\newtheorem{theorem}{Theorem}[section]
\newtheorem{lemma}[theorem]{Lemma}
\newtheorem{corollary}[theorem]{Corollary}
\newtheorem{proposition}[theorem]{Proposition}
\newtheorem{conjecture}[theorem]{Conjecture}
\newtheorem{claim}[theorem]{Claim}
\newtheorem{notation}[theorem]{Notation}
\theoremstyle{remark}
\newtheorem{remark}[theorem]{Remark}
\newtheorem{question}[theorem]{Question}
\theoremstyle{definition}
\newtheorem{definition}[theorem]{Definition}
\newtheorem{example}[theorem]{Example}
\newtheorem{algorithm}[theorem]{Algorithm}
\DeclareMathOperator{\rank}{rank}
\DeclareMathOperator{\lcm}{lcm}
\DeclareMathOperator{\pd}{pd}
\DeclareMathOperator{\sbridge}{sb}
\DeclareMathOperator{\sink}{sink}
\DeclareMathOperator{\iron}{iron}
\newcommand{\G}{\mathcal{G}}
\newcommand{\D}{\mathcal{D}}
\newcommand{\T}{\mathcal{T}}
\newcommand{\C}{\mathcal{C}}
\newcommand{\ZZ}{{\mathbb Z}}
\newcommand{\cmark}{\ding{51}}
\newcommand{\xmark}{\ding{55}}
\def\T{{\mathcal T}}
\def\G{{\mathcal G}}
\def\P{{\mathcal P}}
\def\G{{\mathcal G}}
\def\w{{\bf w}}
\def\1{{\bf 1}}
\def\0{{\bf 0}}
\begin{document}
\title{\textbf{Barile-Macchia Resolutions}}

\author{Trung Chau}
\address{Department of Mathematics, University of Utah, 155 South 1400 East, Salt Lake City, UT~84112, USA}
\email{trung.chau@utah.edu}

\author{Selvi Kara$^{\ast}$}
\address{Science Research Initiative, University of Utah, 155 South 1400 East, Salt Lake City, UT~84112, USA}
\email{selvi.kara@utah.edu}
\date{}

\maketitle

\begin{abstract}
  We construct cellular resolutions for monomial ideals via discrete Morse theory. In particular, we develop an algorithm to create homogeneous acyclic matchings and we call the cellular resolutions induced from these matchings  Barile-Macchia resolutions. These resolutions are minimal for edge ideals of weighted oriented forests and (most) cycles. As a result, we provide recursive formulas for graded Betti numbers and projective dimension. Furthermore, we compare Barile-Macchia resolutions to those created by Batzies and Welker and some well-known simplicial resolutions. Under certain assumptions, whenever the above resolutions are minimal, so are Barile-Macchia resolutions. 
\end{abstract}

\tableofcontents

\section{Introduction}

There are several combinatorial constructions to produce free resolutions of monomial ideals over a polynomial ring $R=\Bbbk[x_1,\ldots, x_n]$. Such constructions rarely yield minimal ones; Taylor \cite{Tay66} and Lyubeznik resolutions \cite{Ly88} are classic such examples.  Thus, it is of interest to find new and nice constructions that produce free resolutions which are minimal for large classes of ideals, which is the main objective of our paper. A general approach is to associate a monomial ideal $I$ to a combinatorial object that supports a minimal free resolution of $R/I$.  This has been a fruitful approach for a myriad of ideals including stable ideals \cite{AHH98, EK90, Mer10, Pe94, Sin08}, generic ideals \cite{BPS98,BS98,MSY00}, matroid ideals of a finite projective space \cite{Nov00,NPS02}, shellable ideals \cite{BW02}, quadratic ideals with minimal linear resolutions \cite{Ho07}, edge ideals of Ferrers graphs and their specializations \cite{CN08-2, CN08}, and edge ideals of the complements of cycles \cite{Bi11}.

Our focus is on Batzies and Welker's approach from  \cite{BW02}  that is based on discrete Morse theory.  In \cite{BW02},  they developed a method to ``trim" a cellular resolution induced from a regular CW-complex, and their method relies on Chari's reformulation \cite{Cha00} of Forman’s discrete Morse theory \cite{Fo94}. In particular, they showed that \textit{homogeneous acyclic matchings } induce cellular resolutions. Moreover, they provided a way to produce such matchings for any monomial ideal and proved that the free resolutions induced by them are minimal for generic and shellable ideals.  Inspired by this work, researchers constructed minimal free resolutions for powers of edge ideals of paths \cite{EN13}, edge ideals of forests \cite{BM20}, and  powers of square-free monomial ideals of projective dimension one \cite{CEFMMSS22}. Structure of these ideals heavily informs their homogeneous acyclic matchings.

In this paper, we generalize Barile and Macchia's  approach \cite{BM20} to all monomial ideals, and adopt some of their terminology. Specifically, we provide the \textit{Barile-Macchia algorithm} that produces  homogeneous acyclic matchings (Algorithm \ref{algorithm1} and Theorem \ref{thm:proofalg}).  We call a matching produced this way a \emph{Barile-Macchia matching} and the corresponding induced free resolution a \textit{Barile-Macchia resolution}. Barile-Macchia matchings give rise to an interesting classes of monomial ideals called  \emph{bridge-friendly} (see Section \ref{subsec:contents}). A monomial ideal is bridge-friendly if, roughly speaking, it behaves \emph{well} under our algorithm. More importantly, the Barile-Macchia resolutions of bridge-friendly ideals are always minimal (see Theorem \ref{thm:friendlyToMinimal}).  In addition, in many occasions, we show that Barile-Macchia matchings are \textit{Batzies-Welker}. The main class of ideals we consider for bridge-friendliness are the edge ideals of weighted oriented graphs.

A \emph{weighted oriented graph} is a triple $\D=(V,E,\w)$ where $V$ is the vertex set, $E$ is the set of directed edges, and $\w: V\to \mathbb{N}^{+}$ is a weight function on the vertices. Let $ R:=\Bbbk[\D]$ denote the polynomial ring on the vertices. The \emph{edge ideal} of $\D$ is denoted $I(\D)$ and defined as
$$I(\D)= ( xy^{\w(y)}\mid (x,y) \in E ) \subseteq R.$$ 
When all vertices have weight one, $I(\D)$ is the edge ideal of an unweighted unoriented graph which was introduced by Villarreal \cite{villarreal1990cohen} and has been studied extensively since (see \cite{banerjee2017regularity} for a survey). Edge ideals of weighted oriented graphs are relatively newer objects and there has been a growing literature on their algebraic properties and invariants \cite{gimenez2018symbolic, ha2019edge, pitones2019monomial, kara2022algebraic}. As a first application of our methods, we consider forests and show that their edge ideals are bridge-friendly (Theorem \ref{thm:morsefriendlyforest}). Next, we consider edge ideals of weighted oriented cycles. The story for this class of ideals is more involved.  We study these cycles by partitioning them into \textit{classic} and \textit{non-classic} cycles (see Section \ref{sec:cycle}). Edge ideals of non-classic cycles are bridge-friendly (Theorem \ref{morsepropertycycle}). Although this is not necessarily the case for edge ideals of classic cycles (Subsection \ref{subsec:classic}), we prove that their minimal resolutions can be obtained from those of bridge-friendly ideals via the mapping cone construction (Corollary \ref{cor:resofclassic}). In these cases, we deduce recursive (and explicit) formulas for Betti numbers and projective dimension.

A cycle (or path/forest) is called \emph{naturally oriented} if all of its edges are oriented in the same direction. Our methods yield to a powerful property which we call \emph{independence of directions} for weighted oriented cycles and paths. Specifically, given a weighted oriented cycle (resp, path), one can find a naturally oriented cycle (resp, path) such that the total Betti numbers of the two edge ideals coincide (Theorems \ref{indecycle} and \ref{indepath}).  Unfortunately, weighted oriented forests do not have this property in general (Example \ref{notindeforest}).  We also show it is often possible to find a weighted oriented path (resp, cycle) from a given weighted oriented cycle (resp, path) such that the total Betti numbers of their edge ideals are equal (Theorem \ref{thm:identicalBetti}). 

In a more general direction, we compare Barile-Macchia resolutions with  Scarf complexes, Taylor and Lyubeznik resolutions (see Section \ref{sec:comparison}). First, we provide an ideal whose Barile-Macchia, Taylor, and Lyubeznik resolutions and Scarf complex are all non-isomorphic and the only minimal one among these is a Barile-Macchia resolution (Theorem \ref{exampleofmultipleres}). Next, we study when these resolutions or the Scarf complex coincide with a Barile-Macchia resolution. The most immediate case is the Taylor resolutions as they  coincide with Barile-Macchia resolutions when the Barile-Macchia matching is the empty set.   In the case of Lyubeznik resolutions,  while there are examples where the two resolutions are not comparable in general, we identify sufficient conditions under which Barile-Macchia resolutions are closer to minimal ones (Theorem \ref{BridgevsLyu}). In addition, we prove that Barile-Macchia resolutions coincide minimally with the Scarf complex under a natural assumption, recovering a result of Yuzvinsky \cite[Proposition 4.4]{Yu99} (Theorem \ref{YuzvinskyisMorseminimal}). Finally, we generalize Barile-Macchia resolutions in the same way Batzies and Welker generalize Lyubeznik resolutions in \cite{BW02}. We briefly compare these two generalized versions (Theorem ~\ref{BridgevsLyu3}).

Our paper is structured as follows: Section \ref{section2} contains the background regarding discrete Morse theory (Subsection \ref{subsection2.1}), the Barile-Macchia algorithm (Subsection \ref{subsec:AMA}), Barile-Macchia matchings, and the notion of \emph{bridge-friendliness} (Subsection \ref{subsec:contents}). We present a criterion to check bridge-friendliness in Lemma \ref{lem:morsefri2} which proves to be quite useful in our work. In addition, we show that the corresponding Barile-Macchia resolution of a bridge-friendly ideal is minimal in Theorem \ref{thm:friendlyToMinimal}. In Section \ref{sec:forests}, we focus on edge ideals of weighted oriented forests and  prove Theorem \ref{thm:morsefriendlyforest}.  Edge ideals of weighted oriented cycles are studied in Section \ref{sec:cycle}. In particular, we focus on independence of directions in  Subsection \ref{subsec:independence}, non-classic cycles in  Subsection \ref{subsec:non-classic}, and classic cycles in Subsection \ref{subsec:classic}. In Section \ref{sec:comparison}, we provide a comparison between Barile-Macchia resolutions and some well-known simplicial resolutions and complexes.  In Section \ref{sec:final}, we  introduce  some open questions and conjectures.

\section{Barile-Macchia matchings}\label{section2}

\subsection{Preliminaries}\label{subsection2.1}

We recall some of the basic discrete Morse theory notions from  \cite{BW02}. Let $X$ be a CW-complex, $P$ a poset and $f\colon X^{(*)}\to P$ an order-preserving map where $X^{(*)}$ denotes the set of cells of $X$ and is ordered by $\sigma \leq \sigma'$ whenever $\sigma \subseteq \text{cl}(\sigma')$. Consider the directed graph $G=(V,E)$ where $V$ is the set of cells of $X$ and $E$ is the set of directed edges given by $\sigma\to \sigma'$ for $\sigma'\leq  \sigma$ and $|\sigma'|=|\sigma|-1$. We adopt the notation $(\sigma, \sigma ')$ to denote a directed edge  from $\sigma$ to $\sigma '$. For $A\subseteq E$, let $G^A$ be the directed graph obtained from $G$ by reversing the edges in $A$, i.e., $V(G^A)=V(G)$ and  $E(G^A)=(E \setminus A) \cup \{(\sigma', \sigma ) \mid (\sigma, \sigma') \in A\}.$

\begin{definition}\label{def:acyclicmatch}
    A subset $A \subseteq E$ is called an  \textit{$f$-homogeneous acyclic matching} if it satisfies the following conditions:
    \begin{enumerate}
        \item (\emph{matching}) Any cell appears in at most one edge of $A$.
        \item (\emph{acyclicity}) The edge set $E(G^A)$ does not contain a directed cycle.
        \item (\emph{$f$-homogeneity}) If $(\sigma,  \sigma') \in A$, then $f(\sigma)=f(\sigma')$.
    \end{enumerate}
In this case, a cell is called $A$-\textit{critical} if does not appear in any of the edges of $A$. When there is no confusion, we will simply use the term \textit{critical}.
\end{definition}

For a directed edge $(\sigma, \sigma') \in E(G_A)$, we set 
$$m(\sigma,\sigma')=\begin{cases}
    -[\sigma':\sigma] & \text{ if } (\sigma', \sigma) \in A,\\
     ~~[\sigma:\sigma'] & \text{ otherwise}
\end{cases}
$$
where $[\sigma:\sigma'] $ is the coefficient of $\sigma'$ in the differential of the cellular complex of $X$. A \textit{gradient path} $\mathcal{P}$  from $\sigma_1$ to $\sigma_t$ is a directed path $\mathcal{P}\colon \sigma_1 \to \sigma_2\to \cdots \to \sigma_t$ in  $G^A_X$. Set
\[ m(\mathcal{P})=m(\sigma_1,\sigma_2)\cdots m(\sigma_{t-1}, \sigma_t).\]

In this paper, we only focus on finding $f$-homogeneous acyclic matchings of the Taylor complex. For the rest of the paper, unless otherwise stated, let $R$ denote a polynomial ring in $N$ variables over a field $\Bbbk$, and $I$ be a monomial ideal with its set of minimal monomial generators $\G(I)$. Impose a total ordering   $(>_I)$ on $\G(I)$ and let $X$ be the full simplex whose vertices are labelled by the elements of $\G(I)$. Then, cells of $X$ correspond to subsets of $\G(I)$. We will treat a subset $\sigma$ of $\G(I)$ as an ordered set with respect to $(>_I)$. Set $\lcm(\sigma)$ to be the least common multiple of elements in $\sigma$. Recall that $X$ is a $\mathbb{Z}^N$-graded complex that induces a complex $\mathcal{F}$ where $\mathcal{F}_r$ is the free $R$-module with a basis indexed by all subsets of $\G(I)$ of cardinality $r$, and the differentials $\partial_r\colon \mathcal{F}_r\to \mathcal{F}_{r-1}$ are  defined by
\[ \partial_r(\sigma) =\sum_{\substack{\sigma'\subseteq \sigma,\\|\sigma'|=r-1}} [\sigma:\sigma'] \frac{\lcm(\sigma)}{\lcm(\sigma')} \sigma'. \]
The complex $\mathcal{F}$ is well-known to be a resolution \cite{Tay66}, hence is called the \textit{Taylor resolution} of $R/I$.

We are now ready to define the resolution induced by an $f$-homogeneous acyclic matching $A$.

\begin{definition}
    If there exists a commutative diagram of poset maps
    \[
    \begin{tikzcd}
        X^{(*)} \arrow[d, "\lcm"] \arrow[rd, "f"]
        & \\
        \mathbb{Z}^N
        & \arrow[l, dashed, "g"]  P,
    \end{tikzcd}
    \]
    then $f$ is called an \emph{$\lcm$-compatible $P$-grading} of $X$. 
\end{definition}

\begin{theorem}\cite[Proposition 2.2, Proposition 3.1,  Lemma 7.7]{BW02} \label{thm:morseres}
    Let $I$ be a monomial ideal and $f$ an $\lcm$-compatible $P$-grading of $X$. Then any $f$-homogeneous acyclic matching $A$ induces a cellular resolution $\mathcal{F}_A$ where $(\mathcal{F}_A)_r$ is the free $R$-module with a basis indexed by all critical subsets   of cardinality $r$ and the differentials are the maps $\partial_r^A:(\mathcal{F}_A)_r\to (\mathcal{F}_A)_{r-1}$ defined by
    \[ \partial_r^A(\sigma) =\sum_{\substack{\sigma'\subseteq \sigma,\\|\sigma'|=r-1}} [\sigma:\sigma'] \sum_{\substack{\sigma'' \text{ critical,}\\ |\sigma''|=r-1 }} \sum_{\substack{\mathcal{P} \text{ gradient path}\\ \text{from } \sigma' \text{ to }\sigma''}} m(\mathcal{P}) \frac{\lcm(\sigma)}{\lcm(\sigma'')} \sigma''. \]
    The resulting (cellular) free resolution $\mathcal{F}_A$ is called a \emph{Morse resolution} of $R/I$ associated to $A$.
\end{theorem}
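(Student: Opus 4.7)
The plan is to follow the three-step architecture of classical discrete Morse theory, specialized to the $\ZZ^N$-graded setting of the Taylor complex, since the statement is essentially Forman--Chari discrete Morse theory combined with the monomial labelling. First I would build a CW-complex $X_A$ whose cells are in bijection with the $A$-critical cells of $X$; second I would propagate the monomial labelling from $X$ to $X_A$; third I would identify the resulting cellular differential with the gradient-path formula in the statement.

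For the first step, Chari's reformulation of Forman's theorem says that any acyclic matching on the face poset of a regular CW-complex produces a CW-complex $X_A$ with exactly one cell for each unmatched (critical) cell, together with a cellular homotopy equivalence $X \simeq X_A$. Here the acyclicity hypothesis, i.e.\ that $G^A$ has no directed cycle, is precisely what allows one to perform the elementary collapses $\sigma' \nearrow \sigma$ for each $(\sigma,\sigma') \in A$ in an order compatible with the partial order induced by $G^A$. Since $X$ is a full simplex, it is in particular regular, and this step goes through without modification.

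For the second step, I would use the $\lcm$-compatible $P$-grading $f$ to equip $X_A$ with a $\ZZ^N$-labelling. Each matched pair $(\sigma,\sigma')\in A$ satisfies $f(\sigma)=f(\sigma')$ by $f$-homogeneity, and the factorization $\lcm = g\circ f$ then gives $\lcm(\sigma)=\lcm(\sigma')$. Hence the elementary collapses used to build $X_A$ from $X$ do not change the multidegree of any surviving cell, so each critical $\sigma$ carries a well-defined monomial label $\lcm(\sigma)$, and the cellular chain complex of $X_A$ becomes a complex of free $\ZZ^N$-graded $R$-modules in the standard way, matching $(\mathcal{F}_A)_r$ in the statement.

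For the third step, the cellular differential between critical cells is given by the standard Morse-theoretic formula, in which the incidence number between two critical cells $\sigma$ and $\sigma''$ is obtained by summing $m(\mathcal{P})$ over all gradient paths $\mathcal{P}$ joining a codimension-one face $\sigma'\subseteq \sigma$ to $\sigma''$, with signs determined by whether each step of the path traverses a matched or an unmatched edge; multiplication by $\lcm(\sigma)/\lcm(\sigma'')$ is then forced by degree considerations. Finally, exactness of $\mathcal{F}_A$ as a free resolution of $R/I$ is inherited from the Taylor resolution $\mathcal{F}$ via the homotopy equivalence $X \simeq X_A$: in every $\ZZ^N$-multidegree $\mathbf{a}\neq \mathbf{0}$, the corresponding subcomplex of $X$ is acyclic, hence so is its deformation retract inside $X_A$. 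The main obstacle I would expect is purely bookkeeping: verifying that the signs $m(\sigma_i,\sigma_{i+1})$ combine across consecutive gradient path steps so that $\partial_r^A\circ \partial_{r+1}^A = 0$ holds identically, rather than merely up to homotopy; this is where one really uses that the collapses were performed in a chain-level, not just homotopy-level, fashion, and it is the technical heart of the Batzies--Welker argument cited.
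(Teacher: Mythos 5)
The paper offers no proof of this statement: it is imported verbatim from Batzies--Welker \cite[Propositions 2.2 and 3.1, Lemma 7.7]{BW02}, and the authors simply cite it. Your sketch is a correct outline of exactly that cited argument --- Chari's collapsing construction for the critical CW-complex, preservation of the multidegree via $f$-homogeneity and the factorization $\lcm = g \circ f$, the gradient-path formula for the induced differential, and exactness inherited from the multigraded acyclicity of the Taylor complex --- so it takes the same route as the source the paper relies on, with the sign bookkeeping for $\partial^A \circ \partial^A = 0$ correctly flagged as the part that genuinely requires the chain-level Batzies--Welker machinery.
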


Thus, one can bound the projective dimension and graded Betti numbers using critical subsets.

\begin{corollary}\label{morseminimabetti}
	Let $I$ be a monomial ideal and $A$ an $\lcm$-homogeneous acyclic matching. Then
	\begin{enumerate}
		\item[(a)] $\pd(R/I) \leq \{ |\sigma| : \sigma \text{ is a critical subset }  \}$, and 
		\item[(b)] $\beta_{r,\mathbf{a}}(R/I) \leq \Big| \{ \sigma : \sigma  \text{ is a critical subset } , |\sigma|=r, \lcm (\sigma) =\mathbf{x}^\mathbf{a} \} \Big| $. 
	\end{enumerate}
	where $\mathbf{x}^\mathbf{a}\coloneqq x_1^{a_1}\dots x_N^{a_N}$. Moreover, equalities in (a) and (b) hold if and only if the Morse resolution $\mathcal{F}_A$ is minimal. 
\end{corollary}

The following simple sufficient condition is shown to be effective to deduce the minimality of Morse resolutions \cite{BW02,BM20,faridi22}.

\begin{definition}
	An $\lcm$-homogeneous acyclic matching  is called \textit{Batzies-Welker matching} if $\lcm(\sigma)\neq \lcm(\sigma')$ for any two critical subsets $\sigma,\sigma'$ of $\G(I)$. 
\end{definition} 

The next result follows from Corollary \ref{morseminimabetti}.

\begin{theorem}\label{thm:BWmatching}
	Let $I$ be a monomial ideal. If $I$ has a Batzies-Welker matching, then the corresponding Morse resolution of $R/I$ is minimal. Moreover, 
	\[
	\beta_{r,\mathbf{a}}(R/I)=\begin{cases}
		1 & \text{if there exists a critical subset $\sigma\subseteq \G(I)$ such that } |\sigma|=r \text{ and } \lcm (\sigma) =\mathbf{x}^\mathbf{a},\\
		0 & \text{otherwise.}  
	\end{cases}
	\]
\end{theorem}

\subsection{Barile-Macchia Algorithm}\label{subsec:AMA}

Throughout the paper, let $[k]$ denote the set $\{1,2,\cdots, k\}$ for any integer $k$.

\begin{definition}
Let $\sigma$ be a subset of $\G(I)$. A monomial $m\in \G(I)$ is called a \textit{bridge} of $\sigma$ if $m\in \sigma$ and $\lcm (\sigma \setminus \{m\})=\lcm (\sigma)$.
\end{definition}

\begin{definition} 
Let $B$ be the collection of all subsets of $\G(I)$. The \emph{smallest bridge function}, denoted by $\sbridge$, is a map $\sbridge: B \to \G(I) \cup \{\emptyset\}$ where $\sbridge(\sigma)$ is the smallest bridge of $\sigma$ if it has a bridge and $\emptyset$ otherwise.
\end{definition}

In what follows, we define an iterative algorithm that produces an $\lcm$-homogeneous acyclic matching. We implemented this algorithm in {\tt Macaulay2}, and our code is available online at~\cite{github}. After describing the steps of the algorithm, we show that the matching is indeed acyclic. To ease the notation, we use  $A \setminus a$ and $A\cup a$ instead of $A \setminus \{ a\}$ and $A\cup \{a\}$, respectively.
 
\begin{algorithm}\label{algorithm1}
    {\sf Let $A=\emptyset$. Set $\Omega=\{\text{all  subsets of } \G(I) \text{ with cardinality at least } 3\}.$
    \begin{enumerate}[label=(\arabic*)]
        \item Pick a subset $\sigma$ of maximal cardinality in $\Omega$. 
        \item  Set
        \[ \Omega \coloneqq \Omega \setminus \{\sigma, \sigma \setminus \sbridge(\sigma)\}. \]
        If  $\sbridge(\sigma)\neq \emptyset$, add the directed edge $(\sigma , \sigma \setminus \sbridge(\sigma))$ to $A$. If $\Omega\neq \emptyset$, return to step (1).
        \item Whenever there exist distinct directed edges $(\sigma, \sigma \setminus \sbridge(\sigma))$ and $(\sigma', \sigma' \setminus \sbridge(\sigma'))$ in $A$ such that 
            $$\sigma \setminus \sbridge(\sigma) = \sigma' \setminus \sbridge(\sigma'),$$
            then 
            \begin{itemize}
                \item if $ \sbridge(\sigma') >_I \sbridge(\sigma)$, remove $(\sigma', \sigma' \setminus \sbridge(\sigma'))$ from $A$,
                \item  otherwise, remove $(\sigma, \sigma \setminus \sbridge(\sigma))$ from $A$.
            \end{itemize}
    \end{enumerate}}
\end{algorithm}

This process eventually terminates since there are only finitely many subsets to consider. It is straightforward from  steps (2) and (3) that $A$ is an $\lcm$-homogeneous matching. In order to show it is acyclic, we will employ the next lemma.

\begin{lemma}\label{clm:directedcycle}
        If $A$ is not acyclic, then there exists a directed cycle $\mathcal{C}$ of the form
        $$\mathcal{C}: \tau_1\to \sigma_1 \to \tau_2\to \sigma_2 \to \cdots \to \tau_k \to \sigma_k \to \tau_1$$
    where $k\geq 2$ and
        \begin{enumerate}
            \item[(a)] for each $i \in [k]$, we have $\tau_{i+1}=\sigma_{i+1} \setminus \sbridge(\sigma_{i+1}) =\sigma_{i}\setminus m_i$  for some $m_i\in \G(I)$, and
            \item[(b)]  $\lcm (\sigma_i)= \lcm(\tau_j)$ for all $ i,j \in [k]$.
        \end{enumerate}
        Here we use the convention  $k+1=1$.
    \end{lemma}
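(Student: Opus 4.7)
The plan is to start from a hypothetical directed cycle in $G^A$, show it must have the alternating up/down structure claimed, and then use $\mathrm{lcm}$-homogeneity of the matching to force the divisibility chain around the cycle to collapse to equalities.

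First I would observe the bipartite structure of $G^A$: the underlying graph $G$ has edges only between cells whose cardinalities differ by exactly one, with the original orientation going from larger to smaller cells. Reversing the edges in $A$ flips some of these to go from smaller to larger. Consequently, along any directed path in $G^A$ the cardinality changes by $\pm 1$ at each step, and a reversed (matched) edge can only be followed by an original (unmatched) edge and vice versa. So any directed cycle in $G^A$ has even length $2k$ with $k \geq 2$ (a single matched pair does not give a cycle in the usual sense, since the two orientations of one underlying edge do not constitute a cycle on two distinct edges), and must take the alternating form
\[
\tau_1 \to \sigma_1 \to \tau_2 \to \sigma_2 \to \cdots \to \tau_k \to \sigma_k \to \tau_1,
\]
where each step $\tau_i \to \sigma_i$ is a reversed matched edge (so $(\sigma_i,\tau_i) \in A$) and each step $\sigma_i \to \tau_{i+1}$ is an original unmatched edge.

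Next I would extract (a). Since $(\sigma_i,\tau_i) \in A$ and the only edges Algorithm~\ref{algorithm1} ever adds out of $\sigma_i$ are of the form $(\sigma_i,\sigma_i\setminus \sbridge(\sigma_i))$, we get $\tau_i = \sigma_i \setminus \sbridge(\sigma_i)$. Applying this to index $i+1$ and combining with the unmatched step $\sigma_i \to \tau_{i+1}$, which by definition of $G$ just removes one element $m_i \in \sigma_i$, yields $\tau_{i+1} = \sigma_{i+1}\setminus \sbridge(\sigma_{i+1}) = \sigma_i \setminus m_i$, which is exactly (a).

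For (b), the key observation is that $\sbridge(\sigma_{i+1})$ is by definition a bridge of $\sigma_{i+1}$, so $\lcm(\tau_{i+1}) = \lcm(\sigma_{i+1})$. On the other hand, for the downward step $\tau_{i+1} \subsetneq \sigma_i$, removing an element can only preserve or shrink the lcm, so $\lcm(\tau_{i+1}) \mid \lcm(\sigma_i)$. Traversing the cycle we therefore obtain the chain of divisibilities
\[
\lcm(\sigma_1) = \lcm(\tau_2) \mid \lcm(\sigma_2) = \lcm(\tau_3) \mid \cdots \mid \lcm(\sigma_k) = \lcm(\tau_1) \mid \lcm(\sigma_1),
\]
which closes up on itself and therefore must consist of equalities throughout; this gives (b). The main subtlety worth being explicit about is ruling out $k = 1$ and confirming that the alternation is forced rather than assumed — everything else is essentially bookkeeping once the bipartite-level-set structure of $G^A$ is in hand.
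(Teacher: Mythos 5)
Your proof follows the same route as the paper's: decompose the hypothetical cycle into up-steps (reversed matched edges) and down-steps (unmatched edges), force alternation, read off (a) from the form of the edges Algorithm~\ref{algorithm1} adds, and close the divisibility chain for (b). Parts (a) and (b) and the exclusion of $k=1$ are fine.

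The one place where your justification is not quite right is the claim that ``a reversed (matched) edge can only be followed by an original (unmatched) edge \emph{and vice versa}.'' The first half is a genuine local consequence of the matching property: a cell appears in at most one edge of $A$, so two reversed edges cannot be consecutive. But the ``vice versa'' half is false as a local statement --- nothing prevents a directed path in $G^A$ from taking two consecutive unmatched (downward) steps $\sigma \to \sigma' \to \sigma''$. Alternation is forced only globally, for a closed walk: since every step changes cardinality by $\pm 1$ and the cycle returns to its start, the numbers of up-steps and down-steps are equal, say $k$ each in a cyclic sequence of length $2k$; combined with the fact that no two up-steps are adjacent, this pigeonholes the two types into strict alternation. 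You flag this as ``the main subtlety worth being explicit about,'' but you assert it via the incorrect local claim rather than supplying the counting argument, which is exactly the step the paper spells out. With that counting argument inserted, your proof is complete and coincides with the paper's.
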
 

\begin{proof}
    Suppose $A$ is not acyclic. Then, $G^A$ has a directed cycle $\mathcal{C}$. Note that directed edges of $G^A$ are either in $E \setminus A$ or of the form $(\sigma', \sigma)$ for $(\sigma, \sigma') \in A$.  Since $A$ is a matching, the directed cycle $\mathcal{C}$ cannot contain two consecutive directed edges of the second form. If $\mathcal{C}$ has directed edges $(\tau_1,\tau_2),  \ldots, (\tau_{k},\tau_{k+1})$ from $E \setminus A$, one has $|\tau_{i+1}|=|\tau_{i}|-1$ for each $i \in [k] $. In order to circle back to $\tau_1$ along  $\mathcal{C}$, one must travel through $k$ many directed edges of the second form. Since it is not possible to have two successive directed edges of the second form, the directed cycle $\mathcal{C}$ must be formed by alternating directed edges of the first and second forms. Thus, $\mathcal{C}$  is of the  form
$$\tau_1\to \sigma_1 ~\textcolor{blue}{\to} ~\tau_2\to \sigma_2 ~\textcolor{blue}{\to} ~ \cdots \to \tau_k \to \sigma_k ~\textcolor{blue}{\to}~  \tau_1$$
where $k+1=1$ and, for each $i\in [k]$, we have
\begin{itemize}
    \item $(\sigma_i, \tau_{i+1})= (\sigma_i, \sigma_i\setminus m_i) \in E \setminus A$ for some $m_{i}\in \G(I)$  (blue edges),
    \item $(\tau_i,\sigma_i)=(\sigma_i \setminus \sbridge(\sigma_i), \sigma_i)$ where $(\sigma_i,\tau_i) \in A$ (black edges).
\end{itemize}

Next, observe that $k \geq 2$. Otherwise, if $k=1$, then the directed edges of $\mathcal{C}$ are $(\sigma,\tau)$ and $(\tau, \sigma)$. Since the edges alternate between the first and second form, we must have  $(\sigma,\tau) \in \big(E\setminus A \big) \cap A$, a contradiction. We conclude the proof with two observations. Since $(\sigma_i, \tau_i)\in A$ for each $i\in [k]$, we have $\lcm(\sigma_i)= \lcm(\tau_i)$ for each $i$. In addition, since $\tau_{i+1}= \sigma_i \setminus m_i$ for each $i\in [k]$, we have $\lcm (\tau_{i+1}) | \lcm (\sigma_i)$. As a result, we obtain $(b)$ from the following:
\[ \lcm(\sigma_1)=\lcm(\tau_1) \mid \lcm(\sigma_k)= \lcm(\tau_k) \mid  \cdots \mid \lcm(\sigma_2) = \lcm(\tau_2) \mid \lcm(\sigma_1).    \] 
\end{proof}

\begin{theorem}\label{thm:proofalg}
   The collection of directed edges $A$ obtained from Algorithm \ref{algorithm1} is an $\lcm$-homogeneous acyclic matching.
\end{theorem}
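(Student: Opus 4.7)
The plan is to verify the three defining conditions of an $\lcm$-homogeneous acyclic matching in turn, leaning heavily on Lemma~\ref{clm:directedcycle} for the only substantive part.

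The matching and $\lcm$-homogeneity conditions are essentially built into the algorithm. Each subset $\sigma$ is considered at most once in step~(1), so at most one edge with source $\sigma$ is ever added, while step~(3) is explicitly designed to remove all-but-one of any pair of edges sharing a target; sources have larger cardinality than targets, so no source-target collision can occur. Every edge added in step~(2) has the form $(\sigma, \sigma \setminus \sbridge(\sigma))$ with $\sbridge(\sigma) \neq \emptyset$, and the very definition of a bridge forces $\lcm(\sigma) = \lcm(\sigma \setminus \sbridge(\sigma))$.

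For acyclicity I would argue by contradiction. If $A$ is not acyclic, Lemma~\ref{clm:directedcycle} supplies a directed cycle $\mathcal{C}: \tau_1 \to \sigma_1 \to \cdots \to \tau_k \to \sigma_k \to \tau_1$ of length $k \geq 2$, with $\tau_{i+1} = \sigma_{i+1} \setminus \sbridge(\sigma_{i+1}) = \sigma_i \setminus m_i$ and $\lcm(\sigma_i) = \lcm(\tau_j)$ for all $i,j$. Property~(b) gives $\lcm(\sigma_i \setminus m_i) = \lcm(\sigma_i)$, so $m_i$ is a bridge of $\sigma_i$, whence $\sbridge(\sigma_i) \leq_I m_i$. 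The key combinatorial step is then to write $\sigma_{i+1} = (\sigma_i \setminus m_i) \cup \sbridge(\sigma_{i+1})$ for each $i$ (indices mod $k$) and note that $m_i \neq \sbridge(\sigma_{i+1})$, since otherwise $\sigma_{i+1} = \sigma_i$. Tracking the multiplicity of an arbitrary monomial as we travel once around the cycle, it must end unchanged, yielding the multiset equality
\[ \{m_1, \ldots, m_k\} \;=\; \{\sbridge(\sigma_1), \ldots, \sbridge(\sigma_k)\}. \]

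With this equality in hand, let $m_{i^*}$ attain the minimum of this common multiset with respect to $>_I$. Then $\sbridge(\sigma_{i^*})$ also lies in the multiset, so $\sbridge(\sigma_{i^*}) \geq_I m_{i^*}$; combined with $\sbridge(\sigma_{i^*}) \leq_I m_{i^*}$ this forces $\sbridge(\sigma_{i^*}) = m_{i^*}$. But then $\tau_{i^*+1} = \sigma_{i^*} \setminus m_{i^*} = \sigma_{i^*} \setminus \sbridge(\sigma_{i^*}) = \tau_{i^*}$, contradicting the distinctness of the vertices of $\mathcal{C}$. The main obstacle I anticipate is justifying the multiset identity rigorously; I would write it out element by element using indicator functions, leveraging $m_i \neq \sbridge(\sigma_{i+1})$ to rule out any cancellation ambiguities. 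Once this bookkeeping is done, the minimum-element chase is completely mechanical.
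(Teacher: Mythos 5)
Your proof is correct and follows essentially the same route as the paper's: both reduce acyclicity to Lemma~\ref{clm:directedcycle}, observe that each removed monomial $m_i$ is a bridge of $\sigma_i$ which must reappear among the smallest bridges occurring along the cycle, and derive a contradiction from an extremal element (your multiset identity $\{m_1,\dots,m_k\}=\{\sbridge(\sigma_1),\dots,\sbridge(\sigma_k)\}$ is a mild refinement of the paper's observation that every $m_i$ lies in $S=\{\sbridge(\sigma_1),\dots,\sbridge(\sigma_k)\}$). The only cosmetic difference is that you take the minimum $m_{i^*}$ and force $m_{i^*}=\sbridge(\sigma_{i^*})$, hence $\tau_{i^*+1}=\tau_{i^*}$, whereas the paper takes the maximum of $S$ and shows the corresponding removed monomial would strictly dominate it.
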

\begin{proof}
    For the sake of contradiction, suppose $A$ is not acyclic. Then, there exists a directed cycle $\mathcal{C}$  in $G^A$ satisfying  $(a)$ and $(b)$ from Lemma \ref{clm:directedcycle}. In particular, each directed edge of  $\mathcal{C}$ is of the form $(\tau_i, \sigma_i)=(\sigma_i \setminus \sbridge(\sigma_i), \sigma_i)$ or $(\sigma_i, \tau_{i+1})= (\sigma_i, \sigma_i \setminus m_i)$. A direct edge of the first form is obtained by adding  a monomial $\sbridge(\sigma_i)$ and the second form is obtained by removing a monomial $m_i$. 
    
    Set $S=\{\sbridge(\sigma_1), \sbridge(\sigma_2), \ldots, \sbridge(\sigma_k)\}$. If an edge is formed by adding a monomial, then that monomial must be an element of $S$. Hence, without loss of generality, we may assume  $\sbridge(\sigma_1)$ is the largest element in $S$ with respect to the total ordering $(>_I)$ on $\G(I)$. 
    
    It follows from Lemma \ref{clm:directedcycle} that the directed edge $(\sigma_1, \tau_2)$ is obtained by removing $m_1$. Since $(\tau_1,\sigma_1)$ is a directed edge of $\mathcal{C}$, the monomial $m_1$ is eventually added back in an edge of the first form. Thus, $m_1 \in S$, i.e., $m_1= \sbridge(\sigma_j)$ for some $j \in [k]$. Note that $j\neq 1$. Otherwise, it would imply $\tau_1=\tau_2$, which is impossible by the construction of $G^A$. Furthermore, by Lemma \ref{clm:directedcycle}, we have 
    $$\lcm(\tau_{2}\cup m_1)=\lcm(\sigma_1)=\lcm(\tau_{2}) = \lcm (\sigma_1 \setminus m_1)$$
    where the first and last equality follows from $(a)$ and the second one follows from $(b)$, and hence $m_1$ is a bridge of $\sigma_1$. Thus, $m_1= \sbridge(\sigma_j) >_I \sbridge(\sigma_1)$, a contradiction. Therefore, $A$ is acyclic.
\end{proof}

\begin{definition}
    An $\lcm$-homogeneous acyclic matching induced from Algorithm \ref{algorithm1} is called a \textit{Barile-Macchia matching}. A resolution induced from a Barile-Macchia matching is called a \textit{Barile-Macchia resolution}.
\end{definition}

We conclude this section with the following example to illustrate  Algorithm \ref{algorithm1} and present the corresponding Barile-Macchia resolution.

   \begin{example}\label{ex:alg1}
     Set $R=\Bbbk[x,y,z,w]$, $I=(xw,xy,yz,zw)$ and consider the total ordering $xw> xy> yz> zw$ on $\G(I)$. We apply Algorithm \ref{algorithm1}:
  
  {\sf Set $A=\emptyset$ and 
  	$$\Omega:=\{ \{xw,xy,yz,zw\}, \{xy,yz,zw\},\{xw,yz, zw\},  \{xw,xy,zw\},\{xw,xy,yz\}\}.$$
  \begin{itemize}
      \item (step 1) Pick an element of maximum cardinality in $\Omega$: $\sigma= \{xw,xy,yz,zw\}$. 
      \item (step 2) $\sbridge(\sigma)=zw$. Then $$A=A\cup \{ (\{xw,xy,yz,zw\},\{xw,xy,yz\})\}$$ and
      $$\Omega:=\{  \{xy,yz,zw\},\{xw,yz, zw\},  \{xw,xy,zw\}\}$$
      \item (step 1) Pick an element of maximum cardinality in $\Omega$: $\sigma=\{xw,yz,zw\}$. 
      \item (step 2)  $\sbridge(\sigma)=zw$. Then $$A=A\cup \{ (\{xw,yz,zw\}, \{xw,yz\})\}$$ and
        \[ \Omega\coloneqq \{ \{xw,xy,zw\},\{xw,yz,zw\} \}. \]
        \item (step 1) Pick an element of maximum cardinality in $\Omega$: $\sigma=\{xw,xy,zw\}$.
        \item (step 2)  $\sbridge(\sigma)=xw$. Then $$A=A\cup \{( (\{xw,xy,zw\},\{xy,zw\}) \}$$ and
        \[ \Omega\coloneqq \{ \{xy,yz,zw\} \}. \]
       \item (step 1) Pick an element of maximum cardinality in $\Omega$: $\sigma=\{xy,yz,zw\}$.
       \item (step 2)  $\sbridge(\sigma)=yz$. Then 
       \begin{align*}
           A=A\cup \{     (\{xy,yz,zw\},\{xy,zw\})\}
       \end{align*} 
       and $ \Omega\coloneqq \emptyset$.
    
        \item Since $\Omega=\emptyset.$ proceed to step 3.
        \item (step 3) There is only one pair of directed edges with the same target:
        $$(\underbrace{\{xw,xy,zw\}}_{\sigma'}, \{xy,zw\}) \text{ and } (\underbrace{\{xy,yz,zw\}}_{\sigma}, \{xy,zw\}).$$ 
        Since $\sbridge(\sigma')=xw>_I yz=\sbridge(\sigma)$, remove the former edge from $A$. Then 
         $$A=\{(\{xw,xy,yz,zw\},\{xw,xy,yz\}), (\{xw,yz,zw\}, \{xw,yz\}), (\{xy,yz,zw\},\{xy,zw\})\}.$$ 
         \item Terminate.
  \end{itemize}
  }
      Critical subsets include one  of cardinality zero (the empty set), four of cardinality $1$, four of cardinality $2$, and one of cardinality $3$. By Theorem \ref{thm:morseres}, the matching $A$ induces a cellular resolution of $R/I$:
    \[ 0 \leftarrow R \leftarrow R^4\leftarrow R^4 \leftarrow R \leftarrow 0. \]
 In the following table, we compute the least common multiples of all the critical subsets and see that they are all different.

    \begin{center}
        \begin{tabular}{|c|c|c|c|c|c|}
        \hline
        & $\{xw,xy\}$ & $\{xw,zw\}$ & $\{xy,yz\}$ & $\{yz,zw\}$ & $\{xw,xy,zw\}$ \\ \hline
        lcm & $xyw$       & $xzw$       & $xyz$       & $yzw$       & $xyzw$         \\ \hline
    \end{tabular}
    \end{center}
Hence, this resolution is minimal. In particular, this matching is Batzies-Welker. Note that there is no need to consider subsets of cardinality $1$ in the algorithm because for any $m,m',m''\in \G(I)$, we must have $\lcm(m)\neq \lcm(\{m',m''\})$. 
\end{example}

\begin{remark}\label{Taylorismorseminimal}
    Algorithm \ref{algorithm1} always produces a resolution shorter than the Taylor resolution except for when none of the subsets of $\G(I)$ have a bridge, which happens exactly when the latter is minimal. In this case, Algorithm \ref{algorithm1} produces $A=\emptyset$ and  $\mathcal{F}_{A}$ coincides with the Taylor resolution. 
\end{remark}

Barile-Macchia resolutions of $R/I$ depend on the choice of total orderings on $\G(I)$. Two different total orderings may produce different Barile-Macchia matchings and the corresponding Barile-Macchia resolutions are not necessarily isomorphic, as can be seen in the following example.

\begin{example}\label{example:DependOnordering}
   Consider the ideal $I=(x^2y^2,y^2z^2,xz^2,x^2z)$ with the following total orderings $(>_1)$ and $(>_2)$:
   $$x^2y^2>_1 y^2z^2>_1 xz^2>_1 x^2z,$$
   $$\text{and } xz^2>_2 x^2z>_2 x^2y^2>_2 y^2z^2.$$
   Then the Barile-Macchia resolutions induced from the two orderings are
    \begin{align*}
        \mathcal{F}_1: &~~0\leftarrow R\leftarrow R^4\leftarrow R^3 \leftarrow 0,\text{ and }\\
        \mathcal{F}_2:& ~~0\leftarrow R\leftarrow R^4\leftarrow R^4\leftarrow R\leftarrow 0,
    \end{align*}
    where $\mathcal{F}_i$ is with respect to $(>_i)$ for $i\in \{1,2\}$. One can verify that  $\mathcal{F}_1$ is minimal while $\mathcal{F}_2$ is not.
\end{example}

\subsection{Contents of a Barile-Macchia matching}\label{subsec:contents}

Let $A$ be a Barile-Macchia matching throughout this section. Terminology introduced in this subsection is inspired by Barile and Macchia's work in \cite{BM20}.

\begin{definition}
For each directed edge $(\sigma,\tau)$ in $A$, we call $\sigma$ a \textit{type-2} element in $A$ and  $\tau$ a \textit{type-1} element in $A$. Moreover, for each directed edge $(\sigma, \tau)$ added to $A$ in step (2) of Algorithm \ref{algorithm1}, we call $\sigma$ a \textit{potentially-type-2} element in $A$. This directed edge does not necessarily appear in the final elements in $A$. If it does, then $\sigma$ is type-2.
\end{definition}

\begin{example}\label{ex:types}
Let $I$ be the ideal from Example \ref{ex:alg1}. The edges in $A$ are
$$\{\big(\textcolor{blue}{\{xw,xy,yz,zw\}},\{xw,xy,yz\}\big), \big(\textcolor{blue}{\{xw,yz,zw\}}, \{xw,yz\}\big), \big(\textcolor{blue}{\{xy,yz,zw\}},\{xy,zw\}\big)\}.$$

Then, the type-2 elements of $A$ are $\{xw,xy,yz,zw\}, \{xw,yz,zw\}$, and $\{xy,yz,zw\}$ and type-1 elements are $\{xw,xy,yz\}, \{xw,yz\}$, and $\{xy,zw\} $. Recall from Example \ref{ex:alg1} that the edge $(\{xw,xy,zw\}, \{xy,zw\})$ was added to $A$ in step (2) but removed at  step (3) of the algorithm. Thus, the subset $\{xw,xy,zw \}$ is the only  potentially-type-2 element which is not type-2.
\end{example}

\begin{remark}\label{rem:types}
It follows from step (2) of Algorithm \ref{algorithm1} that a subset of $\G(I)$  cannot be both type-1 and (potentially-) type-2. On the other hand, all type-2 subsets of $\G(I)$  are potentially-type-2 while the reverse is not always true. Moreover, if a subset of $\G(I)$  has a bridge, it must be either potentially-type-2 or type-1.
\end{remark}

In addition to producing $A$ via Algorithm \ref{algorithm1}, one can identify whether a subset $\sigma$ of $\G(I)$ is an element of $A$ and its type only by analyzing the structure of $\sigma$. The remainder of this subsection is devoted to the characterization of these types of elements in $A$.

\begin{definition}
Let $m,m'\in \G(I)$ and $\sigma$ be a subset of $\G(I)$.

\begin{enumerate}
    \item If $m>_I m'$, we say $m$ \textit{dominates} $m'$.
    \item  The monomial $m$ is called a \textit{gap} of $\sigma$ if 
    \begin{enumerate}
        \item[(a)] $m\notin \sigma$ and
        \item[(b)] $\lcm (\sigma \cup m)=\lcm (\sigma)$; in other words, $m$ is a bridge of $\sigma \cup m$.
    \end{enumerate}
    \item The monomial $m$ is called a \textit{true gap} of $\sigma$ if 
        \begin{enumerate}
        \item[(a)]  it is a gap of $\sigma$ and 
        \item[(b)]  the subset $\sigma \cup m$ has no new bridges dominated by $m$. In other words, if $m'$ is a bridge of $\sigma \cup m$ such that $ m >_I m'$, then $m'$ is a bridge of $\sigma$.
    \end{enumerate}
\end{enumerate}
\end{definition}

\begin{example}\label{ex:bgtg}
    Consider the ideal $I=(xw,xy,yz,zw)$ from Example \ref{ex:alg1} with the total ordering $xw> xy> yz> zw$.
    \begin{enumerate}
        \item[(a)] Every element of $\sigma=\{xw,xy,yz,zw\}$ is a bridge because omitting any element from this subset does not change the lcm. The smallest bridge of $\sigma$ is $\sbridge(\sigma)= zw$. It is immediate that $\sigma $ has no gap or true gap.
        \item[(b)] Consider the subset $\sigma_1=\{xw,xy,yz\}$. The only bridge of $\sigma_1$ is $xy$ and only gap of $\sigma_1$ is $zw$.  Moreover, $zw$ is a true gap of $\sigma_1$ because all the new bridges of $\sigma =\sigma_1 \cup zw$ are $xw,yz,zw$ and $zw$  is the smallest bridge among them.
        \item[(c)] Next, consider the subset $\sigma_2=\{xw,xy,zw\}$. The only bridge of $\sigma_2$ is $xw$ and   $yz$ is the only gap of $\sigma_2$. However,  $yz$ is not a true gap of $\sigma_2$ because one of the new bridges of $\sigma=\sigma_2\cup yz$ is $zw$ and  $yz> zw$.
    \end{enumerate}
\end{example}

Notions of a bridge and a gap are  \emph{almost} complementary. One can view a gap as the opposite of a bridge. However, the opposite of the smallest bridge is not the smallest gap and it is more like the smallest true gap. 

The following proposition will be used frequently in the remainder of this subsection as we will be mostly working with true gaps not dominating any bridges.

\begin{proposition}\label{prop:truegap}
    A monomial $m$ is a gap of $\sigma$ such that  $\sbridge(\sigma \cup m)=m$ if and only if $m$ is a true gap of $\sigma$ that does not dominate any bridges of $\sigma$.
\end{proposition}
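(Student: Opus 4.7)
The plan is to prove both directions by directly unwinding the definitions, with the key technical observation being that whenever $m$ is a gap of $\sigma$, any bridge $m'$ of $\sigma$ remains a bridge of $\sigma \cup m$. Indeed, since $m \notin \sigma$ we have $(\sigma \cup m)\setminus m' = (\sigma\setminus m') \cup m$, so
\[ \lcm((\sigma\cup m)\setminus m') = \lcm(\lcm(\sigma\setminus m'),m) = \lcm(\lcm(\sigma),m) = \lcm(\sigma\cup m), \]
using that $m'$ is a bridge of $\sigma$ and that $m$ is a gap of $\sigma$. This small fact will do most of the work in both directions.

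For the forward direction, I would assume $m$ is a gap of $\sigma$ with $\sbridge(\sigma\cup m) = m$. The hypothesis says $m$ is a bridge of $\sigma\cup m$, so $m$ is indeed a gap, and moreover \emph{no} bridge of $\sigma\cup m$ is strictly dominated by $m$. This immediately gives condition (3)(b) vacuously (there are no new bridges dominated by $m$ to worry about), so $m$ is a true gap. For the non-domination clause, suppose for contradiction some bridge $m'$ of $\sigma$ satisfies $m >_I m'$. By the observation above, $m'$ is also a bridge of $\sigma\cup m$, contradicting the minimality of $m = \sbridge(\sigma\cup m)$.

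For the reverse direction, assume $m$ is a true gap of $\sigma$ that dominates no bridges of $\sigma$. Since $m$ is a gap, it is a bridge of $\sigma\cup m$; it remains to show it is the smallest. Suppose $m'$ is a bridge of $\sigma\cup m$ with $m >_I m'$. Necessarily $m' \in \sigma$. I would split into two cases: if $m'$ is a bridge of $\sigma$, then $m$ dominates a bridge of $\sigma$, contradicting the hypothesis; if $m'$ is not a bridge of $\sigma$, then $m'$ is a new bridge of $\sigma\cup m$ dominated by $m$, contradicting that $m$ is a true gap. Either way we reach a contradiction, hence $\sbridge(\sigma\cup m) = m$.

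I don't anticipate a real obstacle here — the only subtlety is to keep the two notions ``new bridge of $\sigma\cup m$ dominated by $m$'' and ``bridge of $\sigma$ dominated by $m$'' clearly separated, and to remember that together they exhaust all bridges of $\sigma \cup m$ that are strictly smaller than $m$ (since $m \notin \sigma$, any bridge of $\sigma\cup m$ other than $m$ lies in $\sigma$). The observation that bridges of $\sigma$ persist as bridges of $\sigma \cup m$ is what lets the two hypotheses in the statement jointly capture precisely the condition $\sbridge(\sigma\cup m) = m$.
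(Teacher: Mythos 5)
Your proof is correct and follows essentially the same route as the paper's: both directions unwind the definitions, using that bridges of $\sigma$ persist as bridges of $\sigma\cup m$ when $m$ is a gap, and that any bridge of $\sigma\cup m$ smaller than $m$ is either an old bridge of $\sigma$ or a new one forbidden by the true-gap condition. Your explicit lcm verification of the persistence fact is a detail the paper leaves implicit, but the argument is the same.
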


\begin{proof}
Suppose $m$ is a gap of $\sigma$ such that $\sbridge(\sigma \cup m)=m$. Then $\sigma \cup m$ has no new bridges dominated by $m$. Thus, $m$ is a true gap of $\sigma$. In addition, $m$ does not dominate any bridges of $\sigma$ since $\sbridge(\sigma \cup m)=m$.

Conversely, suppose $m$ is a true gap of $\sigma$ that does not dominate any bridges of $\sigma$. Then, $m$ must be a bridge of  $\sigma \cup m$. If there exists a new bridge $m'$ of $\sigma \cup m$, we must have $m' >_I m$. Otherwise, if $m >_I m'$, then $m'$ is a bridge of $\sigma$ by the definition of true gaps, a contradiction. Therefore, all the bridges (new and old) of $\sigma \cup m$ must be $m$ or dominate $m$, i.e., $\sbridge(\sigma \cup m)=m$.
\end{proof}

\begin{proposition}\label{prop:truegap2}
Assume $m=\sbridge(\sigma)$ for some $m \in \G(I)$ and subset $\sigma$ of $\G(I)$. Then, $m$ is a true gap of $\sigma\setminus m$ that does not dominate any bridges of $\sigma \setminus m$.
\end{proposition}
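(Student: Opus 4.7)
The plan is to apply Proposition \ref{prop:truegap} directly, since it gives a clean equivalent characterization of exactly what we want to establish. Namely, showing that $m$ is a true gap of $\sigma\setminus m$ that does not dominate any bridges of $\sigma\setminus m$ reduces to verifying the two conditions: (i) $m$ is a gap of $\sigma\setminus m$, and (ii) $\sbridge\bigl((\sigma\setminus m)\cup m\bigr)=m$.

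First I would check (i). By the definition of a bridge, $m\in\sigma$, so $m\notin\sigma\setminus m$, handling the first clause of being a gap. For the lcm clause, the assumption that $m=\sbridge(\sigma)$ means in particular that $m$ is a bridge of $\sigma$, so $\lcm(\sigma\setminus m)=\lcm(\sigma)$. Then
\[
\lcm\bigl((\sigma\setminus m)\cup m\bigr)=\lcm(\sigma)=\lcm(\sigma\setminus m),
\]
which shows $m$ is a gap of $\sigma\setminus m$.

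Next I would verify (ii). Since $m\in\sigma$, the set-theoretic identity $(\sigma\setminus m)\cup m=\sigma$ holds, so
\[
\sbridge\bigl((\sigma\setminus m)\cup m\bigr)=\sbridge(\sigma)=m
\]
by hypothesis. Applying Proposition \ref{prop:truegap} finishes the argument.

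There is essentially no obstacle here; the proposition is really a restatement of Proposition \ref{prop:truegap} in the special case where the ``gap'' is being pulled out of a set where it was already the smallest bridge. The only thing to be careful about is the membership $m\in\sigma$, which is part of the definition of a bridge and is what allows the identification $(\sigma\setminus m)\cup m=\sigma$.
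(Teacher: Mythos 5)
Your proof is correct and is essentially the same as the paper's: both verify that $m$ is a gap of $\sigma\setminus m$ via $\lcm(\sigma\setminus m)=\lcm(\sigma)$, note that $(\sigma\setminus m)\cup m=\sigma$ so its smallest bridge is $m$, and then invoke Proposition \ref{prop:truegap}. You have merely spelled out the details slightly more explicitly than the paper does.
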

 
\begin{proof}
   Observe that $m$ is a gap of $\sigma \setminus m$ because $m$ is a bridge of $\sigma$. Furthermore, $m$ is the smallest bridge of $\sigma=(\sigma \setminus m)\cup m$. Then, by Proposition \ref{prop:truegap}, $m$ is a true gap of $\sigma\setminus m$ not dominating any bridges of $\sigma\setminus m$.
\end{proof}

\begin{proposition}\label{prop:truegap3}
    If $m$ is the smallest true gap of $\sigma$ that does not dominate any bridges, then $m=\sbridge(\sigma\cup m)$ and $m$ does not dominate any true gaps of $\sigma \cup m$.
\end{proposition}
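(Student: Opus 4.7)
The plan is to prove the statement by contradiction, leveraging Proposition \ref{prop:truegap} symmetrically in both directions. First, by Proposition \ref{prop:truegap} applied to the hypothesis, we already know $\sbridge(\sigma\cup m)=m$, so $m$ is the smallest bridge of $\sigma\cup m$. Thus it remains only to prove that $m$ does not dominate any true gap of $\sigma\cup m$. I would assume for contradiction that there exists a true gap $m'$ of $\sigma\cup m$ with $m>_I m'$, and aim to show $\sbridge(\sigma\cup m')=m'$; then Proposition \ref{prop:truegap} would imply that $m'$ is a true gap of $\sigma$ not dominating any bridge of $\sigma$, and since $m>_I m'$, this contradicts the minimality of $m$.

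The first easy step is verifying that $m'$ is a gap of $\sigma$. Since both $m$ and $m'$ are gaps of $\sigma\cup m$ (the latter by hypothesis, the former trivially) and $m$ is itself a gap of $\sigma$, we get $\lcm(\sigma\cup m')\mid \lcm(\sigma\cup m\cup m')=\lcm(\sigma\cup m)=\lcm(\sigma)$, so $\lcm(\sigma\cup m')=\lcm(\sigma)$ and $m'$ is a bridge of $\sigma\cup m'$. It remains to show that $m'$ is in fact the \emph{smallest} bridge of $\sigma\cup m'$.

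The main step — and the principal obstacle — is the following lifting argument. Suppose some $m''$ is a bridge of $\sigma\cup m'$ with $m'>_I m''$; necessarily $m''\in\sigma$. Using $(\sigma\cup m\cup m')\setminus m''=((\sigma\cup m')\setminus m'')\cup\{m\}$ and $m\mid \lcm(\sigma)$ (because $m$ is a gap of $\sigma$), I would compute
\[
\lcm\bigl((\sigma\cup m\cup m')\setminus m''\bigr)=\lcm\bigl(\lcm((\sigma\cup m')\setminus m''),\,m\bigr)=\lcm(\lcm(\sigma),m)=\lcm(\sigma),
\]
showing $m''$ is a bridge of $\sigma\cup m\cup m'=(\sigma\cup m)\cup m'$. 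Since $m'$ is a true gap of $\sigma\cup m$ and $m'>_I m''$, the defining property of a true gap forces $m''$ to be a bridge of $\sigma\cup m$. But $m''<_I m'<_I m$ contradicts $\sbridge(\sigma\cup m)=m$. Hence no such $m''$ exists, so $\sbridge(\sigma\cup m')=m'$.

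I expect the delicate bookkeeping to be in the lifting computation above: carefully tracking that $m\mid\lcm(\sigma)$ is exactly what converts $m''$'s bridge status from $\sigma\cup m'$ to $\sigma\cup m\cup m'$, thereby unlocking the true-gap hypothesis at $m'$. Once this is in place, Proposition \ref{prop:truegap} closes the argument cleanly by translating $\sbridge(\sigma\cup m')=m'$ back into the statement that $m'$ is a true gap of $\sigma$ not dominating any bridge of $\sigma$, and the minimality hypothesis on $m$ delivers the contradiction.
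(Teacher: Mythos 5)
Your proof is correct and follows essentially the same route as the paper's: assume a smaller true gap $m'$ of $\sigma\cup m$, show $m'$ is a gap of $\sigma$ with $\sbridge(\sigma\cup m')=m'$, and invoke Proposition \ref{prop:truegap} to contradict the minimality of $m$. The only cosmetic difference is that you lift a hypothetical bridge $m''$ of $\sigma\cup m'$ up to $\sigma\cup \{m, m'\}$ and then push it down to $\sigma\cup m$ via the true-gap definition, whereas the paper runs the equivalent divisibility chain downward from $\sbridge(\sigma\cup \{m, m'\})=m'$; both hinge on the same lcm computations.
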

\begin{proof}
It follows from Proposition \ref{prop:truegap} that $\sbridge(\sigma \cup m)= m$. It remains to show that any true gap of $\sigma \cup m$ dominates $m$. On the contrary, suppose $m'$ is a true gap of $\sigma\cup m$ such that  $m>_I m'$. Note that $m'$ does not dominate any bridges of $\sigma \cup m$ since $m$ does not. Then,  we have
    \begin{itemize}
        \item $m'\notin \sigma\cup m$ and $\lcm(\sigma\cup \{m , m'\})=\lcm(\sigma\cup m)$ as $m'$ is a gap of $\sigma \cup m$, and
        \item $\sbridge(\sigma\cup \{m , m'\})= m'$ by Proposition \ref{prop:truegap}.  
    \end{itemize}
    
      The second item implies that any $m''\in \sigma \cup \{m , m'\}$ such that $m'>_I m''$ cannot be bridge of $\sigma \cup \{m , m'\}$. Hence, $m''$ is not a bridge  of $\sigma \cup m'$ either.  Thus, $\sbridge(\sigma \cup m')= m'$. It suffices to show that $m'$ is a gap of $\sigma$ because this implies that $m'$ is a true gap of $\sigma$  not dominating any bridges of $\sigma$ by Proposition \ref{prop:truegap} which is in contradiction with our assumption on $m$.  Indeed, since $m$ and $m'$ are gaps of $\sigma$ and $\sigma \cup m$, respectively, we have 
        \[ \lcm(\sigma)\mid \lcm(\sigma \cup m') \mid\lcm(\sigma \cup \{m , m'\}) =\lcm(\sigma \cup m)=  \lcm(\sigma). \]
        This means $\lcm(\sigma \cup m') = \lcm(\sigma)$, i.e., $m'$ is a gap of ~$\sigma$.
\end{proof}

Now we are ready to characterize type-1 and (potentially-)type-2 elements in $A$.

\begin{theorem}\label{thm:alltypes}
Let $\sigma$ be a subset of $\G(I)$. Then
   \begin{enumerate}
        \item[(a)] $\sigma$ is type-1 if and only if it has a true gap not dominating any bridges. 
        \item[(b)] $\sigma$ is potentially-type-2  if and only if  it has a bridge not dominating any true gaps. 
        \item[(c)] $\sigma$ is type-2  if and only if  
        \begin{enumerate}
            \item[(i)] it has a bridge that does not dominate any true gaps and
            \item[(ii)] whenever there exists a potentially-type-2  subset $\tau$ of $\G(I)$  where $\sigma \setminus \sbridge(\sigma)=\tau \setminus \sbridge(\tau)$, we have $\sbridge(\tau) \geq_I \sbridge(\sigma)$.
        \end{enumerate}
   \end{enumerate} 
\end{theorem}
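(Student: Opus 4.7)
I would prove (a) and (b) by a simultaneous downward induction on $|\sigma|$, and then deduce (c). The base case $|\sigma|=|\G(I)|$ is vacuous: there is no monomial to adjoin (no gaps or true gaps) and no larger set to source an edge into $\sigma$. The very small cardinalities $|\sigma|\leq 1$ are also trivial since such $\sigma$ has neither bridges nor gaps. In the inductive step I would prove (a) for $\sigma$ first, using the induction hypothesis of (b) at the next cardinality; then prove (b) for $\sigma$ using the freshly-established (a) for $\sigma$ (so there is no circularity); and finally extract (c) from (b) together with the bookkeeping of step (3) of Algorithm~\ref{algorithm1}.

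For (a), the forward direction is essentially Proposition~\ref{prop:truegap2}: if $\sigma$ is type-1, it equals $\sigma'\setminus \sbridge(\sigma')$ for some edge $(\sigma',\sigma)$ of $A$, and $\sbridge(\sigma')$ is the required true gap not dominating any bridge of $\sigma$. For the converse, let $m$ be the smallest true gap of $\sigma$ not dominating any bridges and put $\sigma'=\sigma\cup m$; Proposition~\ref{prop:truegap3} makes $m$ a bridge of $\sigma'$ not dominating any true gaps, so by the induction hypothesis of (b) applied to $\sigma'$, the set $\sigma'$ is potentially-type-2. Step (2) of the algorithm then places the edge $(\sigma',\sigma)$ into $A$, and because step (3) only tie-breaks between edges sharing a target (it never orphans a target), $\sigma$ remains type-1 in the final $A$.

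For (b), the forward direction combines Remark~\ref{rem:types}, which says a set is never both type-1 and potentially-type-2, with the version of (a) just proven for $\sigma$: every true gap of $\sigma$ dominates some bridge, and a quick chase shows that $b=\sbridge(\sigma)$ does not dominate any true gap (if $b>_I g$ for a true gap $g$, then $g>_I b'\geq_I b$ for some bridge $b'$ dominated by $g$, contradicting $b>_I g$). For the converse, a bridge $b$ not dominating any true gap of $\sigma$ and a true gap $g$ not dominating any bridge of $\sigma$ cannot coexist: they are distinct because $b\in\sigma$ while $g\notin\sigma$, yet neither could $>_I$-dominate the other, contradicting totality. Hence $\sigma$ is not type-1 by (a), the algorithm picks $\sigma$ in step (1), and having a nonempty bridge makes $\sigma$ potentially-type-2.

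Finally, $\sigma$ is type-2 iff its candidate edge $(\sigma,\sigma\setminus \sbridge(\sigma))$ is added in step (2) and survives step (3). The first condition is equivalent to $\sigma$ being potentially-type-2 (condition (i), via part (b)), and the second is precisely the tie-break comparison (ii) against any other potentially-type-2 $\tau$ sharing the target $\sigma\setminus\sbridge(\sigma)$. The main obstacle in executing this plan is threading the simultaneous induction cleanly, so that (b) for $\sigma'=\sigma\cup m$ is available at the moment I need it while proving (a) for $\sigma$; Proposition~\ref{prop:truegap3} does the crucial work here, ensuring that the $\sigma'$ I construct satisfies the hypothesis of (b) and thereby lets the induction close.
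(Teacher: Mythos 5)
Your proposal is correct and follows essentially the same route as the paper's proof: a descending induction in which part (a) at cardinality $n$ invokes part (b) at cardinality $n+1$ via Propositions~\ref{prop:truegap2} and~\ref{prop:truegap3}, part (b) at cardinality $n$ then uses the freshly proved (a), and part (c) is read off from (b) together with the tie-breaking in step (3) of Algorithm~\ref{algorithm1}. If anything, you are slightly more explicit than the paper on two small points — that step (3) never orphans a target, and the totality argument showing a bridge not dominating true gaps and a true gap not dominating bridges cannot coexist — both of which the paper leaves implicit.
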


\begin{proof}
Let $n=|\sigma|$, the cardinality of $\sigma$. We prove these statements using descending induction on $n$. If $n=|\G(I)|$, then $\sigma = \G(I)$. It is clear from the definition that $\sigma$ does not have any gaps. If $\sigma$ has a bridge, then $(\sigma, \sigma \setminus \sbridge(\sigma)) \in A$ by Algorithm \ref{algorithm1}. Thus, $\sigma$ is type-2. Otherwise, $A= \emptyset$ and the Taylor resolution is minimal. Hence, the theorem holds for the base case $n=|\G(I)|$. Suppose the statement of the theorem holds for all subsets of $\G(I)$  of cardinality $n+1$. 

\begin{enumerate}
        \item[(a)] If $\sigma$ is type-1, then there is a directed edge $(\tau, \sigma) \in A$ where $\sigma = \tau \setminus \sbridge(\tau)$. It follows from Proposition \ref{prop:truegap2} (a) that $\sbridge(\tau)$ is a true gap of $\sigma$ not dominating any bridges of $\sigma$. For the reverse implication, suppose $\sigma$ has a true gap which does not dominate any bridges of $\sigma$ and $m$ is its smallest true gap. Then $\sbridge(\sigma\cup m)=m$ and $m$ does not dominate any true gaps of $\sigma \cup m$ by Proposition \ref{prop:truegap3}. Since $|\sigma\cup m|=n+1$, the subset $\sigma \cup m$ is potentially-type-2 by the induction hypothesis. This guarantees the existence of a directed edge towards $\sigma$ in $A$. Thus, $\sigma$ is type-1.
        
        \item[(b)] If $\sigma$ is potentially-type-2 , then $\sigma$ has a bridge and it is not type-1 by step (2) of Algorithm \ref{algorithm1}. If $\sigma$ has a true gap, it dominates a bridge of $\sigma$ by part (a). Then, $\sbridge(\sigma)$ does not dominate any true gap of $\sigma$. If $\sigma$ has no true gaps, then the statement is immediate. For the reverse direction, suppose $\sigma$ has a bridge not dominating any true gaps of $\sigma$. Note that $\sigma$ is not type-1 because any true gap of $\sigma$ dominates $\sbridge(\sigma)$ by part (a). Then, the directed edge $(\sigma, \sigma \setminus \sbridge(\sigma))$ is added to $A$ in step (2) of Algorithm \ref{algorithm1} and thus $\sigma$ is potentially-type-2. 
        \item[(c)] This statement follows from part (b) and step (3) of Algorithm \ref{algorithm1}.  \qedhere 
\end{enumerate}
\end{proof}

\begin{example}\label{ex:cattypes}
  Let $I$ be the ideal from Example \ref{ex:alg1}. In Example \ref{ex:bgtg}, we discussed bridges, gaps and true gaps for some subsets of $\G(I)$. In this example, we determine the types of the subsets  considered in Example \ref{ex:bgtg} using our characterization from Theorem \ref{thm:alltypes}. 
  
    \begin{enumerate}
        \item[(a)] The subset $\sigma=\{xw,xy,yz,zw\}$ has a bridge but it has no true gaps. Therefore, it is potentially-type-2 and it is, in fact, clear that $\sigma$ is type-2.
        \item[(b)] The only bridge of $\sigma=\{xw,xy,yz\}$ is $xy$ and $zw$ is its only true gap. Since $xy$ dominates $zw$, $\sigma$ is type-1. 
        \item[(c)] The subset $\sigma=\{xw,xy,zw\}$ has only one bridge $xw$ and it has no true gaps. Hence, $\sigma$ is  potentially-type-2. However, $\sigma$ is not type-2 since there exists $\tau=\{xy,yz,zw\}$ such that $\sigma \setminus \sbridge(\sigma)=\tau \setminus \sbridge(\tau)$ and $\sbridge(\sigma) > \sbridge(\tau) $.
    \end{enumerate}
\end{example}

Given a type-1 element $\sigma$ in $A$, one can determine which $m \in \G(I)$ is added back to $\sigma$ so that $\sigma \cup m$ is the associated type-2 element, i.e., $(\sigma \cup m, \sigma) \in A$.  We will describe those elements in the following remark while providing an explanation for why a potentially-type-2 element is not necessarily type-2 using the notions of bridges, gaps, and true gaps.

\begin{remark}
Consider a subset $\sigma$ of $\G(I)$. 

\begin{itemize}
    \item Suppose $\sigma$ has a true gap not dominating any bridges, i.e., $\sigma$ is type-1. Let $m$ be the smallest true gap of $\sigma$. Then, $m$ is the smallest bridge of $\sigma \cup m$ and it does not dominate any true gaps. Furthermore, $\sigma \cup m$  is type-2 since $(\sigma \cup m, \sigma) \in A$. 
    \item Suppose $\sigma$ has a bridge not dominating any true gaps, i.e., $\sigma$ is potentially-type-2. Let $\sbridge(\sigma)=m$. One may expect $m$ to be the smallest true gap of $\sigma \setminus \sbridge(\sigma)$. Unfortunately, this is not necessarily the case because a gap of $\sigma$ smaller than $m$ may become a true gap of $\sigma \setminus m$. To see this, consider the subset $\sigma$ from part (c) of Example \ref{ex:cattypes}. Notice that $\sbridge(\sigma)= xw$ and the smallest true gap of $ \sigma \setminus xw= \{xy, zw\}$ is $yz$ which was a gap of $\sigma$ with $xw> yz$.

\end{itemize}
\end{remark}

As we see in the statement of Theorem \ref{thm:alltypes}, the classification of  potentially-type-2 elements in terms of bridges and true gaps is relatively simpler than that of type-2 elements (assuming one can identify bridges and true gaps). In what follows, we introduce a class of ideals whose type-2 and potentially-type-2 elements coincide.

\begin{definition}
   A monomial ideal $I$ is called \textit{bridge-friendly} if there exists a total ordering $(>_I)$ on $\G(I)$ such that all potentially-type-2 subsets of $\G(I)$  are type-2. In other words, $I$ is bridge-friendly  if a subset $\sigma$ of $\G(I)$ is type-2 exactly when it has a bridge not dominating any  true gaps.
\end{definition}

The critical subsets associated with bridge-friendly ideals can be characterized in a nice way. This follows from Theorem \ref{thm:alltypes}.

\begin{corollary}\label{cor:critical}
    If $I$ is bridge-friendly, then the critical subsets of $\G(I)$  are exactly  the  ones with no bridges and no true gaps.
\end{corollary}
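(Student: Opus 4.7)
The plan is to unpack the definition of a critical subset through the characterizations supplied by Theorem \ref{thm:alltypes}. By construction, $\sigma$ is critical if and only if it is neither type-1 nor type-2 in $A$; and bridge-friendliness of $I$ collapses the type-2 case to potentially-type-2. Applying parts (a) and (b) of Theorem \ref{thm:alltypes}, criticality translates into the conjunction of: (i) every true gap of $\sigma$ dominates some bridge of $\sigma$, and (ii) every bridge of $\sigma$ dominates some true gap of $\sigma$. The task thus reduces to showing that (i) and (ii) together are equivalent to the assertion that $\sigma$ has neither bridges nor true gaps.

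The reverse implication is immediate, since both (i) and (ii) are vacuous when their relevant quantifiers are empty. For the forward implication, I will argue by contradiction. Suppose $\sigma$ has at least one bridge and set $b = \sbridge(\sigma)$. By (ii), $b$ dominates some true gap $g$, so $b >_I g$. By (i) applied to $g$, there is a bridge $b'$ of $\sigma$ with $g >_I b'$. Combining, $b >_I b'$, which contradicts the minimality of $b$. Hence $\sigma$ has no bridges at all; but then any true gap of $\sigma$ would vacuously fail (i), so $\sigma$ has no true gaps either.

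The crux is the short chaining argument with the smallest bridge, and it is precisely where bridge-friendliness earns its keep: without the collapse of potentially-type-2 to type-2, one would have to contend with the delicate tie-breaking clause of Theorem \ref{thm:alltypes}(c)(ii), whereas here the result falls out of a purely formal manipulation of the quantifier structure in (a) and (b).
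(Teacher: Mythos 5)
Your proof is correct and takes the same route as the paper: reduce criticality to "neither type-1 nor potentially-type-2" via bridge-friendliness and then apply Theorem \ref{thm:alltypes}(a) and (b). The only difference is that you explicitly carry out the smallest-bridge chaining argument showing that conditions (i) and (ii) force the absence of all bridges and true gaps, a step the paper treats as immediate.
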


This nice characterization of critical subsets of bridge-friendly ideals allows us to understand their minimal free resolutions.

\begin{theorem}\label{thm:friendlyToMinimal}
If $I$ is bridge-friendly with respect to $(>_I)$, then the corresponding Barile-Macchia resolution of $R/I$ is minimal.
\end{theorem}

\begin{proof}
	Suppose the corresponding Barile-Macchia resolution of  $R/I$ is not minimal. Then, by Theorem \ref{thm:morseres}, there exist a critical subset $\sigma$ and its subset $\sigma'$ where $|\sigma'|=|\sigma|-1$ and there is a gradient path from $\sigma'$ to $\sigma''$ for a critical subset $\sigma''$ with $|\sigma'|=|\sigma''|$ such that $\lcm(\sigma)=\lcm(\sigma'')$. By similar arguments as in the proof of Claim \ref{clm:directedcycle}, $\lcm$'s of all the subsets along this gradient path must coincide. In particular, we have $\lcm(\sigma')=\lcm(\sigma'')=\lcm(\sigma)$. Thus $\sigma$ has a bridge, contradicting Corollary ~\ref{cor:critical}.
\end{proof}

As discussed earlier, one can obtain the minimality of Morse resolutions via bridge-friendliness or Batzies-Walker matchings. A natural question is whether these two notions are related. In the following two examples, we provide examples to show that one notion does not imply the other.

\begin{example}
	Consider the monomial ideal $I=(xy,yz,zx)$ in $R=\Bbbk[x,y,z]$ with a total ordering $xy>yz>zx$. The corresponding Barile-Macchia matching of $R/I$ is
	\[
	A=\{(\{xy,yz,zx\},\{xy,yz\})\}.
	\]
	Since there is only one subset to consider in Algorithm \ref{algorithm1}, ideal $I$ is bridge-friendly.  Note that $I$ is bridge-friendly with respect to any total ordering. On the other hand, $A$ is not Batzies-Welker because $\{xy,zx\}$ and $\{yz,zx\}$ are critical and they have the same $\lcm$. More generally, this implies that none of the lcm-homogeneous acyclic mathcings of $R/I$ is Batzies-Welker.
\end{example}

\begin{example}\label{notmorseminimalexample}
Let $I$ be the ideal from Example \ref{ex:alg1}. It follows from Example \ref{ex:types} that $I$ is not bridge-friendly with respect to the given total ordering because  the subset $\{xw, xy,zw\}$ is potentially-type-2 but not type-2.  Note that the corresponding matching is Batzies-Welker. 

In this example, we use Corollary \ref{cor:critical} to prove that $I$ is not bridge-friendly with respect to any total ordering on $\G(I)$. For the sake of contradiction, suppose $I$ is bridge-friendly. Let $m_1=xw, ~m_2=xy, ~m_3=yz, ~m_4=zw$ and we will take indices modulo $4$. Note that a subset $\sigma$ of $\G(I)$ of cardinality $3$ must contain three consecutive elements $m_i, m_{i+1},$ and $m_{i+2}$ for some $1\leq i\leq 4$. In particular, such $\sigma$ has a bridge $m_{i+1}$ and hence not critical by Corollary \ref{cor:critical}. Then the maximum cardinality of a critical subset is 2 and thus $\pd_R(R/I)\leq 2$, which is a contradiction because  $\pd_R(R/I)=3$ from Example \ref{ex:alg1}.  Therefore, $I$ is not bridge-friendly.
\end{example}

In fact, this example can be generalized to a $(3n+1)$-cycle. 

\begin{proposition}\label{ex:notfriendlycycle}
  Let $I$ be the edge ideal of a $(3n+1)$-cycle for $n\geq 1$, i.e.,
    \[I=(x_1x_2,x_2x_3,\dots, x_{3n}x_{3n+1},x_{3n+1}x_1).\] 
    Then $I$ is not bridge-friendly.
\end{proposition}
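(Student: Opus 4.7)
The plan is to generalize the contradiction in Example~\ref{notmorseminimalexample} using the structural bridges produced by any three cyclically consecutive generators. Write $m_i = x_i x_{i+1}$ for $i \in [3n+1]$ with indices taken modulo $3n+1$ (so that $m_{3n+1} = x_{3n+1}x_1$). First I would observe that whenever a subset $\sigma$ of $\G(I)$ contains three cyclically consecutive generators $m_i, m_{i+1}, m_{i+2}$, the middle generator $m_{i+1}$ is a bridge of $\sigma$: both variables of $m_{i+1}=x_{i+1}x_{i+2}$ already appear in $m_i$ and $m_{i+2}$, so $\lcm(\sigma \setminus m_{i+1})=\lcm(\sigma)$. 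Consequently, if $I$ were bridge-friendly, then by Corollary~\ref{cor:critical} every critical subset would have to avoid three cyclically consecutive generators.

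I would then establish the combinatorial bound: any $S \subseteq [3n+1]$ containing no three cyclically consecutive indices satisfies $|S| \leq 2n$. This follows from a block-gap decomposition on the cycle of length $3n+1$. Each maximal block of consecutive chosen indices contains at most $2$ elements, and any two distinct blocks are separated by at least one unchosen index. If there are $k \geq 1$ blocks, then $|S| \leq 2k$ and $3n+1-|S|\geq k$, which combine to give $3|S|\leq 2(3n+1)=6n+2$, hence $|S| \leq 2n$. The degenerate case $S = \G(I)$ is excluded outright since it contains three cyclically consecutive generators, and $|S|\le 2$ is trivial.

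Combining these two steps, if $I$ were bridge-friendly then by Theorem~\ref{thm:morseres} the Barile-Macchia resolution would have length at most $2n$, so $\pd_R(R/I) \leq 2n$. This contradicts the well-known equality $\pd_R(R/I(C_{3n+1}))=2n+1$, which one can read off from the standard formula $\pd_R(R/I(C_m))=\lceil 2m/3 \rceil$ for $m \not\equiv 0 \pmod 3$ (available, for instance, from Jacques' explicit Betti-number computation for cycle edge ideals). The contradiction forces $I$ to fail to be bridge-friendly. The main obstacle is the combinatorial bound in the second step; the block-gap argument is clean but requires some care with the cyclic wrap-around, since a block must be viewed as an arc on the $(3n+1)$-cycle rather than an interval in $[3n+1]$, and one should also verify that the quoted projective dimension formula specializes correctly, which matches the case $n=1$ computed directly in Example~\ref{notmorseminimalexample}.
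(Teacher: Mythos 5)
Your proposal is correct and follows essentially the same route as the paper: assume bridge-friendliness, note that three cyclically consecutive generators force a bridge so critical subsets have cardinality at most $2n$, and contradict $\pd_R(R/I)=2n+1$. The only difference is that you supply the block-gap count (which the paper leaves as ``one can show'') and cite the projective dimension formula from a different source; both details check out.
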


\begin{proof}
    For the sake of contradiction, suppose $I$ is bridge-friendly. We consider indices modulo $(3n+1)$ in this proof. Let $m_i=x_ix_{i+1}$ for each $i\in [3n+1]$. One can show that any subset $\sigma$ of $\G(I)$  of cardinality at least $2n+1$ contains  $m_j,m_{j+1},m_{j+2}$ for some $j \in[3n+1]$. Hence $\sigma$ has a bridge, which, in particular, implies that $\sigma$ is not critical since $I$ is  bridge-friendly.  Thus, $\pd_R(R/I)\leq 2n$, which contradicts the fact that $\pd_R(R/I)=2n+1$ (\cite[Proposition 5.0.6]{Bou10}). Therefore, $I$ is not bridge-friendly.
\end{proof}

In the next sections, we show that many interesting classes of monomial ideals are bridge-friendly. In what follows, we provide an equivalent condition to check the bridge-friendliness of an ideal which will prove to be useful later.

\begin{lemma}\label{lem:morsefri}
     A monomial ideal $I$ is bridge-friendly with respect to $(>_I)$ if and only if there exists no monomial $m \in \G(I)$ such that $m$ is a true gap of $\sigma \setminus \sbridge(\sigma)$ and $\sbridge(\sigma) >_I m$ for any potentially-type-2  element $\sigma$.
\end{lemma}

\begin{proof}
   We will prove both directions with contraposition. For the forward implication, suppose that there exists a monomial $m$ and a potentially-type-2  subset $\sigma$  of $\G(I)$  such that $m$ is a true gap of $\sigma \setminus \sbridge(\sigma)$ and  $\sbridge(\sigma) >_I m$. Then $m$ is not a true gap of $\sigma$ by Theorem \ref{thm:alltypes}. We may  assume $m$ is the smallest such monomial. Let $\tau= \sigma \setminus \sbridge(\sigma)$. Then, $m$ is the smallest true gap of $\tau$ and it does not dominate any bridges of $\tau$. It follows from Proposition \ref{prop:truegap3} that $m=\sbridge(\tau \cup m)$ and it does not dominate any true gaps of  $\tau\cup m$. Thus, $\tau\cup m$ is potentially-type-2 by Theorem \ref{thm:alltypes}. Since 
   \[
   \sigma \setminus \sbridge(\sigma) = (\tau\cup m)\setminus\sbridge(\tau\cup m),
   \]
    the ideal $I$ is not bridge-friendly. 
    
	For the reverse implication, suppose $I$ is not bridge-friendly with respect to $(>_I)$. Then
	there exist two different potentially-type-2 subsets $\sigma$ and $\tau$ such that  $\sigma\setminus \sbridge(\sigma)=\tau\setminus \sbridge(\tau)$. Without loss of generality, we assume that $\sbridge(\tau) >_I \sbridge(\sigma)$. Then $\sbridge(\sigma)$ is a true gap of $\sigma\setminus \sbridge(\sigma)=\tau\setminus \sbridge(\tau)$ not dominating any of its bridges by Proposition \ref{prop:truegap2}, which completes the proof.
\end{proof}

\begin{lemma}\label{lem:morsefri2}
Let $I$ be a monomial ideal  and $(>_I)$ a total ordering on $\G(I)$. If $m$ is a true gap of $\sigma\cup m'$ whenever $m>_I m'$ and $m$ is a true gap of $\sigma$, then  $I$ is bridge-friendly with respect to $(>_I)$.
\end{lemma}

\begin{proof}
Let $\sigma$ be a potentially-type-2 subset  of $\G(I)$. If there exists a true gap $m$ of $\sigma\setminus \sbridge(\sigma)$ such that $\sbridge(\sigma)>_I m$, then $m$ is a true gap of $\sigma$ by the assumption of this lemma, contradicting Theorem \ref{thm:alltypes}. Therefore, $I$ is bridge-friendly from  Lemma \ref{lem:morsefri}. 
\end{proof}

We conclude this section with the following result:

\begin{theorem}\label{thm:tensormorse}
    If $I$ and $J$ are bridge-friendly ideals of $R=\Bbbk[x_1,\ldots, x_r]$ and $S=\Bbbk[y_1,\ldots, y_s]$, respectively. Then $I+J$ is a bridge-friendly ideal of $R\otimes_\Bbbk S$.
\end{theorem}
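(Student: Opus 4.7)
The plan is to exploit the fact that $I$ and $J$ live in disjoint polynomial rings, so every subset $\sigma$ of $\G(I+J) = \G(I) \sqcup \G(J)$ decomposes uniquely as $\sigma = \sigma_I \sqcup \sigma_J$ with $\lcm(\sigma) = \lcm(\sigma_I)\cdot\lcm(\sigma_J)$. I would first choose the total ordering on $\G(I+J)$ that restricts to the given orderings $(>_I)$ and $(>_J)$ on each side and declares every element of $\G(I)$ to be greater than every element of $\G(J)$ (either order between the two sides works; fixing one makes the case analysis cleaner).

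The first task is to establish a bridge/true-gap dictionary: a generator $m \in \G(I)$ (resp.\ $\G(J)$) is a bridge of $\sigma$ in $R \otimes_\Bbbk S$ if and only if it is a bridge of $\sigma_I$ (resp.\ $\sigma_J$) in the corresponding component. For bridges this is immediate from the product formula for $\lcm$. For true gaps, the separating ordering forces any bridge $m'$ of $\sigma \cup m$ that is dominated by $m$ to lie in the same component as $m$, so the true-gap condition reduces to the analogous condition on that single component. In particular, a subset $\sigma$ has no bridges and no true gaps in $I+J$ precisely when $\sigma_I$ and $\sigma_J$ have no bridges and no true gaps in $I$ and $J$ respectively.

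Using this dictionary, bridge-friendliness of $I+J$ follows from Lemma \ref{lem:morsefri}. Given a potentially-type-2 subset $\sigma$ with $m = \sbridge(\sigma)$, the smallest-bridge location (either $\sigma_I$ or $\sigma_J$ depending on which has any bridges, using our ordering convention) is itself potentially-type-2 in the corresponding ideal with the same $\sbridge$. The bridge-friendliness of $I$ or $J$ then forces any true gap of $\sigma \setminus m$ dominated by $m$ to be a true gap of the appropriate part, which transfers back via the dictionary to a true gap of $\sigma$.

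For bridge-minimality, I would combine Corollary \ref{cor:critical} with the dictionary to identify critical subsets of $I+J$ with pairs $(\sigma_I,\sigma_J)$ of critical subsets for $I$ and $J$, of sizes and $\lcm$-multi-degrees that add. Bridge-minimality of $I$ and $J$, combined with Theorem \ref{morseminimabetti}, yields that for each $(r_i,d_i)$ the number of critical subsets of that type equals $\beta_{r_i,d_i}$ on each side. The graded K\"unneth formula (available since $\Bbbk$ is a field and $(R\otimes_\Bbbk S)/(I+J) = R/I \otimes_\Bbbk S/J$) then gives
\[
\beta_{r,d}\bigl((R\otimes_\Bbbk S)/(I+J)\bigr) \;=\; \sum_{\substack{r_1+r_2=r \\ d_1+d_2=d}} \beta_{r_1,d_1}(R/I)\,\beta_{r_2,d_2}(S/J),
\]
which matches the count of critical subsets of $I+J$ of that bidegree. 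Equality is therefore achieved in Theorem \ref{morseminimabetti}(b), forcing the induced Barile-Macchia resolution of $I+J$ to be minimal. The main obstacle is the careful verification of the true-gap half of the dictionary, particularly handling the dominance clause when the monomial $m$ being tested and a candidate new bridge $m'$ lie in different components; the choice of ordering that stratifies $\G(I)$ above $\G(J)$ is precisely what makes this bookkeeping tractable.
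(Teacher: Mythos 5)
Your proposal is correct and follows essentially the same route as the paper: the same stacked total ordering on $\G(I)\sqcup\G(J)$, the unique decomposition $\sigma=\sigma_I\sqcup\sigma_J$, and the observation that bridges, gaps, and true gaps are detected componentwise (the paper leaves the remaining verification implicit, which you fill in via Lemma \ref{lem:morsefri} and a K\"unneth count of critical subsets against Theorem \ref{morseminimabetti}). The only cosmetic quibble is that the true-gap dictionary does not actually need the separating ordering — a new bridge of $\sigma\cup m$ automatically lies in the same component as $m$ because the other component of $\sigma$ is unchanged — though the separating ordering is still what makes the bridge-friendliness transfer clean.
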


\begin{proof}
   Suppose $I$ and $J$ are bridge-friendly ideals with respect to the total orderings $(>_I)$ on $\G(I)$ and $(>_J)$ on $\G(J)$, respectively. Let $(>)$ be the total ordering on $\G(I+J)=\G(I)\cup \G(J)$ defined as follows:    for all $f,f'\in \G(I)$ and $g,g' \in \G(J)$, 
   \begin{itemize}
       \item $f>f'$ whenever $f>_I f',$
       \item $g>g'$ whenever $g>_J g'$, and
       \item $f>g$.
   \end{itemize}

One can decompose any subset $\sigma$ of $I+J$ uniquely as $\sigma=\sigma_I \sqcup \sigma_J$ where $\sigma_I$ and $\sigma_J$ are subsets of $\G(I)$ and $\G(J)$, respectively. Note that a monomial $m\in \G(I)$ is a bridge/gap/true gap of $\sigma$ if and only if $m$ is a bridge/gap/true gap of $\sigma_I$.  The same statement holds for $m\in \G(J)$. With these observations, we can conclude  that $I+J$ is bridge-friendly and bridge-minimal with respect to $(>)$.
\end{proof}

\begin{remark}
    A direct corollary of Theorem \ref{thm:tensormorse} is that if $\mathcal{F}_A$ and $\mathcal{F}_B$ are the (minimal) Barile-Macchia resolutions of $R/I$ and $R/J$, respectively, then $\mathcal{F}_A \otimes_\Bbbk \mathcal{F}_B$ is a (minimal) Barile-Macchia resolution of $(R \otimes_\Bbbk S)/(I+J)$.
\end{remark}

\section{Minimal Free Resolutions of Edge Ideals of Weighted Oriented Forests}\label{sec:forests}

The main objects of this section are edge ideals of weighted oriented forests. In particular, we will show that these ideals are bridge-friendly. We start by  introducing some of the fundamental definitions.

Let $\D$ be a weighted oriented graph. Abusing notation, we  write $xy \in E(\D)$ for the directed edge between the vertices $x$ and $y$ without specifying the orientation. We will use the notation $m_{xy}$ to denote the monomial associated to this directed edge (without the orientation information). We denote the underlying unweighted unoriented graph of $\D$ by $G_{\D}$. The vertices of $\D$ and $G_{\D}$ are the same while the edges of $G_{\D}$  have no orientation. We use $\{x,y\}$ to denote the unoriented edge between the vertices $x$ and $y$, and $(x,y)$ to denote the edge oriented from vertex $x$ to vertex $y$. In our setting, all the underlying graphs are finite and simple, i.e., no loops and no multiple edges are allowed. 

\begin{definition}\label{def:naturallyoriented}
Let $\T=(V(\T),E(\T),\w)$ be a weighted oriented rooted tree with a root vertex $x_0$. Note that $G_{\T}$ has no cycles since it is a tree.  We call $\T$ a \emph{weighted naturally oriented tree} if each edge of $\T$ is oriented towards the vertex that is further away from $x_0$. A \textit{weighted naturally oriented forest} is a disjoint union of weighted naturally oriented trees. 
\end{definition}

Due to Theorem \ref{thm:tensormorse}, it suffices to consider trees when dealing with Barile-Macchia resolutions of weighted oriented forests. For the remainder of this section, we assume $\T$ is a weighted naturally oriented tree with a root vertex $x_0$. 

Let $\rank x$ denote the distance between $x\in V(\T)$ and the root vertex, and $V(\T)_d$ be the  collection of all vertices of rank $d$ where $V(\T)_d:=\{x^{(d)}_{1}, \ldots, x^{(d)}_{n_d}\}$. For the remainder of this section, we consider the following  variable ordering in $R$ based on vertex ranks:
\[ x_0>x^{(1)}_{1}>\cdots >x^{(1)}_{n_1} >x^{(2)}_{1}>\cdots>x^{(2)}_{n_2}>\cdots\]
Note that vertices of the same rank are ordered based on their labels.  We order the edges of $\T$ with respect to this variable ordering: For two edges $xy$ and $zw$ in $\T$, we write $xy > zw$ if $\rank(x) < \rank(z)$ or if $x$ and $z$ are of the same rank while $\rank(y)<\rank(w)$. Let $(>_I)$ denote the total ordering on $\G(I(\T))$ defined as follows:  $ m_{xy}>_I m_{zw} $ if $ xy> zw$  where $xy,zw\in E(\T)$. To ease the notation, we write $\G(\D)$ for $\G(I(\D))$ whenever $\D$ is a (weighted oriented) graph.

\begin{definition}
   A vertex $x\in V(\T)$ is called a \textit{predecessor} of $z\in V(\T)$ if  $(x,z)\in E(\T)$.
\end{definition}

\begin{remark}\label{rmk:forest}
A subtle detail particular to trees is that each vertex (except the root)  has a unique predecessor. In particular, if there are  two different edges $xy,xz\in E(\T)$ with $(y,x)\in E(\T)$, we have $(x,z) \in E(\T)$.  
\end{remark}

\begin{example}\label{exampleforest}
    Consider the weighted naturally oriented tree $\T$ given in Figure \ref{fig:forest0}.
    \begin{figure}[H]
    \centering
    \includegraphics[width=0.3\textwidth]{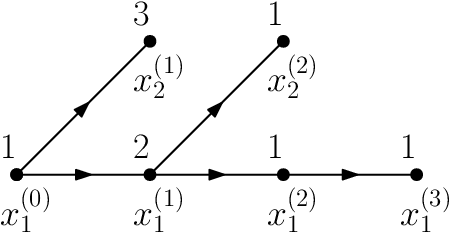}
    \caption{A weighted naturally oriented tree}
    \label{fig:forest0}
\end{figure}

   \noindent Then the ideal $I(\T)$ is generated by the following monomials:

    \[\small{m_{x_0x^{(1)}_{1}}=x_0(x^{(1)}_{1})^2,~~ m_{x_0x^{(1)}_{2}}=x_0(x^{(1)}_{2})^3,~~ m_{x^{(1)}_{1}x^{(2)}_{1}}=x^{(1)}_{1}x^{(2)}_{1},
    }\]
    \vskip0.05cm
    \[\small{ m_{x^{(1)}_{1}x^{(2)}_{2}}=x^{(1)}_{1}x^{(2)}_{2},~~ m_{x^{(2)}_{1}x^{(3)}_{1}}=x^{(2)}_{1}x^{(3)}_{1}}.   \]
   The total ordering on the generators of $I(\T)$ is 
    \[m_{x_0x^{(1)}_{1}}>_I m_{x_0x^{(1)}_{2}}>_I m_{x^{(1)}_{1}x^{(2)}_{1}}>_I m_{x^{(1)}_{1}x^{(2)}_{2}}>_I m_{x^{(2)}_{1}x^{(3)}_{1}}.\]
    Note that degrees of the generators do not play any role in this total ordering. 
\end{example}

We next introduce the concept of blocks which will be useful in characterizing bridges, gaps, and true gaps of our ideals. Let $P_{ee'}$ denote the unique path between two distinct edges $e, e' \in E(\T)$ which admits $e$ and $e'$ as its first and last edges. Inheriting the orientations and weights from $\T$, $P_{ee'}$ is a weighted oriented path and is called an \textit{induced path} of $\T$.  It is immediate that $\G(P_{ee'})\subseteq \G(\T)$. Set $E(P_{ee'})=\{x_1x_2,x_2x_3,\dots, x_nx_{n+1}\}$ and denote the corresponding monomials by $m_1,\ldots, m_n$ where $m_i=m_{x_ix_{i+1}}$ for $i \in [n]$. The monomials $m_{1}$ and $m_{n}$ are called the \textit{ends} of $\G(P_{ee'})$. Note that edges of $P_{ee'}$ are not necessarily oriented  in the same direction.

\begin{definition}\label{blockdefinitionforest}
   Let $e,e'\in E(\T)$ be two distinct edges. The set $\G(P_{ee'})=\{m_1,\ldots, m_n\}$  is called a \textit{potential block} if
    \[m_p \mid \lcm(m_{p-1},m_{p+1})
    \]
    for any $2\leq p\leq n-1$.  If $\G(P_{ee'})$  is maximal (with respect to inclusion) among  all potential blocks, then it is called a \textit{block} and  its ends $m_1$ and $m_n$ are called \textit{blockends}.
\end{definition}

\begin{example}\label{exampleforest2}
    Consider the weighted oriented tree $\T$ from  Example \ref{exampleforest}. Recall that each block comes from a path between two vertices of $\T$ and the monomials are written in the order of the successive edges along the corresponding path. Then the blocks of $\T$ and their blockends (underlined) are given as follows:
    \begin{itemize}
        \item $P_1= \{\underline{m_{x_0x^{(1)}_{1}}},\underline{ m_{x_0x^{(1)}_{2}}}\}$
        \item $P_2= \{\underline{m_{x_0x^{(1)}_{1}}}, \underline{m_{x^{(1)}_{1}x^{(2)}_{2}}}\}$
        \item $P_3= \{\underline{m_{x_0x^{(1)}_{1}}}, m_{x^{(1)}_{1}x^{(2)}_{1}}, \underline{m_{x^{(2)}_{1}x^{(3)}_{1}}}\}$
        \item $P_4= \{\underline{m_{x^{(1)}_{1}x^{(2)}_{2}}}, m_{x^{(1)}_{1}x^{(2)}_{1}}, \underline{m_{x^{(2)}_{1}x^{(3)}_{1}}}\}$
    \end{itemize}
 The set  $\{m_{x^{(1)}_{1}x^{(2)}_{2}}, m_{x^{(1)}_{1}x^{(2)}_{1}}\}$ is  a potential block but not a block since it is contained in the fourth block $P_4$. 
\end{example}

\begin{definition}\label{inthesameblock}
   Let $e_1,e_2,\ldots, e_n \in E(\T)$. We say $m_{e_1}, m_{e_2},\dots, m_{e_n}$ are \textit{in the same block} if there exists a block that contains all of them. There may be multiple blocks containing a given collection of elements of $\G(\T)$.

   Let $e_1,e_2,\ldots, e_n \in E(\T)$ such that $m_{e_1}, \dots, m_{e_n}$ are in the same block, say $\G(P)$. Suppose we have another edge $e_{n+1} \in E(\T)$. We say \textit{$m_{e_{n+1}}$ \textit{is in the same block as} $m_{e_1}, \dots, m_{e_n}$} if there exists a block, say $\G(P')$, that contains all of them. Note that $P$ and $P'$ share directed edges but they are not necessarily the same. 
\end{definition}

We are now ready to characterize bridges, gaps, and true gaps of $I(\T)$.

\begin{proposition}\label{prop:bridgeforest}
Let $\sigma$ be a subset  of $\G(\T)$. Consider an edge $(x,z)\in E(\T)$. The monomial $m_{xz}$ is 
    \begin{enumerate}
        \item[(a)] a bridge of $\sigma$ iff there exist edges $xy,zw\in E(\T)$ such that $m_{xy}, m_{xz}, m_{zw}\in \sigma$ are in the same block.
        \item[(b)]  a gap of $\sigma$ iff there exist edges $ xy,zw\in E(\T)$ such that $m_{xy}, m_{zw}\in \sigma $ and $m_{xz}\notin \sigma$ are in the same block.
        \item[(c)] a true gap of $\sigma$ iff it is a gap and the following conditions hold for each pair $ xy,zw\in E(\T)$ satisfying the condition in (b): 
        \begin{enumerate}
        \item[(i)] Either
        \begin{itemize}
            \item the monomial $m_{zw}$ is the only monomial in $\sigma$ that is in the same block as $m_{xy}, m_{xz}, m_{zw}$ and  divisible by $w$, or 
            \item there exist $m_{zz'}, m_{ww'}\in \sigma$ such that   $m_{zz'},m_{ww'}, m_{zw}$ are in the same block. Here $zz', ww', zw$ are distinct.
        \end{itemize}
        \item[(ii)] If $z>y$, then either
        \begin{itemize}
            \item the monomial $m_{xy}$ is the only monomial in $\sigma$ that is in the same block as  $m_{xy}, m_{xz}, m_{zw}$ and divisible by $y$, or 
            \item there exist $m_{xx'}, m_{yy'}\in \sigma$ such that   $m_{xx'},m_{yy'}, m_{xy}$ are in the same block. Here $xx', yy', xy$ are distinct.
        \end{itemize}
    \end{enumerate}
    \end{enumerate}
    Note that $xy, xz, zw$ are distinct in all the statements.
\end{proposition}

\begin{remark}
    Note that $xz>zw$ in our set-up. On the other hand, it is possible to have $xz > xy$ or  $xy> xz$ as it is shown in Figure \ref{fig:forest2}. For the remainder of this section, we will use this convention for the edges $xy,xz,zw$.
\end{remark}

\begin{figure}[ht]
    \centering
    \includegraphics[width=0.65\textwidth]{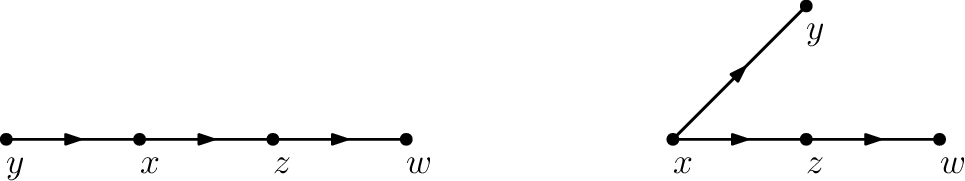}
    \caption{Two possibilities for the edge $xy$}
    \label{fig:forest2}
\end{figure}

\begin{proof}
$(a)$ Assume $m_{xz}$ is a bridge of $\sigma$. By definition, we have $m_{xz} \mid \lcm(\sigma\setminus m_{xz})$. Then there exist $y,w\in V(\T)$ such that $m_{xy}, m_{zw}\in \sigma$ and $m_{xz}\mid \lcm(m_{xy},m_{zw})$. By definition, $\{m_{xy},m_{xz},m_{zw}\}$ is a potential block and thus all these monomials are in the same block. The other direction is immediate from the definition of a bridge. One can prove $(b)$ using similar arguments.

Before moving on to part $(c)$, we prove the following claim:

\begin{claim}\label{clm:newbridge}
    Consider an edge $(x,z)\in E(\T)$ and a subset $\sigma$  of $\G(\T)$. Assume $m_{xz}$ is a gap of $\sigma$, i.e., there exist  edges $ xy,zw\in E(\T)$ such that $m_{xy}, m_{zw}\in \sigma $ and $m_{xz}\notin \sigma$ are in the same block. Moreover, assume that $\sigma \cup m_{xz}$ has a new bridge $m$ dominated by $m_{xz}$. Then $m$ is either $m_{xy}$ or $m_{zw}$. Note that  the first case implies that $z>y$.
\end{claim}

\begin{proof}[Proof of Claim \ref{clm:newbridge}]
    We prove this statement by contradiction. Suppose $\sigma \cup m_{xz}$ has a new bridge $m$ dominated by $m_{xz}$ such that  $m\neq m_{xy}, m_{zw}$.  Observe that $m$ must be of the form $m_{zw'}$ or $m_{xz'}$ for some $w',z'\in V(\T)$ where $w' \neq w$ and $z'<z$. We consider these two cases separately and obtain a contradiction by showing that $m$ must be a bridge of $\sigma$ for each situation. 
    
    Suppose $m=m_{zw'}$ where $w'\neq w$. Since $m$ is a bridge of $\sigma \cup m_{xz}$, we have $m_{zw'}\in \sigma$ and there exists $m_{w'u}\in \sigma$ such that $m_{xz}, m_{zw'}, m_{w'u}$ are in the same block  by Proposition \ref{prop:bridgeforest} $(a)$. Hence
    \[m_{zw'} \mid \lcm(m_{xz},m_{w'u}) \mid \lcm(m_{xy},m_{zw},m_{w'u}), \]
    which implies $m_{zw'} \mid \lcm(m_{zw},m_{w'u})$. Therefore $m_{zw'}$ is a bridge of $\sigma$.

    Suppose $m=m_{xz'}$ where $z'<z$. Since $m$ is a bridge of $\sigma\cup m_{xz}$, we have $m_{xz'}\in \sigma$ and there exists $m_{z'w'}\in \sigma$ such that $m_{xz}, m_{xz'}, m_{z'w'}$ are in the same block  by Proposition \ref{prop:bridgeforest} $(a)$. Hence
    \[m_{xz'} \mid \lcm(m_{xz},m_{z'w'}) \mid \lcm(m_{xy},m_{zw},m_{w'u}), \]
    which implies $m_{xz'} \mid \lcm(m_{xy},m_{z'w'})$. Therefore $m_{xz'}$ is a bridge of $\sigma$.
\end{proof}

$(c)$ We start by considering the forward direction.  Assume $m_{xz}$ is a gap of $\sigma$ and either $(i)$ or $(ii)$ fails. Our goal is to show $m_{xz}$ cannot be a true gap of $\sigma$ under these assumptions. First, suppose $(i)$ fails. Then, there exist $ m_{ww'}\in \sigma$ that is in the same block as $m_{xy},m_{xz},m_{zw}$ and there  is no $ m_{zz'} \in \sigma$ such that $m_{zz'},m_{ww'}, m_{zw}$ are in the same block where $zw \neq zz', ww'$.  The second condition guarantees that $ m_{zw}$ is not a bridge of $\sigma$ while the first condition implies that it is a bridge of $\sigma\cup m_{xz}$. Since $ m_{xz} $ dominates $ m_{zw}$, it is not a true gap of $\sigma$ by definition. Similar arguments apply when $(ii)$ fails.

Now we prove the reverse direction. Assume $m_{xz}$ is a gap such that $(i)$ and $(ii)$ hold for each pair $xy,zw \in E(\T)$ satisfying the condition in $(b)$. By definition of true gaps, we need to show that $\sigma\cup m_{xz}$ does not have any new  bridges dominated by $m_{xz}$.  For the sake of contradiction, suppose $\sigma\cup m_{xz}$  has such a bridge $m$. Then $m$ must be $m_{zw}$ or $m_{xy}$ where $z>y$ by Claim \ref{clm:newbridge}. Suppose $m=m_{zw}$. Due to $(i)$, we have two cases. We first present the statement of each case and then obtain a contradiction based on the statement.
        \begin{itemize}
            \item \emph{The monomial $m_{zw}$ is the only monomial in $\sigma$ that is in the same block as $m_{xy}, m_{xz}, m_{zw}$ and is divisible by $w$.} Since $m_{zw}$ is a bridge of $\sigma \cup m_{xz}$ but not a bridge of $\sigma$, each pair satisfying (a) for $m_{zw}$ must involve $m_{xz}$. Thus there exists $wu \in E(\T)$ such that  $m_{wu}\in \sigma$ and $m_{xz},m_{zw},m_{wu}$ are in the same block. Recall that $m_{xy},m_{xz},m_{zw}$ are all in the same block as $m_{xz}$ is a gap of $\sigma$. Thus, $m_{xy},m_{xz},m_{zw},m_{wu}$ form a potential block. Therefore $m_{wu}$ is in the same block as as $m_{xy}, m_{xz}, m_{zw}$ and it is divisible by $w$. This is in contradiction to the statement of this case.
            \item \emph{There exist $ m_{zz'}, m_{ww'}\in \sigma$ such that $m_{zz'}, m_{ww'}, m_{zw}$ are in the same block where $zw\neq zz', ww'$.} Then $m_{zw}$ is a bridge of $\sigma$, a contradiction.
        \end{itemize}
Similar arguments apply for the case $m=m_{xy}$ where $z>y$ using $(ii)$ instead of $(i)$. 
\end{proof}

In particular, if a monomial $m$ is a true gap of some subset $\sigma$  of $\G(\T)$, then $m$ is still a true gap of $\sigma\cup m'$ as long as $m$ dominates $m'$. Thus the main theorem of this section follows immediately from Lemma \ref{lem:morsefri2}.

\begin{theorem}\label{thm:morsefriendlyforest}
    The ideal $I(\T)$ is bridge-friendly.
\end{theorem}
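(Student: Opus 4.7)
The plan is to invoke Lemma \ref{lem:morsefri2}: it suffices to show that whenever $m = m_{xz}$ is a true gap of $\sigma$ and $m' >_I m$, the monomial $m$ remains a true gap of $\sigma \cup m'$. Preservation of the gap property itself is immediate because the defining equality $\lcm(\sigma \cup m) = \lcm(\sigma)$ is stable under enlarging $\sigma$, and $m \notin \sigma \cup m'$ since $m' \neq m$.

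The core observation I would prove is that every monomial which can appear in the negative (``only monomial'') alternatives of Proposition \ref{prop:bridgeforest}(c)(i) and (ii) has its source vertex of rank strictly greater than $\rank(x)$ in the naturally oriented tree $\T$, and therefore is strictly smaller than $m_{xz}$ in the ordering $(>_I)$. Concretely, this amounts to checking, using the uniqueness of predecessors in a tree (Remark \ref{rmk:forest}), that each of $m_{zw}$, any candidate witness $m_{wu}$, and (in the case $z > y$) both $m_{xy}$ and any witness $m_{yy'}$, has its source at rank $\geq d+1 > d = \rank(x)$. Consequently, adjoining a monomial $m' >_I m$ cannot introduce a new monomial that breaks these negative alternatives, and the positive (existential) alternatives of (i), (ii) are trivially monotone under $\sigma \subseteq \sigma \cup m'$. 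So (i) and (ii) survive for every pair $(xy, zw)$ that already lay in $\sigma$.

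The only remaining case is new pairs $(xy^*, zw)$ created when $m'$ itself fills the $xy$-slot; no new pair can arise with $m' = m_{zw^*}$ since every such $m_{zw^*}$ is $<_I m_{xz}$. A case split according to whether $y^*$ is the predecessor of $x$ or a child of $x$ with smaller label than $z$ shows that in both sub-cases $z < y^*$ in the vertex ordering, so condition (ii) is vacuous for the new pair. For condition (i), I would argue that the set of monomials divisible by $w$ that can lie in a common block with $m_{xy^*}, m_{xz}, m_{zw}$ depends only on the ``$w$-side'' of the block — which, by uniqueness of paths in a tree, is the same as for any pre-existing pair $(xy, zw)$ in $\sigma$ (such a pair exists because $m_{xz}$ is already a gap of $\sigma$). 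Hence condition (i) for the new pair reduces to (i) for that pre-existing pair, which holds by the true-gap hypothesis. The main obstacle I anticipate is the clean comparison of blocks sharing the $z$–$w$ sub-path but differing on the $x$-side; this requires careful bookkeeping of how potential blocks extend in a tree, but the tree structure keeps the extensions on the two sides independent, which is what makes the reduction go through.
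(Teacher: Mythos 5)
Your proposal is correct and follows the paper's route exactly: reduce to Lemma \ref{lem:morsefri2} and check, via the characterization of true gaps in Proposition \ref{prop:bridgeforest}(c), that adjoining a monomial $m'>_I m$ cannot destroy the true-gap property of $m$. The paper compresses this verification into a single sentence (``whether $m$ is a true gap depends only on monomials smaller than $m$''), whereas you carry out the monotonicity check explicitly, including the new-pair case $(xy^*,zw)$ that the one-liner glosses over; your only slip is asserting that $m_{xy}$ has source rank $\geq d+1$ in the case $z>y$ (its source is $x$, of rank $d$), which is harmless because the monomials that could break the ``only monomial'' clauses are the witnesses $m_{wu}$ and $m_{yy'}$, and those are indeed all smaller than $m_{xz}$.
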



%

%
Next we will compute all the multi-graded Betti numbers of $R/I(\T)$. Specifically, we will show that the Barile-Macchia matching considered in this section is Batzies-Welker. First we provide a few auxiliary lemmas which will be used in the proof. 

\begin{lemma} (Forest-Bridge Lemma)
    \label{bridgelemma1}
    Let $\sigma$ be a subset  of $\G(\T)$ and $(x,z) in E(\T)$.  Assume $m_{xy},m_{xz}\in \sigma$ and $\sigma$ does not have any bridge. If  $m_{zw}$ is in the same block as $m_{xy}, m_{xz}$ where $(z,w)\in E(\T)$, then $m_{zw}\notin \sigma$.
\end{lemma}

\begin{proof}
If $m_{zw}$ is in the same block as $m_{xy}, m_{xz}$ where $(z,w)\in E(\T)$, then $m_{xz}\mid \lcm(m_{xy},m_{zw})$ by definition. Since $\sigma$ has no bridges, we must have $m_{zw} \notin \sigma$.
\end{proof}

\begin{lemma} (Forest-True-Gap Lemma)
    \label{truegaplemma1}
     Let $\sigma$ be a subset  of $\G(\T)$ and $(x,z) \in E(\T)$. Let $m_{xz}$ be a gap of $\sigma$.  Assume that $\sigma$ has no true gaps.   Then either of the following is true for any pair $xy,zw\in E(\T)$ satisfying the condition in Proposition \ref{prop:bridgeforest} (b):
\begin{enumerate}
        \item[(i)] There exists $(w,w') \in E(\T)$ such that $m_{ww'}\in \sigma$ and it is in the same block as $m_{xy},m_{xz},m_{zw}$. Moreover, there exists no $m_{zz'} \in \sigma$ such that $m_{zz'}, m_{ww'}, m_{zw}$ are in the same block. Here $zz',ww',zw$ are distinct.
        \item[(ii)] If  $z>y$, there exists $(y,y') \in E(\T)$ such that $m_{yy'}\in \sigma$ and it is in the same block as $m_{xy},m_{xz},m_{zw}$. Moreover, there exists no $m_{xx'} \in \sigma$ such that $m_{xx'}, m_{yy'}, m_{xy}$ are in the same block. Here $xx',yy', xy$ are distinct.
\end{enumerate} 
\end{lemma}
\begin{proof}
   It follows from Proposition \ref{prop:bridgeforest} $(c)$.
\end{proof}

\begin{theorem}\label{thm:morseminimalforest}
    The ideal $I(\T)$ has a Batzies-Welker matching. In particular, if  there exists $\sigma\subseteq \G(\T)$ such that $\sigma$ has no bridges or true gaps where $|\sigma|=r$ and $\lcm (\sigma) =\mathbf{x}^\mathbf{a}$, then $\beta_{r,\mathbf{a}}(R/I(\T))=1$. Otherwise, $\beta_{r,\mathbf{a}}(R/I(\T))=0$.
\end{theorem}
\begin{proof}
    Let $\sigma$ and $\sigma'$ be  different critical subsets of $\G(\T)$ with respect to the total ordering $(>_I)$. Recall that $I(\T)$ is bridge-friendly by Theorem \ref{thm:morsefriendlyforest}. Then  $\sigma$ and $\sigma'$ have no bridges and no true gaps by Corollary \ref{cor:critical}. 
    
    Since $\sigma$ and $\sigma'$ are different, there exists  some $(x,z)\in E(\T)$ such that $m_{xz}\in \sigma$ but $m_{xz}\notin \sigma'$. We say that an edge  $e$ satisfies the  \textbf{separation condition} if $m_e \in \sigma$ and $m_e \notin \sigma'$. Let  $xz$ be the smallest edge satisfying the separation condition. Suppose  $\lcm(\sigma)=\lcm(\sigma')$ for the sake of contradiction. Then $m_{xz}$ is a gap of $\sigma'$ because $m_{xz} \mid \lcm(\sigma')$. It then follows from  Proposition \ref{prop:bridgeforest} $(b)$ that there exist $xy,zw\in E(\T)$ such that monomials $m_{xy},m_{xz},m_{zw}$ are in the same block and $m_{xy},m_{zw}\in \sigma'$. Since $\sigma'$ has no true gaps, we have the following scenarios by  \hyperref[truegaplemma1]{Forest-True-Gap Lemma}:
    \begin{enumerate}
        \item[(i)] There exists $(w,s) \in E(\T)$ such that $m_{ws}\in \sigma'$ and it is in the same block as $m_{xy},m_{xz},m_{zw}$. Moreover, there exists no $m_{zw'} \in \sigma'$ such that $m_{zw'}, m_{ws}, m_{zw}$ are in the same block. 
      \begin{enumerate}
          \item[(a)] Suppose $m_{zw}\in \sigma$.  Then $m_{ws}\notin \sigma$ by \hyperref[bridgelemma1]{Forest-Bridge Lemma}. Note that $m_{ws} \mid \lcm(\sigma)$ as $m_{ws} \in \sigma'$. Hence $m_{ws}$ is a gap of $\sigma$ and thus by Proposition \ref{prop:bridgeforest} $(b)$, there exist $st, ws' \in E(\T)$ such that $m_{st}, m_{ws'} \in \sigma$ and $m_{st}, m_{ws'}, m_{ws}$ are in the same block. Suppose $ws'\neq zw$. Then 
          \[ m_{zw} \mid \lcm(m_{xz},m_{ws}) \mid \lcm(m_{xz},m_{st},m_{ws'}), \]
          which implies $m_{zw}\mid \lcm(m_{xz},m_{ws'})$. Therefore $m_{zw}$ is a bridge of $\sigma$, a contradiction. Now we suppose $ws'=zw$, i.e., $s'=z$. In particular, this means $m_{zw},m_{ws},m_{st}$ are in the same block. Thus, $m_{st} \notin \sigma'$ by \hyperref[bridgelemma1]{Forest-Bridge Lemma}. Note that $xz$ and $st$ satisfies the separation condition, and $xy>st$. This contradicts the minimality of the edge $xz$.
\begin{figure}[ht]
    \centering
    \includegraphics[width=0.45\textwidth]{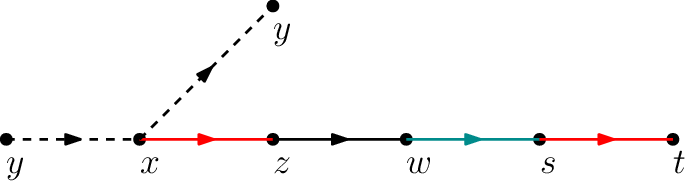}
    \caption{A figure depicting (a) (red edges are not in $\sigma'$, teal edge is not in $\sigma$)}
    \label{fig:exforest1}
\end{figure}

          \item[(b)] Suppose $m_{zw}\notin \sigma$. Note that $m_{zw} \mid \lcm(\sigma)$ as $m_{zw} \in \sigma'$. Hence $m_{zw}$ is a gap of $\sigma$ and thus by Proposition \ref{prop:bridgeforest} $(b)$, there exist $ ws', zw' \in E(\T)$ such that $m_{ws'}, m_{zw'} \in \sigma$ and $m_{ws'}, m_{zw'}, m_{zw}$ are in the same block.

          Suppose $zw'\neq xz$, i.e., $(z,w')\in E(\T)$. We know that $m_{zw}=z^aw^b$ for some integers $a,b$. Observe that in this case,
          \[ m_{zw} \mid \lcm(m_{xz},m_{ws}) \text{ and } m_{zw}\mid \lcm(m_{zw'},m_{ws'}). \]
           In particular, this means $z^a\mid m_{zw'}$ and $w^b\mid m_{ws}$. Therefore $m_{zw}\mid \lcm(m_{zw'},m_{ws})$. In other words, $m_{zw'}, m_{ws}, m_{zw}$ are in the same block, and hence $m_{zw'}\notin \sigma'$ by our assumption. Note that $xz$  and $zw'$ satisfies the separation condition, but $xz>zw'$. This contradicts the minimality of the edge $xz$. 
           \begin{figure}[ht]
    \centering
    \includegraphics[width=0.35\textwidth]{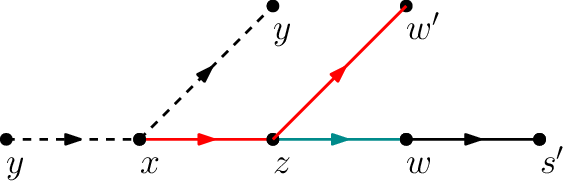}
    \caption{A figure depicting the first case  $zw'\neq xz$ in (b)}
    \label{fig:exforest2}
\end{figure}

Now we suppose  $zw'= xz$, i.e., $w'=x$. Since $xz>ws'$ and $ws'\in \sigma$, we must have $m_{ws'} \in \sigma'$.  Observe that $m_{zw}$ is a gap of $\sigma$. It follows from \hyperref[truegaplemma1]{Forest-True-Gap Lemma} $(i)$ that there exists $(s',t) \in E(\T)$  such that $m_{s't} \in \sigma$ and it is in the same block as $m_{xz}, m_{zw}, m_{ws'}$. Note that part (ii) of the lemma is not applicable to this case since $x>w$. Then, by \hyperref[bridgelemma1]{Forest-Bridge Lemma}, we must have $m_{s't} \notin \sigma'$. Then $s't$ satisfies the separation condition. Since $xz> s't$, we obtain a contradiction. 
\begin{figure}[ht]
    \centering
    \includegraphics[width=0.45\textwidth]{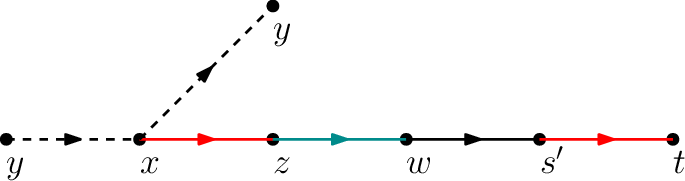}
    \caption{A figure depicting the second case $w'=x$ in (b)}
    \label{fig:exforest2'}
\end{figure}
      \end{enumerate} 
        \item[(ii)] If $z>y$, there exists $(y,y') \in E(\T)$ such that $m_{yy'}\in \sigma'$ and it is in the same block as $m_{xy},m_{xz},m_{zw}$. Moreover, there exists no $m_{xx'} \in \sigma$ such that $m_{xx'}, m_{yy'}, m_{xy}$ are in the same block. The proof of this case uses the same arguments as $(i)$ with the only difference that we have two subcases based on whether $m_{xy} \in \sigma$ and we work along the vertices $x,z,y$ instead of $x,z,w$. \qedhere 
          \begin{figure}[H]
    \centering
    \includegraphics[width=0.25\textwidth]{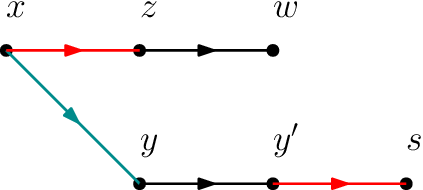}
    \caption{A figure depicting (ii)}
    \label{fig:exforest3}
\end{figure}    
    \end{enumerate}
\end{proof}

As another application, we obtain recursive formulas for the graded and total Betti numbers of $I(\T)$ which generalize those from \cite{Jac04} and \cite{JM05}. Before providing these formulas, we characterize the blockends:

\begin{proposition}\label{blockendforest}
	The monomial $m_{x^{(d)}_px^{(d+1)}_q}$ is a blockend if and only if either of the following holds:
	\begin{enumerate}[label=(\arabic*)]
		\item[(a)] Either $x^{(d)}_p$ or $x^{(d+1)}_q$ is a leaf. 
		\item[(b)] The vertex $x^{(d+1)}_q$ has a non-simple weight, i.e., $\w(x^{(d+1)}_q)\geq 2$.
	\end{enumerate}
\end{proposition}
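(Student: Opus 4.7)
The plan is to characterize blockends by analyzing the divisibility condition $m_p \mid \lcm(m_{p-1}, m_{p+1})$ on $m_e$ under the assumption that $m_e$ sits as an interior monomial of a path. The key observation, which drives both directions, is as follows: we have $m_e = x^{(d)}_p \cdot (x^{(d+1)}_q)^{\w(x^{(d+1)}_q)}$, while by Remark \ref{rmk:forest} any other edge of $\T$ incident to $x^{(d+1)}_q$ must be directed out of $x^{(d+1)}_q$ (since the unique predecessor of $x^{(d+1)}_q$ is $x^{(d)}_p$, the other endpoint of $e$). Consequently, in any edge of $\T$ other than $e$ containing $x^{(d+1)}_q$, the exponent of $x^{(d+1)}_q$ is exactly $1$. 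Thus if $m_e$ is interior to a potential block, the block condition at $m_e$ forces $\w(x^{(d+1)}_q) \leq 1$; the analogous condition at $x^{(d)}_p$ is automatic because $m_e$ carries $x^{(d)}_p$ with exponent $1$ and any flanking edge incident to $x^{(d)}_p$ carries it with exponent at least $1$.

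For the ($\Leftarrow$) direction, assume (a) or (b). If $\w(x^{(d+1)}_q) \geq 2$, the observation just made shows that $m_e$ cannot be an interior monomial of any potential block, so $m_e$ sits at an end of every potential block containing it. If instead $x^{(d)}_p$ or $x^{(d+1)}_q$ is a leaf, no path of $\T$ extends beyond that leaf, so again $m_e$ is forced to be at an end of every potential block containing it. Assuming the nontrivial case that $\T$ has at least two edges, there is an adjacent edge $m'$ so that $\{m_e, m'\}$ is a potential block of length $2$ with $m_e$ at an end; any maximal extension of this potential block still has $m_e$ at an end, so $m_e$ is a blockend.

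For the ($\Rightarrow$) direction, I argue the contrapositive. Assume both (a) and (b) fail, so neither endpoint of $e$ is a leaf and $\w(x^{(d+1)}_q) = 1$. Let $B = \{m_1, \ldots, m_n\}$ be any potential block with $m_e$ at an end, say $m_e = m_1$, and let $x_1 \in \{x^{(d)}_p, x^{(d+1)}_q\}$ denote the vertex of the path at that end. Since $x_1$ is not a leaf, it has a neighbor $x_0'$ distinct from $x_2$; set $m_0 = m_{x_0' x_1}$. The only new constraint for $\{m_0\} \cup B$ to be a potential block is $m_e \mid \lcm(m_0, m_2)$, and using $\w(x^{(d+1)}_q) = 1$, each exponent appearing in $m_e$ is already bounded by the corresponding exponent in one of $m_0, m_2$ (two symmetric sub-cases according to whether $x_1 = x^{(d)}_p$ or $x_1 = x^{(d+1)}_q$; in both, the exponent of $x^{(d+1)}_q$ equals $1$ on the flanking edge incident to $x^{(d+1)}_q$ and the exponent of $x^{(d)}_p$ is at least $1$ on the flanking edge incident to $x^{(d)}_p$). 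Therefore $B$ is not maximal and $m_e$ is not a blockend.

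The main bookkeeping obstacle is the clean separation between the fixed natural orientation of $\T$ and the two possible orientations of the path at the $m_e$ end (choosing which endpoint of $e$ plays the role of $x_1$), compounded, in the prepending step, by whether the new neighbor $x_0'$ is a child or the parent of $x_1$. Once notation is fixed, each sub-case reduces to a short exponent calculation anchored on Remark \ref{rmk:forest}.
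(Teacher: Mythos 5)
Your proposal is correct and follows essentially the same route as the paper: the reverse direction rests on the observation that under (a) or (b) the monomial $m_e$ cannot occupy an interior position of any potential block (in case (b) because the unique-predecessor property forces every other edge at $x^{(d+1)}_q$ to carry it with exponent one), and the forward direction extends the path past the non-leaf endpoint and checks $m_e \mid \lcm(m_0,m_2)$ using that $m_e$ is squarefree when $\w(x^{(d+1)}_q)=1$ — exactly the paper's Claim \ref{clm:blockend} and its extension argument. The only cosmetic difference is that you phrase the forward direction as a contrapositive and spell out the maximal-extension step slightly more explicitly.
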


\begin{proof}
	
	Let $e$ denote the directed edge $x^{(d)}_px^{(d+1)}_q$. Suppose $m_e$ is a blockend of a block  $\G(P)$ where
	\[E(P)=\{x_1x_2,x_2x_3,\dots, x_nx_{n+1}\}\]
	such that $m_e= m_1$ or $m_e=m_n$. Suppose that $m_e= m_1$.  For the sake of contradiction, suppose neither $x^{(d)}_p$ nor $x^{(d+1)}_q$ is a leaf and $\w(x^{(d+1)}_q)=1$. In particular, $m_1=x_1x_2$ and $x_1$ is not a leaf, i.e., there exist a directed edge $e_1$ incident to $x_1$ where $e_1 \neq e$. Let $P_1$ be the path obtained by extending $P$ at $m_1$ so that it contains $e_1$. Then $\G(P_1)=\{m_{e_1},m_1,m_2,\ldots, m_n\}$. Observe that  $\lcm(m_{e_1}, m_2)$  is divisible by $m_1$ because $x_1 |m_{e_1}$ and $x_2 |m_{2}$. Therefore, $\G(P_1)$ is a potential block which contains $\G(P)$, a contradiction. The other case $m_e=m_n$ is treated similarly.
	
	For the reverse direction, suppose $(a)$ or $(b)$ holds for $m_e$. We make the following claim whose proof is provided later.
	\begin{claim}\label{clm:blockend}
		The monomial $m_e$ does not divide $ \lcm(\sigma \setminus m_e)$ for any subset  $\sigma$ of $\G(\T)$.
	\end{claim}
	Suppose the claim holds. Observe that there exists a potential block $\G(P)$ that contains $m_e$ where $P$ is an induced path of $\T$.  Then by Claim \ref{clm:blockend} and the definition of a potential block, $m_e$ must be one of the two ends of $\G(P)$ and it is not possible to extend $\G(P)$ to a block at the end $m_e$.  If the extension can be done at the other end of $\G(P)$, extend and continue in this fashion. This process terminates after a finite number of steps at a block $\G(P')$ which admits $m_e$ as one of its blockends. 
\end{proof}

\begin{proof}[Proof of Claim \ref{clm:blockend}]
	Since $m_e$ is a blockend, then it satisfies Proposition \ref{blockendforest} (a) or (b). If (a) holds, then $m_e$ is the only monomial in $\G(\T)$ that is divisible by $x^{(d)}_p$ or $x^{(d+1)}_q$. Suppose (b) holds. Since $\T$ is a tree, the directed edge $(x^{(d)}_p,x^{(d+1)}_q)$ is the only edge directed towards $x^{(d+1)}_q$. Thus $m_e$ is the only monomial in $\G(\T)$ that is divisible by $(x^{(d+1)}_q)^2$. Hence, the statement holds.  
\end{proof}

We introduce some more notations. Let $p=\max \{\rank (x) : x \in V(\T)\}$. Let $v_1$ be one of the vertices of rank $p$ in $\T$ and $v$ be the predecessor of $v_1$. Denote the neighbors of $v$ by $v_1,\ldots, v_n$. Note that at most one of the neighbors of $v$ is not a leaf. If all of them are leaves, then $v=x_0$ and the ideal $I(\T)$ is quite simple to study.  Suppose exactly one neighbor of $v$ is not a leaf and denote it by $v_n$. Let $w_1,\ldots, w_k$ be the neighbors of $v_n$ other than $v$.  If $v_n$ has no predecessor, then $I(\T)$ is still a simpler ideal to study. So, suppose $w_k$ is the predecessor of $v_n$. 
\begin{figure}[ht]
    \centering
    \includegraphics[width=0.4\textwidth]{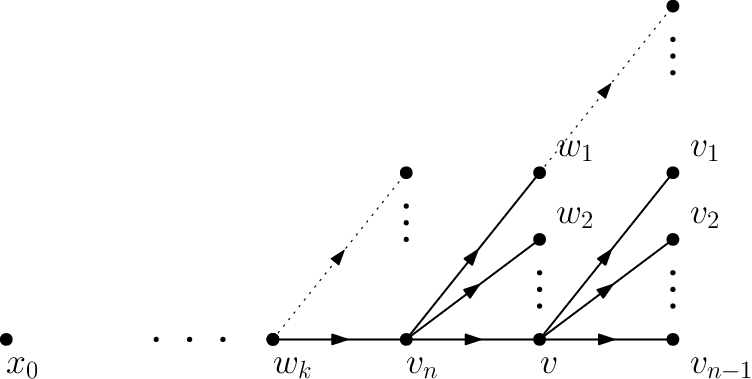}
    \caption{Our set-up}
    \label{fig:forestsetup}
\end{figure}

Let $W \subseteq V(\T)$. Let $\T\setminus W$ denote the weighted naturally oriented tree obtained from $\T$ by deleting all the vertices in $W$ and the edges incident to those vertices. Let $\T_1:= \T \setminus \{v_1\}, \T_2:= \T\setminus \{v,v_1,\ldots, v_n\}$, and $ \T_3:= \T_2 \setminus \{w_1,\ldots, w_k\}$.

\begin{theorem}\label{thm:bettiforest1}
   Let $M=\{m_{vv_2}, \ldots, m_{vv_n}\}$. Assume $\w(v)=1$. Then, we have
    \begin{align*}
        \beta_{r,d}(R/I(\T)) &=\beta_{r,d}(R/I(\T_1)) + \sum_{
       S\subseteq M } \beta_{r-(|S|+1),d-d' }(R/I(\T_2)) 
    \end{align*}
 for all indices $r,d$  where 
   $$d' = 1+ \w(v_{1})+ \sum_{\substack{m_{vv_i}\in S\\
    i\neq n}} \w(v_i)+ \begin{cases}
    1  &    \text{ if } m_{vv_n}\in S,\\
    0 &  \text{ otherwise}.
   \end{cases}$$
\end{theorem}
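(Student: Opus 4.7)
The plan is to reduce the computation to $\T_1$ and $\T_2$ via a mapping-cone argument on the short exact sequence
\[
0 \to \bigl(R/(I(\T_1):m_{vv_1})\bigr)(-\deg m_{vv_1}) \xrightarrow{\,\cdot\, m_{vv_1}\,} R/I(\T_1) \to R/I(\T) \to 0.
\]
The first step is to verify that the induced mapping cone on the minimal free resolutions is itself minimal, which should follow from the bridge-minimality of $I(\T)$ and $I(\T_1)$ (Theorem \ref{thm:morseminimalforest}) together with the leaf property of $v_1$ (so $v_1$ appears in no generator of $I(\T)$ other than $m_{vv_1}$, forcing every comparison morphism in the cone to carry nontrivial $v_1$-weight and therefore no unit entries). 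Granting this, one obtains
\[
\beta_{r,d}(R/I(\T)) = \beta_{r,d}(R/I(\T_1)) + \beta_{r-1,\,d-\deg m_{vv_1}}\!\bigl(R/(I(\T_1):m_{vv_1})\bigr),
\]
with $\deg m_{vv_1} = 1 + \w(v_1)$ since $\w(v)=1$.

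Next I would compute the colon ideal explicitly. For each $m' \in \G(I(\T_1))$, the reduction $m'/\gcd(m',m_{vv_1})$ gives $v_i^{\w(v_i)}$ when $m' = m_{vv_i}$ for $i<n$; it gives $v_n$ when $m' = m_{vv_n} = v_n v$ (here $\w(v)=1$ is essential); and it gives $m'$ itself when $m'$ is not incident to $v$. The monomials $m_{v_nw_j}$ are then divisible by the new generator $v_n$ and hence redundant, yielding
\[
(I(\T_1):m_{vv_1}) = \bigl(v_n,\ v_2^{\w(v_2)}, \ldots, v_{n-1}^{\w(v_{n-1})}\bigr) + I(\T_2),
\]
in which the listed pure powers and the generators of $I(\T_2)$ involve pairwise disjoint sets of variables (the $v_i$'s and $v_n$ do not appear in $\T_2$).

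Because of this variable separation, $R/(I(\T_1):m_{vv_1})$ decomposes as a tensor product over $\Bbbk$ of the Koszul quotients $\Bbbk[v_i]/(v_i^{\w(v_i)})$, $\Bbbk[v_n]/(v_n)$, and the bridge-minimal quotient $R/I(\T_2)$ (Theorem \ref{thm:morseminimalforest}). By iterated application of Theorem \ref{thm:tensormorse},
\[
\beta_{r-1,\,d-\deg m_{vv_1}}(R/(I(\T_1):m_{vv_1})) = \sum_{T} \beta_{r-1-|T|,\ d-\deg m_{vv_1}-d_T}(R/I(\T_2)),
\]
where $T$ ranges over subsets of $\{v_n,\,v_2^{\w(v_2)},\ldots,v_{n-1}^{\w(v_{n-1})}\}$ and $d_T$ is the total degree of $T$ (each $v_i^{\w(v_i)}$ contributing $\w(v_i)$ and $v_n$ contributing $1$). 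Identifying $T$ with $S \subseteq M$ via $v_n \leftrightarrow m_{vv_n}$ and $v_i^{\w(v_i)} \leftrightarrow m_{vv_i}$ produces $|T|=|S|$ and $\deg m_{vv_1} + d_T = d'$, so substituting into the mapping-cone identity yields exactly the stated recursion. The main obstacle is the minimality of the mapping cone in the first step, which rests on $v_1$ being a leaf and on the bridge-minimality framework already developed in the paper.
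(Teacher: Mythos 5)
Your proposal is correct, but it takes a genuinely different route from the paper. The paper proves this recursion entirely inside the Barile--Macchia framework: since $I(\T)$ is bridge-minimal, $\beta_{r,d}(R/I(\T))$ counts critical subsets of $\G(\T)$ of cardinality $r$ and degree $d$ (Theorem \ref{morseminimabetti}), and the proof enumerates these by splitting on whether $m_{vv_1}\in\sigma$, showing that in the latter case $\sigma=\sigma'\cup\{m_{vv_1}\}\cup S$ for a critical subset $\sigma'$ of $\G(\T_2)$ and $S\subseteq M$ (this is the content of Claim \ref{claim:betti}, verified via the bridge/true-gap characterization of Proposition \ref{prop:bridgeforest}). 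You instead use the colon short exact sequence and a mapping cone: the multigraded lift $G_\bullet\to F_\bullet$ must have all entries divisible by $v_1^{\w(v_1)}$ because every twist of $G_\bullet$ carries $v_1$-weight $\w(v_1)$ while no twist of $F_\bullet$ involves $v_1$, so the cone is minimal and the Betti numbers add; then $(I(\T_1):m_{vv_1})=(v_n,\,v_2^{\w(v_2)},\ldots,v_{n-1}^{\w(v_{n-1})})+I(\T_2)$ (your computation is correct, including the role of $\w(v)=1$ in producing $v_n$ from $m_{vv_n}=v_nv$ and the redundancy of the $m_{v_nw_j}$), and the disjoint-variable tensor decomposition yields the sum over $S\subseteq M$ with exactly the stated degree shift $d'$. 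Your route is essentially a Betti-splitting argument: it is shorter, does not actually need the bridge-minimality of $I(\T)$ or $I(\T_1)$ for the cone's minimality (the $v_1$-weight argument alone suffices, so that invocation is superfluous), and it identifies the recursion as coming from a standard homological construction. What the paper's approach buys in exchange is that it stays within the machinery the section is building --- the explicit combinatorial description of critical subsets --- which is then reused verbatim in the subsequent recursions (Theorems \ref{thm:bettiforest2}, \ref{thm:bettiforest3}, \ref{thm:totalbettiforest}) where the colon-ideal computation would become messier. One cosmetic point: $\deg m_{vv_1}=1+\w(v_1)$ holds regardless of $\w(v)$; the hypothesis $\w(v)=1$ is genuinely needed only in the computation of $m_{vv_n}:m_{vv_1}$, as you correctly flag.
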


\begin{proof}
Recall that the Barile-Macchia resolution of $R/I(\T)$ is minimal by Theorems \ref{thm:friendlyToMinimal} and \ref{thm:morsefriendlyforest}. Then,  by Corollary \ref{morseminimabetti},  the $(r,d)^{\text{th}}$ graded Betti number of $R/I(\T)$ is the number of critical subsets of $\G(\T)$ with cardinality $r$ and degree $d$. Let $\sigma$ be a critical subset of $\G(\T)$ such that $|\sigma|=r$ and $\deg (\lcm (\sigma))=d$. Our approach is based on considering whether the monomial $m_{vv_1}$ belongs to $\sigma$.

(a) Suppose $m_{vv_1}\notin \sigma.$ Then $\sigma$ is critical subset of $\G(\T)$ if and only if it is a critical subset of $\G(\T_1)$. In this case, the number of such subsets of $\G(\T)$ equals $\beta_{r,d}(R/I(\T_1))$.

(b) Suppose $m_{vv_1}\in \sigma$. Recall that our variable ordering does not have a specific rule on how to order vertices of the same rank. So, we may assume that $v$ is the smallest among vertices of rank $p-1$. Then, none of the monomials $m_{v_nw_i}$ belongs to $\sigma$ for $i\in [k]$. Otherwise, $m_{vv_n}$ is either a bridge or a true gap of $\sigma$. This is not possible  by Corollary \ref{cor:critical} since $\sigma$ is a critical subset. Then  $\sigma'=\sigma \setminus \{m_{vv_1},m_{vv_2},\dots, m_{vv_n} \}$ is a critical subset of $\G(\T_2)$.
        
        Conversely, we can extend a critical subset of $\G(\T_2)$ to obtain a critical subset of $\G((I(\T))$. Let $\sigma'$ be a critical subset of $\G(\T_2)$ with cardinality $r-(j+1)$ where $0\leq j\leq n-1$. Let $S \subseteq M$ such that $|S|=j$. Suppose the following claim holds:
        
        \begin{claim}\label{claim:betti}
        Let $\sigma ''= \sigma' \cup \{m_{vv_1}\} \cup S$. Then,  $\sigma''$ is a critical subset of $\G(\T)$ where $|\sigma''|=r$.    
        \end{claim} 
        
        Then $\sigma=\sigma' \cup\{m_{vv_1}\} \cup S$ for some critical subset $\sigma'$ of $I(\T_2)$ with $|\sigma'|=r-(j+1)$ and a subset $S \subseteq M$ with $|S|=j$.  Let $S=\{m_{vv_{i_1}},\ldots, m_{vv_{i_j}}\}$ where $2 \leq i_1< i_2< \cdots < i_j \leq n$. Then
       \[\lcm(\sigma)=\begin{cases}
            \lcm(\sigma')\cdot v \cdot v_1^{\w(v_1)}\cdot v_n \cdot \prod_{k=1}^{j-1} v_{i_k}^{\w(v_{i_k})} &  \text{ if } i_j=n, \\\\
            \lcm(\sigma')\cdot v \cdot v_1^{\w(v_1)} \cdot \prod_{k=1}^{j}v_{i_k}^{\w(v_{i_k})}  &  \text{ otherwise } 
        \end{cases}
        \]
     and   
        \[ \deg (\lcm(\sigma))= d \iff \deg (\lcm(\sigma'))=\begin{cases}
         d- (2+\w(v_1) + \sum_{k=1}^{j}\w(v_{i_k}))   & \text{ if } i_j=n,\\
         d- (1+\w(v_1) + \sum_{k=1}^{j}\w(v_{i_k}))   & \text{ otherwise. } 
        \end{cases}\]
        Hence, the statement holds.
\end{proof}
\begin{proof}[Proof of the Claim \ref{claim:betti}] Consider the weighted oriented tree that only contains the directed edges of $\T$ involving the vertices $\{v_1,v_2,\ldots, v_n\}.$  This tree and $\T_2$ are disconnected. Thus, $\sigma''$ does not have any bridges. Next, our goal is to show that $\sigma''$ has no true gaps. Suppose $\sigma''$ has a true gap.  We claim that $m_{vv_i}$ is not a (true) gap of $\sigma''$ for any $i\in [n]$. Suppose $m:=m_{vv_i} \notin \sigma''$ for some $i\in [n]$. If $i=n$, then $m$ does not divide $\lcm(\sigma'' \setminus m)$ since $m_{v_nw_j} \notin \sigma''$ for any $j\in[k]$. If $i\in [n-1]$, then $m$ is a blockend and $\lcm(\sigma'' \setminus m)$ is not divisible by $m$ by Claim \ref{clm:blockend}. Thus, $m$ is not a (true) gap of $\sigma''$. Therefore, a true gap of $\sigma''$ is of the form $m_{v_nw_i}$ for some $1\leq i\leq k$. Then there exists $e_1=vv_n, e_2=w_iw \in E(\T)$ such that $m_{e_1},m_{e_2} \in \sigma''$ and $m_{e_1}, m_{v_nw_i},  m_{e_2}$ are all in the same block. Since $w_i>v$, we have the following two cases for the pair $e_1, e_2$ by Proposition \ref{prop:bridgeforest} $(c)$ $(ii)$:
        \begin{itemize}
            \item The monomial $m_{vv_n}$ is the only monomial in $\sigma''$ that is in the same block as $ m_{e_1}, m_{v_nw_i}, m_{e_2}$ and divisible by $v$. This is not possible because $m_{vv_1} \in \sigma''$ and it is in the same block as those monomials because $\w(v)=1$. 
            \item There exists $m_{v_nx'}, m_{vy'} \in \sigma''$ such that $m_{v_nx'},m_{vy'}, m_{vv_n}$ are all in the same block where $x'\neq v$ and $y'\neq v_n$. Since $x'$ is a neighbor of $v_n$, it must be one of the $w_i$'s, which is a contradiction since $m_{v_nw_i} \notin \sigma''$.    \qedhere
        \end{itemize} 
\end{proof}

\begin{remark}
    The formula given in Theorem \ref{thm:bettiforest1} coincides with \cite[Theorem 9.3.15]{Jac04} when $\w(x)=1$ for all $x\in V(\T)$.
\end{remark}

Let  $N_i$ be the set of all neighbors of $w_i$  other than $v_n$ for $i \in [k]$.  By reordering the vertices $w_1,\ldots, w_k$, we may assume that the first $l$ of them  have simple weights and $w_{l+1},\ldots, w_{k-1}$ have non-simple weights for $l \geq 0$. Let $M=\{m_{vv_2}, \ldots, m_{vv_{n-1}}\}$ and
$$M'=M\cup \{m_{v_nw_1},\ldots,m_{v_nw_k} \} \cup \{m_{w_iw_{i'}} :  w_{i'} \in N_i \text{ for } l<i\leq k-1\}.$$

\begin{theorem}\label{thm:bettiforest2}
Assume $\w(v)\geq 2$ and $\w(v_n)=1$. 
    Then, we have
 \[
        \beta_{r,d}(R/I(\T)) =\beta_{r,d}(R/I(\T_1)) + \sum_{S\subseteq M} \beta_{r-(|S|+1),d-d_1 }(R/I(\T_2)) + \sum_{S'\subseteq M'} \beta_{r-(|S'|+2),d-d_2 }(R/I(\T_3)) 
\]
    for all indices $r,d$  such that
    \begin{align*}
       d_1 & =1+ \w(v)+\w(v_{1})+ \sum_{m_{vv_i}\in S} \w(v_i)\\ 
       d_2&=1+\w(v)+\w(v_1)+ \sum_{xy^{\w(y) }\in S' } \w(y)+d'
    \end{align*}
   
    where $d'= \#\{i\mid m_{v_nw_i}\notin \sigma \text{ and } m_{w_iw_{i'}}\in \sigma \text{ for some } w_{i'} \in N_i \text{ where } l<i<k\}$. 
\end{theorem}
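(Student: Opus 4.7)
The plan follows the pattern of Theorem \ref{thm:bettiforest1}: since $I(\T)$ is bridge-minimal (Theorem \ref{thm:morseminimalforest}), Theorem \ref{morseminimabetti} identifies $\beta_{r,d}(R/I(\T))$ with the number of critical subsets of $\G(\T)$ of cardinality $r$ and $\lcm$-degree $d$, so the task reduces to enumerating such subsets $\sigma$.

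The first step is to condition on whether $m_{vv_1} \in \sigma$. Since $v_1$ is a leaf of maximal rank, the generator $m_{vv_1} = v \cdot v_1^{\w(v_1)}$ is the unique generator containing $v_1$; by Claim \ref{clm:blockend} it is a blockend and can neither be a bridge of any subset nor a true gap of any subset not containing it. Hence critical subsets with $m_{vv_1} \notin \sigma$ biject with critical subsets of $\G(\T_1)$, yielding the $\beta_{r,d}(R/I(\T_1))$ term.

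The second step treats subsets with $m_{vv_1} \in \sigma$, where the hypothesis $\w(v) \geq 2$ becomes crucial: the generator $m_{vv_n} = v_n \cdot v^{\w(v)}$ is now the unique one containing $v^{\w(v)}$, making it a blockend by Proposition \ref{blockendforest}(b), and its presence or absence in $\sigma$ controls whether $v^{\w(v)} \mid \lcm(\sigma)$. I would split further on $m_{vv_n} \in \sigma$ versus $m_{vv_n} \notin \sigma$. In the first sub-case, the free choices around $v$ are precisely $S \subseteq M = \{m_{vv_2}, \ldots, m_{vv_{n-1}}\}$ (the remaining $v_i$ are leaves), and the rest of $\sigma$ is a critical subset of $\G(\T_2)$; a direct $\lcm$ computation yields the shift $d_1 = 1 + \w(v) + \w(v_1) + \sum_{m_{vv_i} \in S} \w(v_i)$, matching the displayed sum. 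In the second sub-case, extra freedom opens up at $v_n$ via the monomials $m_{v_nw_i}$ and the edges leaving the non-simply weighted $w_i$ with $l < i < k$, which together form the parameter set $M'$ and feed into the companion sum involving $d_2$.

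The main obstacle will be verifying criticality in the extension direction, mirroring Claim \ref{claim:betti}: given a critical subset $\sigma'$ of $\G(\T_2)$ (or $\G(\T_3)$) together with an admissible choice of $S$ (or $T \subseteq M'$), I must check that the augmented subset is critical in $\G(\T)$. Using Proposition \ref{prop:bridgeforest} together with the \hyperref[bridgelemma1]{Forest-Bridge Lemma} and \hyperref[truegaplemma1]{Forest-True-Gap Lemma}, I would rule out new bridges or true gaps edge by edge, exploiting that $v_2, \ldots, v_{n-1}$ are leaves and that both $m_{vv_1}$ and $m_{vv_n}$ are blockends. The delicate verification happens around $v_n$ in the second sub-case: a potential true gap at some $m_{v_nw_i}$ is precluded exactly when either $w_i$ is a simply-weighted leaf ($i \leq l$) or one of the compensating generators in $M'$ (edges out of $w_i$ for non-simple $w_i$) appears in $\sigma$, which is precisely the combinatorial information recorded by $d'$ in the definition of $d_2$.
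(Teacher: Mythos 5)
Your overall strategy is the paper's: reduce to counting critical subsets via bridge-minimality (Theorems \ref{thm:morseminimalforest} and \ref{morseminimabetti}), split on whether $m_{vv_1}\in\sigma$ to peel off the $\beta_{r,d}(R/I(\T_1))$ term, then split on $m_{vv_n}$. The first two steps are fine. The problem is that you have assigned the two $m_{vv_n}$-sub-cases to the wrong terms, and the reversal is not just a labelling issue --- it breaks the criticality verification.

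Concretely, the simple sub-case is $m_{vv_n}\notin\sigma$, not $m_{vv_n}\in\sigma$. Since $\w(v)\geq 2$, the monomial $m_{vv_n}=v_nv^{\w(v)}$ is the unique generator divisible by $v^{\w(v)}$, so when it is absent the star at $v$ decouples from the rest of the tree and $\sigma$ splits as $\sigma'\cup\{m_{vv_1}\}\cup S$ with $S\subseteq M$ free; this is the paper's case yielding the $\sum_{S\subseteq M}$ term (note the index $r-(|S|+1)$ counts exactly $m_{vv_1}$ plus $S$, with no room for $m_{vv_n}$). When instead $m_{vv_n}\in\sigma$, your claim that ``the rest of $\sigma$ is a critical subset of $\G(\T_2)$'' fails: because $v_n\mid m_{vv_n}$ and $\w(v_n)=1$, one has $m_{v_nw_k}=w_kv_n\mid\lcm(m_{vv_n},m_{w_kw'})$ for any edge $w_kw'$, so the presence of $m_{vv_n}$ turns the generators $m_{v_nw_i}$ into potential bridges or true gaps of $\sigma$. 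Criticality therefore constrains (and permits membership among) the edges at $v_n$ and at the $w_i$, which is precisely why the paper descends to $\T_3$ and parametrizes this sub-case by $T\subseteq M'$ with index shift $r-(|T|+2)$ and degree shift $d_2$ (whose leading $1+\w(v)$ is $\deg m_{vv_n}$). Your ``delicate verification'' at $v_n$ is aimed at the sub-case $m_{vv_n}\notin\sigma$, where no such interaction with the $v$-star exists, so the argument as planned would both overcount (by extending non-critical sets in the $m_{vv_n}\in\sigma$ case) and undercount (by forcing the $m_{v_nw_i}$ out of $\sigma$ where they are in fact allowed). To repair the proof, swap the two sub-cases and carry out the true-gap analysis at $v_n$, in the spirit of Claim \ref{claim:betti}, for the sub-case $m_{vv_n}\in\sigma$.
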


\begin{proof}
   As in the proof of the previous theorem, it suffices to partition the set of critical subsets of $\G(\T)$ of cardinality $r$ and degree $d$ appropriately.  Let $\sigma$ be a critical subset of $\G(\T)$ with $|\sigma|=r$ and $\deg(\lcm(\sigma))=d$. As before, our approach is based on whether $m_{vv_1}$ or $m_{vv_n}$ belong to $\sigma$. If $m_{vv_1}\notin \sigma$, then the number of such subsets equals $\beta_{r,d}(R/I(\T_1))$. Suppose $m_{vv_1}\in \sigma$. Next we consider whether  the monomial $m_{vv_n}$ is in $\sigma$.

(a) Suppose $m_{vv_n}\notin \sigma$.  Note that none of the $m_{vv_1},\ldots, m_{vv_{n-1}}$ is in a block $P$ with $|P|\geq 3$  since $\w(v)\geq 2$. Observe that $\sigma \setminus \{m_{vv_i}\}$ has no true gaps or bridges for $i\in [n-1]$. Moreover, $\sigma' \cup \{m_{vv_i}\}$ has no true gaps or bridges for a critical subset $\sigma' \subseteq \sigma$ of $I(\T_2)$ and  $i\in [n-1]$. Then $\sigma$ can be expressed as $\sigma = \sigma' \cup \{m_{vv_1}\} \cup S$ where $\sigma'$ a critical subset of $\G(\T_2)$ and $S \subseteq M$. Thus, $\deg(\lcm(\sigma))=d$ if and only if $\deg(\lcm(\sigma'))=d-d_1$ where $d_1=1+\w(v)+\w(v_1) + \sum_{m_{vv_i}\in S}\w(v_{i})$. Therefore, the number of such critical subsets for this case equals $\beta_{r-(|S|+1),d-d_1}(R/I(\T_2))$.

(b) Suppose $m_{vv_n}\in \sigma$. The proof of this case uses similar arguments as in the second case in the proof of Theorem \ref{thm:bettiforest1}. In that case, we start by observing $\sigma$ does not contain any monomial $m_{w_iw_{i'}}$ where  $i\in [l]\cup \{k\}$ and $w_{i'} \neq v_n$ is a neighbor of $w_i$.  Otherwise, $m_{v_nw_i}$ would be a bridge or a true gap of $\sigma$ for the corresponding $i$. This is shown first for $i=k$ and then iteratively build up for the others starting from $i=l$. Following the same steps, one can verify that $\sigma$ is of the form $\sigma' \cup\{m_{vv_1},m_{vv_n}\} \cup S'$ where $\sigma'$ is a critical subset of $\G(\T_3)$ and $S' \subseteq M'$. We omit the details to avoid repetition. 

Note that $\lcm(\sigma)$ is divisible by $v^{\w(v)}v_1^{\w(v_1)}v_n$ and we subtract $\w(y)$ from $d=\deg(\lcm(\sigma))$ for each $m_{xy} = xy^{\w(y)} \in S'$. Additionally, we subtract $1$ from $d$ for each $i$ whenever $m_{v_nw_i} \notin \sigma$ and $m_{w_iw_{i'}} \in \sigma$ for some $w_{i'} \in N_i$ where $l<i<k$. Let $d'$ be the total number of such $i$'s. Thus, the number of such critical subsets in this case  equals  $\beta_{r-(|S'|+2),d-d_2}(R/I(\T_3))$ where $d_2= \w(v)+\w(v_1) +\sum_{xy^{\w(y)} \in T} \w(y)+d'$.
\end{proof}

\begin{theorem}\label{thm:bettiforest3}
    Assume $\w(v)\geq 2$ and $\w(v_n)\geq 2$.  Then, for all indices $r,d$, we have

        \[\beta_{r,d}(R/I(\T)) =\beta_{r,d}(R/I(\T_1)) + \sum_{S\subseteq M} \beta_{r-(|S|+1),d-d_1 }(R/I(\T_2)) +d'\]
    where $d'$ is the number of critical subsets of $\G(\T)$ with cardinality $r$ and degree $d$ containing $vv_1$ and $ vv_n$.
\end{theorem}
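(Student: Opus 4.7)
The plan is to use bridge-minimality: by Theorems \ref{thm:morseminimalforest} and \ref{morseminimabetti}, $\beta_{r,d}(R/I(\T))$ equals the number of critical subsets $\sigma$ of $\G(\T)$ with $|\sigma|=r$ and $\deg(\lcm(\sigma)) = d$. I will partition these critical subsets into three disjoint classes according to the membership of $m_{vv_1}$ and $m_{vv_n}$: (I) $m_{vv_1}\notin \sigma$; (II) $m_{vv_1}\in \sigma$ and $m_{vv_n}\notin \sigma$; (III) $m_{vv_1},m_{vv_n}\in \sigma$. Since class (III) is precisely what $d'$ counts by definition, the task reduces to showing that classes (I) and (II) contribute $\beta_{r,d}(R/I(\T_1))$ and $\sum_{S\subseteq M}\beta_{r-(|S|+1),\,d-d_1}(R/I(\T_2))$ respectively.

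Class (I) is handled exactly as in case (a) of Theorem \ref{thm:bettiforest1}: because $v_1$ is a leaf, Proposition \ref{blockendforest}(a) shows $m_{vv_1}$ is a blockend and therefore can never be an interior element of a length-$\geq 3$ block. By Proposition \ref{prop:bridgeforest} this means $m_{vv_1}$ is neither a bridge nor a gap, so any subset $\sigma$ omitting $m_{vv_1}$ is critical in $\G(\T)$ if and only if it is critical in $\G(\T_1)=\G(\T)\setminus\{m_{vv_1}\}$. For class (II), I follow the template of case (a) of Theorem \ref{thm:bettiforest2}. Since $\w(v)\geq 2$ and each $v_i$ with $i<n$ is a leaf, Proposition \ref{blockendforest} forces every $m_{vv_i}$ with $i\in [n]$ to be a blockend, and a direct divisibility check shows that $m_{vv_n}$ can be an interior monomial of a length-$3$ block only if $\w(v)\leq 1$, which fails. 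Consequently none of the $m_{vv_i}$ can be a bridge or a true gap. A short verification --- using that $\G(\T_2)$-monomials are supported away from $v,v_1,\dots,v_n$, and that no $m_{v_nw_i}$ can be a true gap of $\sigma$ in class (II) because neither $m_{vv_n}$ nor any $m_{v_nw_j}$ lies in $\sigma$ --- then shows that the map $(\sigma',S)\mapsto \sigma'\sqcup \{m_{vv_1}\}\sqcup S$ is a bijection between pairs (critical $\sigma'\subseteq \G(\T_2)$, $S\subseteq M$) and critical subsets $\sigma$ of class (II). Tracking the degree shift contributed by $\{m_{vv_1}\}\sqcup S$ produces the factor $d_1=1+\w(v)+\w(v_1)+\sum_{m_{vv_i}\in S}\w(v_i)$.

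The main obstacle is class (III), and this is precisely why the third summand is left unexpanded as the count $d'$. When both $\w(v)\geq 2$ and $\w(v_n)\geq 2$ hold, the edges $m_{vv_n}$ and $m_{v_nw_k}$ are both blockends by Proposition \ref{blockendforest}(b), while the status of $m_{v_nw_i}$ for $i\in [k-1]$ as a possible interior monomial of a length-$3$ block depends delicately on whether $w_i$ is a leaf and on whether $\w(w_i)=1$. Unlike case (b) of Theorem \ref{thm:bettiforest2}, where the assumption $\w(v_n)=1$ forces a uniform propagation of blocks across $v_n$ and permits a clean contraction to $\G(\T_3)$, here the block structure at $v_n$ fragments according to the mixed weight profile $(\w(w_1),\dots,\w(w_k))$, and no single recursion captures class (III) in closed form. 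Once (I)--(III) are shown to be pairwise disjoint and exhaustive, summing their cardinalities yields the claimed identity.
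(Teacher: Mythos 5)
Your proof follows the paper's argument for this theorem essentially verbatim: the paper likewise partitions the critical subsets of $\G(\T)$ by membership of $m_{vv_1}$ and $m_{vv_n}$, reuses the arguments of Theorems \ref{thm:bettiforest1} and \ref{thm:bettiforest2} for the first two classes, and treats the third class as nothing more than the definition of $d'$. The only point where you add something beyond the paper is the unjustified assertion that class-(II) critical subsets contain no $m_{v_nw_j}$ --- exactly the step the paper also leaves implicit when it claims $\sigma\setminus(\{m_{vv_1}\}\cup S)$ lands in $\G(\T_2)$ --- so your write-up matches both the route and the level of detail of the original.
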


\begin{proof}
    The arguments of this proof are similar to those from the previous two recursive formulas. Let $\sigma$ be a critical subset of $\G(\T)$ with $|\sigma|=r$ and $\deg(\lcm(\sigma))=d$. As we have seen earlier, if $m_{vv_1}\notin \sigma$, then the number of such subsets $\sigma$ equals  $\beta_{r,d}(R/I(\T_1))$. Suppose $m_{vv_1}\in \sigma$. If $m_{vv_n} \notin \sigma$, then we employ the same arguments from the proof of Theorem \ref{thm:bettiforest2}.  The remaining case is $m_{vv_1}\in \sigma$ and $m_{vv_n}\in \sigma$. This concludes the proof.
\end{proof}

One can continue to peel off $\T$ in this way to obtain more recursive formulas but we will stop here to avoid repetition. If one focuses on total Betti numbers instead of graded Betti numbers, formulas have simpler expressions.

\begin{theorem}\label{thm:totalbettiforest}
    For any index $r$ we have
    \[ \beta_r(R/I(\T))=\begin{cases} 
    \beta_r(R/I(\T_1)) +\sum_{j=0}^{n-1} \binom{n-1}{j}\beta_{r-(j+1)}(R/I(\T_2)) & \text{ if } \w(v)=1,\\
    \beta_r(R/I(\T_1))+\beta_{r-1}(R/I(\T_1))& \text{ if } \w(v)\geq 2.
    \end{cases}
    \]
    Moreover,
    \[ \pd(R/I(\T))=\begin{cases} 
    \max\{\pd(R/I(\T_1)), n+\pd(R/I(\T_2))\} & \text{ if } \w(v)=1,\\
     \pd(R/I(\T_1))+1& \text{ if } \w(v)\geq 2.
    \end{cases}
    \]
\end{theorem}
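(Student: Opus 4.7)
The plan is to combine bridge-minimality (Theorem \ref{thm:morseminimalforest}) and bridge-friendliness (Theorem \ref{thm:morsefriendlyforest}) with Corollary \ref{cor:critical} and Theorem \ref{morseminimabetti}: $\beta_r(R/I(\T))$ equals the number of subsets of $\G(\T)$ of cardinality $r$ that have no bridges and no true gaps. I will derive the total Betti formulas first and then extract the projective dimension formulas.

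For $\w(v)=1$, the total Betti formula follows by summing the graded formula of Theorem \ref{thm:bettiforest1} over all degrees $d$. Since $M=\{m_{vv_2},\ldots,m_{vv_n}\}$ has cardinality $n-1$, the number of $S\subseteq M$ with $|S|=j$ is $\binom{n-1}{j}$, which immediately produces the stated formula.

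For $\w(v)\geq 2$, I would argue directly rather than summing Theorems \ref{thm:bettiforest2} and \ref{thm:bettiforest3}. Partition the critical subsets of $\G(\T)$ of cardinality $r$ into those not containing $m_{vv_1}$ and those containing it. The first class coincides with the critical subsets of $\G(\T_1)$ of cardinality $r$, yielding the summand $\beta_r(R/I(\T_1))$. For the second class, the map $\sigma\mapsto \sigma\setminus m_{vv_1}$ should be a bijection onto the critical subsets of $\G(\T_1)$ of cardinality $r-1$, yielding $\beta_{r-1}(R/I(\T_1))$. This bijection rests on three observations: first, each $v_i$ for $i\in [n-1]$ is a leaf appearing only in $m_{vv_i}$, so $m_{vv_i}$ can never be a bridge or a gap of any subset; second, when $\w(v)\geq 2$, the monomial $m_{v_nv}=v_nv^{\w(v)}$ is neither a bridge nor a gap of any subset, because the other generators containing $v$ (namely the $m_{vv_i}$) contribute $v$ only to the first power, which is insufficient to reach $v^{\w(v)}$; third, every remaining generator of $\G(\T)$ avoids the variables $v,v_1,\ldots,v_{n-1}$, so its bridge and gap status is independent of whether $m_{vv_1}\in \sigma$. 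Together these imply that bridge and gap status of each monomial $m\neq m_{vv_1}$ is insensitive to the presence of $m_{vv_1}$ in $\sigma$. Since a gap is a true gap precisely when no new bridge of the enlarged subset is dominated by it, true gap status is preserved as well, and the bijection follows.

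The projective dimension formulas then fall out by locating the largest $r$ for which $\beta_r(R/I(\T))>0$. For $\w(v)=1$, the maximum is attained either by the $\beta_r(R/I(\T_1))$ summand or by the $j=n-1$ term of the sum, giving $\max\{\pd(R/I(\T_1)),\, n+\pd(R/I(\T_2))\}$. For $\w(v)\geq 2$, the summand $\beta_{r-1}(R/I(\T_1))$ dominates, giving $\pd(R/I(\T_1))+1$. The principal obstacle will be verifying the second observation above and confirming that the set of new bridges of $\sigma\cup m_{xy}$ coincides with that of $(\sigma\setminus m_{vv_1})\cup m_{xy}$, which is what makes true gap status invariant under the bijection; this reduces to careful bookkeeping of $v$-exponents across the generators together with the leaf property of $v_1,\ldots,v_{n-1}$.
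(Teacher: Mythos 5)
Your proof is correct and follows essentially the same route as the paper: the $\w(v)=1$ case is obtained by summing the graded formula of Theorem \ref{thm:bettiforest1} over degrees, the $\w(v)\geq 2$ case by partitioning critical subsets according to whether they contain $m_{vv_1}$ and removing it, and the projective dimension statements by reading off the top nonvanishing Betti number. The only (cosmetic) difference is that you justify the insensitivity of criticality to $m_{vv_1}$ by direct bookkeeping of the exponents of $v,v_1,\dots,v_{n-1}$, whereas the paper phrases the same fact as ``$m_{vv_1}$ lies in no block of cardinality $\geq 3$ when $\w(v)\geq 2$'' and invokes Proposition \ref{prop:bridgeforest}.
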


\begin{proof}
    Regarding the Betti numbers, we can apply the same argument from the proof of Theorem \ref{thm:bettiforest1} (a). For the second case, note that  $m_{vv_1}$ is not in any block of $P$ with $|P|\geq 3$ as $\w(v)\geq 2$. Hence, $\sigma$ is critical for $I(\T)$ if and only if $\sigma\setminus \{m_{vv_1}\}$ is critical for $I(\T_1)$.
    
    The projective dimension part is clear from the above arguments and Corollary \ref{morseminimabetti}.
\end{proof}

One can generalize the results of this section from edge ideals of weighted naturally oriented forests to a larger class of ideals.  Let $\T$ be a tree (no vertex weights or edge orientations). We assign each edge $e=xy$ of $\T$ with a pair $(p_e(x), q_e(y)) \in \ZZ_{+}^2$ and define the following ideal associated to $\T$ with respect to this pair assignment. 
$$J(\T)= (x^{p_e(x)} y^{q_e(y)} ~|~ e=xy \in E(\T ) \text{ and } (p_e(x), q_e(y)) \in \ZZ_{+}^2)$$

One can produce almost all the results of this section (except for the blockend description, Proposition \ref{blockendforest}, and the recursive formulas for (graded and total) Betti numbers) for $J(\T)$  using the same arguments.
\begin{theorem}\label{thm:superforest}
    The ideal $J(\T)$ is bridge-friendly and has a Batzies-Welker matching.
\end{theorem}

This result can be extended to forests by Theorem \ref{thm:tensormorse}. Moreover, the edge ideals of weighted oriented forests (with any orientation) belong to this class of ideals. 

\begin{corollary}
    The edge ideals of weighted oriented forests are bridge-friendly and have Batzies-Welker matchings.
\end{corollary}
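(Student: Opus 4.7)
The plan is to observe that edge ideals of weighted oriented forests are a special case of the class $J(\T)$ from Theorem \ref{thm:superforest}, after which the corollary becomes a direct application of Theorems \ref{thm:superforest} and \ref{thm:tensormorse}. First I would fix a weighted oriented forest $\D = (V, E, \w)$ and let $\T$ denote its underlying (unweighted, unoriented) forest. For each edge $e = \{x, y\} \in E(\T)$, I would define the pair $(p_e(x), q_e(y)) \in \ZZ_+^2$ by setting $(p_e(x), q_e(y)) = (1, \w(y))$ if the orientation in $\D$ is $(x,y)$, and $(p_e(x), q_e(y)) = (\w(x), 1)$ if the orientation is $(y,x)$. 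Because the weight function takes values in $\NN^+$, both components of this pair are positive integers, so the assignment is valid in the sense of the construction of $J(\T)$ preceding Theorem \ref{thm:superforest}.

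Next I would verify that, under this assignment, the generator $x^{p_e(x)} y^{q_e(y)}$ of $J(\T)$ is exactly the monomial $m_{xy}$ attached to $e$ in $I(\D)$, since $xy^{\w(y)}$ or $x^{\w(x)}y$ depending on orientation. Consequently $I(\D) = J(\T)$ as monomial ideals in the polynomial ring on $V$.

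To conclude, decompose $\T$ into its connected components $\T_1, \dots, \T_t$, which are individual trees. Each $J(\T_i)$ is bridge-friendly and bridge-minimal by Theorem \ref{thm:superforest}. The vertex sets of distinct components are disjoint, so we may view each $J(\T_i)$ as living in its own polynomial ring, and $J(\T) = J(\T_1) + \cdots + J(\T_t)$ inside the tensor product of these rings. An iterated application of Theorem \ref{thm:tensormorse} transfers bridge-friendliness and bridge-minimality from the components to the sum, yielding the corollary. There is essentially no obstacle: the only thing to be careful about is that Theorem \ref{thm:superforest} applies to arbitrary pair assignments rather than only those coming from natural orientations, which is exactly the point of introducing $J(\T)$ in the preceding discussion.
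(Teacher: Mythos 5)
Your proposal is correct and follows essentially the same route as the paper: the paper derives this corollary precisely by noting that an edge ideal of a weighted oriented forest is an instance of $J(\T)$ for a suitable pair assignment determined by the orientation, invoking Theorem \ref{thm:superforest} for each tree, and extending to disjoint unions via Theorem \ref{thm:tensormorse}. Your write-up merely makes explicit the pair assignment and the component decomposition that the paper leaves implicit.
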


\section{Minimal Free Resolutions of Edge Ideals of Weighted Oriented Cycles}\label{sec:cycle}

This section consists of three parts. In the first one, we simplify the study of weighted oriented cycles by finding a way to reduce any weighted oriented cycle (or path) into a \emph{naturally oriented} one. In the last two subsections, we study \emph{classic} and \emph{non-classic} cycles whose disjoint union is the class of weighted oriented cycles.

Let $C=(V(C),E(C),\w)$ be a weighted oriented cycle with a vertex set $V(C)=\{x_1,x_2,\dots, x_n\}$, an edge set $E(C)=\{ x_1x_2,x_2x_3,\dots, x_{n-1}x_n,x_nx_1 \}$, and a weight function $\w$ on the vertices.  Recall that the notation $x_ix_{i+1}$ does not indicate the direction of this edge. For the remainder of this section, indices are considered modulo $n$. Set $e_i= x_ix_{i+1}$ and $m_i=m_{e_i}$ for each $i \in [n]$. 

We start this section by introducing blocks and blockends for cycles. These definitions are very similar to those of forests and the main difference is the order they are introduced. In the case of forests, blockends are defined from blocks. It is more natural to first define blockends for ~cycles. 

\begin{definition}\label{definitionblockcycle}
   We call $m_i$ a \textit{blockend} of $I(C)$ if $m_i\nmid \lcm(m_{i-1},m_{i+1})$ for $i \in [n]$.  If $\G(C)$ has no blockends, then the cycle $C$ is called  \emph{classic}. Otherwise, we call $C$ \textit{non-classic}. 
   
Blocks are defined as follows: if $\G(C)$ has exactly one blockend, then we say $I(C)$ has only one \textit{block}, namely $\G(C)$. If $\G(C)$ has at least two blockends, then a \textit{block} of $I(C)$ is $\G(P)$ where $P$ is the weighted oriented path that admits two nearest blockends as the first and last edges,  contains all the other edges in between, and inherits weights and orientations from $C$. In particular, a block is a subset of $\G(C)$. Note that none of the $m_i$ between the two blockends of a block can be a blockend of another block. 
\end{definition}

The notion of (being in the) same block is defined as in Definition \ref{inthesameblock}.  Our definition of blockends and blocks are based on Claim \ref{clm:blockend} and Proposition \ref{blockendforest}, respectively.

\begin{remark} 
Unlike forests, there may be two paths admitting $e_i$ and $e_j$ as its first and last edges. If $I(C)$ has only two blockends, say  $m_i$ and $m_{i+k}$ for $i \in [n]$ and $k\geq 1$, then $\{m_i, m_{i+1} ,\ldots, m_{i+k}\}$ and $\{m_{i+k}, m_{i+k+1}, \ldots, m_i\}$ are the only blocks of $I(C)$. 
\end{remark}

\begin{example} 

Let $C_1, C_2, C_3$ be the weighted oriented cycles given in Figure \ref{fig:cycles}. We identify blockends and blocks of these cycles below. 

\begin{figure}[h]
    \centering
    \includegraphics[width=0.75\textwidth]{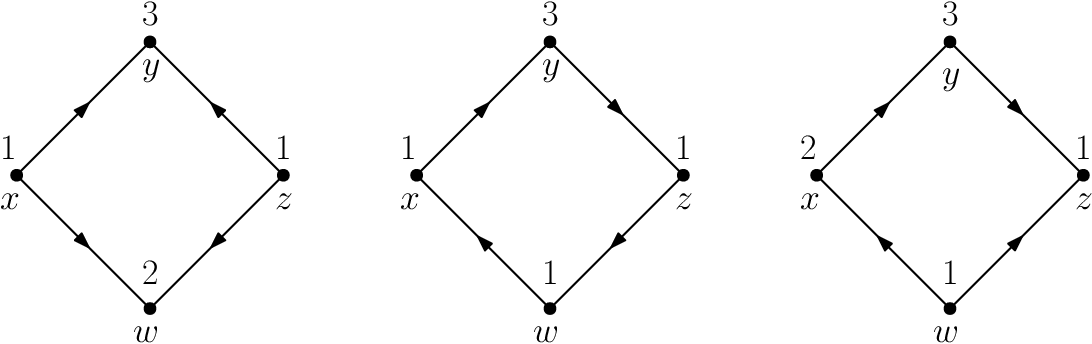}
    \label{fig:cycles}
    \caption{Weighted oriented cycles $C_1, C_2$, and $C_3$ (from left to right)}
\end{figure}

For the first cycle, we have $\G(C_1)=\{xy^3,y^3z,zw^2,w^2x\}$. It has no blockends or blocks. For the second cycle,  $\G(C_2)=\{\underline{xy^3},yz,zw,wx\}$ and it has only one blockend $xy^3$. For the last cycle, $\G(C_3)=\{\underline{xy^3},yz,zw,\underline{wx^2}\}$ has two blockends
        $xy^3$ and $wx^2$ since  $xy^3\nmid \lcm(wx^2,yz)$ and $wx^2\nmid \lcm(zw,xy^3)$. Hence, the blocks of $\G(C_3)$ are ~$\{xy^3,yz,zw,wx^2\}$ and $\{xy^3,wx^2\}$.
\end{example}

\subsection{Independence of Directions for Paths and Cycles}\label{subsec:independence}

The main result of this subsection is that given a weighted oriented cycle (resp, path), one can apply a sequence of operations to obtain a weighted \emph{naturally oriented} (all edges are are oriented in the same direction) cycle (resp, path) that shares similar properties. For the remainder of this subsection, we will work with the following total ordering $(>_I)$ on $\G(C)$:
\[ m_1>_I m_2>_I \cdots>_I m_{n-1}>_I m_n. \]
If $C$ is a non-classic cycle, then by definition it has at least one blockend. Without loss of generality, we may assume $m_n$ is one of them in this case.  Note that a blockend is neither a bridge nor a true gap of any subset of $\G(C)$.

One can produce an analogue of Proposition \ref{prop:bridgeforest} for cycles:

\begin{proposition}\label{bridgetruegapcycle}
    Let $C$ be a non-classic cycle and $\sigma$ a subset of $\G(C)$. Consider a monomial $m_i\in \G (C)$ where $1\leq i\leq n$. Then
    \begin{enumerate}
        \item[(a)] $m_i$ is a bridge of $\sigma$ iff $m_{i-1},m_i,m_{i+1}\in \sigma$ are in the same block.
        \item[(b)] $m_i$ is a gap of $\sigma$ iff $m_{i-1},m_{i+1}\in \sigma$ and $m_i\notin\sigma$ are in the same block, 
        \item[(c)] $m_i$ is a true gap of $\sigma$ iff
        \begin{itemize}
            \item $m_{i-1},m_{i+1}\in \sigma$ and $m_i\notin\sigma$ are in the same block and 
            \item if $m_{i+2}$ is in the same block as $m_{i-1},m_i,m_{i+1}$, then $m_{i+2}\notin \sigma$. In other words, $m_{i+1}$ is the only monomial that is in the same block as $m_{i-1}, m_i$ and  is divisible by $x_{i+2}$.
        \end{itemize}
    \end{enumerate}
\end{proposition}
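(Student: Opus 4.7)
The plan is to exploit a structural peculiarity of cycles: each generator $m_i = x_i^{a_i}x_{i+1}^{a_{i+1}}$ involves only the vertices $x_i$ and $x_{i+1}$, and each variable $x_i$ appears only in the two neighboring generators $m_{i-1}$ and $m_i$. This means any divisibility relation like $m_i \mid \lcm(\tau)$ for some $\tau \subseteq \G(C)\setminus m_i$ is controlled entirely by $m_{i-1}$ and $m_{i+1}$. This observation will drive every part of the proof.

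For (a), if $m_i$ is a bridge of $\sigma$, then $m_i \mid \lcm(\sigma \setminus m_i)$. To cover the $x_i$-component of $m_i$ we must have $m_{i-1} \in \sigma$, and for the $x_{i+1}$-component we must have $m_{i+1}\in \sigma$, giving $m_i \mid \lcm(m_{i-1}, m_{i+1})$. By Definition \ref{definitionblockcycle}, this is exactly the condition that $\{m_{i-1}, m_i, m_{i+1}\}$ is a potential block; combined with the non-classic blockend structure (which guarantees that any potential block lies inside a block), these three monomials lie in a common block. The converse is immediate from the definition of bridge. Part (b) is proved identically, with the only change being that $m_i \notin \sigma$ and the divisibility is of $\lcm(\sigma)$ rather than $\lcm(\sigma\setminus m_i)$.

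For (c), assume $m_i$ is a gap of $\sigma$. By definition, $m_i$ fails to be a true gap precisely when $\sigma\cup m_i$ has a new bridge $m_j$ dominated by $m_i$, i.e.\ with $j > i$ under our ordering. By part (a), such $m_j$ must satisfy $m_{j-1}, m_j, m_{j+1}\in \sigma\cup m_i$ in a common block, and $m_j$ must fail this condition in $\sigma$ alone --- so $m_i$ must appear among $\{m_{j-1}, m_{j+1}\}$. The case $m_i = m_{j+1}$ gives $j = i-1 < i$, violating domination; hence $m_i = m_{j-1}$, i.e.\ $j = i+1$. Thus the only obstruction to true-gapness is $m_{i+1}$ acquiring a new bridge triple, which happens iff $m_{i+2}\in \sigma$ and $m_i, m_{i+1}, m_{i+2}$ lie in a common block. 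Since $m_i$ being a gap already puts $m_{i-1}, m_i, m_{i+1}$ in a block, this common block extends to all four monomials, producing the stated forbidden configuration. The reformulation in terms of $x_{i+2}$-divisibility is a direct translation of "being in the block containing $m_{i-1}, m_i$."

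The main obstacle I anticipate is bookkeeping rather than a conceptual hurdle: one must carefully check that the block-extension step in (c) is valid (combining the $m_{i-1}, m_i, m_{i+1}$ block with the $m_i, m_{i+1}, m_{i+2}$ block into a single block containing all four) and that the blockend assumption on $m_n$ never interferes by cutting a potential block in the wrong place. This should follow cleanly from the maximality clause in Definition \ref{definitionblockcycle} and the blockend characterization via $m_i \nmid \lcm(m_{i-1}, m_{i+1})$.
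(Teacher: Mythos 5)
Your proof is correct and follows essentially the same route as the paper, which simply notes that (a) and (b) are identical to the forest case (Proposition \ref{prop:bridgeforest}) and that (c) follows directly from the definition of a true gap; your localization argument (each $x_j$ appears only in $m_{j-1},m_j$) and your cycle version of Claim \ref{clm:newbridge} showing $m_{i+1}$ is the only possible new bridge dominated by $m_i$ are exactly the intended reasoning, spelled out in more detail than the paper provides. The block-extension step you flag does indeed go through, since a run of consecutive non-blockends forms a potential block contained in a block.
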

\begin{proof}
   The proofs of $(a)$ and $(b)$ are identical to that of Proposition \ref{prop:bridgeforest} while the proof of $(c)$ is much simpler in this case as it immediately follows from the definition of a true gap.
\end{proof}

\begin{theorem}\label{morsepropertycycle}
    If $C$ is a non-classic cycle, then $I(C)$ is bridge-friendly and has a Batzies-Welker matching. In particular, if there exists $\sigma\subseteq \G(I(C))$ such that $\sigma$ has neither bridge nor true gap where $|\sigma|=r$ and $\lcm (\sigma) =\mathbf{x}^\mathbf{a}$, then $\beta_{r,\mathbf{a}}(R/I(\C))=1$. Otherwise, $\beta_{r,\mathbf{a}}(R/I(\C))=0$.

\end{theorem}
\begin{proof}
    The proof of bridge-friendliness follows from Lemma \ref{lem:morsefri2} and Proposition \ref{bridgetruegapcycle} $(c)$. The proof of the Batzies-Welker matching is similar to that of Theorem \ref{thm:morseminimalforest}. There are fewer cases to consider and one only needs to use Proposition \ref{bridgetruegapcycle} $(b)$ and $(c)$ along with the fact that a critical subset of $\G(C)$ has no bridges or true gaps. In order to avoid repetition, we will omit the details. 
\end{proof}

The next main result of this subsection is about independence of directions. This means for any given  weighted oriented cycle $C$, one can obtain a weighted \emph{naturally oriented} cycle $C^*$ from $C$  such that the Betti numbers of $I(C)$ can be expressed in terms of those of $I(C^*)$. A weighted oriented cycle $C$ is called \textit{naturally oriented} if all of its edges are oriented in the same direction, i.e., $(x_i,x_{i+1}) \in E(C)$ for each $i\in [n]$.

The process of obtaining $C^*$ from $C$ involves two operations: sinking and ironing. A vertex $x$ is called a \emph{sink} if every edge incident to $x$ is oriented towards it.

\begin{definition}
    \emph{Sinking} is an operation applied to a weighted oriented graph such that it reduces the weight of each sink vertex to 1 while keeping weights of all non-sinks the same. This operation only affects the weights and there is no change in the orientation of the graph. 
\end{definition}

\begin{notation}\label{straightenedcycle}
    Let $C_{\sink}$ denote the weighted oriented cycle obtained from $C$ via sinking and let $\w_{\sink}$ be the weight function of $C_{\sink}$. Then
   \[\w_{\sink}(x_i)\coloneqq \begin{cases}
   1 &\text{ if } x_i \text{ is a sink},\\
   \w(x_i)& \text{ otherwise}.
   \end{cases}\]  
   
   Let $m_i^{\sink}\in I(C_{\sink})$ denote the minimal generator corresponding to the edge $x_ix_{i+1}\in E(C_{\sink})$ for each $i \in [n]$. 
\end{notation}

Note that if a sink vertex appears in a minimal generator of $I(C)$, then it appears with the same exponent in each of those generators. Then $I(C)$ and $I(C_{\sink})$ only differ at the exponents of sink vertices. Hence, the minimal free resolution of $I(C)$ is exactly that of $I(C_{\sink})$, after a change of variables.

\begin{remark}\label{rem:GradedBetti}
Let $I$ be monomial ideal in $R=\Bbbk[x_1,\ldots, x_N]$ and $\textbf{v}=(\mathbf{v}_1, \ldots \mathbf{v}_n) \in \mathbb{N}^N $. All non-zero Betti numbers of $I$ occurs in $\mathbb{Z}^N$-graded degrees $\textbf{v}$ such that $x^\textbf{v} $ equals a least common multiple of some minimal generators of $I$ where $x^\textbf{v}=\prod_{i=1}^n x_i^{\mathbf{v}_i}$. 
\end{remark}

\begin{corollary}\label{cor:sinking}
    For any integer $r$ and vector $\mathbf{v}=(\mathbf{v}_1, \ldots ,\mathbf{v}_n)\in \mathbb{N}^n$, we have 
    \[\beta_{r,\mathbf{v}}(R/I(C))=\beta_{r,\mathbf{v}_{\sink}}(R/I(C_{\sink}))\]
    where 
    \[(\mathbf{v}_{\sink})_i =\begin{cases}
        \frac{\mathbf{v}_i}{\w(x_i)} & \text{if } \w_{\sink}(x_i)\neq \w(x_i),\\
        \mathbf{v}_i&  \text{otherwise.}
    \end{cases} \]
\end{corollary}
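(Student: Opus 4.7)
The approach is to realize sinking as a flat change of rings given by substituting powers of sink variables. Let $R'$ be another copy of the polynomial ring $\Bbbk[x_1,\dots,x_n]$ (the ambient ring of $I(C_{\sink})$) and define a graded ring homomorphism $\psi\colon R' \to R$ by $\psi(x_j) = x_j^{\w(x_j)}$ when $x_j$ is a sink of $C$, and $\psi(x_i) = x_i$ otherwise. Since every edge incident to a sink $x_j$ is oriented towards $x_j$, any minimal generator of $I(C)$ divisible by $x_j$ contains $x_j$ with exponent exactly $\w(x_j)$. Together with the identification $\w_{\sink}(x_j) = 1$, this yields $\psi(m_i^{\sink}) = m_i$ for every $i \in [n]$, and consequently $\psi(I(C_{\sink})) \cdot R = I(C)$.

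The subring $\psi(R') \subseteq R$ is generated by $x_j^{\w(x_j)}$ for each sink $x_j$ together with $x_i$ for each non-sink $x_i$, and $R$ is a free module over $\psi(R')$ with basis $\{\prod_{j \text{ sink}} x_j^{a_j} : 0 \leq a_j < \w(x_j)\}$. In particular, $R$ is flat over $R'$ via $\psi$. Hence, if $F^{\bullet}$ is a $\mathbb{Z}^n$-graded minimal free resolution of $R'/I(C_{\sink})$, then $F^{\bullet} \otimes_{R'} R$ is a free resolution of
$$ (R'/I(C_{\sink})) \otimes_{R'} R \;=\; R/(\psi(I(C_{\sink})) \cdot R) \;=\; R/I(C). $$
This tensored complex remains minimal because $\psi$ maps the graded maximal ideal of $R'$ into that of $R$ (each variable is sent to a non-constant monomial), so no matrix entry of a differential becomes a unit.

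The only thing left is to read off the multigraded correspondence. Equip $R'$ with the standard $\mathbb{Z}^n$-grading $\deg_{R'}(x_i) = \mathbf{e}_i$ and similarly for $R$, so that $\psi$ induces the linear map on multidegrees that multiplies the $j$-th coordinate by $\w(x_j)$ for each sink $x_j$. Under this identification a summand $R'(-\mathbf{v}_{\sink})$ of $F^{\bullet}$ becomes $R(-\mathbf{v})$ after tensoring, matching the transformation in the statement. Matching Betti numbers summand-by-summand gives the desired equality $\beta_{r,\mathbf{v}}(R/I(C)) = \beta_{r,\mathbf{v}_{\sink}}(R/I(C_{\sink}))$.

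The main technical point is simply keeping track of which substitution sends $\mathbf{v}_{\sink}$ to $\mathbf{v}$; this reduces to the elementary observation that in any least common multiple of generators of $I(C)$, a sink variable $x_j$ appears with exponent $0$ or $\w(x_j)$, whereas in $I(C_{\sink})$ it appears with exponent $0$ or $1$. (If $\mathbf{v}$ fails to satisfy $\w(x_j)\mid \mathbf{v}_j$ for some sink $x_j$, then $\mathbf{v}_{\sink}$ is not integral and both Betti numbers vanish, so the equality holds trivially.)
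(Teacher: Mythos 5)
Your proposal is correct and follows essentially the same route as the paper: the paper justifies this corollary by the observation immediately preceding it that $I(C)$ and $I(C_{\sink})$ differ only by a change of variables in the sink vertices, and your argument is simply a careful formalization of that observation as a flat, multidegree-rescaling substitution $x_j \mapsto x_j^{\w(x_j)}$. The extra care you take with minimality, flatness, and the non-integral case is all sound but does not constitute a different method.
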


It turns out that sinking operation does not affect blockends of $I(C)$ when $C$ is a non-classic ~cycle.

\begin{proposition}\label{easyblockendcycle}
  If $C$ is a  non-classic cycle, so is $C_{\sink}$. Moreover, $m^{\sink}_i$ is a blockend if and only if $\deg m_i> 2$.
\end{proposition}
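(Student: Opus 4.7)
The plan is to reduce blockend-ness of $m_i$ to a condition on the target of $e_i$ alone, observe that this condition is invariant under sinking (which yields the first assertion), and then compute $\deg m_i^{\sink}$ to recognize the condition via degrees.

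First I would write $m_i = x_i^{q_i} x_{i+1}^{p_i}$, where $q_i \in \{1, \w(x_i)\}$ and $p_i \in \{1, \w(x_{i+1})\}$, with $q_i = \w(x_i)$ exactly when $e_i$ is oriented into $x_i$, and analogously for $p_i$. Since $m_{i-1}$ is the only monomial on the right-hand side containing $x_i$ and $m_{i+1}$ the only one containing $x_{i+1}$, the divisibility $m_i \mid \lcm(m_{i-1}, m_{i+1})$ is equivalent to the two inequalities $q_i \leq p_{i-1}$ and $p_i \leq q_{i+1}$. Hence $m_i$ is a blockend precisely when $q_i > p_{i-1}$ or $p_i > q_{i+1}$. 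Each such strict inequality forces the larger term to equal the relevant weight (hence $>1$) and the smaller to equal $1$. Unpacking orientations, the conclusion is: $m_i$ is a blockend if and only if the target $t$ of $e_i$ satisfies $\w(t) > 1$ and the other edge incident to $t$ is oriented out of $t$, i.e., $t$ is not a sink.

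For the first assertion, this characterization depends only on the orientation of $C$ (unchanged by sinking) and on the weights of non-sinks (which coincide with $\w_{\sink}$ by construction). Applying the very same description to $C_{\sink}$ shows that $m_i^{\sink}$ is a blockend exactly when $m_i$ is, and in particular $C$ non-classic forces $C_{\sink}$ non-classic.

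For the degree assertion, let $t$ be the target vertex of $e_i$ and $s$ the source; then $m_i^{\sink} = s \cdot t^{\w_{\sink}(t)}$. If $t$ is a sink, then $\w_{\sink}(t) = 1$ and $\deg m_i^{\sink} = 2$; if $t$ is not a sink, then $\w_{\sink}(t) = \w(t)$ and $\deg m_i^{\sink} = 1 + \w(t)$. Thus $\deg m_i^{\sink} > 2$ if and only if $t$ is not a sink and $\w(t) > 1$---exactly the blockend condition from the first step. I do not expect a real obstacle; the work is a small case analysis on orientations. The only subtlety is that ``$\deg m_i > 2$'' in the statement should be read as the degree in $C_{\sink}$ (equivalently, $\deg m_i^{\sink} > 2$), which agrees with $\deg m_i$ except when $t$ is a sink with $\w(t) > 1$---the single case in which $m_i^{\sink}$ fails to be a blockend despite $\deg m_i > 2$.
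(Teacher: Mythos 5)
Your proof is correct and follows essentially the same route as the paper's, which merely records the key fact that if $x_i$ is a sink then neither $m_{i-1}$ nor $m_i$ (nor their sunk counterparts) can be blockends, so blockends occupy the same positions in $C$ and $C_{\sink}$; your explicit case analysis via the two inequalities $q_i \le p_{i-1}$ and $p_i \le q_{i+1}$ just fills in the details the paper leaves implicit. Your closing caveat is also well taken: the ``only if'' direction genuinely fails if $\deg m_i$ is computed in $I(C)$ and the target of $e_i$ is a sink of weight at least $2$, so the statement must be read as $\deg m_i^{\sink} > 2$ --- which is exactly how it is used later, e.g.\ in the proof of Theorem \ref{indecycle}, where it is paired with the observation $\deg m_i^{\sink} = \deg m_i^{\iron}$.
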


\begin{proof}
    For the first statement, it suffices to show that the blockends of $C$ and $C_{\sink}$ are in the same positions.  Notice that $m_i^{\sink}$ is a blockend of $I(C_{\sink})$ if and only if neither $x_i$ nor $x_{i+1}$ is a sink and  $\deg (m_i)>2$. This is equivalent to $m_i\nmid \lcm(m_{i-1},m_{i+1})$, i.e., $m_i$ is a blockend of $I(C)$.
\end{proof}

Notice that the second statement about blockends  is similar to Proposition \ref{blockendforest}. In fact, we have the following analogue:

\begin{corollary}\label{blockendcycles}
    If $C$ is naturally oriented, then for any $i \in [n]$, $m_i$ is a blockend if and only if $\w(x_{i+1})\geq 2$.
\end{corollary}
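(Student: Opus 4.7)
The plan is to unwind the definition of blockend using the explicit form of the generators of $I(C)$ when $C$ is naturally oriented. Since every edge of $C$ has the form $(x_j, x_{j+1})$, the associated minimal generator is $m_j = x_j \, x_{j+1}^{\w(x_{j+1})}$. I would record the three relevant facts from this explicit form: the exponent of $x_{i+1}$ in $m_i$ equals $\w(x_{i+1})$, its exponent in $m_{i-1} = x_{i-1}x_i^{\w(x_i)}$ is $0$, and its exponent in $m_{i+1} = x_{i+1}x_{i+2}^{\w(x_{i+2})}$ is exactly $1$.

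From here I would compute
\[
\lcm(m_{i-1}, m_{i+1}) \;=\; x_{i-1}\, x_i^{\w(x_i)}\, x_{i+1}\, x_{i+2}^{\w(x_{i+2})},
\]
and compare against $m_i = x_i\, x_{i+1}^{\w(x_{i+1})}$. The variable $x_i$ is already supplied by $m_{i-1}$ with exponent $\w(x_i) \ge 1$, and no other variable in $m_i$ causes any obstruction, so divisibility $m_i \mid \lcm(m_{i-1}, m_{i+1})$ reduces to the single inequality $\w(x_{i+1}) \le 1$ coming from matching the exponent of $x_{i+1}$. Applying Definition \ref{definitionblockcycle} then gives the equivalence: $m_i$ is a blockend if and only if $\w(x_{i+1}) \ge 2$.

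I do not foresee any real obstacle here; the result is an immediate computation once the generators are written out explicitly, and indices modulo $n$ cause no issue since the argument is local to the three consecutive edges $e_{i-1}, e_i, e_{i+1}$. The statement is also the cycle analogue of Proposition \ref{blockendforest}(b), and the proof here is simpler because the naturally oriented hypothesis rules out the case where both endpoints of the edge could independently carry weight $\ge 2$.
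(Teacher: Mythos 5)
Your computation is correct and is exactly the intended argument: unwinding Definition \ref{definitionblockcycle} with $m_j = x_j x_{j+1}^{\w(x_{j+1})}$ reduces the divisibility $m_i \mid \lcm(m_{i-1},m_{i+1})$ to $\w(x_{i+1}) \le 1$. The paper leaves this corollary unproved, obtaining it implicitly from Proposition \ref{easyblockendcycle} (blockend iff $\deg m_i > 2$, noting that a naturally oriented cycle has no sinks so $C_{\sink}=C$), which for these generators is the same inequality, so the two routes coincide.
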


Next, we define the ironing operation.

\begin{definition}
    \emph{Ironing} is an operation applied to a weighted oriented cycle $C$ after sinking it. This operation produces a weighted naturally oriented cycle $C_{\iron}$ by reversing the direction of each edge of $C$ (or $C_{\sink})$ of the form $(x_i,x_{i-1})$. That way all the edges of $C_{\sink}$ are oriented in the same direction. The weight function of $C_{\iron}$ is defined as follows: 
      \begin{align*}
        \w_{\iron}(x_i)&=\begin{cases}
                    \w_{\sink}(x_{i-1}) &\text{ if } (x_i,x_{i-1})\in E(C_{\sink}),\\
                \w_{\sink}(x_i) &\text{ otherwise.}
                 \end{cases}
    \end{align*}
    Let $m_i^{\iron}$ denote the minimal generator of $I(C_{\iron})$ corresponding to edge $x_ix_{i+1} \in E(C_{\iron})$. ~Then
    \[m_i^{\iron}=\begin{cases}
    x_ix_{i+1}^{\w_{\sink}(x_i)} &\text{ if } m_i^{\sink}= x_i^{\w_{\sink}(x_i)} x_{i+1},\\\\
    x_ix_{i+1}^{\w_{\sink}(x_{i+1})} &\text{ if } m_i^{\sink}= x_ix_{i+1}^{\w_{\sink}(x_{i+1})}. 
    \end{cases}\]
\end{definition}

We are now ready to present our main result on independence of directions for cycles. 

\begin{theorem}\label{indecycle}
    Let $C$ be a weighted oriented cycle. Then
    \[\beta_{r,d}(R/I(C_{\sink}))=\beta_{r,d}(R/I(C_{\iron}))\]
    for any indices $r,d$. In particular,
    \[\beta_{r}(R/I(C))=\beta_{r}(R/I(C_{\sink}))=\beta_{r}(R/I(C_{\iron}))  \]
    for any index $r$.
\end{theorem}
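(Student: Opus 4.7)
The second pair of equalities follows from the first combined with Corollary \ref{cor:sinking}. Indeed, $\beta_r(R/I(C)) = \sum_{\mathbf{v}}\beta_{r,\mathbf{v}}(R/I(C)) = \sum_{\mathbf{v}}\beta_{r,\mathbf{v}_{\sink}}(R/I(C_{\sink})) = \beta_r(R/I(C_{\sink}))$, since $\mathbf{v}\mapsto\mathbf{v}_{\sink}$ is a bijection on the support of the nonzero Betti numbers, and then $\beta_r(R/I(C_{\sink}))=\beta_r(R/I(C_{\iron}))$ follows from the $\mathbb{Z}$-graded equality by summing over $d$. So the heart of the proof is to establish $\beta_{r,d}(R/I(C_{\sink}))=\beta_{r,d}(R/I(C_{\iron}))$ for all $r,d$.

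My plan is to compute $\Tor_r^R(R/I,\Bbbk)$ via the Taylor resolution tensored with $\Bbbk$, in which the differential retains only the bridge summands. It then suffices to show that, under the natural bijection $\phi\colon m_i^{\sink}\leftrightarrow m_i^{\iron}$ extended to subsets, the two tensored Taylor complexes are isomorphic as $\mathbb{Z}$-graded complexes of $\Bbbk$-vector spaces.

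The bridge patterns correspond under $\phi$. A direct calculation in a cycle (valid for both classic and non-classic) shows that $m_i$ is a bridge of a subset $\sigma$ iff $m_i$ is not a blockend and the three indices $i-1,i,i+1$ lie in the index set of $\sigma$. By Proposition \ref{easyblockendcycle} and Corollary \ref{blockendcycles}, in both $C_{\sink}$ and $C_{\iron}$ the generator indexed by $i$ is a blockend exactly when $\w_{\iron}(x_{i+1})\geq 2$, so blockend positions agree. Since the Taylor sign $[\sigma:\sigma']$ depends only on the abstract simplex ordering on the index set $[n]$, the differentials of the tensored Taylor complexes agree after identification by $\phi$.

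The main obstacle, which I would address last, is the combinatorial identity $\deg\lcm(\sigma^{\sink})=\deg\lcm(\sigma^{\iron})$ for every subset $\sigma\subseteq[n]$, which is what provides the $\mathbb{Z}$-grading match. I would prove it by comparing the exponent of each vertex $x_j$ in the two lcms via a case analysis on the orientations of $e_{j-1}$ and $e_j$ in $C_{\sink}$. Local contributions coincide unless $e_{j-1}$ is backward, and the residual differences along a maximal backward run $\{e_k,\ldots,e_{k+\ell-1}\}$ telescope to $\w_{\sink}(x_k)-1$; this vanishes because the first vertex $x_k$ of the run receives both $e_{k-1}$ (oriented forward into it) and $e_k$ (oriented backward into it), hence is a sink of $C$ with $\w_{\sink}(x_k)=1$. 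The wraparound case, when backward edges cover the whole cycle, is handled by a cyclic telescoping argument. Combining these facts yields the desired $\mathbb{Z}$-graded isomorphism of complexes, and hence of $\Tor$ modules, proving the equality of $\mathbb{Z}$-graded Betti numbers.
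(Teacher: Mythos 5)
Your proposal is correct, but it takes a genuinely different route from the paper. The paper restricts to non-classic cycles (handling classic ones via the trivial observation that $C_{\iron}=C_{\sink}$ there), invokes Theorem \ref{morsepropertycycle} to get bridge-minimality of both $I(C_{\sink})$ and $I(C_{\iron})$, and then matches critical subsets under the index bijection using the characterizations of bridges and true gaps in Proposition \ref{bridgetruegapcycle} together with the blockend correspondence of Proposition \ref{easyblockendcycle}; the graded part is dispatched with the remark that $\deg m_i^{\sink}=\deg m_i^{\iron}$. You instead bypass the discrete Morse machinery entirely and compute $\Tor$ from the Taylor resolution tensored with $\Bbbk$, where only bridge summands survive, and exhibit a $\mathbb{Z}$-graded isomorphism of complexes. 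This is more elementary (no appeal to bridge-minimality), treats classic and non-classic cycles uniformly, and — importantly — isolates and actually proves the identity $\deg\lcm(\sigma^{\sink})=\deg\lcm(\sigma^{\iron})$, which is the one point the paper's proof leaves implicit and which does not follow from $\deg m_i^{\sink}=\deg m_i^{\iron}$ alone. I checked your telescoping: writing $g(i)=\w_{\sink}(x_{i+1}^{\text{target}})$ or $1$ according to whether $m_i\in\sigma$, the vertexwise discrepancies along a maximal backward run $e_k,\dots,e_{k+\ell-1}$ sum to $1-g(k)$, which vanishes precisely because $x_k$ is a sink of $C$, and the all-backward case closes up cyclically; note also that once the degree identity is in hand for all subsets, the bridge correspondence is an immediate corollary (since $\lcm(\sigma\setminus m_i)$ divides $\lcm(\sigma)$, equality of their degrees is equality of the lcms), so your blockend argument could be folded into it. The only cost of your approach is length: the case analysis you defer to the end is real work, whereas the paper's argument is short given the machinery already built in Section \ref{sec:cycle}.
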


\begin{proof}
    If $C$ is a classic cycle, then $C_{\sink}$ is exactly the edge ideal of an $n$-cycle. It can be viewed as a naturally oriented cycle equipped with the constant weight function $
    \w(x_i)=1$ for each $i\in [n]$. In this case, $C_{\iron}=C_{\sink}$ and both of the statements hold. 
    
     Suppose $C$ is a non-classic cycle. Let $\sigma_{\sink}=\{m_{i_1}^{\sink}, m_{i_2}^{\sink},\dots, m_{i_k}^{\sink}\}$ be a subset of $\G(C_{\sink})$ and let $\sigma_{\iron}=\{m_{i_1}^{\iron}, m_{i_2}^{\iron},\dots, m_{i_k}^{\iron}\}$ be the corresponding subset of $\G(C_{\iron})$. The key idea of the proof is the  observation that  $m_i^{\sink}$ is a bridge/true gap in $\sigma_{\sink}$  if and only if $m_i^{\iron}$ is a bridge/true gap in the corresponding  $\sigma_{\iron}$.   This follows from the characterizations given in Proposition  \ref{bridgetruegapcycle}, and, together with Proposition \ref{easyblockendcycle},  implies that $m_i^{\sink}$ is a blockend for $I(C_{\sink})$ if and only if $m_i^{\iron}$ is a blockend  for $I(C_{\iron})$. Hence, $\sigma_{\sink}$ is a critical subset of $\G(C_{\sink})$ if and only if $\sigma_{\iron}$ is a critical subset of $\G(C_{\iron})$. Note that $\deg m_i^{\sink}=\deg m_i^{\iron}$ for all $i \in [n]$. Therefore, $I(C_{\sink})$ and $I(C_{\iron})$ have the same graded Betti numbers by Corollary \ref{morseminimabetti} and Theorem \ref{morsepropertycycle}.
\end{proof}

One can sink and iron a given weighted oriented path in the same way as cycles and obtain the same result for a weighted oriented path $P, P_{\sink}$, and $P_{\iron}$. Note that $I(P)$ is always non-classic in the sense of Definition \ref{definitionblockcycle}. We will omit the proof in this case since the arguments are exactly the same. 

\begin{theorem}\label{indepath}
    Let $P$ be a weighted oriented path. Then
    \[\beta_{r,d}(R/I(P_{\sink}))=\beta_{r,d}(R/I(P_{\iron}))\]
    for any indices $r,d$. In particular,
    \[\beta_{r}(R/I(P))=\beta_{r}(R/I(P_{\sink}))=\beta_{r}(R/I(P_{\iron}))  \]
    for any index $r$.
\end{theorem}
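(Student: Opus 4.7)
The plan is to mirror the proof of Theorem \ref{indecycle} almost verbatim, replacing cyclic indices with the linear indices of a path and relying on the forest-theoretic setup of Section \ref{sec:forests}. First, I would establish a path-analogue of Corollary \ref{cor:sinking}: if $x_i$ is a sink in $P$, then $x_i$ appears with exponent $\w(x_i)$ in every generator of $I(P)$ that involves it, so a rescaling of $x_i$ identifies a minimal free resolution of $R/I(P)$ with one of $R/I(P_{\sink})$ up to an affine adjustment of the internal $\mathbb{Z}^N$-degree. In particular $\beta_r(R/I(P)) = \beta_r(R/I(P_{\sink}))$ for all $r$, so it suffices to prove $\beta_{r,d}(R/I(P_{\sink})) = \beta_{r,d}(R/I(P_{\iron}))$.

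Since paths are forests, both $I(P_{\sink})$ and $I(P_{\iron})$ are bridge-friendly and bridge-minimal by Theorem \ref{thm:superforest} (and its corollary). Hence, by Theorem \ref{morseminimabetti}, their graded Betti numbers are given by counts of critical subsets in each (cardinality, degree) bidegree. I would next verify path-analogues of Proposition \ref{easyblockendcycle} and Proposition \ref{bridgetruegapcycle}. For a weighted oriented path with edges $e_1, \dots, e_n$, the ends $m_1$ and $m_n$ are always blockends because their endpoint vertices are leaves, so Proposition \ref{blockendforest}(a) applies; an interior $m_i$ is a blockend if and only if the target of $e_i$ has non-simple weight, i.e.\ if and only if $\deg m_i > 2$ after sinking. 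Since ironing relocates weights but preserves the degree of each $m_i$, the blockend index set, and hence the entire block decomposition (as a set of intervals of indices), agrees for $P_{\sink}$ and $P_{\iron}$. Bridges and true gaps can therefore be characterized purely in terms of this common block structure and the inclusion of $m_{i-1}, m_i, m_{i+1}, m_{i+2}$ in the subset, exactly as in Proposition \ref{bridgetruegapcycle}.

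With these path-analogues in hand, I would use the index-preserving bijection
\[\sigma_{\sink} = \{m_{i_1}^{\sink}, \dots, m_{i_k}^{\sink}\} \;\longleftrightarrow\; \sigma_{\iron} = \{m_{i_1}^{\iron}, \dots, m_{i_k}^{\iron}\}\]
and observe that $m_i^{\sink}$ is a bridge or true gap in $\sigma_{\sink}$ if and only if $m_i^{\iron}$ is a bridge or true gap in $\sigma_{\iron}$, since both the block decomposition and the local inclusion pattern are preserved. Hence the bijection restricts to a bijection between critical subsets of $\G(P_{\sink})$ and critical subsets of $\G(P_{\iron})$; because $\deg m_i^{\sink} = \deg m_i^{\iron}$ for every $i$, it also preserves total degree. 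Applying Theorem \ref{morseminimabetti} once more yields the graded equality, and summing over $d$ recovers the total Betti equality.

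The only mild obstacle is rigorously confirming the path-versions of Proposition \ref{easyblockendcycle} and Proposition \ref{bridgetruegapcycle}, since these lemmas are stated for cycles; however, both follow by restricting the cyclic arguments to an interval of indices and treating the two endpoint edges via the forest blockend criterion (Proposition \ref{blockendforest}(a)) rather than the interior divisibility test. Everything else in the proof of Theorem \ref{indecycle} carries over without modification.
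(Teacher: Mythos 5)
Your proposal is correct and follows exactly the route the paper intends: the paper omits the proof of this theorem, stating that the arguments are identical to those of Theorem \ref{indecycle}, and your argument is precisely that adaptation (index-preserving bijection on subsets, preservation of blockends and hence of bridges and true gaps, then Theorem \ref{morseminimabetti}). Your added care in handling the end edges of the path via Proposition \ref{blockendforest}(a) and in citing the forest results for bridge-friendliness and bridge-minimality fills in the details the paper leaves implicit, with no gaps.
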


We now illustrate the last two theorems with an example.

\begin{example}
 Let $C$ and $P$ be the weighted oriented cycle and path given in Figures \ref{fig:iron} and \ref{fig:straighten}, respectively.  It can be verified with Macaulay2 \cite{M2} that $I(C), I(C_{\iron})$, and $I(C_{\sink})$ have the same total Betti numbers. The same is true of $I(P), I(P_{\iron})$, and $I(P_{\sink})$.
  \begin{figure}[h]
        \centering
        \includegraphics[width=0.75\textwidth]{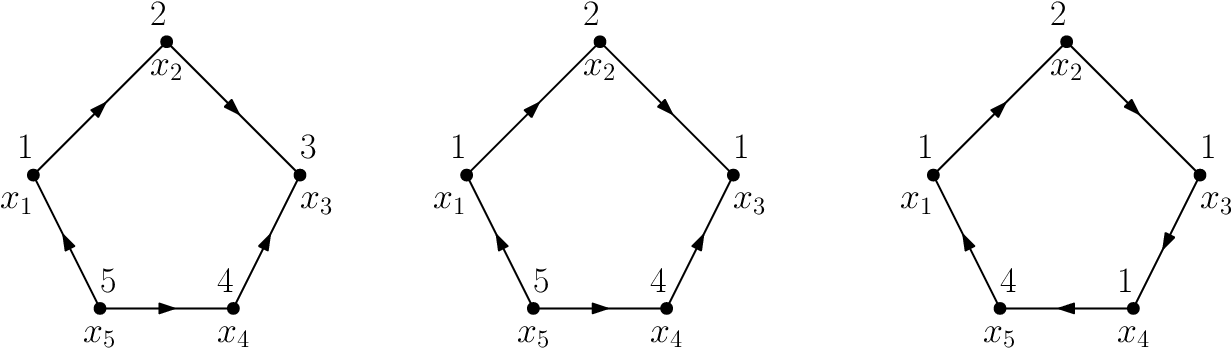}
        \caption{ $C, C_{\sink}$ and $C_{\iron}$ (from left to right)}
        \label{fig:iron}
    \end{figure}
    
 \begin{figure}[h]
        \centering
        \includegraphics[width=0.45\textwidth]{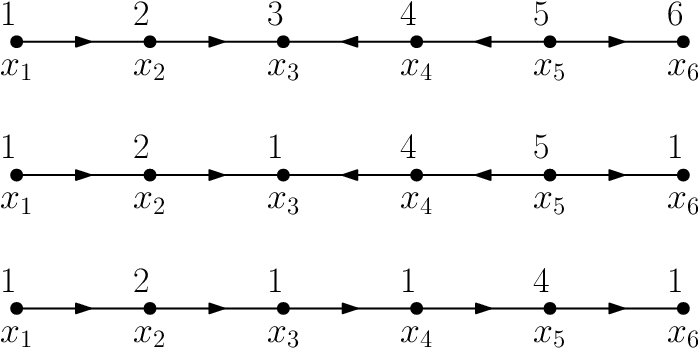}
        \caption{$P, P_{\sink},$ and $P_{\iron}$ (from top to bottom)}
        \label{fig:straighten}
    \end{figure}
\end{example}

As we shall see in the following example, it is not possible to obtain an analogue of Theorems \ref{indecycle} and \ref{indepath} for weighted oriented trees (or more generally, forests).

\begin{example}\label{notindeforest}
    Let $\T$ be the weighted oriented tree  given in Figure \ref{fig:weirdforest}.
        \begin{figure}[H]
        \centering
        \includegraphics[width=0.3\textwidth]{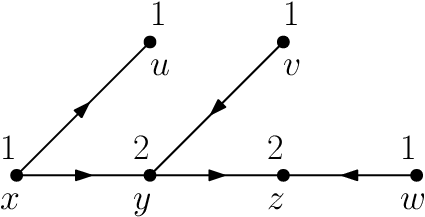}
        \caption{A weighted oriented tree}
        \label{fig:weirdforest}
    \end{figure}

   Then $I(\T)= (xy^2,xu,yz^2,y^2v,z^2w)$. As in cycles and paths, one can obtain a weighted naturally oriented tree from $\T$ by sinking and ironing operations to $\T$. After sinking $\T$, we obtain  $I(\T_{\sink})= (xy^2,xu,yz,y^2v,zw)$.  The total Betti numbers of $R/I(\T)$ and $R/I(\T_{\sink})$ both equal $(1,5,7,3)$.

    On the other hand, it is not possible to preserve the total Betti numbers after ironing $\T_{\sink}$. In fact, one can verify with Macaulay2 \cite{M2} that there are no weighted naturally oriented trees on 6 vertices with the total Betti numbers $(1,5,7,3)$. Note that there are a finite number of them as one needs to only consider the trees with vertex weights one or two, i.e., $\w(V(\T)) \subseteq \{1,2\}$ for a weighted naturally oriented tree $\T,$ due to Propositions \ref{blockendforest}, \ref{prop:bridgeforest}, and Theorem \ref{thm:morsefriendlyforest}.
\end{example}

Paths and cycles share the property of independence of directions and this shared property comes from the similarities in the total orderings inducing Barile-Macchia resolutions that are minimal. This observation leads to the next result regarding their (sometimes identical) total Betti numbers. Recall that an $n$-cycle is a cycle on $n$ vertices and a path of length $n$ is a path with $n$ edges.

\begin{theorem}\label{thm:identicalBetti} We have the following:
  \begin{enumerate}
        \item[(a)] If $P$ is a weighted oriented path of length $n$, then there exists a weighted oriented $n$-cycle $C$ such that $\beta_{i}(R/I(P))=\beta_i(R/I(C))$ for each $i\in \mathbb{Z}$. 
        \item[(b)] If $C$ is a weighted oriented $n$-cycle such that $C$ has a block of cardinality $2$ or $n\leq 4$, then there exists a weighted oriented path $P$ of length $n$ such that $\beta_{i}(R/I(P))=\beta_i(R/I(C))$ for each $i\in \mathbb{Z}$.
        \item[(c)]  For each $n\geq 5$, there exists a weighted oriented $n$-cycle $C$ such that for any weighted oriented path $P$, we have $\beta_{i}(R/I(P))\neq \beta_i(R/I(C))$ for some $i\in \mathbb{Z}$.
    \end{enumerate}
\end{theorem}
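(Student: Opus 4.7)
The plan handles the three parts separately, using Theorems~\ref{indecycle} and \ref{indepath} to reduce to naturally oriented paths and cycles throughout.

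For (a), I would start from a naturally oriented path $P$ on $x_1, \ldots, x_{n+1}$ and use Corollary~\ref{cor:sinking} to assume $\w_P(x_{n+1}) = 1$. I then build a naturally oriented cycle $C$ on $y_1, \ldots, y_n$ with $\w_C(y_1) = 2$, $\w_C(y_2) = \max\{\w_P(x_2), 2\}$, and $\w_C(y_i) = \w_P(x_i)$ for $3 \le i \le n$. Corollary~\ref{blockendcycles} then makes both $m_n^C$ and $m_1^C$ blockends. The key technical step will be a direct exponent check showing that $m_n^C$ and $m_1^C$ can be neither a bridge nor a (true) gap of any subset of $\G(C)$ — the point is that $y_1^{\w_C(y_1)}$ and $y_2^{\w_C(y_2)}$ appear only in $m_n^C$ and $m_1^C$ themselves. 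This mirrors the inertness of $m_1^P$ and $m_n^P$ in $P$, whose leaf variables $x_1$ and $x_{n+1}$ occur nowhere else. For the interior edges the bridge and true-gap conditions are local and depend only on the matched exponents, so $m_i^C \leftrightarrow m_i^P$ becomes a cardinality-preserving bijection on critical subsets. Combined with Theorem~\ref{morseminimabetti} and the bridge-minimality of $I(P)$ and $I(C)$ (Theorems~\ref{thm:morseminimalforest} and \ref{morsepropertycycle}), this gives $\beta_i(R/I(P))=\beta_i(R/I(C))$ for every $i$.

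For (b), a cardinality-$2$ block of $C$, written after cyclic relabeling as $\{m_n^C, m_1^C\}$, forces $\w_C(y_1), \w_C(y_2) \ge 2$ — precisely the hypothesis engineered in (a). I would therefore invert that construction: define a naturally oriented path $P$ on $n+1$ vertices with $\w_P(x_{i+1}) = \w_C(y_{i+1})$ for $1 \le i \le n-1$ and $\w_P(x_{n+1}) = \w_C(y_1)$, and apply the same bijection argument. For the residual cases with $n \le 4$ and no cardinality-$2$ block, the possibilities are few (classic $3$- and $4$-cycles, and the handful of small weighted cycles with a single long block), and I would verify the Betti match against an appropriate path directly, using Corollary~\ref{cor:resofclassic} for the classic cases.

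Part (c) will be the main obstacle. For each $n \ge 5$ I plan to exhibit the unweighted classic $n$-cycle as the counterexample: compute its total Betti numbers via Corollary~\ref{cor:resofclassic}, then show that no weighted path of length $n$ shares this sequence. My plan is to pin down a numerical invariant of the cycle (for instance its projective dimension paired with its top Betti number) and bound the analogous invariant across all weighted paths of length $n$ via a finite case analysis on possible block structures, using the characterizations of critical subsets in Section~\ref{sec:forests}. The hard part is that varying interior weights of a path can drastically reshape its Betti sequence, so the argument must rule out all weight patterns; I expect the difficulty to lie in the case where high-weight vertices split the path into short blocks, whose combined critical-subset structure could a priori conspire to reproduce the cyclic invariants.
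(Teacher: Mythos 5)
Your treatment of parts (a) and (b) is essentially the paper's own argument. The paper also reduces to naturally oriented $P$ and $C$ via Theorems~\ref{indecycle} and \ref{indepath}, sets $\w_C(y_1)=\w_C(y_2)=2$ and $\w_C(y_i)=\w_P(x_i)$ for $3\le i\le n$ (your $\max\{\w_P(x_2),2\}$ is a harmless variant), matches blockend positions via Proposition~\ref{blockendforest} and Corollary~\ref{blockendcycles}, and transports criticality through the block-based characterizations of bridges and true gaps (Propositions~\ref{prop:bridgeforest} and \ref{bridgetruegapcycle}) before invoking Theorem~\ref{morseminimabetti}. For (b) it likewise inverts the construction when two consecutive blockends exist and disposes of the residual $n\le 4$ cases by listing the few possible cycles and checking against explicit paths.

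Part (c) is where your proposal has a genuine gap, and you have chosen a harder route than necessary. First, your candidate counterexample differs from the paper's: the paper does \emph{not} use the unweighted classic cycle but rather the non-classic cycle $I(C)=(y_1y_2,\dots,y_{n-1}y_n,y_ny_1^2)$, which is bridge-minimal, so its critical subsets directly compute its Betti numbers. Second, and more importantly, your strategy of ``pin down a numerical invariant and bound it across all weighted paths via a finite case analysis on block structures'' is not an argument yet: the space of weight patterns on a length-$n$ path is large, the recursive formulas of Theorem~\ref{thm:totalbettiforest} branch on each weight, and you give no mechanism guaranteeing that no pattern reproduces the cycle's invariants. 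The paper avoids this entirely: after noting that $\beta_1$ forces $P$ to have length $n$, it uses polarization (\cite[Observation 37]{casiday2021betti}) to reduce the comparison of $\beta_2$ to the single unweighted path $I(P)=(x_1x_2,\dots,x_nx_{n+1})$, then exhibits an injection of critical subsets of $\G(C)$ into critical subsets of $\G(P)$ together with one explicit pair $\{m_2^P,m_n^P\}$ that is critical for $P$ but whose counterpart $\{m_2^C,m_n^C\}$ is not critical for $C$ (since $m_1^C$ is a true gap there), giving $\beta_2(R/I(C))<\beta_2(R/I(P))$ for every weighted path $P$ of length $n$. Without something playing the role of this polarization step, your case analysis over all weight patterns is the missing (and substantial) piece; and it is not even clear that the classic cycle's Betti sequence is avoided by every weighted path, since you have not computed either side.
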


\begin{proof}
    First note that $\beta_{1}(R/I(P))=\beta_1(R/I(C))$ is equivalent to $|\G(P)|= |\G(C)|$. Hence, for all three parts, we can assume path $P$ is on the vertices $x_1, \ldots, x_{n+1}$ and cycle $C$ on the vertices $y_1, \ldots, y_n$. Then 
    $$I(P)=(m^P_1,m^P_2,\dots, m^P_n) \text{ and } I(C)=(m^C_1,m^C_2,\dots, m^C_n)$$ where $m_i^P=m_{x_ix_{i+1}}$ for each $i\in [n]$,     $m_j^C=m_{y_jy_{j+1}}$ for each $j\leq [n-1]$, and $m_n^C=m_{y_ny_1}$.  Let $\w_P$ and $\w_C$ be the weight functions of $P$ and $C$, respectively.

    We may assume $P$  and $C$ are naturally oriented  by Theorems \ref{indecycle} and \ref{indepath}. Let $x_1$ be the root of $P$. Consider the Barile-Macchia matchings induced by the total orderings $m^P_1>m^P_2>\cdots > m^P_n$ on $\G(P)$ and $m^C_1>m^C_2>\cdots > m^C_n$ on $\G(C$). Observe that $I(P)$ and $I(C)$ are both  bridge-friendly by Theorems \ref{thm:morsefriendlyforest} and \ref{morsepropertycycle} with respect to the given total orderings while noting that  $I(C)$ must be non-classic for the second theorem to hold. 
    \begin{enumerate}[label=(\alph*)]
        \item Set $\w_C(y_1)=\w_C(y_2)\coloneqq 2$  and $\w_C(y_i)\coloneqq \w_P(x_i)$ for each $3\leq i\leq n$. Then $C$ is a non-classic cycle. Note that $m^C_i$ is a blockend if and only if $m^P_i$ is a blockend for  each $1\leq i\leq n$ by Proposition \ref{blockendforest} and Corollary \ref{blockendcycles}. Let $\sigma_P=\{m^P_{i_1},m^P_{i_2},\dots,m^P_{i_k}\}$ be a subset of $\G(P)$ and $\sigma_C=\{m^C_{i_1},m^C_{i_2},\dots,m^C_{i_k}\}$ be the corresponding subset of $\G(C)$. It follows from Propositions \ref{prop:bridgeforest} and \ref{bridgetruegapcycle} that, for any $1\leq i\leq n$, $m_i^C$ is a bridge/true gap of $\sigma_C$ if and only if  $m_i^P$ is a bridge/true gap of $\sigma_P$. Thus, $\sigma_C$ is critical if and only if $\sigma_P$ is critical. Therefore, $I(P)$ and $I(C)$ have the same total Betti numbers by Corollary ~\ref{morseminimabetti}.
        \item  Suppose $C$ has a block of cardinality $2$, i.e., $C$ has two consecutive blockends. Without loss of generality, we may assume $m_1^C$ and $m_n^C$ are blockends. Set $\w_P(x_i)\coloneqq \w_C(y_i)$ for each $1\leq i\leq n$. By similar arguments as the previous part, we obtain the same ~conclusion.

        Now suppose $C$ does not have any blocks of cardinality $2$ and is of length at most $4$. There are only a small number of possibilities. Note that since a cycle is symmetric, if $C$ has a blockend,  we may assume $m_1^C$ is one.  Below, we construct the corresponding $P$ for each possible case:
        \begin{itemize}
            \item If $I(C)=(y_1y_2,y_2y_3,y_3y_1^a)$ where $a\geq 1$, set $I(P)=(x_1x_2,x_2x_3,x_3x_4)$.
            \item If $I(C)=(y_1y_2,y_2y_3,y_3y_4, y_4y_1^a)$ where $a\geq 1$, set $I(P)=(x_1x_2,x_2x_3,x_3x_4,x_4x_5)$.
            \item If $I(C)=(y_1y_2,y_2y_3^a,y_3y_4, y_4y_1^b)$ where $a,b\geq 2$,  set $I(P)=(x_1x_2,x_2x_3^2,x_3x_4,x_4x_5)$.
        \end{itemize}
        One can verify with Macaulay2 \cite{M2} that $I(C)$ and $I(P)$ have the same total Betti numbers in each case.
        \item Let $I(C)=(y_1y_2,y_2y_3,\dots, y_{n-1}y_n,y_ny_1^2)$. Let $P$ be a weighted naturally oriented path. If $P$ is not of length $n$, then $\beta_1(R/I(C))\neq\beta_1(R/I(P))$. Otherwise, we will show that $\beta_2(R/I(C))<\beta_2(R/I(P))$. By the technique of polarization (see \cite[Observation 37]{casiday2021betti}), it suffices to show that the inequality holds for $I(P)=(x_1x_2,x_2x_3,\dots, x_{n-1}x_n,x_nx_{n+1})$. Note that if $\sigma_C$ is critical, then the corresponding subset $\sigma_P$ is critical. On the other hand, while $\{m_2^P,m_n^P\}$ is critical, $\{m_2^C,m_n^C\}$ is not since $m_1^C$ is a true gap of $\sigma_C$. Therefore $\beta_2(R/I(C))<\beta_2(R/I(P))$ by Corollary \ref{morseminimabetti}. \qedhere
    \end{enumerate}
\end{proof}

\subsection{Non-Classic Cycles and Recursive Formulas For Graded Betti Numbers}\label{subsec:non-classic}

In this subsection, we obtain recursive formulas for the Betti numbers of edge ideals of weighted oriented cycles. As we have seen in the previous chapter, it suffices to consider the Betti numbers of edge ideals of weighted naturally oriented cycles by Theorem \ref{indecycle}. For the remainder of this section, we assume $C$ is a non-classic weighted naturally oriented cycle. As in the previous subsection, let $m_i$ be the monomial associated to the edge $x_ix_{i+1} \in E(C)$ for each $i\in [n]$. Without loss of generality, we may assume $m_n$ is a blockend, i.e., $\w(x_1) \geq 2$.

In the previous subsection, we proved that $I(C)$ is bridge-friendly with respect to a given total ordering $(>_I)$. This total ordering is not fruitful in our efforts to obtain recursive formulas similar to the ones from  Theorems \ref{thm:bettiforest1}, \ref{thm:bettiforest2}, and \ref{thm:bettiforest3}. For this reason, we need a new total ordering on $\G(C)$ in this subsection.  Let $B_1,\dots,B_p$ be all the blocks of $I(C)$ such that
\begin{align*}
    B_1&=\{\underline{m_{b_p}=m_n}, m_1,m_2\dots, \underline{m_{b_1}}\},\\
    B_2&=\{\underline{m_{b_1}},m_{b_1+1},\dots, \underline{m_{b_2}}\},\\
    \vdots&\\
    B_p&=\{\underline{m_{b_{p-1}}},m_{b_{p-1}+1},\cdots, \underline{m_{b_p}=m_n}\}
\end{align*}
where each blockend of a block is underlined. If $|B_i|=2$ for each $i\in [p]$, then all monomials in $\G(C)$ are blockends. In particular, it is not possible for any subset of $\G(C)$ to contain a bridge or a (true) gap. Thus, the minimal free resolution of $I(C)$ is the Taylor resolution and Betti numbers are obtained directly from the Taylor resolution. Thus, we may assume $|B_k|\geq 3$ for some $k\in [p]$. We introduce a new total ordering on $\G(C)$ for a fixed $k$ where $|B_k|\geq 3$, which we call the  \emph{k-flip ordering}. This order is similar to $(>_I)$ with the only difference that  the order of the second and third elements of $B_k$, namely $m_{b_{k-1}+1}$ and  $m_{b_{k-1}+2}$,  are switched.  Let $(>_{k})$ denote the $k$-flip ordering:
\[  m_{1}>_k  m_2 >_k \cdots >_k m_{b_{k-1}} >_k \textcolor{blue}{m_{b_{k-1}+2}}>_k \textcolor{blue}{m_{b_{k-1}+1}}>_k m_{b_{k-1}+3}>_k \cdots >_k m_{n-1}>_k m_n\] 

In order to obtain formulas for the Betti numbers using our tools, we first show that $I(C)$ is bridge-friendly  with respect to  $(>_{k})$. We will start by characterizing bridges, gaps, and true gaps of $I(C)$ with respect to $(>_{k})$. Notice that bridges and gaps do not depend on the total ordering. Hence,  we use the characterizations of bridges and gaps of $I(C)$ from Proposition \ref{bridgetruegapcycle} and only consider true gaps.

\begin{proposition}\label{bridgetruegapcycle2}
    Let $\sigma$ be a subset of $\G(C)$. Consider a monomial  $m_i\in \G(C)$. Then, $m_i$ is a true gap of $\sigma$ if and only if  $m_i$ is a gap of $\sigma$  and
    \begin{enumerate}
        \item[(a)] when $i=b_{k-1}+1$, nothing else is needed;
        \item[(b)]  when $i=b_{k-1}+2$, we have $m_{i-2}\notin \sigma$  and, whenever $m_{i+2}$ is in the same block as $m_{i-1},m_i,m_{i+1}$, then $m_{i+2}\notin \sigma$;
        \item[(c)]  when $i\notin \{ b_{k-1}+1, b_{k-1}+2\}$, whenever $m_{i+2}$ is in the same block as $m_{i-1},m_i,m_{i+1}$, then $m_{i+2}\notin \sigma$.
    \end{enumerate}
\end{proposition}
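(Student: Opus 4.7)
The plan is to mirror the proof of Proposition~\ref{bridgetruegapcycle}(c), carefully tracking how the $k$-flip ordering changes which monomials are dominated by $m_i$. The gap condition on $m_i$ does not depend on the total ordering, so the only task is to determine, under $(>_k)$, when $\sigma \cup m_i$ has no new bridge dominated by $m_i$ (in the sense of the definition of a true gap).

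First I would make a standard reduction: by Proposition~\ref{bridgetruegapcycle}(a), every bridge of $\sigma \cup m_i$ is some $m_j$ with $m_{j-1}, m_j, m_{j+1} \in \sigma \cup m_i$ lying in a common block. If $m_j$ is not already a bridge of $\sigma$, then the newly added $m_i$ must appear in $\{m_{j-1}, m_j, m_{j+1}\}$, forcing $j \in \{i-1, i, i+1\}$. The case $j = i$ produces a bridge equal to $m_i$, which is never dominated by itself, so the only candidates for new bridges dominated by $m_i$ are $m_{i-1}$ and $m_{i+1}$. Moreover, $m_{i-1}$ is a new bridge of $\sigma\cup m_i$ iff $m_{i-2},m_{i-1},m_i$ lie in a common block and $m_{i-2}\in\sigma$, and symmetrically $m_{i+1}$ is a new bridge iff $m_i,m_{i+1},m_{i+2}$ lie in a common block and $m_{i+2}\in\sigma$.

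Next I would split into cases according to $i$. For $i \notin \{b_{k-1}+1, b_{k-1}+2\}$, a direct comparison of positions in $(>_k)$ shows that $m_{i+1}$ is dominated by $m_i$ while $m_{i-1}$ is not, exactly as under the standard ordering $(>_I)$; the analysis then reduces verbatim to that of Proposition~\ref{bridgetruegapcycle}(c), recovering~(c). For $i = b_{k-1}+1$, the swap places both $m_{i-1}=m_{b_{k-1}}$ and $m_{i+1}=m_{b_{k-1}+2}$ strictly above $m_i$ in $(>_k)$, so neither is dominated by $m_i$ and the gap condition alone suffices, giving~(a). For $i = b_{k-1}+2$, the swap now forces $m_{i-1}=m_{b_{k-1}+1}$ to lie strictly below $m_i$, while $m_{i+1}=m_{b_{k-1}+3}$ also lies below $m_i$; both must therefore be blocked from becoming new bridges. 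Blocking $m_{i+1}$ yields the usual condition that, whenever $m_{i+2}$ is in the same block as $m_{i-1},m_i,m_{i+1}$, one has $m_{i+2}\notin\sigma$; blocking $m_{i-1}$ yields the new condition $m_{i-2}\notin\sigma$, using that under the standing hypothesis $|B_k|\geq 3$ the monomials $m_{i-2},m_{i-1},m_i$ automatically lie in the block $B_k$.

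The main obstacle will be cleanly justifying the equivalence `$m_{i+2}$ lies in the same block as $m_{i-1},m_i,m_{i+1}$' iff `$m_i,m_{i+1},m_{i+2}$ lie in a common block' within the cycle setting, since this is what translates the abstract `no new bridge at $m_{i+1}$' condition into the stated condition, and parallel care is needed for the $m_{i-1}$ condition. Both this equivalence and the assertion that $m_{i-2},m_{i-1},m_i$ belong to $B_k$ in case~(b) follow at once from the fact that in a non-classic cycle a block is a maximal consecutive run of positions between two consecutive blockends, so membership of any triple of consecutive monomials in a common block is completely determined by whether the two interior positions are blockends, which they are not in the situations at hand.
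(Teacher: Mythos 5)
Your proof is correct and follows essentially the same route as the paper's: reduce to checking whether $m_{i-1}$ or $m_{i+1}$ becomes a new bridge of $\sigma\cup m_i$ dominated by $m_i$ under $(>_k)$, and observe that the flip reverses which of these neighbors is dominated only at positions $b_{k-1}+1$ and $b_{k-1}+2$. Your treatment is slightly more explicit than the paper's (which asserts rather than derives the restriction of candidate new bridges to $m_{i\pm 1}$, and which handles $m_{b_{k-1}}$ in case (a) via its being a blockend rather than via the ordering), but these are cosmetic differences.
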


\begin{proof}
    Recall from Proposition \ref{bridgetruegapcycle} $(b)$ that $m_i$ is a gap of $\sigma$ if and only if $m_{i-1},m_i,m_{i+1}$ are in the same block while $m_{i-1},m_{i+1}\in \sigma$ and $m_i\notin\sigma$. In particular,  in this case, $m_i$ is not  a ~blockend.
    
    Suppose $i=b_{k-1}+1$. Note that $\sigma \cup m_i$ has a new bridge smaller than $m_i$ if and only if that bridge is either $m_{b_{k-1}}$ or $m_{b_{k-1}+2}$. None of these situations is possible because   $m_{b_{k-1}}$ is a blockend, thus it cannot be a bridge, and $m_{b_{k-1}+2}>_k m_{b_{k-1}+1}$. Hence $m_i$ is a true gap of $\sigma$ if and only if it is a gap of $\sigma$.
    
    Suppose $i=b_{k-1}+2$. Note that $\sigma \cup \{m_i\}$ has a new bridge dominated by $m_i$ if and only if that bridge is either $m_{b_{k-1}+1}$ or $m_{b_{k-1}+3}$. Notice that $m_{i-2}=m_{b_{k-1}}$ is already in the same block as $m_{i-1},m_i,m_{i+1}$ and the monomial $m_{i-1}=m_{b_{k-1}+1}$ is a bridge of $\sigma \cup \{m_i\}$ iff $m_{i-2} \in \sigma$. On the other had, the monomial $m_{i+1}=m_{b_{k-1}+3}$ is a bridge of $\sigma \cup \{m_i\}$ iff $m_{i+2}\in \sigma$ and it is in the same block as $m_{i-1},m_{i},m_{i+1}$.  Thus, the statement in $(b)$ holds from the definition of true ~gaps.

    The third case follows immediately from the definition of true gaps. Note that it is the same condition as the one in Proposition \ref{bridgetruegapcycle} $(c)$, which is not surprising since the two orderings  $(>_{k})$ and $(>_I)$ are very similar.
\end{proof}

The bridge-friendliness of $I(C)$ with respect to the $k$-flip ordering follows immediately from Lemma \ref{lem:morsefri2} and Proposition \ref{bridgetruegapcycle2}.

\begin{proposition}\label{thm:morseminimalcycle2}
    The ideal $I(C)$ is bridge-friendly with respect to $(>_{k})$.
\end{proposition}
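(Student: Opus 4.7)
The plan is to establish bridge-friendliness and then bridge-minimality, leveraging the fact that $(>_k)$ differs from $(>_I)$ only by the adjacent transposition of $m_{b_{k-1}+1}$ and $m_{b_{k-1}+2}$, so both arguments mirror those already used for Theorem \ref{morsepropertycycle} with a little extra bookkeeping at the flipped pair. The key structural input is Proposition \ref{bridgetruegapcycle2}, which pinpoints exactly how the true-gap condition changes at the flip, together with the observation that the blockends $m_{b_{k-1}}$ and $m_n=m_{b_p}$ can never be bridges by the \hyperref[blockendlemma2]{Cycle-Blockend Lemma}.

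For bridge-friendliness, I would apply Lemma \ref{lem:morsefri} directly rather than Lemma \ref{lem:morsefri2}, because the flip makes the bulk-addition hypothesis of the latter fragile. Let $\sigma$ be potentially-type-2 with $b=\sbridge(\sigma)=m_j$, and let $m=m_i$ be a true gap of $\sigma\setminus\{b\}$ with $b>_k m$. Since $b$ is a bridge, $\lcm(\sigma)=\lcm(\sigma\setminus\{b\})$, so $m$ is still a gap of $\sigma$. The true-gap side-conditions in Proposition \ref{bridgetruegapcycle2} can fail only when $b\in\{m_{i-2},m_{i+2}\}$ and this element lies in the same block as $m_{i-1},m_i,m_{i+1}$. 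A case analysis on $i$ shows that in each subcase the inequality $b>_k m_i$, combined with the fact that the flip swaps only two consecutive monomials, forces $b$ to be one of $m_{b_{k-1}}$ or $m_n$; since both are blockends they cannot be bridges, contradicting $b=\sbridge(\sigma)$. Hence the hypothesis of Lemma \ref{lem:morsefri} holds.

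For bridge-minimality, I would invoke Lemma \ref{lem:bridgeminimality} following the template of the forest argument in Theorem \ref{thm:morseminimalforest}, but the cyclic structure produces far fewer cases. Assume for contradiction that distinct critical subsets $\sigma,\sigma'$ with $|\sigma'|=|\sigma|-1$ satisfy $\lcm(\sigma)=\lcm(\sigma')$. By Corollary \ref{cor:critical} and the bridge-friendliness just established, neither subset contains a bridge or a true gap. Choose $m_i$ to be a $(>_k)$-largest element of the symmetric difference $\sigma\triangle\sigma'$, say $m_i\in\sigma\setminus\sigma'$. Then $m_i$ is a gap of $\sigma'$, so Proposition \ref{bridgetruegapcycle}(b) places $m_{i-1},m_{i+1}\in\sigma'$ in the same block as $m_i$; the failure of the true-gap condition for $m_i$ in $\sigma'$, via Proposition \ref{bridgetruegapcycle2}, forces an additional element of $\sigma'$ into that block. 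The \hyperref[bridgelemma2]{Cycle-Bridge Lemma} applied to the no-bridge property of $\sigma$ then converts this into a $(>_k)$-larger element of $\sigma\triangle\sigma'$, contradicting the maximality of $m_i$.

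The principal obstacle is the asymmetric way the flip reshapes the true-gap condition at $i\in\{b_{k-1}+1,b_{k-1}+2\}$, because at these positions Proposition \ref{bridgetruegapcycle2} adds non-trivial constraints on $m_{i-2}$ or $m_{i+2}$ absent elsewhere. The idea that dissolves every such case is that the relevant neighbors of the flipped pair are precisely the blockends $m_{b_{k-1}}$ and $m_{b_k}$: by the \hyperref[blockendlemma2]{Cycle-Blockend Lemma} these are forced in or out of any subset by its lcm, and they can never serve as bridges, making each potentially problematic branch vacuous.
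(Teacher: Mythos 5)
Your bridge-friendliness argument is sound, and in fact slightly more careful than the paper's one-line justification: the paper invokes Lemma \ref{lem:morsefri2}, whose blanket hypothesis is indeed delicate at the flip (adding $m'=m_{b_{k-1}}$, which satisfies $m'>_k m_{b_{k-1}+2}$, can destroy the true-gap property of $m_{b_{k-1}+2}$ via condition (b) of Proposition \ref{bridgetruegapcycle2}), whereas your use of Lemma \ref{lem:morsefri} restricts $m'$ to be the smallest bridge, and a bridge can never be the blockend $m_{b_{k-1}}$ — exactly the observation you make. (Minor quibble: in the branch $b=m_{i+2}$ the case is vacuous because $m_{i+2}<_k m_i$ always holds, not because $m_{i+2}$ must be a blockend; only the $b=m_{i-2}=m_{b_{k-1}}$ branch genuinely needs the blockend observation.)

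The bridge-minimality half, however, has a genuine directional error. You choose a $(>_k)$-\emph{largest} element $m_i$ of $\sigma\triangle\sigma'$ and claim the chase produces a $(>_k)$-\emph{larger} element, contradicting maximality. But the chase runs the other way. The failure of the true-gap condition for $m_i$ in $\sigma'$ produces a new bridge of $\sigma'\cup\{m_i\}$ that is \emph{dominated by} $m_i$; since $m_{i-1}>_k m_i$ (up to the harmless flip cases), that bridge can only be $m_{i+1}$, which forces $m_{i+2}\in\sigma'$, then $m_{i+3}\notin\sigma'$ by the Cycle-Bridge Lemma, and finally $m_{i+3}\in\sigma$. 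Every element this produces ($m_{i+1},m_{i+2},m_{i+3}$) is $(>_k)$-\emph{smaller} than $m_i$, so no contradiction with maximality is available; indeed, starting from the largest element of the symmetric difference, the neighbor $m_{i-1}$ on the large side is automatically in both sets and the forward chase simply propagates without terminating in the way you describe. The paper instead takes $m_i$ to be the \emph{smallest} element with $m_i\in\sigma$, $m_i\notin\sigma'$, and contradicts \emph{minimality} when the chase reaches $m_{i+3}$. Your skeleton is repairable by flipping the extremal choice (and then handling the three cases $i=b_{k-1}+1$, $i=b_{k-1}+2$, and the generic $i$ separately, as the special positions require the blockend $m_{b_{k-1}}$ analysis you anticipate), but as written the contradiction you aim for cannot be reached.
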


The Barile-Macchia resolution  of $I(C)$ with respect to the $k$-flip ordering is then minimal and we can read off the Betti numbers of $I(C)$ from this Barile-Macchia resolution. In the following recursive formulas, the graded Betti numbers of $I(C)$ are expressed in terms of those of weighted naturally oriented paths. By combining these formulas with the ones given in Theorems \ref{thm:bettiforest1} -- \ref{thm:totalbettiforest}, one can reach to a better understanding of the graded Betti numbers, total Betti numbers, and projective dimension of edge ideals of weighted naturally oriented cycles.

As in Theorems \ref{thm:bettiforest2} and \ref{thm:bettiforest3}, the more vertices with non-simple weights in $C$, the more involved the formula for graded Betti numbers becomes. Hence, we only provide the graded Betti numbers only for two simple cases. One can continue to derive recursive formulas under other weight ~assumptions.

Consider the following weighted naturally oriented paths obtained from $C$ by deleting an edge or some vertices of $C$: 
$$ P_1  := C \setminus \{x_1x_n\}, ~~P_2  := C \setminus \{x_{n-1},x_n, x_1, x_2\} \text{ and } P_3 := P_2 \setminus \{ x_3\}.$$

\begin{theorem}\label{cor:recursive_non-classic}
Assume the vertex $x_n$ has a simple weight, i.e., $\w(x_n)=1$.
\begin{enumerate}
    \item[(a)]  If $\w(x_2)=1$, then, for all $r$ and $d$, we have
    \[ \beta_{r,d}(R/I(C))=\beta_{r,d}(R/I(P_1)) +\sum_{j=0}^{2}\binom{2}{j} \beta_{r-(j+1),d-(j+1+\w(x_1))}(R/I(P_2)). \]
    \item[(b)]  If $\w(x_2)\geq 2$ and $\w(x_3)=1$, then, for all   $r$ and $d$, we have 
    \begin{align*}
        \beta_{r,d}(R/I(C))=\beta_{r,d}(R/I(P_1)) &+\beta_{r-1,d-(1+\w(x_1))}(R/I(P_3))+\beta_{r-2,d-(2+\w(x_1))}(R/I(P_3))\\
        &+\beta_{r-2,d-(3+\w(x_1))}(R/I(P_3))+\beta_{r-3,d-(4+\w(x_1))}(R/I(P_3))\\
        &+\sum_{j=0}^{2}\binom{2}{j} \beta_{r-(j+2),d-(j+1+\w(x_1)+\w(x_2))}(R/I(P_3)).
    \end{align*}
\end{enumerate}
\end{theorem}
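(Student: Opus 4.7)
The plan is to count critical subsets of $\G(C)$ directly. Since $I(C)$ is bridge-minimal with respect to the $k$-flip ordering by Proposition~\ref{thm:morseminimalcycle2}, and each of $P_1,P_2,P_3$ has a bridge-minimal edge ideal by Theorem~\ref{thm:morseminimalforest} (viewing each path as a weighted naturally oriented forest), Theorem~\ref{morseminimabetti} identifies every graded Betti number appearing in the statement with the count of critical subsets of the stated cardinality and lcm-degree. Under the hypothesis $\w(x_1)\geq 2$, the monomial $m_n=x_n x_1^{\w(x_1)}$ is a blockend of $I(C)$, which makes it the natural pivot for a case split: I will partition critical subsets of $\G(C)$ by whether $m_n\in\sigma$, mirroring the strategy of the forest recursions (Theorems~\ref{thm:bettiforest1}--\ref{thm:bettiforest3}).

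If $m_n\notin\sigma$, then $\sigma$ lies in $\G(P_1)=\G(C)\setminus\{m_n\}$, and the bridge, gap, and true-gap characterizations of Propositions~\ref{prop:bridgeforest}, \ref{bridgetruegapcycle}, and \ref{bridgetruegapcycle2} reduce to their path-ideal analogues in $\G(P_1)$: any condition at $m_1$ or $m_{n-1}$ that would involve $m_n$ fails automatically, and interior conditions agree. This produces a cardinality- and degree-preserving bijection between critical subsets of $\G(C)$ with $m_n\notin\sigma$ and critical subsets of $\G(P_1)$, accounting for the $\beta_{r,d}(R/I(P_1))$ term in both (a) and (b).

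For $\sigma\ni m_n$, I will decompose $\sigma=\{m_n\}\cup S\cup\sigma'$ with $\sigma'=\sigma\cap\G(P_j)$ and $S$ the remaining ``boundary'' generators. In case (a), $\w(x_2)=\w(x_n)=1$ places $\{m_n,m_1,m_2\}$ and $\{m_{n-2},m_{n-1},m_n\}$ each inside a single block (via Corollary~\ref{blockendcycles}); using Proposition~\ref{bridgetruegapcycle2} together with Lemmas~\ref{blockendlemma2}--\ref{bridgelemma2}, I will rule out $m_2,m_{n-2}\in\sigma$ by showing that in each such configuration $m_1$ (respectively $m_{n-1}$) is forced to be either a bridge or a true gap of $\sigma$, contradicting criticality. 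Thus $S\subseteq\{m_1,m_{n-1}\}$ varies freely, and the direct calculation $\deg\lcm(\{m_n\}\cup S)=1+\w(x_1)+|S|$ produces the binomial sum. In case (b), $\w(x_2)\geq 2$ makes $m_1$ a blockend while $\w(x_3)=1$ keeps $m_2$ in the interior of its block; I then refine the case split by whether $m_1\in\sigma$. The four subsets $S\subseteq\{m_2,m_{n-1}\}$ with $m_1\notin\sigma$ yield the four middle $\beta_{r-\cdot,d-\cdot}(R/I(P_3))$ terms, with lcm-degrees computed directly from $\lcm(\{m_n\}\cup S)$; and the three sizes $j=|S|$ with $m_1\in\sigma$ yield the binomial sum via $\deg\lcm(\{m_n,m_1\}\cup S)=1+\w(x_1)+\w(x_2)+|S|$. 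The reverse direction, that any decomposed $\sigma$ with $\sigma'$ critical in $\G(P_j)$ is critical in $\G(C)$, follows from the variable-disjointness of $\sigma'\subseteq\G(P_j)$ from the boundary region.

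The principal obstacle is the true-gap case analysis around $m_n$: for each admissible configuration of $\sigma\cap\{m_1,m_2,m_{n-1}\}$ (or its case~(b) refinement), one must check that criticality of $\sigma$ in $\G(C)$ with respect to the $k$-flip ordering is equivalent to criticality of $\sigma'$ in $\G(P_j)$ with respect to the natural path ordering. A secondary subtlety is that $m_4$ and $m_{n-3}$ are blockends of $I(P_j)$ (because $x_4$ and $x_{n-2}$ are leaves in the path) even when they are not blockends of $I(C)$; however, because the generator separating them from $m_n$ (namely $m_3$ in case (b), or $m_2,m_{n-2}$ in case (a)) is absent from $\sigma$ in every admissible configuration, this blockend discrepancy does not affect the verification. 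Once the case analysis is in place, the cardinality accounting and lcm-degree shifts are routine and the stated recursions follow.
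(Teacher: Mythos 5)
Your treatment of part (a) and of the common first term is sound and essentially the paper's own argument: split on whether $m_n\in\sigma$, identify the $m_n\notin\sigma$ case with critical subsets of $\G(P_1)$, and for $m_n\in\sigma$ force $m_2,m_{n-2}\notin\sigma$ (the $1$-flip ordering is exactly what prevents $m_2$ from being a true gap when $m_n\in\sigma$, via Proposition \ref{bridgetruegapcycle2}). The paper omits the proof of part (b) entirely, deferring to the arguments of Theorem \ref{thm:bettiforest2}, and it is in part (b) that your explicit argument has a genuine gap, concentrated at the interface generator $m_3$.

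Two things go wrong with the claimed decomposition $\sigma=\{m_n\}\cup S\cup\sigma'$ with $S\subseteq\{m_1,m_2,m_{n-1}\}$ and $\sigma'\subseteq\G(P_3)$. First, the forward direction: $m_3$ is not excluded from critical subsets containing $m_n$. Take $\sigma=\{m_n,m_2,m_3\}$ for, say, $n\geq 8$. Its lcm is $x_nx_1^{\w(x_1)}x_2x_3x_4^{\w(x_4)}$; deleting any generator drops a variable, so there are no bridges, and the only candidate gaps $m_1$ (which needs $x_2^{\w(x_2)}$ with $\w(x_2)\geq 2$), $m_4$ (which needs $x_5$), and $m_{n-1}$ (which needs $x_{n-1}$) all fail to divide the lcm. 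So $\sigma$ is critical, contains $m_3\notin\G(P_3)$, and is counted by no term of your decomposition. Second, the reverse direction: variable-disjointness of $\sigma'$ from the boundary rules out new bridges but not new true gaps at $m_3$. If $\w(x_4)=1$, then $\tau=\{m_4\}$ is critical in $\G(P_3)$, yet in $\{m_n,m_2\}\cup\tau$ the monomial $m_3=x_3x_4$ divides $\lcm(m_2,m_4)$, and since $m_1\notin\sigma$ and $m_5\notin\sigma$, Proposition \ref{bridgetruegapcycle2}(b) makes $m_3$ a true gap, so the union is not critical even though $\tau$ is. The $2$-flip ordering protects the interface at $m_3$ only in your $m_1\in\sigma$ branch, where the requirement $m_{i-2}=m_1\notin\sigma$ of Proposition \ref{bridgetruegapcycle2}(b) fails automatically; in the $m_1\notin\sigma$ branch the correspondence breaks in both directions, and the two defects live in different graded degrees (the missed critical subsets contribute in degrees involving $\w(x_4)$), so they cannot be expected to cancel. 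To close the argument you must either enlarge the boundary analysis to account for configurations containing $m_3$ (and for the interaction between $m_2$ and $m_4$), or identify additional hypotheses under which such configurations are genuinely excluded.
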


\begin{proof}
The assumptions of $(a)$ and $(b)$ are equivalent to $|B_1|\geq 3$ and $|B_2|\geq 3$, respectively. Hence we can use the $1$-flip ordering for $(a)$ and  $2$-flip ordering for $(b)$. Recall from Proposition \ref{thm:morseminimalcycle2} that $I(C)$ is bridge-friendly with respect to both total orderings. Thus, we can make use of Corollary \ref{morseminimabetti} and only consider critical subsets of $\G(C)$ with cardinality $r$ and degree $d$. Let $C_{r,d}$ denote the set of all such critical subsets of $\G(C)$ and $\sigma \in C_{r,d}$.  We consider the following two cases based on whether $m_n$ is contained in $\sigma$.

$(a)$ If $m_n\notin \sigma$, then $\sigma$ is critical subset of $\G(C)$ if and only if it is a  critical subset of  $\G(P_1)$. In this case, the cardinality of  $C_{r,d}$  equals  $\beta_{r,d}(R/I(P_1))$.

If $m_n\in \sigma$, then neither $m_2$ nor $m_{n-2}$ is contained in $\sigma$. Otherwise, $\sigma$ admits $m_{1}$ and $m_{n-1}$ either as a bridge or a true gap. As a result, $\sigma \setminus \{m_1,m_{n-1}, m_n \}$ is a critical subset of $\G(P_2)$. We claim that $\sigma$ can be expressed as $\tau\cup \{m_n\} \cup S$ where  $\tau$ is a critical subset of $\G(P_2)$ with  $\deg (\tau)= r-(j+1)$ for $0\leq j\leq 2$ and $S \subseteq \{m_1,m_{n-1}\}$.  Let $\tau':= \tau\cup \{m_n\} \cup S$. It suffices to show $\tau'$ is in $ C_{r,d}$. First note that $\tau'$ has no bridges since the supports of $m_1, m_{n-1}, m_n$ are disjoint from the vertices of $P_2$. Lastly, if $\tau'$ has a true gap, it must be either $m_2$ or $m_{n-2}$. In either case, we have $m_n \notin \sigma$ by Proposition \ref{bridgetruegapcycle2}, a contradiction. Hence, $\tau'$ is indeed a critical subset of $\G(C)$ and our claim holds. One can deduce the degree of $\sigma$ in terms of that of $\tau$ by considering the four possibilities of $S$.

$(b)$ This proof is based on the arguments from proofs of $(a)$ and Theorem \ref{thm:bettiforest2}. To avoid repetition, we will omit the details. 
\end{proof}

In what follows, we present recursive formulas for total Betti numbers and projective dimension as we did for forests in Theorem \ref{thm:totalbettiforest}. These formulas are derived from similar arguments as the ones used above. 

\begin{theorem}\label{cor:recursive_nonclassic2}
    Let $P'=C\setminus \{ x_{n-1}, x_n\}$  and $P''=C\setminus \{ x_1, x_2\}$. Then, for all $r$,  we have
    \[\beta_r(R/I(C))=
    \begin{cases}
        \beta_r(R/I(P))+ \sum_{j=0}^{2}\binom{2}{j} \beta_{r-(j+1)}(R/I(P'))&\text{ if } \w(x_n)=\w(w_2)=1,\\
        \beta_r(R/I(P))+ \sum_{j=0}^{1}\binom{1}{j} \beta_{r-(j+1)}(R/I(P'))  &\text{ if } \w(x_n)=1 \text{ and } \w(x_2)\geq 2,\\
        \beta_r(R/I(P))+ \sum_{j=0}^{1}\binom{1}{j} \beta_{r-(j+1)}(R/I(P''))  &\text{ if } \w(x_n)\geq 2 \text{ and } \w(x_2)=1,\\
        \beta_r(R/I(P))+\beta_{r-1}(R/I(P))&\text{ if } \w(x_n)\geq 2 \text{ and } \w(x_2)\geq 2.
    \end{cases}
    \]
    Moreover,
    \[\pd(R/I(C))=
    \begin{cases}
        \max\{\pd(R/I(P)),3+\pd(R/I(P'))\}&\text{ if } \w(x_n)=\w(w_2)=1,\\
        \max\{\pd(R/I(P)),2+\pd(R/I(P'))\}  &\text{ if } \w(x_n)=1 \text{ and } \w(x_2)\geq 2,\\
        \max\{\pd(R/I(P)),2+\pd(R/I(P''))\}  &\text{ if } \w(x_n)\geq 2 \text{ and } \w(x_2)=1,\\
        1+ \pd(R/I(P))&\text{ if } \w(x_n)\geq 2 \text{ and } \w(x_2)\geq 2.
    \end{cases}
    \]
\end{theorem}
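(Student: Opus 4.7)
The plan is to adapt the strategy of Theorem \ref{cor:recursive_non-classic} (which handles the graded statement in Cases 1 and 2 under a simplicity hypothesis on $\w(x_3)$) to the total setting, where the weight distinctions cleanly decouple into the four cases of the statement. First I would fix, in each case, an appropriate $k$-flip ordering so that $I(C)$ is bridge-friendly and bridge-minimal by Proposition \ref{thm:morseminimalcycle2}; concretely, take $k$ to be the smallest index with $|B_k|\geq 3$. (If no such $k$ exists then every $m_i$ is a blockend, the Taylor resolution is already minimal, and the formulas degenerate to straightforward Taylor counts.) By Theorem \ref{morseminimabetti}, $\beta_r(R/I(C))$ then equals the number of critical subsets $\sigma\subseteq\G(C)$ of cardinality $r$, so the task is a recursive enumeration of such $\sigma$.

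Next I would split the count according to whether $m_n\in\sigma$, using that $m_n$ is a blockend (since $\w(x_1)\geq 2$) and hence never itself appears as a bridge or a true gap. When $m_n\notin\sigma$, Proposition \ref{bridgetruegapcycle} together with the \hyperref[blockendlemma2]{Cycle-Blockend Lemma} shows that $\sigma$ is critical for $I(C)$ if and only if it is critical for $I(P)$, where $P=C\setminus\{x_1x_n\}$; this produces the summand $\beta_r(R/I(P))$ common to all four cases. When $m_n\in\sigma$, the question becomes which of $m_{n-1}$, $m_1$, and their neighbours $m_{n-2}$, $m_2$ may appear in $\sigma$, and this is controlled precisely by whether $m_{n-1}$ and $m_1$ are blockends, i.e.\ by Corollary \ref{blockendcycles} applied to $\w(x_n)$ and $\w(x_2)$.

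In each case I would argue that critical subsets containing $m_n$ are in bijection with pairs $(\tau,S)$, where $\tau$ is a critical subset of $\G(P')$, $\G(P'')$, or $\G(P)$ (depending on which edges around $x_1$, $x_n$ must be deleted to prevent $m_{n-1}$ or $m_1$ from becoming a new bridge or true gap), and $S$ is a subset of the remaining \emph{free} monomials among $\{m_{n-1},m_1\}$. In Case 1 both $m_{n-1}$ and $m_1$ are free toggles (with $m_{n-2}, m_2\notin\sigma$ forced by the \hyperref[bridgelemma2]{Cycle-Bridge Lemma}), giving the factor $\sum_{j=0}^{2}\binom{2}{j}$. In Cases 2 and 3, one of $m_1$ or $m_{n-1}$ is already a blockend and hence absorbed into the path critical subset $\tau$, leaving only one genuine toggle and the factor $\sum_{j=0}^{1}\binom{1}{j}$. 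In Case 4 both endpoints are blockends, no toggle is available, and the contribution collapses to $\beta_{r-1}(R/I(P))$. The projective dimension identities follow immediately by taking the maximum $r$ for which $\beta_r$ is nonzero and distributing the maximum over the summands.

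The principal obstacle is the verification that adjoining $m_n$, together with any toggled edges, to a critical subset of the appropriate path actually yields a critical subset of $\G(C)$---i.e.\ that no bridge or true gap is secretly introduced elsewhere. This reduces to a finite case check with Propositions \ref{bridgetruegapcycle} and \ref{bridgetruegapcycle2}, the \hyperref[bridgelemma2]{Cycle-Bridge Lemma}, and Lemma \ref{lcmlemma2}, and runs parallel to the argument already carried out in the proof of Theorem \ref{cor:recursive_non-classic}(a) and to the analogous forest peeling in Theorem \ref{thm:bettiforest2}. I would therefore cite those arguments for the routine verifications and write out in full only the new checks arising from the additional blockend configurations in Cases 3 and 4.
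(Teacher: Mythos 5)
Your overall strategy---fixing a $k$-flip ordering so that Proposition \ref{thm:morseminimalcycle2} applies, counting critical subsets via Theorem \ref{morseminimabetti}, and splitting on whether $m_n\in\sigma$---is exactly what the paper intends (it records no details beyond ``similar arguments''), and your treatment of Cases 2, 3 and 4 is correct: there precisely one of $m_{n-2},m_2$ (or neither, in Case 4) must be expelled from $\sigma$, so the residual part of $\sigma$ really does range over critical subsets of $P'$, $P''$ or $P$ respectively, with the stated number of free toggles.

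Case 1, however, does not close up as you have written it. You correctly note that when $m_n\in\sigma$ both $m_{n-2}$ and $m_2$ are forced out (for $m_2$ this requires the $1$-flip ordering, under which a gap at position $1$ is automatically a true gap by Proposition \ref{bridgetruegapcycle2}(a); with the unflipped ordering $m_1$ can be a gap that fails to be a true gap when $m_3\in\sigma$). But you then let $\tau$ range over critical subsets of $\G(P')=\{m_1,\dots,m_{n-3}\}$, which contains both $m_1$ (already consumed as a toggle in $S$) and $m_2$ (which you have just excluded); the bijection is ill-posed. The residual set must range over $\G\big(C\setminus\{x_{n-1},x_n,x_1,x_2\}\big)=\{m_3,\dots,m_{n-3}\}$, i.e.\ the path $P_2$ of Theorem \ref{cor:recursive_non-classic}(a), and summing that graded formula over $d$ yields $\beta_r(R/I(P))+\sum_{j=0}^{2}\binom{2}{j}\beta_{r-(j+1)}(R/I(P_2))$ rather than the displayed formula with $P'$. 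The two genuinely disagree: for $I(C)=(x_1x_2,x_2x_3,x_3x_4,x_4x_5,x_5x_1^2)$ one computes $\beta(R/I(C))=(1,5,6,2)$ and $\pd(R/I(C))=3$, which the $P_2$ version reproduces, whereas the displayed Case 1 predicts $\beta_2=8$ and $\pd=5$. So carrying your argument out consistently proves a statement different from the one displayed; you should derive the $P_2$ formula and flag the discrepancy rather than present the verification as routine. The same correction applies to the first line of the projective-dimension formula, which should read $\max\{\pd(R/I(P)),\,3+\pd(R/I(P_2))\}$.
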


In the same spirit as the conclusion of weighted oriented forests, our main results on edge ideals of weighted naturally oriented cycles can be generalized to a larger class of ideals. Let $C$ be an $n$-cycle (no vertex weights or edge orientations). We assign each edge $e=x_ix_{i+1}$ of $C$ with a pair $(p_e(x_i), q_e(x_{i+1})) \in \ZZ_{+}^2$ and define the following ideal associated to $C$ with respect to this pair assignment: 
$$J(C)= (x_i^{p_e(x_i)} x_{i+1}^{q_e(x_{i+1})} ~|~ e=x_ix_{i+1} \in E(C ) \text{ and } (p_e(x_i), q_e(x_{i+1})) \in \ZZ_{+}^2)$$

We adopt the definition of blockends and the notions of classic and non-classic cycles from Definition \ref{definitionblockcycle}. Furthermore, by applying arguments similar to those employed throughout this subsection to this class of ideals, one can obtain the following result.

\begin{theorem}\label{thm:supercycle}
    If $J(C)$ is non-classic, i.e., $\G(J(C))$ has a non-bridge, then $J(C)$ is bridge-friendly and has a Batzies-Welker matching.
\end{theorem}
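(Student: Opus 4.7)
The plan is to mirror the three-part development carried out for edge ideals of weighted naturally oriented non-classic cycles in Subsection~\ref{subsec:non-classic}, adapted to the broader class $J(C)$. Write $m_i = x_i^{p_i} x_{i+1}^{q_i}$, where $p_i = p_{e_i}(x_i)$ and $q_i = q_{e_i}(x_{i+1})$. The combinatorial scaffolding of blockends, blocks, bridges, gaps, and true gaps depends only on divisibility relations among consecutive generators and on the fact that each variable $x_j$ appears in exactly the two generators $m_{j-1}$ and $m_j$; both features persist for $J(C)$, so the entire framework transfers.

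First I would define blockends and blocks of $J(C)$ verbatim as in Definition~\ref{definitionblockcycle}: the monomial $m_i$ is a blockend iff $m_i \nmid \lcm(m_{i-1}, m_{i+1})$, which is equivalent to $p_i > q_{i-1}$ or $q_i > p_{i+1}$. Non-classicity guarantees at least one blockend. The analogue of Proposition~\ref{bridgetruegapcycle} then follows by the same arguments, yielding characterizations of bridges, gaps, and true gaps of any subset $\sigma \subseteq \G(J(C))$ in terms of blocks.

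Next I would introduce a $k$-flip ordering. If every block has cardinality $2$, then no subset contains a bridge or gap, so the Taylor resolution is already minimal and the conclusion holds with empty matching. Otherwise pick $k$ with $|B_k| \geq 3$ and order $\G(J(C))$ as in the construction preceding Proposition~\ref{thm:morseminimalcycle2}. The analogue of Proposition~\ref{bridgetruegapcycle2} follows by the same case split on the position of $m_i$, and bridge-friendliness is then immediate from Lemma~\ref{lem:morsefri2}.

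Finally, for bridge-minimality I would repeat the proof of Proposition~\ref{thm:morseminimalcycle2}. The three auxiliary lemmas (Cycle-Blockend, Cycle-Lcm, Cycle-Bridge) all generalize without change; the key point to verify is the Cycle-Blockend Lemma, which uses that a blockend $m_i$ is the unique generator divisible by $x_i^{p_i}$ (when $p_i > q_{i-1}$) or by $x_{i+1}^{q_i}$ (when $q_i > p_{i+1}$). Given these lemmas, the three-case analysis on the position of the smallest separating index $i$ relative to $b_{k-1}+1$ and $b_{k-1}+2$ proceeds word-for-word, so any two distinct critical subsets have distinct lcm's, establishing bridge-minimality via Lemma~\ref{lem:bridgeminimality}. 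The main obstacle is bookkeeping: one must verify that every step uses only divisibility relations among the $m_i$ and the two-generators-per-variable property, rather than the specific exponent pattern $m_i = x_i x_{i+1}^{\w(x_{i+1})}$ of the weighted naturally oriented edge-ideal case.
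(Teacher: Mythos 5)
Your proposal is correct and follows essentially the same route as the paper, which in fact offers no written proof beyond the assertion that the arguments of Subsection~\ref{subsec:non-classic} carry over. The details you supply — the explicit blockend criterion $p_i > q_{i-1}$ or $q_i > p_{i+1}$, and the check that the Cycle-Blockend Lemma still holds because a blockend remains the unique generator divisible by the relevant pure power — are precisely the verifications the paper leaves implicit.
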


\subsection{Classic Cycles and Eliahou-Kervaire Splitting}\label{subsec:classic}

The main result of this subsection provides an inductive way to construct the minimal free resolution of a classic cycle. Since we already considered non-classic cycles in Subsection \ref{subsec:non-classic}, the results of this subsection concludes our investigation on the minimal free resolutions of edge ideals of weighted oriented cycles. The study of edge ideals of classic cycles can be reduced to that of their underlying (unweighted and unoriented) cycles. Let $C$ be a classic cycle on $n$ vertices. Denote the underlying cycle of $C$ by $C_n$, i.e.,
$$I(C_n)=(x_1x_2,x_2x_3, \ldots, x_{n-1}x_n, x_nx_1).$$ 
Observe that the edge ideals of $C_{\sink}$ and $C$ coincide.  Then, $I(C)$ is exactly $I(C_n)$ after a change of variables, hence it suffices to find the minimal free resolution of $I(C_n)$.

We already know that the Betti numbers of $I(C_n)$ are independent of $\textrm{char} (\Bbbk) $ \cite[Theorem ~7.6.28]{Jac04}. Therefore, one can check if a Barile-Macchia resolution of $I(C_n)$ is minimal by comparing the numbers of its critical subsets with its Betti numbers. 
\begin{remark}\label{rem:cycleTable}
    Using our {\tt Macaulay2} codes from \cite{github}, we obtain the following information about the bridge-friendliness of $I(C_n)$ and whether it has a minimal Barile-Macchia resolution:
    \begin{center}
        \begin{tabular}{|c|c|c|}
        \hline
        $n$ & bridge-friendliness & minimal Barile-Macchia resolution  \\ \hline
        2,3,5,6 & \cmark      & \cmark         \\ \hline
        4,7,8,10 & \xmark      & \cmark         \\ \hline
        9 & \xmark      & \xmark         \\ \hline
    \end{tabular}
    \end{center}

In general, it is not known whether $I(C_n)$ is  bridge-friendly or has a minimal Barile-Macchia resolution, with the exception of when $n\equiv 1 \pmod 3$ from Proposition \ref{ex:notfriendlycycle}. Thus, our tools from the previous sections fall short in producing the minimal free resolution of $I(C_n)$. Our supplementary tool is the Eliahou-Kervaire splitting (abbreviated as E-K splitting) from  \cite{EK90}. The main idea behind the E-K splitting is to decompose a given monomial ideal $I$ into smaller ones and use these ideals to understand $I$. 
\end{remark}

\begin{definition}\label{def:EKsplit}
   A monomial ideal $I$ is called \textit{E-K-splittable} if it  is the sum of two non-zero monomial ideals $J$ and $K$ such that $\G(I)=\G(J) \sqcup \G(K)$  and there is a splitting function
    \begin{align*}
        \G(J \cap K ) &\to \G(J) \times \G(K)\\
        w &\mapsto (\phi(w), \varphi(w))
    \end{align*}
    satisfying the following properties:
    \begin{enumerate}[label=(\arabic*)]
        \item $w = \lcm(\phi(w), \varphi(w))$ for all $w \in \G(J\cap K)$, and
        \item for each non-empty subset $W \subseteq \G(J\cap K)$, we have that $\lcm( \phi(W))$ and $\lcm( \varphi(W))$ strictly divide $\lcm(W)$.
    \end{enumerate}
    The decomposition $I=J+K$ is called an \textit{E-K splitting}.
\end{definition}

It was shown in  \cite[Proposition 2.1]{FHV09} that if $I=J+K$ is an E-K splitting, then the minimal free resolution of $I$ can be obtained from those of $J,K$, and $J\cap K$ via the mapping cone construction. We adopt this strategy for $I(C_n)$ by setting  
$J=(x_2x_3,\dots, x_{n-1}x_n)$ and $K=(x_nx_1,x_1x_2)$
where $I(C_n)=J+K$. Note that 
\begin{multline*}
    \G(J\cap K)=\{x_1x_{n-1}x_n, x_1x_2x_3\} \cup \{ x_1x_n(x_ix_{i+1}) ~|~ 3\leq i\leq n-3 \} ~ \cup \\ \{x_1x_2(x_jx_{j+1})~|~ 4\leq j\leq n-2\}.
\end{multline*}
We may assume $n\geq 8$.

\begin{proposition}\label{prop:EKsplit}
    The decomposition $I(C_n)=J+K$ is an E-K splitting.
\end{proposition}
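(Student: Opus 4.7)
The strategy is to construct an explicit splitting function on $\G(J\cap K)$ using the natural pairing that already appears in the description of $\G(J\cap K)$, and then verify the two conditions of Definition \ref{def:EKsplit} by exploiting the fact that the variable $x_1$ appears in $K$ but not in $J$. By inspection, each element of $\G(J\cap K)$ has a unique preferred expression as $\lcm(u,v)$ with $u\in \G(J)$ and $v\in \G(K)$, so I would define
\[ \phi(x_1x_{n-1}x_n)=x_{n-1}x_n,\qquad \varphi(x_1x_{n-1}x_n)=x_nx_1, \]
\[ \phi(x_1x_2x_3)=x_2x_3,\qquad \varphi(x_1x_2x_3)=x_1x_2, \]
and for the remaining two families set $\phi(x_1x_nx_ix_{i+1})=x_ix_{i+1}$, $\varphi(x_1x_nx_ix_{i+1})=x_nx_1$ (for $3\le i\le n-3$) and $\phi(x_1x_2x_jx_{j+1})=x_jx_{j+1}$, $\varphi(x_1x_2x_jx_{j+1})=x_1x_2$ (for $4\le j\le n-2$). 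The disjoint union $\G(I(C_n))=\G(J)\sqcup\G(K)$ is clear from the definitions of $J$ and $K$, and condition (1) of Definition \ref{def:EKsplit} holds by construction.

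For the $\phi$ half of condition (2), the key observation is that no generator of $J$ involves the variable $x_1$, while every element of $\G(J\cap K)$ is divisible by $x_1$. Hence $x_1\nmid\lcm(\phi(W))$ but $x_1\mid\lcm(W)$ for every non-empty $W\subseteq\G(J\cap K)$. Since $\phi(w)\mid w$ for each $w$ gives $\lcm(\phi(W))\mid\lcm(W)$ automatically, the divisibility is strict.

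For the $\varphi$ half, I would use that $\G(K)\subseteq\{x_1x_2,\,x_nx_1\}$, so $\lcm(\varphi(W))$ is supported entirely on the three variables $\{x_1,x_2,x_n\}$. It therefore suffices to exhibit some variable $x_k$ with $3\le k\le n-1$ that divides $\lcm(W)$. Each of the four families of generators of $\G(J\cap K)$ contributes such a variable: $x_{n-1}$ from $x_1x_{n-1}x_n$, $x_3$ from $x_1x_2x_3$, one of $x_i,x_{i+1}$ from $x_1x_nx_ix_{i+1}$ (with $i\in[3,n-3]$), and one of $x_j,x_{j+1}$ from $x_1x_2x_jx_{j+1}$ (with $j\in[4,n-2]$). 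The index bounds keep all these indices inside $\{3,\dots,n-1\}$, and the standing hypothesis $n\ge 8$ ensures the two parametrised families are non-empty and the various ranges are well defined.

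I do not expect a serious obstacle: the argument is essentially combinatorial bookkeeping, and the only thing to be careful about is tracking the ranges of $i$ and $j$ to make sure the extra variables they produce genuinely lie outside the support $\{x_1,x_2,x_n\}$ of $\lcm(\varphi(W))$. The asymmetry between $\G(J)$ (no $x_1$) and $\G(K)$ (supported on three variables) is what makes both halves of condition (2) fall out almost automatically.
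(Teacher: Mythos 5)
Your proposal is correct and uses exactly the same splitting function and the same two observations as the paper: $x_1$ divides $\lcm(W)$ but not $\lcm(\phi(W))$, and some $x_k$ with $3\le k\le n-1$ divides $\lcm(W)$ but not $\lcm(\varphi(W))$. The only difference is that you spell out the index bookkeeping for the second half in more detail than the paper does.
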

\begin{proof}
    Consider the following splitting function
    \begin{align*}
        \G(J \cap K ) &\to \G(J) \times \G(K)\\
        w &\mapsto (\phi(w), \varphi(w)),\\
        x_1x_{n-1}x_n&\mapsto (x_{n-1}x_n,x_1x_n),\\
        x_1x_{2}x_3&\mapsto (x_2x_3,x_1x_2),\\
        x_1x_ix_{i+1}x_n&\mapsto (x_ix_{i+1},x_1x_n),\\
        x_1x_2x_jx_{j+1}&\mapsto (x_{j}x_{j+1},x_1x_2)
    \end{align*}
    where   $3\leq i\leq n-3$ and $ 4\leq j\leq n-2$.  The first property of Definition \ref{def:EKsplit} is immediate. For the second property, consider a non-empty subset $W\subseteq \G(J\cap K)$. Observe that $\lcm(\phi(W))$ strictly divides $ \lcm(W)$ since $x_1 \mid \lcm(W)$ but  $x_1 \nmid \lcm(\phi(W))$.  Similarly, $\lcm(\varphi(W)) $ is a proper divisor of  $\lcm(W)$ because there exists $x_k$ for  $3\leq k\leq n-1$ such that $x_k \mid \lcm(W)$ but $x_k \nmid \lcm(\varphi(W))$.
\end{proof}

Notice that $J$ and $K$ are edge ideals of weighted naturally oriented paths and their minimal free resolutions are known from Theorem \ref{thm:morsefriendlyforest}. Thus, it suffices to obtain a minimal free resolution $J\cap K$. We repeat the process by showing that $J\cap K= J' +K'$ is an E-K splitting where
$$
  J'=x_1x_n(x_{n-1},x_ix_{i+1}~|~3\leq i\leq n-3)  \text{ and }
  K'=x_1x_2(x_3, x_ix_{i+1}~|~ 4\leq i\leq n-2).
$$
Then $J'\cap K'=x_1x_2x_n(x_3x_4,x_4x_5,\dots,x_{n-2}x_{n-1},x_3x_{n-1})$.

\begin{proposition}
    The decomposition $J\cap K=J'+K'$ is an E-K splitting.
\end{proposition}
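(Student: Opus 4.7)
The approach is to mirror the proof of Proposition~\ref{prop:EKsplit}: build an explicit splitting function $\G(J' \cap K') \to \G(J') \times \G(K')$ and then read off the two required conditions from the structure of the generators. The first condition, that $\G(J \cap K) = \G(J') \sqcup \G(K')$, is immediate by inspection of the generators of $J \cap K$ listed in the excerpt --- each of them is either a multiple of $x_1 x_n$ (and hence in $J'$) or a multiple of $x_1 x_2$ with no $x_n$ factor (and hence in $K'$), and the two collections are disjoint. The real content is then the splitting function for $J' \cap K'$, whose minimal generators (given just before the proposition) are the five-variable squarefree monomials $x_1 x_2 x_n \cdot x_i x_{i+1}$ for $3 \le i \le n-2$ together with $x_1 x_2 x_3 x_{n-1} x_n$.

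For the splitting function I would make the natural choices: send $x_1 x_2 x_3 x_{n-1} x_n$ to $(x_1 x_{n-1} x_n,\, x_1 x_2 x_3)$; for $4 \le i \le n-3$ send $x_1 x_2 x_n x_i x_{i+1}$ to $(x_1 x_n x_i x_{i+1},\, x_1 x_2 x_i x_{i+1})$; and treat the two boundary values $i = 3$ and $i = n-2$ separately as $(x_1 x_n x_3 x_4,\, x_1 x_2 x_3)$ and $(x_1 x_{n-1} x_n,\, x_1 x_2 x_{n-2} x_{n-1})$, respectively, since in these cases $x_1 x_2 x_3$ and $x_1 x_{n-1} x_n$ already absorb the extra factor. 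Condition~(1) of Definition~\ref{def:EKsplit} is then a direct variable-by-variable check: each pair lies in $\G(J') \times \G(K')$ and its lcm recovers the source monomial.

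Condition~(2) is where the argument really crystallises. The key structural observation is that every generator of $J'$ is divisible by $x_n$ but none is divisible by $x_2$, while every generator of $K'$ is divisible by $x_2$ but none is divisible by $x_n$. On the other hand, every minimal generator of $J' \cap K'$ is divisible by both $x_2$ and $x_n$. Consequently, for any non-empty $W \subseteq \G(J' \cap K')$, $x_2 \mid \lcm(W)$ but $x_2 \nmid \lcm(\phi(W))$, so $\lcm(\phi(W))$ strictly divides $\lcm(W)$; symmetrically, $\lcm(\varphi(W))$ strictly divides $\lcm(W)$ because it fails to contain $x_n$. This is the same mechanism that drove Proposition~\ref{prop:EKsplit}, with the distinguishing variables now being $x_2$ and $x_n$ rather than $x_1$ and an interior vertex variable. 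I do not anticipate any genuine obstacle; the only point requiring care is the case split in the splitting function, so that each image is an actual minimal generator (not merely an element of the ideal), which is precisely what forces the separate treatment of $i = 3$ and $i = n-2$ above.
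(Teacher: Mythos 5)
Your proof is correct and follows essentially the same route as the paper: the decisive point in both is that every generator of $J'\cap K'$ is divisible by $x_2x_n$, while no generator of $J'$ involves $x_2$ and no generator of $K'$ involves $x_n$, so condition (2) holds for any splitting function. The only difference is that you exhibit an explicit splitting function (which the paper leaves implicit by noting the observation works for ``any'' splitting function), and your construction, including the boundary cases $i=3$ and $i=n-2$, checks out.
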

\begin{proof}
  It is immediate from the following observation: $x_2x_n\mid \lcm(W)$ but $x_2\nmid \lcm(\phi(W))$ and $x_n\nmid \lcm(\varphi(W))$ any for non-empty subset $W\subseteq \G(J'\cap K')$ and any splitting function  which maps $w \in \G(J' \cap K' )$ to $(\phi(w), \varphi(w)) \in \G(J') \times \G(K')$.
\end{proof}

The ideals $J', K'$, and  $J'\cap K'$ are of very special forms: $J'$ and $K'$ resemble the edge ideal of a path of length $(n-5)$ and $J'\cap K'$ resemble that of an $(n-3)-$cycle. These ideals are obtained from edge ideals of such paths and cycles by either adding a new variable or multiplying by a monomial in new variables. Applying these two operations to a bridge-friendly and bridge-minimal ideal produce ideals with these properties:

\begin{proposition}
    Let $I$ be a monomial ideal in  $R=K[x_1,x_2,\dots, x_m]$ and $y$ a new variable. If $I$ is bridge-friendly, so are $(y)+I$ and $ yI$ as ideals in $R[y]$.
\end{proposition}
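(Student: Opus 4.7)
My plan is to handle the two cases by different methods. For $(y)+I$, I will appeal directly to Theorem \ref{thm:tensormorse}: the isomorphism $R[y] \cong R \otimes_\Bbbk \Bbbk[y]$ realizes $(y)+I$ as the sum of $(y)\subseteq \Bbbk[y]$ and $I\subseteq R$ in disjoint sets of variables. Since a principal ideal is trivially bridge-friendly and bridge-minimal (the only subset of $\G((y))$ that could participate in Algorithm \ref{algorithm1} would need cardinality at least $3$, so Algorithm \ref{algorithm1} produces the empty matching and the Taylor resolution, which is minimal in this case), Theorem \ref{thm:tensormorse} immediately yields the conclusion for $(y)+I$.

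For $yI$ a direct construction is required. I will transport the total ordering $(>_I)$ on $\G(I)$ to an ordering $(>')$ on $\G(yI)$ via the bijection $\varphi\colon \G(I)\to \G(yI)$, $m\mapsto ym$, declaring $ym_1 >' ym_2$ whenever $m_1 >_I m_2$. The central step is to verify that for any subset $\sigma\subseteq \G(yI)$ with preimage $\sigma' = \varphi^{-1}(\sigma)\subseteq \G(I)$, one has $\lcm(\sigma) = y\cdot \lcm(\sigma')$; from this one deduces in a sequence of routine if-and-only-ifs that $ym$ is a bridge (respectively, a gap, a true gap) of $\sigma$ precisely when $m$ is a bridge (respectively, a gap, a true gap) of $\sigma'$. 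The key point for the true-gap case is that, since $\varphi$ preserves the ordering, a new bridge of $\sigma \cup \{ym\}$ dominated by $ym$ corresponds via $\varphi^{-1}$ to a new bridge of $\sigma' \cup \{m\}$ dominated by $m$.

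Once this correspondence is in place, the characterizations of type-1, potentially-type-2, and type-2 subsets from Theorem \ref{thm:alltypes} transfer verbatim through $\varphi$. Hence the bridge matching produced by Algorithm \ref{algorithm1} on $yI$ with respect to $(>')$ is exactly the image under $\varphi$ of the bridge matching on $I$ with respect to $(>_I)$, so bridge-friendliness transfers. For bridge-minimality I will apply Lemma \ref{lem:bridgeminimality}: if $\sigma_1,\sigma_2$ are critical subsets of $\G(yI)$ with $|\sigma_2|=|\sigma_1|-1$, then $\sigma_1',\sigma_2'$ are critical subsets of $\G(I)$ of consecutive cardinalities, so $\lcm(\sigma_1')\neq \lcm(\sigma_2')$ by bridge-minimality of $I$, whence $\lcm(\sigma_1) = y\cdot \lcm(\sigma_1') \neq y\cdot \lcm(\sigma_2') = \lcm(\sigma_2)$.

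I do not anticipate a substantial obstacle. The bulk of the work is the mechanical verification that bridges, gaps, and true gaps are preserved under $\varphi$; the main care needed is in the true-gap case, where one must track the comparison of new bridges against the ordering, but this is immediate from the compatibility of $(>_I)$ and $(>')$.
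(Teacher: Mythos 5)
Your treatment of $(y)+I$ via Theorem \ref{thm:tensormorse} is exactly the paper's, and your bijection $\varphi\colon m\mapsto ym$ for $yI$ is the intended elaboration of the paper's one-line remark that the same argument applies. The correspondence of bridges, gaps, and true gaps through $\varphi$ is correct, since $\lcm(\sigma)=y\cdot\lcm(\varphi^{-1}(\sigma))$ (as $y$ is a new variable appearing to the first power in every generator of $yI$) and $\varphi$ preserves the total ordering, so domination of new bridges is also preserved.

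The one step that does not quite work as written is the transfer of bridge-minimality. You infer from bridge-minimality of $I$ that critical subsets of $\G(I)$ of consecutive cardinalities have distinct lcms, and then apply Lemma \ref{lem:bridgeminimality} to $yI$. But that lemma is only a sufficient criterion: bridge-minimality means the induced Barile-Macchia resolution is minimal, which does not by itself guarantee that consecutive critical subsets have distinct lcms (an entry of a differential could fail to be a unit for other reasons, e.g.\ because the signed sum over gradient paths vanishes). So the implication you use runs in the unproved direction. The repair is immediate and stays entirely within your set-up: the matchings, the critical cells, and the gradient paths correspond under $\varphi$, and the entries of the Morse differentials involve only the ratios $\lcm(\sigma)/\lcm(\sigma'')$, which are unchanged when both lcms acquire the common factor $y$. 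Hence the differential matrices of the two Morse resolutions are literally identical, and one resolution is minimal if and only if the other is. With that substitution your argument is complete and matches the paper's.
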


\begin{proof}
    The ideal $(y)+I$ is bridge-friendly and bridge-minimal due to Theorem \ref{thm:tensormorse}.  The same argument from that proof can be applied to $yI$.
\end{proof}

Since edge ideals of paths (unweighted and unoriented) are bridge-friendly and bridge-minimal from Theorem \ref{thm:morsefriendlyforest}. so are $J'$ and $ K'$. As a result, we can construct a minimal free resolution of $I(C_n)$ from $I(C_{n-3})$ and $I(P_{n-5})$, the edge ideal of a path of length $(n-5)$. 

\begin{corollary}\label{cor:resofclassic}
    The minimal free resolution of $I(C_n)$ can be constructed inductively from the minimal free resolutions of bridge-friendly ideals.
\end{corollary}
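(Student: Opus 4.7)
The plan is to proceed by strong induction on $n$, iterating the two E-K splittings established above and invoking the mapping cone construction from \cite[Proposition 2.1]{FHV09}. Recall that if $I = A + B$ is an E-K splitting, then a minimal free resolution of $I$ can be built from minimal free resolutions of $A$, $B$, and $A \cap B$ via mapping cones. The base cases are handled by Remark \ref{rem:cycleTable}: for small $n$, $I(C_n)$ is itself bridge-minimal, so its Barile-Macchia resolution is already the minimal one. Since the recursion below will decrease $n$ by $3$, we need base cases covering all residues modulo $3$; the table supplies bridge-minimal $I(C_n)$ for $n \in \{2,3,5,6,7,8,10\}$, which is more than enough.

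For the inductive step with $n$ sufficiently large, apply Proposition \ref{prop:EKsplit} to obtain the E-K splitting $I(C_n) = J + K$. The mapping cone construction produces the minimal resolution of $I(C_n)$ from minimal resolutions of $J$, $K$, and $J \cap K$. Since $J$ and $K$ are edge ideals of (unions of) naturally oriented paths with trivial weights, Theorems \ref{thm:morsefriendlyforest} and \ref{thm:morseminimalforest} give that they are bridge-friendly and bridge-minimal, so their minimal resolutions are Barile-Macchia.

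Next, apply the second E-K splitting $J \cap K = J' + K'$ to replace the middle piece. Each of $J'$ and $K'$ factors as a monomial in new variables times an edge ideal of a path (namely $x_1 x_n \cdot (x_{n-1}, x_3 x_4, \ldots, x_{n-4} x_{n-3})$ and similarly for $K'$). Since edge ideals of paths are bridge-friendly and bridge-minimal, and since the proposition preceding this corollary asserts that multiplying a bridge-friendly and bridge-minimal ideal by a new variable preserves both properties, $J'$ and $K'$ are themselves bridge-friendly and bridge-minimal. Finally,
\[
J' \cap K' \;=\; x_1 x_2 x_n \cdot (x_3 x_4,\, x_4 x_5,\, \ldots,\, x_{n-2} x_{n-1},\, x_3 x_{n-1}),
\]
which is, up to relabeling the variables and multiplying by the monomial $x_1 x_2 x_n$, precisely $I(C_{n-3})$. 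Again using the proposition on monomial multiplication, a minimal free resolution of $J' \cap K'$ is obtained directly from a minimal free resolution of $I(C_{n-3})$, which exists by the inductive hypothesis.

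The main subtlety is simply bookkeeping: one must verify that the three base residues modulo $3$ are all covered by the explicit small cases of Remark \ref{rem:cycleTable} and that the mapping cone applied to minimal resolutions genuinely produces a minimal resolution---this is exactly the content of the E-K splitting machinery, and Propositions \ref{prop:EKsplit} and (the unnumbered) E-K splitting of $J \cap K$ supply the needed splitting functions. Everything else is a direct application of results already in hand: Theorems \ref{thm:morsefriendlyforest}, \ref{thm:morseminimalforest}, and the preservation lemma for multiplication by new variables.
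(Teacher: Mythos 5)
Your proof follows the paper's own route essentially verbatim: the two E-K splittings of Propositions \ref{prop:EKsplit} and the one following it, the mapping cone construction from \cite[Proposition 2.1]{FHV09}, bridge-friendliness and bridge-minimality of the path pieces via Theorems \ref{thm:morsefriendlyforest} and \ref{thm:morseminimalforest}, the proposition on adjoining or multiplying by new variables, and induction reducing $n$ to $n-3$ with the small cycles of Remark \ref{rem:cycleTable} as base cases. Only cosmetic nitpicks: $J'$ equals $x_1x_n\bigl((x_{n-1}) + I(P)\bigr)$ for a path $P$, so you need \emph{both} halves of the preceding proposition (the $(y)+I$ case as well as the $yI$ case), the last path generator inside $J'$ is $x_{n-3}x_{n-2}$ rather than $x_{n-4}x_{n-3}$, and your list of base cases should include $n=4$ (which the table does cover).
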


If a monomial ideal $I=J+K$ is E-K splittable, then it is a Betti splitting (see \cite[Proposition 3.1]{EK90} and \cite[Definition 1.1.]{FHV09}). This means  the (graded) Betti numbers and projective dimension of $I$ can be obtained from those of $J, K,$ and 
$J\cap K$. We will not derive these formulas for edge ideals of cycles since they are already obtained in \cite[Theorem 7.6.28, Corollary 7.6.30]{Jac04} using a different method. 

Finally, we note that $I(C_9)$ neither is bridge-friendly nor has a minimal Barile-Macchia resolution. Hence

\begin{corollary}\label{cor:9cycle}
    There exists a monomial ideal $I$ and an E-K splitting $I=J+K$ such that $J,K$ and  $J\cap K$ have minimal Barile-Macchia resolutions, but $I$ does not.
\end{corollary}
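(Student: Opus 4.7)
The plan is to take $I = I(C_9)$ and use the E-K splitting already set up by Proposition \ref{prop:EKsplit}: $I = J + K$ with $J = (x_2x_3, x_3x_4, \ldots, x_8x_9)$ and $K = (x_9x_1, x_1x_2)$. Both $J$ and $K$ are edge ideals of (unweighted, unoriented) paths, hence bridge-friendly and bridge-minimal by Theorems \ref{thm:morsefriendlyforest} and \ref{thm:morseminimalforest}. Meanwhile, Remark \ref{rem:cycleTable} records that $I(C_9)$ is neither bridge-friendly nor bridge-minimal, which delivers the negative half of the claim at once.

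The real work is to verify that $J \cap K$ is bridge-friendly and bridge-minimal. Explicitly,
\[ \G(J \cap K) = \{x_1 x_8 x_9,\ x_1 x_2 x_3\} \cup \{x_1 x_9 \cdot x_i x_{i+1} : 3 \leq i \leq 6\} \cup \{x_1 x_2 \cdot x_j x_{j+1} : 4 \leq j \leq 7\}, \]
a set of size $10$. I would first search for a total ordering inspired by the iterated splitting $J \cap K = J' + K'$ introduced right after the corollary's setup, for instance by listing all generators of $J'$ before those of $K'$. Since $J'$ and $K'$ are each obtained from an edge ideal of a path by adjoining a variable and then multiplying by a monomial, and since $J' \cap K' = x_1 x_2 x_9 \cdot I(C_6)$ is bridge-friendly and bridge-minimal (as $C_6$ sits in the positive row of Remark \ref{rem:cycleTable}), such an ordering is a natural candidate in which to analyse bridges and true gaps of $J \cap K$.

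With the ordering fixed, the verification would proceed by applying the criteria of Theorem \ref{thm:alltypes} — to confirm that every potentially-type-2 subset is actually type-2, yielding bridge-friendliness — and Lemma \ref{lem:bridgeminimality} — to confirm that critical subsets in consecutive cardinalities have distinct lcms, yielding bridge-minimality. The main obstacle is that Theorem \ref{thm:tensormorse} is unavailable here: $J'$ and $K'$ share the variables $x_1, x_3, \ldots, x_8$, so no tensor-product shortcut applies, and the analogous issue blocks a direct inductive argument for $J \cap K$ as a subideal of the non-bridge-minimal $I(C_9)$. I would therefore expect to close the verification by a direct case analysis over the at most $2^{10}$ subsets — or, more practically, by running the \texttt{Macaulay2} implementation of Algorithm \ref{algorithm1} from \cite{github} on $J \cap K$ with the chosen ordering — which is entirely routine once a suitable ordering is in hand.
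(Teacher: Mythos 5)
Your proposal matches the paper's (implicit) argument exactly: the corollary carries no written proof in the paper, and the intended example is precisely $I=I(C_9)$ with the splitting of Proposition~\ref{prop:EKsplit}, the positive claims for $J$ and $K$ coming from the path results and the negative claim from Remark~\ref{rem:cycleTable}. The one step neither you nor the paper carries out on the page --- that $J\cap K$ is bridge-friendly and bridge-minimal, which as you correctly observe does not follow from Theorem~\ref{thm:tensormorse} or from the iterated splitting $J\cap K=J'+K'$ --- is rightly isolated as the remaining verification, and deferring it to the {\tt Macaulay2} implementation is consistent with how the paper handles the analogous checks in Remark~\ref{rem:cycleTable}.
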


\section{A Comparison}\label{sec:comparison}

We devote this section to draw comparisons between Barile-Macchia resolutions and some well-known simplicial resolutions and the Scarf complex. Throughout this section, let $R$ be the polynomial ring in $N$ variables over a field, $I$ a monomial ideal of $R$ and $(>_I)$ a total ordering on $\G(I)$.

\subsection{Barile-Macchia resolutions versus simplicial resolutions}

There are several constructions that yield simplicial resolutions or complexes of a monomial ideal, e.g., Taylor resolutions \cite{Tay66} and  Lyubeznik resolutions \cite{Ly88,Nov00} among the most well-known ones. Meanwhile, the Scarf complex \cite{BPS98, Sca73} is defined to be the subcomplex of the Taylor resolution determined by the subsets with unique $\lcm$, and is not necessarily a resolution. We refer to \cite{Mer09} for the basics on these classic notions.  

In this section, we  compare Barile-Macchia resolutions with these simplicial resolutions and complexes. We begin with an observation on how Morse resolutions relate to simplicial complexes, which we believe was implicitly given in \cite{BW02}. 

\begin{proposition}\label{prop:Morsesimplicialcomplex}
    If the set of critical subsets of $\G(I)$ forms a simplicial complex, i.e., it is closed under taking subsets, then the Morse resolution coincides with the corresponding subcomplex of the Taylor resolution of $R/I$.
\end{proposition}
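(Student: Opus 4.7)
The goal is to show that under the hypothesis, the Morse differential from Theorem \ref{thm:morseres} collapses to the Taylor differential restricted to critical cells. Concretely, I will verify that for every critical subset $\sigma$ of cardinality $r$, the only gradient paths contributing to $\partial_r^A(\sigma)$ are the trivial ones, so that
\[ \partial_r^A(\sigma) \;=\; \sum_{\substack{\sigma' \subseteq \sigma \\ |\sigma'|=r-1}} [\sigma : \sigma'] \,\frac{\lcm(\sigma)}{\lcm(\sigma')}\, \sigma', \]
which is exactly the Taylor differential of $\sigma$ restricted to the subcomplex spanned by critical faces.

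First I would note that since critical subsets are closed under taking subsets, every $\sigma' \subseteq \sigma$ with $|\sigma'|=r-1$ is automatically critical whenever $\sigma$ is. So in the Morse differential formula, the inner sum over critical $\sigma''$ of cardinality $r-1$ together with the sum over gradient paths from $\sigma'$ to $\sigma''$ only needs to be understood when $\sigma'$ itself is critical.

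The key step is the following claim: \emph{if $\sigma'$ is critical, then the only gradient path starting at $\sigma'$ is the trivial one $\mathcal{P} \colon \sigma'$ of length zero.} To prove this, recall that a nontrivial gradient path alternates between a downward step in $E\setminus A$ (to cardinality $r-2$) and an upward step coming from a reversed edge of $A$ (back to cardinality $r-1$). Because $\sigma'$ is critical, no edge of $A$ involves $\sigma'$, so the first step $\sigma' \to \sigma_2$ must be a downward edge in $E\setminus A$, landing on some $\sigma_2 \subset \sigma'$ of cardinality $r-2$. But by the simplicial-complex hypothesis $\sigma_2$ is also critical, hence does not lie in any edge of $A$; in particular there is no reversed edge of $A$ of the form $\sigma_2 \to \sigma_3$. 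Thus the path cannot be extended, and therefore cannot reach a cell of cardinality $r-1$ other than $\sigma'$ itself.

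Granting this, the inner double sum contributes only the trivial path from $\sigma'$ to $\sigma'' = \sigma'$, with $m(\mathcal{P})=1$ (the empty product). Substituting into the Morse differential gives exactly the Taylor differential restricted to critical faces, proving the two complexes coincide. The main (modest) obstacle is simply being careful about the edge cases in the alternation of gradient paths and the convention that a length-zero path has $m(\mathcal{P})=1$; once those are sorted out, the argument is essentially bookkeeping.
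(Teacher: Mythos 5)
Your proof is correct and follows essentially the same route as the paper: both arguments reduce the claim to showing that the only gradient path between critical cells of cardinality $r-1$ is the trivial one, by exploiting the alternating structure of gradient paths and criticality. The only cosmetic difference is where the contradiction lands — you use the closure hypothesis to make the second vertex $\sigma_2$ critical and block the upward step there, while the paper derives the contradiction at the endpoints $\sigma'$ or $\sigma''$; both are valid.
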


\begin{proof}
    We adopt the notations from Theorem \ref{thm:morseres}. Our goal is to show that the differentials in the Morse resolution are exactly those of the corresponding subcomplex of the Taylor resolution. This is equivalent to show  $\sigma''=\sigma'$ for each $\sigma'$ and $\sigma ''$ such that $\sigma'\subseteq \sigma, |\sigma'|=|\sigma|-1$, and  $\sigma''$ is a critical subset where $|\sigma|=|\sigma''|$. For the sake of contradiction, suppose the set of critical subsets form a simplicial complex and $\sigma''\neq \sigma'$ for such $\sigma'$ and $\sigma''$.  By our assumption, $\sigma'$ is ~critical.
    
    Consider a gradient path $\sigma'=\sigma_1 \to \sigma_2 \to \ldots \to \sigma_k=\sigma''$ from $\sigma'$ to $\sigma''$. By the  definition of gradient paths, we have either $(\sigma_i,\sigma_{i+1}) \notin A$  or $(\sigma_{i+1},\sigma_i) \in A$ for each $i\in[k-1]$. Observe that each directed edge $(\sigma_i,\sigma_{i+1})$ along this gradient path must be alternating between these two types of edges. If  $(\sigma',\sigma_2) \notin A$, then $(\sigma'', \sigma_{k-1}) \in A$ since $|\sigma|=|\sigma''|$. This means $\sigma''$ is not a critical subset, a contradiction. If $(\sigma_2, \sigma') \in A$, then $\sigma'$ is not a critical subset, a contradiction. This completes the proof.
\end{proof}

Lyubeznik resolutions, the Taylor resolution, and the Scarf complex of a monomial ideal are not necessarily isomorphic. These three complexes may also differ from Barile-Macchia resolutions.

\begin{theorem}\label{exampleofmultipleres}
    There exists an ideal $I$ such that its Taylor, Lyubeznik resolutions and Scarf complex are all non-isomorphic from one another, and non-isomorphic to one of its Barile-Macchia resolutions which is minimal.
\end{theorem}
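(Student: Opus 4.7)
The proof I would give is by direct construction: exhibit an explicit monomial ideal $I$ and verify by computation that each of its Taylor resolution, some Lyubeznik resolution, its Scarf complex, and one of its Barile-Macchia resolutions have pairwise distinct shapes as graded free complexes, with only the last being minimal.

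I would first select a small monomial ideal whose lcm-lattice is rich enough to trim the Scarf complex below the minimal resolution length, but asymmetric enough that no Lyubeznik resolution coincides with the Barile-Macchia resolution. A promising candidate is a four-or-five-generator ideal in three variables, along the lines of a perturbation of $I = (x^2y^2, y^2z^2, xz^2, x^2z)$ from Example \ref{example:DependOnordering}, whose first Barile-Macchia resolution already has shape $1,4,3,0$ and is short enough to have a chance of being minimal.

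Given such an $I$, the verification is a four-part enumeration. The Taylor resolution is distinguished from all others immediately by having $2^{|\G(I)|}$ total faces. The Scarf complex is obtained by retaining only the subsets of $\G(I)$ with a unique $\lcm$, which is a direct computation from the lcm-lattice. A Lyubeznik resolution is computed by fixing a total order on $\G(I)$ and deleting subsets that contain a generator dividing the lcm of an earlier proper subset; choosing the ordering strategically lets one distinguish its shape from that of the Barile-Macchia resolution. Finally, a Barile-Macchia resolution is produced by running Algorithm \ref{algorithm1} for an appropriate ordering, and its minimality is certified via Lemma \ref{lem:bridgeminimality} by checking that every pair of critical subsets of consecutive cardinalities have distinct lcms.

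The main obstacle is engineering $I$ so that all four constructions are simultaneously pairwise non-isomorphic; small examples tend to produce accidental coincidences, especially between the Scarf complex and the Barile-Macchia resolution, since each trims the Taylor complex using related lcm data. A practical strategy is to iterate the {\tt Macaulay2} package from \cite{github} over a family of candidate ideals and compare outputs until all four are distinct. Once such an $I$ is located, the entire verification reduces to a finite table of lcms together with the determination of type-1 and type-2 subsets in the sense of Theorem \ref{thm:alltypes}.
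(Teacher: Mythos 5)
Your overall strategy (exhibit a witness ideal and compare the four complexes by direct computation, certifying minimality of the Barile--Macchia resolution via Lemma \ref{lem:bridgeminimality}) is exactly the shape of the paper's argument. But as written your proof has a genuine gap: the theorem is an existence statement, and you never actually produce the witness. You describe a search procedure (``iterate the {\tt Macaulay2} package over a family of candidates until all four are distinct'') rather than exhibiting an ideal and verifying the four non-isomorphisms. The paper's proof supplies the witness explicitly: $I=(xy,yz,zw,wx)\subseteq \Bbbk[x,y,z,w]$, the edge ideal of a $4$-cycle, whose Barile--Macchia resolution was already shown to be minimal in Example \ref{ex:alg1}; the four shapes are Taylor $(1,4,6,4,1)$, Lyubeznik $(1,4,5,2)$ (the same for every total ordering), Scarf $(1,4,4)$, and Barile--Macchia $(1,4,4,1)$, which are pairwise distinct.

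Moreover, the one concrete candidate you name points in the wrong direction. For $I=(x^2y^2,y^2z^2,xz^2,x^2z)$ the minimal resolution has shape $(1,4,3)$ by Example \ref{example:DependOnordering}, and a direct lcm computation shows its Scarf complex consists of the empty set, the four vertices, and exactly the three edges $\{x^2y^2,x^2z\}$, $\{y^2z^2,xz^2\}$, $\{xz^2,x^2z\}$ (all other subsets of size at least two share the lcm $x^2y^2z^2$). Since every Scarf face contributes to the minimal free resolution and the counts agree, the Scarf complex \emph{is} the minimal resolution here, hence isomorphic to the minimal Barile--Macchia resolution --- precisely the coincidence the theorem requires you to avoid. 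This illustrates why the witness must be chosen and checked explicitly: your ``perturbation'' is doing all the work, and it is left unspecified. Note also that for the Scarf complex to be non-isomorphic to the minimal resolution it must fail to be a resolution at all (the minimal resolution is unique up to isomorphism), which is a useful constraint to impose up front when hunting for the example; the $4$-cycle satisfies it because its projective dimension is $3$ while its Scarf complex has length $2$.
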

\begin{proof}
    Let $I=(xy,yz,zw,wx)\subseteq R:=k[x,y,z,w]$. It turns out that all the Lyubeznik resolutions of $R/I$ are isomorphic. This can be verified  by exhausting all possible total orderings on $\G(I)$. Then, we have the following resolutions and the Scarf complex of $I$:
    \begin{align*}
        \text{Taylor resolution: } &~~ 0\leftarrow R\leftarrow R^4\leftarrow R^6 \leftarrow R^4 \leftarrow R\leftarrow 0,\\
        \text{Lyubeznik resolution: } &~~ 0\leftarrow R\leftarrow R^4\leftarrow R^5 \leftarrow R^2\leftarrow 0,\\
        \text{Scarf complex: } &~~ 0\leftarrow R\leftarrow R^4 \leftarrow R^4\leftarrow 0,\\
        \text{Barile-Macchia resolution: } &~~ 0\leftarrow R\leftarrow R^4\leftarrow R^4 \leftarrow R\leftarrow 0. \qedhere
    \end{align*}
  \end{proof}

In our experiments, Barile-Macchia resolutions performed quite well in getting closer to minimal resolutions. This suggests that whenever one of the simplicial resolutions (Taylor or Lyubeznik) or the Scarf complex of an ideal is the minimal resolution, it coincides with a Barile-Macchia resolution. This phenomena is already observed for Taylor resolutions in Remark \ref{Taylorismorseminimal}. In the next part, we investigate Scarf complexes. To the best of our knowledge, a full characterization of ideals which admit their Scarf complexes as minimal resolutions is still unknown. There are only a few classes of such ideals including strongly generic ideals \cite{BPS98,BS98}, generic ideals \cite{MSY00} and ideals that admit their Buchberger resolutions as minimal resolutions \cite{OW16}. In the late nineties, Yuzvinsky defined a large class of monomial ideals which contains strongly generic monomial ideals and proved that these ideals admit their Scarf complexes as  minimal resolutions \cite[Proposition 4.4]{Yu99} using lcm lattices. Here, we offer a much shorter and more elementary proof.

\begin{theorem}\label{YuzvinskyisMorseminimal}
    If $\lcm(\sigma)=\lcm(\tau)$ implies $\lcm(\sigma)=\lcm(\sigma\cap \tau)$ for any subsets $\sigma$ and $\tau$ of $\G(I)$, then $I$ is bridge-friendly. In particular, its minimal Barile-Macchia resolution is exactly its Scarf complex.
\end{theorem}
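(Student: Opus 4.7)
The plan is to extract from the hypothesis a very clean description of the lcm-fibers of $I$, from which bridges, gaps, and true gaps become transparent. For each monomial $L$ in the lcm-lattice of $I$, I would consider $\Sigma_L = \{\sigma \subseteq \G(I) : \lcm(\sigma) = L\}$. Closure under union is automatic, and closure under intersection is precisely the given hypothesis; combined with monotonicity of $\lcm$ under inclusion, this forces $\Sigma_L$ to be the Boolean interval $[\sigma_L^{\min}, \sigma_L^{\max}]$, where $\sigma_L^{\min}$ and $\sigma_L^{\max}$ are the intersection and union of $\Sigma_L$ respectively. For $\sigma \in \Sigma_L$, the bridges are then exactly the elements of $\sigma \setminus \sigma_L^{\min}$ and the gaps are exactly the elements of $\sigma_L^{\max} \setminus \sigma$.

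A crucial consequence I would exploit is that every gap is automatically a true gap: if $m$ is a gap of $\sigma$, then $\sigma \cup m \in \Sigma_L$ shares the same $\sigma_L^{\min}$, so the only new bridge of $\sigma \cup m$ (one that is not already a bridge of $\sigma$) is $m$ itself, which does not dominate itself. Bridge-friendliness then follows directly from Lemma \ref{lem:morsefri2}: given a true gap $m$ of $\sigma$ and any $m' >_I m$ with $m' \notin \sigma$, setting $L' = \lcm(\sigma \cup m')$ gives $\lcm(\sigma \cup m' \cup m) = \lcm(L, m') = L'$, so $m$ is still a gap -- and hence a true gap -- of $\sigma \cup m'$.

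For bridge-minimality, I would apply Corollary \ref{cor:critical}: since $I$ is now bridge-friendly, critical subsets are exactly those with no bridges and no true gaps, i.e., those $\sigma$ with $\sigma_L^{\min} = \sigma = \sigma_L^{\max}$ for $L = \lcm(\sigma)$. Equivalently, critical subsets are precisely the faces of the Scarf complex, so distinct critical subsets have distinct lcms and Lemma \ref{lem:bridgeminimality} immediately yields bridge-minimality.

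Finally, to identify the Barile-Macchia resolution with the Scarf complex itself, I would show that the collection of critical subsets is closed under taking subsets and then invoke Proposition \ref{prop:Morsesimplicialcomplex}. This closure is the main obstacle: if $\sigma$ is critical and $\sigma' \subsetneq \sigma$ were not critical, then $\Sigma_{\lcm(\sigma')}$ would contain some $\tau \neq \sigma'$, and a short case analysis -- either $\tau \supsetneq \sigma'$ or $\sigma' \cap \tau \subsetneq \sigma'$ -- would, via a variable-level comparison of exponents in $\lcm(\sigma')$ and $\lcm(\sigma)$, manufacture either a bridge of $\sigma$ or a forbidden pair inside $\Sigma_L$, in each case contradicting criticality of $\sigma$.
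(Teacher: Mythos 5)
Your argument is correct and follows essentially the same route as the paper's proof: both extract the minimal element of each lcm-fiber from the hypothesis, characterize critical subsets as exactly the Scarf faces, and conclude via Lemma \ref{lem:morsefri2}, Corollary \ref{cor:critical}, Lemma \ref{lem:bridgeminimality}, and Proposition \ref{prop:Morsesimplicialcomplex}. Your packaging of the fibers as Boolean intervals $[\sigma_L^{\min},\sigma_L^{\max}]$, with the consequent observation that every gap is automatically a true gap, is a clean reformulation of the lcm computations the paper carries out directly, and your sketched closure-under-subsets step (which the paper simply asserts as the standard fact that the Scarf complex is simplicial) completes in the way you indicate.
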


\begin{proof}
    We first develop some tools for this proof. Let $\lcm^{-1}(p)$ denote the set of all subsets of $\G(I)$ whose $\lcm$ equals $p$ for any monomial $p$. For the rest of this proof, we use $\P$ for   $\lcm^{-1}(p)$, i.e., $\P\coloneqq \{\sigma \subseteq \G(I) : \lcm(\sigma)=p\}$. Let  $\sigma_{\min(\P)}$ denote the smallest set in $\P$ (with respect to inclusion), if it exists. If $\P\neq \emptyset$, then  $\lcm(\cap_{\sigma\in \mathcal{P}} \sigma) = p$ by the hypotheses. In other words,   $\sigma_{\min(\P)}$ exists if and only if $\P \neq \emptyset$.

    Next, we prove the following claim: If $| \mathcal{P}|\geq 2$, then every element in $ \mathcal{P}$ either has a bridge or a true gap.  Note that  $\sigma_{\min(\P)}$ exists and $\sigma_{\min(\P)} \subseteq \sigma$ for each $\sigma\in \P$ under this assumption. Let $\sigma \in \P$ where $\sigma \neq \sigma_{\min(\P)}$. Since  $\lcm (\sigma) = \lcm (\sigma_{\min(\P)})$, any $\tau$ such that $\sigma_{\min(\P)} \subseteq \tau \subseteq \sigma$ must belong to $\P$. Thus,  each monomial in  $\sigma \setminus \sigma_{\min(\P)}$ serves as a bridge of $\sigma$. So, it suffices to show  $\sigma_{\min(\P)}$ has a true gap. Again, since any $\tau$ such that $\sigma_{\min(\P)} \subseteq \tau \subseteq \sigma$ belongs to $\P$, we can choose $\tau$ such that $|\tau|=|\sigma_{\min(\P)}|+1$. Observe that  $\tau \setminus \sbridge(\tau)=\sigma_{\min(\P)}$ because $\tau \setminus \sbridge(\tau) \in \P$ and $\sigma_{\min(\P)}$ is the smallest set in $\P$. Therefore,  $\sbridge(\sigma)$ is a true gap of $\sigma_{\min(\P)}$ by Proposition ~\ref{prop:truegap2}.

    Now we are ready to prove the bridge-friendliness of $I$ using Lemma \ref{lem:morsefri2}. Let $(>_I)$ be any total ordering on $\G(I)$, $\sigma$ a subset of $\G(I)$, and $m$ a true gap of $\sigma$. Consider a monomial $m'$ such that $m'>_I m$.  It suffices to show $m$ is a true gap of $\sigma \cup m'$. It is immediate that $m$ is a gap of $\sigma \cup m'$. Suppose $m''$ is a bridge of $\sigma \cup \{m, m'\}$ where $m>_I m''$. Then, we have 
    \[ \lcm(\sigma \cup\{m, m'\}\setminus \{m''\})=\lcm(\sigma \cup \{m, m'\})=\lcm(\sigma \cup m'). \]
    We obtain the following  chain of equalities by the hypothesis of this theorem:
    \[\lcm(\sigma \cup m')= \lcm\Big((\sigma \cup m')\cap \Big( \big( \sigma \cup m' \big)\setminus m''\Big)\Big) = \lcm\Big(\big(\sigma \cup m'\big) \setminus m'' \Big).\]
   In particular, this means $m''$ is a bridge of $\sigma \cup m'$. Hence by definition, $m$ is a true gap of $\sigma \cup m'$.
    
   Finally, we show that the corresponding Barile-Macchia resolution is indeed the Scarf complex. Since $I$ is bridge-friendly, our claim implies that any $\sigma$ where $|\lcm^{-1}(\lcm(\sigma))|\geq 2$ is not critical. On the other hand, any $\sigma$ where $|\lcm^{-1}(\lcm(\sigma))|= 1$ must be critical. Thus the set of critical subsets forms the Scarf simplicial complex. Therefore, the Barile-Macchia resolution and the Scarf complex of $I$ coincide minimally by  Proposition \ref{prop:Morsesimplicialcomplex}.
\end{proof}

The following result is immediate based on Example \ref{example:DependOnordering}, the fact that every monomial ideal in the polynomial ring in two variables is strongly generic, and Theorem \ref{YuzvinskyisMorseminimal}.

\begin{corollary}\label{cor:allidealsMorsefriendly+minimal}
    Every monomial ideal $I$ is bridge-friendly and bridge-minimal with respect to any total ordering on $\G(I)$ if and only if $\dim R\leq 2$. 
\end{corollary}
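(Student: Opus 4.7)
The plan is to split into the two implications and invoke the two ingredients flagged in the paragraph before the corollary: Theorem \ref{YuzvinskyisMorseminimal} (which already hands over both bridge-friendliness and bridge-minimality for every total ordering) and Example \ref{example:DependOnordering} (which provides the required counterexample in three variables).

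For the backward direction $(\dim R \leq 2 \Rightarrow \text{all good})$, the case $\dim R \leq 1$ is immediate since every monomial ideal is principal. For $R=\Bbbk[x_1,x_2]$, my intention is to verify the Yuzvinsky-type hypothesis $\lcm(\sigma)=\lcm(\tau)\Rightarrow \lcm(\sigma)=\lcm(\sigma\cap\tau)$ on any monomial ideal $I$, so that Theorem \ref{YuzvinskyisMorseminimal} applies. The structural observation I would use is that minimality of $\G(I)$ forbids divisibilities, which forces a strict staircase: one can list $\G(I)=\{m_1,\dots,m_n\}$ with $m_i=x_1^{a_i}x_2^{b_i}$ and $a_1<\cdots<a_n$, $b_1>\cdots>b_n$. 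Consequently, $\lcm(\sigma)=x_1^{a_{i_{\max}}}x_2^{b_{i_{\min}}}$ depends only on the extremal indices of $\sigma$. Two subsets sharing an lcm must therefore share these extremal-index generators; those generators live in $\sigma\cap\tau$, and their lcm already recovers $\lcm(\sigma)$. This is the whole content of the 2-variable case.

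For the forward direction $(\dim R\geq 3 \Rightarrow \text{failure})$, I would exhibit a single witness. Pick three variables $x_1,x_2,x_3$ of $R$ and set
\[ I=(x_1^2x_2^2,\; x_2^2x_3^2,\; x_1x_3^2,\; x_1^2x_3). \]
Since Algorithm \ref{algorithm1} depends only on $\G(I)$ and its lcm structure, extra variables play no role, and Example \ref{example:DependOnordering} transfers verbatim: there is a total ordering on $\G(I)$ for which the induced Barile-Macchia resolution is non-minimal. Thus $I$ is not bridge-minimal with respect to this ordering, establishing the contrapositive.

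The main obstacle is the 2-variable structural lemma. It is elementary but the bookkeeping—extracting the strictly monotone exponent sequences from minimality and then tracking how the min and max indices of $\sigma$ and $\tau$ end up in $\sigma\cap\tau$—is the only non-mechanical step. Everything else is a direct citation of results already in the paper.
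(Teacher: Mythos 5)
Your proposal is correct and follows the same route the paper sketches: Example \ref{example:DependOnordering} (embedded via three variables) for the forward direction, and Theorem \ref{YuzvinskyisMorseminimal} for the backward direction. The only cosmetic difference is that you verify the hypothesis $\lcm(\sigma)=\lcm(\tau)\Rightarrow\lcm(\sigma)=\lcm(\sigma\cap\tau)$ directly from the strict staircase structure of $\G(I)$ in two variables, whereas the paper reaches the same conclusion by citing that every monomial ideal in two variables is strongly generic; your staircase argument is sound.
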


In the remainder of this subsection, we compare Barile-Macchia resolutions with Lyubeznik resolutions. These two resolutions are constructed in a similar way:  Both are obtained from  $\lcm$-homogeneous acyclic matchings using a total ordering on $\G(I)$. We recall Batzies and Welker's construction of Lyubeznik resolutions.

\begin{theorem}\cite[Theorem 3.2]{BW02}\label{def:Lyuusingmatchings}
    Let $I$ be a monomial ideal and $(>_I)$ a total ordering on $\G(I)$. 
    For any subset $\sigma=\{m_1, \ldots ,m_p\}$ of $\G(I)$ where $m_1>_I \cdots >_{I} m_p$, we define
    \[
    	v_L(\sigma)\coloneqq  \sup \big\{ k\in \mathbb{N} : \exists m \in \G(I) \text{ such that } m_k >_I m 
    	\text{ for } k\in [p] \text{ and } m\mid \lcm(m_1, \ldots, m_k) \big\}.
  \]
    If $v_L(\sigma)\neq -\infty$, define
     \[
    m_L(\sigma)\coloneqq \min_{>_I} \{m\in \G(I): m\mid \lcm(m_1,\dots, m_{v_L(\sigma)})  \}.
    \]
    For each $p\in \ZZ^N$, set
    \[
    A_p\coloneqq \{(\sigma\cup m_L(\sigma), \sigma \setminus m_L(\sigma)) :  \lcm (\sigma)=p \text{ and }v_L(\sigma)\neq -\infty \}.
    \]
    Then  $\displaystyle A=\cup_{p\in R}A_p$ is an $\lcm$-homogeneous acyclic matching. Hence, it induces a graded free resolution $\mathcal{F}_A$ of $R/I$, called a \textbf{Lyubeznik} resolution of $R/I$ (with respect to $(>_I)$).
\end{theorem}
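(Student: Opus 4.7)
The plan is to check the three conditions of Definition \ref{def:acyclicmatch} for $A$: matching, $\lcm$-homogeneity, and acyclicity. The resolution assertion then follows automatically from Theorem \ref{thm:morseres} applied to the natural $\ZZ^N$-grading on the Taylor complex.

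First I would handle $\lcm$-homogeneity, which is almost immediate. By definition $m_L(\sigma)\mid\lcm(m_1,\ldots,m_{v_L(\sigma)})\mid\lcm(\sigma)$, so $\lcm(\sigma\cup m_L(\sigma))=\lcm(\sigma)$. If moreover $m_L(\sigma)\in\sigma$, say $m_L(\sigma)=m_j$, then $m_L(\sigma)<_I m_{v_L(\sigma)}$ forces $j>v_L(\sigma)$; thus $m_1,\ldots,m_{v_L(\sigma)}$ all survive in $\sigma\setminus m_L(\sigma)$, and $m_L(\sigma)\mid\lcm(m_1,\ldots,m_{v_L(\sigma)})\mid\lcm(\sigma\setminus m_L(\sigma))$ yields $\lcm(\sigma\setminus m_L(\sigma))=\lcm(\sigma)$ as well.

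Next, for the matching property I would show that the symmetric-difference map $\mu(\sigma):=\sigma\triangle\{m_L(\sigma)\}$ is an involution on the domain $D:=\{\sigma\subseteq\G(I):v_L(\sigma)\neq-\infty\}$. The key technical lemma is that $v_L$ and $m_L$ are $\mu$-invariant. Since the toggled element is strictly $>_I$-smaller than $m_{v_L(\sigma)}$, toggling its presence changes $\sigma$ only past index $v_L(\sigma)$, preserving the prefix $m_1,\ldots,m_{v_L(\sigma)}$; combined with the previous paragraph, the lcm of this prefix is unchanged, so the same $v_L(\sigma)$ and $m_L(\sigma)$ witness the definitions on $\mu(\sigma)$. (One also verifies that no larger index works on $\mu(\sigma)$, using that none worked on $\sigma$ and that the relevant partial lcms are preserved.) Consequently $\mu$ partitions $D$ into $2$-element orbits, each orbit contributes exactly one edge of $A$, and no subset lies in two edges.

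The hard part will be acyclicity. Suppose toward a contradiction that $G^A$ contains a directed cycle. The alternating-edge analysis in Lemma \ref{clm:directedcycle}, which is generic to any matching, forces the cycle to take the form $\tau_1\to\sigma_1\to\tau_2\to\cdots\to\sigma_k\to\tau_1$ with $(\sigma_i,\tau_i)\in A$ and each $\tau_{i+1}=\sigma_i\setminus n_i$ for some $n_i\neq m_L(\sigma_i)$. By $\lcm$-homogeneity all $\sigma_i,\tau_i$ share a common lcm $P$. The strategy is to exhibit an invariant that is preserved by the reversed matching steps (immediate from the $\mu$-invariance established above) but that strictly changes in a consistent direction at every non-matching step, contradicting closure of the cycle. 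The natural candidate, following \cite[Theorem 3.2]{BW02}, is a lexicographic comparison of $(v_L(\sigma_i),m_L(\sigma_i))$, or equivalently the $>_I$-smallest monomial of $\sigma_i$ witnessing the definition of $v_L(\sigma_i)$. The subtlety is that the obvious individual quantities, such as $|\sigma_i|$, $v_L$ alone, or the removed monomial $n_i$, are not themselves monotone; the correct invariant must combine the total-order data with the fixed-lcm constraint $\lcm(\sigma_i)=P$, and making that combination strictly decreasing at each non-matching step is the technical heart of the argument.
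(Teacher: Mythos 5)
First, a point of reference: the paper does not prove this statement at all --- it is quoted verbatim from \cite[Theorem 3.2]{BW02} --- so there is no in-paper proof to compare against. The closest internal analogue is the proof of Theorem \ref{thm:proofalg} for the bridge matching.

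Your treatment of $\lcm$-homogeneity and of the matching property is essentially correct and is the standard argument: $m_L(\sigma)$ divides $\lcm(m_1,\dots,m_{v_L(\sigma)})$ and sits strictly $>_I$-below $m_{v_L(\sigma)}$, so toggling it fixes the length-$v_L(\sigma)$ prefix, and the minimality of $m_L(\sigma)$ among divisors of that prefix's lcm is what rules out $v_L$ jumping up after the toggle (the part you relegate to a parenthesis is exactly where that minimality gets used, so it deserves to be written out, but the idea is right).

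The genuine gap is acyclicity. You correctly reduce, via the alternating-edge analysis of Lemma \ref{clm:directedcycle}, to a cycle $\tau_1\to\sigma_1\to\cdots\to\sigma_k\to\tau_1$ with constant lcm, and you correctly observe that no single obvious quantity is monotone along it --- but you then stop at naming a ``candidate invariant'' $(v_L(\sigma_i),m_L(\sigma_i))$ without establishing that it strictly decreases at the non-matching steps, and you acknowledge that this is ``the technical heart of the argument.'' That is a plan, not a proof: the one condition of Definition \ref{def:acyclicmatch} that is actually hard is left unverified. A concrete way to close it, modeled on the paper's proof of Theorem \ref{thm:proofalg}, is extremal rather than monotone: every monomial $n_i$ removed along the cycle must later be re-inserted, hence lies in $S=\{m_L(\sigma_1),\dots,m_L(\sigma_k)\}$; choosing the index at which the $>_I$-extremal element of $S$ occurs, one shows that $n_i$ would have to divide $\lcm(m_1,\dots,m_{v_L(\sigma_i)})$ while being $>_I$-smaller than $m_L(\sigma_i)$ (or forcing $\tau_{i+1}=\tau_i$), contradicting the minimality in the definition of $m_L$. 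Until some version of that argument is written down, the proposal does not establish the theorem.
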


\begin{example}\label{ex:BWmatching}
      Consider the monomial ideal $I=(m_1,m_2,m_3,m_4,m_5,m_6)$ where
\[
		m_1=x_1x_2x_3x_4, ~~m_2=x_2x_3x_5x_6, ~~m_3=x_1x_2x_5, ~~m_4=x_1x_2x_7, ~~m_5=x_2x_3x_8, ~~m_6=x_7x_8
\]
with the total ordering $ m_1 > m_2 > m_3 > m_4 > m_5 > m_6$  on $\G(I)$. 
\begin{itemize}
    \item For $\sigma_1=(m_1,m_4,m_5)$, we have $v_L(\sigma_1)=3$ and $m_L(\sigma_1)=m_6$ since  $m_6 \mid  \lcm(\sigma_1)$.
    \item For  $\sigma_2=(m_1,m_2,m_3)$, we have $v_L(\sigma_2)=2$ and $m_L(\sigma_2)=m_3$ since $\lcm(\sigma_2)$ is not divisible by $m_4,m_5$ or $m_6$  while  $m_3 \mid  \lcm(m_1,m_2)$.
    \item For $\sigma_3=(m_2,m_3,m_4)$, we have $v_L(\sigma_3)=- \infty$ since $\lcm(\sigma_3)$ is not divisible by $m_5$ or $m_6$  and  $ \lcm(m_2,m_3)$ is not divisible by $m_4,m_5$ or $m_6$.
\end{itemize}
Thus, $(\sigma_1 \cup m_6, \sigma_1)$ and $(\sigma_2, \sigma_2\setminus m_3)$ are elements of $A$ while there is no edge of $A$ involving $\sigma_3$.
\end{example}

Barile-Macchia and Lyubeznik resolutions are both induced by total orderings on $\G(I)$. Thus, it is natural to investigate how close or different these two resolutions are  under the same total  ordering.   Let $A_L$ and $A_B$ denote the matchings induced from Theorem \ref{def:Lyuusingmatchings} and Algorithm \ref{algorithm1}, respectively. Let $V_L$ (resp, $V_B$) denote the set of non-$A_L$-critical (non-$A_B$-critical) subsets of $\G(I)$, i.e.,
\[V_L=\{\sigma \mid (\sigma,\tau) \in A_L \text{ or } (\tau,\sigma)\in A_L \text{ for some subset $\tau$ of $\G(I)$} \}\]
and 
\[V_B=\{\sigma \mid (\sigma,\tau) \in A_B \text{ or } (\tau,\sigma)\in A_B \text{ for some subset $\tau$ of $\G(I)$} \}.\]
To understand the relationship between Barile-Macchia and Lyubeznik resolutions, we compare $V_L$ and $V_B$.  First, we  consider the relationship between $m_L(\sigma)$ and $\sbridge(\sigma)$ where $\sigma\subseteq\G(I)$.

\begin{lemma}\label{lem:Lyumonoisbridge}
    If $m_L(\sigma)$ exists, then it is either a bridge or a gap of $\sigma$.
\end{lemma}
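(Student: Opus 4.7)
The plan is to unpack the definition of $m_L(\sigma)$ and then do a quick case split on whether $m_L(\sigma)$ belongs to $\sigma$ or not. Write $\sigma=\{m_1,\dots,m_p\}$ with $m_1>_I\cdots>_I m_p$, and set $m\coloneqq m_L(\sigma)$ and $k\coloneqq v_L(\sigma)$. The first observation to record, which is immediate from the definition, is that
\[ m\mid \lcm(m_1,\dots,m_k)\mid\lcm(\sigma), \]
so $m$ divides $\lcm(\sigma)$ no matter what.

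If $m\notin\sigma$, then $\lcm(\sigma\cup m)=\lcm(\sigma)$ since $m\mid\lcm(\sigma)$, so $m$ satisfies the definition of a gap and we are done.

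Suppose now $m\in\sigma$, so $m=m_j$ for some $j\in[p]$. The key numerical remark is that $m_{k}>_I m$ by the definition of $v_L(\sigma)$, together with the total ordering on $\sigma$, forces $j>k$. Hence $m_1,\dots,m_k$ all lie in $\sigma\setminus m_j$, which gives
\[ m_j=m\mid\lcm(m_1,\dots,m_k)\mid \lcm(\sigma\setminus m_j). \]
Therefore $\lcm(\sigma\setminus m_j)=\lcm(\sigma)$, i.e., $m$ is a bridge of $\sigma$.

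There is really no obstacle here beyond being careful about the direction of the ordering: "smallest with respect to $>_I$" means last in the chain $m_1>_I\cdots>_I m_p$, which is exactly what makes the index inequality $j>k$ in the bridge case go through. The whole proof is a short two-case argument that reads off directly from Theorem \ref{def:Lyuusingmatchings}, so no induction or deeper structural fact is needed.
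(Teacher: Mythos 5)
Your proof is correct and is essentially the paper's argument: the paper dismisses this lemma with "follows from the definition of $m_L(\sigma)$," and your two-case unpacking (using $m_L(\sigma)\mid\lcm(m_1,\dots,m_{v_L(\sigma)})$ and the fact that $m_k >_I m_L(\sigma)$ forces the index inequality $j>k$ in the bridge case) is exactly the content being elided.
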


\begin{proof}
   Follows from the definition of $m_L(\sigma)$.
\end{proof}

\begin{lemma}\label{lem1}
    Assume $m_L(\sigma)$ exists. If $m_L(\sigma)\in \sigma$, then $\sbridge(\sigma)$ exists. Otherwise, $\sbridge(\sigma\cup m_L(\sigma))$ ~exists.
\end{lemma}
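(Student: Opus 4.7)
The plan is to reduce the lemma to a direct dichotomy via Lemma~\ref{lem:Lyumonoisbridge}, which already tells us that $m_L(\sigma)$ is either a bridge or a gap of $\sigma$. From there, each of the two cases asserted in the lemma corresponds to exactly one of these two alternatives.

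First, I would handle the case $m_L(\sigma)\in\sigma$. Since a gap is by definition a monomial not in $\sigma$, the alternative ``$m_L(\sigma)$ is a gap of $\sigma$'' is ruled out, so Lemma~\ref{lem:Lyumonoisbridge} forces $m_L(\sigma)$ to be a bridge of $\sigma$. In particular, $\sigma$ has at least one bridge, so $\sbridge(\sigma)\neq\emptyset$ by the very definition of the smallest bridge function.

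For the remaining case $m_L(\sigma)\notin\sigma$, the same dichotomy rules out ``bridge of $\sigma$'' (bridges lie in $\sigma$ by definition), so $m_L(\sigma)$ must be a gap of $\sigma$. Unwinding the definition of a gap, condition~(b) in the definition gives $\lcm(\sigma\cup m_L(\sigma))=\lcm(\sigma)$, which is exactly the statement that $m_L(\sigma)$ is a bridge of $\sigma\cup m_L(\sigma)$. Thus $\sigma\cup m_L(\sigma)$ has at least one bridge, and $\sbridge(\sigma\cup m_L(\sigma))\neq\emptyset$.

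There is no real obstacle here: the lemma is essentially a bookkeeping statement that repackages Lemma~\ref{lem:Lyumonoisbridge} using the definitions of bridge, gap, and $\sbridge$. The only subtle point worth flagging explicitly in the write-up is that ``gap'' in this paper includes the condition $m\notin\sigma$, so the two alternatives ``bridge'' and ``gap'' provided by Lemma~\ref{lem:Lyumonoisbridge} are in fact determined by membership of $m_L(\sigma)$ in $\sigma$.
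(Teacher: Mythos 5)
Your proof is correct and follows exactly the route the paper intends: the paper's own proof is the one-line remark that the lemma follows from the definition of $m_L(\sigma)$ and Lemma~\ref{lem:Lyumonoisbridge}, and your write-up simply makes explicit the bridge/gap dichotomy and the membership test $m_L(\sigma)\in\sigma$ that resolves it. No gaps.
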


\begin{proof}
    Follows from  the definition of $m_L(\sigma)$ and Lemma \ref{lem:Lyumonoisbridge}.
\end{proof}

\begin{lemma}\label{lem2}
    Assume $m_L(\sigma)$ exists and $m_L(\sigma)\in \sigma$ where $m_L(\sigma)\neq \sbridge(\sigma)$. Then 
    $$\sbridge(\sigma \setminus m_L(\sigma))=\sbridge(\sigma) \text{ and } m_L(\sigma\setminus \sbridge(\sigma))=m_L(\sigma).$$
\end{lemma}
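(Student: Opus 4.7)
The plan is to carefully exploit the order relationship between $m_L(\sigma)$ and $\sbridge(\sigma)$. By Lemma \ref{lem:Lyumonoisbridge} and the hypothesis $m_L(\sigma) \in \sigma$, the monomial $m_L(\sigma)$ is a bridge of $\sigma$. Since $\sbridge(\sigma)$ is the smallest bridge and $m_L(\sigma) \neq \sbridge(\sigma)$, we must have $m_L(\sigma) >_I \sbridge(\sigma)$. Write $\sigma = \{m_1, \ldots, m_p\}$ in $>_I$-decreasing order, and set $k_0 = v_L(\sigma)$, $m_j = m_L(\sigma)$, $m_s = \sbridge(\sigma)$. By definition of $v_L$ we have $m_{k_0} >_I m_j$, and by the above $m_j >_I m_s$; hence $k_0 < j < s$. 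Moreover $m_j \mid \lcm(m_1, \ldots, m_{k_0})$.

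For the first equality $\sbridge(\sigma \setminus m_j) = m_s$, I would first show that $m_s$ is still a bridge of $\sigma \setminus m_j$. Fix any prime power $x^a \mid m_s$. Since $m_s$ is a bridge of $\sigma$, there exists $m \in \sigma \setminus m_s$ with $x^a \mid m$. If $m \neq m_j$ we are done; if $m = m_j$ then $x^a \mid m_j \mid \lcm(m_1, \ldots, m_{k_0})$, so $x^a \mid m_i$ for some $i \leq k_0 < j < s$, and this $m_i$ lies in $\sigma \setminus \{m_j, m_s\}$. To see that no smaller bridge exists in $\sigma \setminus m_j$, suppose $m <_I m_s$ were a bridge of $\sigma \setminus m_j$; then $m \mid \lcm(\sigma \setminus \{m_j, m\}) \mid \lcm(\sigma \setminus m)$, so $m$ would be a bridge of $\sigma$ strictly smaller than $\sbridge(\sigma)$, a contradiction.

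For the second equality, set $\tau = \sigma \setminus m_s$. Since $s > k_0$, the first $k_0$ entries of $\tau$ in $>_I$-order coincide with $m_1, \ldots, m_{k_0}$. The pair $(m_{k_0}, m_j)$ witnesses $v_L(\tau) \geq k_0$ since $m_{k_0} >_I m_j$ and $m_j \mid \lcm(m_1, \ldots, m_{k_0})$. Conversely, any witness $m$ at some index $k > k_0$ in $\tau$ would give $m \mid \lcm(T)$ for some $T \subseteq \{m_1, \ldots, m_{k+1}\}$, hence $m \mid \lcm(m_1, \ldots, m_{k+1})$ with $m_{k+1} >_I m$ in $\sigma$ (after the trivial reindexing across the removed $m_s$), contradicting the maximality of $v_L(\sigma) = k_0$. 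Therefore $v_L(\tau) = k_0$, and since the defining set $\{m \in \G(I) : m \mid \lcm(m_1, \ldots, m_{k_0})\}$ is identical whether computed in $\sigma$ or $\tau$, we conclude $m_L(\tau) = m_L(\sigma) = m_j$.

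The main obstacle is the bridge-preservation step: it is not in general true that removing one bridge from a set preserves all other bridges (as a simple example with $\sigma = \{xy, yz, xz\}$ shows). The argument must use the specific feature of $m_L(\sigma)$—namely that $m_L(\sigma)$ divides the $\lcm$ of monomials strictly larger than both itself and $\sbridge(\sigma)$—to supply an alternative monomial covering any prime power for which $m_j$ was the unique witness.
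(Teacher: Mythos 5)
Your proof is correct and follows essentially the same strategy as the paper's: both arguments hinge on the observation that $m_L(\sigma)$ divides $\lcm(m_1,\ldots,m_{v_L(\sigma)})$, where every $m_i$ with $i\le v_L(\sigma)$ dominates both $m_L(\sigma)$ and $\sbridge(\sigma)$, so that removing $m_L(\sigma)$ preserves the relevant lcm and removing $\sbridge(\sigma)$ preserves the initial segment defining $v_L$. The only cosmetic difference is that you verify the bridge-preservation step variable-by-variable while the paper writes it as a chain of lcm set identities, and you spell out the index bookkeeping for $v_L(\sigma\setminus\sbridge(\sigma))$ slightly more carefully than the paper does.
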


\begin{proof}
    For the first equality, we first observe that $\sbridge(\sigma \setminus m_L(\sigma))\geq_I  \sbridge(\sigma)$ since the former is a subset of the latter. Hence it suffices to show that $\sbridge(\sigma)$ is a bridge of $\sigma \setminus m_L(\sigma)$. Indeed, by definition, we have
    \begin{align*}
        & \ \ \ \lcm(\sigma \setminus m_L(\sigma) \setminus \sbridge(\sigma)) \\
        &= \lcm(\{m\in \sigma : m_L(\sigma) >_I m \text{ and }m\neq \sbridge(\sigma)\} \cup \{m\in \sigma : m>_I m_L(\sigma)\})\\
        &=\lcm(\lcm(\{m\in \sigma :  m_L(\sigma) >_I m \text{ and }m\neq \sbridge(\sigma)\}),\lcm( \{m\in \sigma : m>_I m_L(\sigma)\}))\\
        &=\lcm(\lcm(\{m\in \sigma :  m_L(\sigma) >_I m \text{ and }m\neq \sbridge(\sigma)\}),\lcm( \{m\in \sigma : m\geq_I  m_L(\sigma)\}))\\
        &=\lcm(\sigma \setminus \sbridge(\sigma))\\
        &=\lcm(\sigma) \\
        & =\lcm(\sigma \setminus m_L(\sigma)).
    \end{align*}
    
    The second equality is due to  the following facts: $\sigma\setminus \sbridge(\sigma)$ and $\sigma$ only differ at the element $\sbridge(\sigma)$ and  $ m_L(\sigma) >_I\sbridge(\sigma)$. By definition, we have $v_L(\sigma\setminus \sbridge(\sigma))\geq_I  v_L(\sigma)$. Since the former subset is contained in the latter, we   have $v_L(\sigma\setminus \sbridge(\sigma))= v_L(\sigma)$. Thus  $m_L(\sigma\setminus \sbridge(\sigma))= m_L(\sigma)$ by definition.
\end{proof}

\begin{lemma}\label{lem3}
    Assume $\sigma$ has a bridge. If $m_L(\sigma\setminus\sbridge(\sigma))$ exists and $m_L(\sigma\setminus\sbridge(\sigma))\neq \sbridge(\sigma)$, then either $m_L(\sigma)=m_L(\sigma\setminus\sbridge(\sigma))$ or $\sbridge(\sigma) >_I m_L(\sigma)$.
\end{lemma}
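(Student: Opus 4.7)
The plan is to introduce compact notation and split on the position of $v_L(\sigma)$ relative to the location of $\sbridge(\sigma)$ in the $>_I$-ordering of $\sigma$. Write $b := \sbridge(\sigma)$, $\tau := \sigma \setminus b$, and list $\sigma = \{m_1 >_I m_2 >_I \cdots >_I m_p\}$ with $b = m_j$ at position $j$. Then the ordered list for $\tau$ satisfies $\tau_k = m_k$ for $k < j$ and $\tau_k = m_{k+1}$ for $k \geq j$. Abbreviating $L_k^\sigma := \lcm(m_1,\dots,m_k)$ and $L_k^\tau := \lcm(\tau_1,\dots,\tau_k)$, the key combinatorial fact is that $L_k^\sigma = L_k^\tau$ for $k < j$, while $L_{k+1}^\sigma = \lcm(L_k^\tau, b)$ for $k \geq j$.

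The first case I would dispatch is $v_L(\sigma) \geq j$, which should force the second alternative in the conclusion. By the definition of $v_L(\sigma)$ there is a witness $m <_I m_{v_L(\sigma)}$ with $m \mid L_{v_L(\sigma)}^\sigma$, hence $m_L(\sigma) <_I m_{v_L(\sigma)}$. If $v_L(\sigma) = j$ then $m_{v_L(\sigma)} = b$, while if $v_L(\sigma) > j$ then $m_{v_L(\sigma)} <_I m_j = b$ by the ordering. Either way $m_L(\sigma) <_I b$, i.e., $\sbridge(\sigma) >_I m_L(\sigma)$.

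The remaining case is $v_L(\sigma) < j$, where I will prove $v_L(\sigma) = v_L(\tau)$ and then conclude $m_L(\sigma) = m_L(\tau)$. Setting $v := v_L(\sigma)$, the first $v$ elements of $\sigma$ and $\tau$ agree, so the witness realizing $v_L(\sigma) = v$ immediately yields $v_L(\tau) \geq v$. For the reverse inequality I will argue by contradiction: assume $v^* := v_L(\tau) > v$ with witness $m'$, and split on $v^*$ vs.\ $j$. If $v^* < j$ then $\tau_{v^*} = m_{v^*}$ and $L_{v^*}^\tau = L_{v^*}^\sigma$, so $m'$ is also a witness for $v_L(\sigma) \geq v^*$; if $v^* \geq j$ then $\tau_{v^*} = m_{v^*+1}$ (the $(v^*+1)$-st element of $\sigma$) and $L_{v^*}^\tau$ divides $L_{v^*+1}^\sigma$, so $m'$ witnesses $v_L(\sigma) \geq v^*+1$. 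Both contradict $v_L(\sigma) = v$. With $v_L(\sigma) = v_L(\tau) = v$ established, the equality $L_v^\sigma = L_v^\tau$ forces $m_L(\sigma) = m_L(\tau)$, since both are the $>_I$-minimum of the common set $\{m \in \G(I) : m \mid L_v^\sigma\}$.

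The main obstacle is purely the bookkeeping of how positions and partial lcms shift when $b$ is inserted into or removed from the ordered sequence; once the index arithmetic is pinned down, the witness transfers are mechanical. I do not expect to need the bridge property of $b$ beyond the fact that it makes $\sbridge(\sigma)$ well-defined, nor the hypothesis $m_L(\tau) \neq \sbridge(\sigma)$, which appears designed to exclude a degenerate overlap with the scope of Lemma \ref{lem2}.
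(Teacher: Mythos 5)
Your proof is correct and follows essentially the same route as the paper's: both arguments rest on comparing $v_L(\sigma)$ with $v_L(\sigma\setminus\sbridge(\sigma))$ using the fact that the two ordered sets differ only by the insertion of $\sbridge(\sigma)$ at position $j$, concluding $m_L(\sigma)=m_L(\sigma\setminus\sbridge(\sigma))$ when the relevant initial segments agree and $\sbridge(\sigma)>_I m_L(\sigma)$ otherwise. Your position-based case split ($v_L(\sigma)\geq j$ versus $v_L(\sigma)<j$) merely supplies the index bookkeeping that the paper compresses into two appeals to ``by definition,'' and your observation that neither the bridge hypothesis (beyond well-definedness of $\sbridge(\sigma)$) nor the condition $m_L(\sigma\setminus\sbridge(\sigma))\neq\sbridge(\sigma)$ is actually needed matches the paper's proof, which likewise never invokes them.
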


\begin{proof}
   Since $\sigma$ contains $\sigma\setminus\sbridge(\sigma)$, we have $v_L(\sigma) \geq v_L(\sigma\setminus\sbridge(\sigma))$. Then  $m_L(\sigma)$ exists by our hypothesis. As before recall that $\sigma\setminus \sbridge(\sigma)$ and $\sigma$ only differ at the element $\sbridge(\sigma)$. We have the following two cases to consider:
    \begin{enumerate}
        \item Suppose $v_L(\sigma)=v_L(\sigma\setminus \sbridge(\sigma))$. Then, by definition, $m_L(\sigma)=m_L(\sigma\setminus\sbridge(\sigma))$.
        \item Suppose $v_L(\sigma)>_I v_L(\sigma\setminus \sbridge(\sigma))$. Then, by definition, $\sbridge(\sigma) >_I m_L(\sigma)$. \qedhere
    \end{enumerate}
\end{proof}

\begin{lemma}\label{lem4:potentialtype2}
    Let $\sigma$ be a subset of $\G(I)$ such that $m_L(\sigma)$ and  $\sbridge(\sigma)$ exist where $m_L(\sigma)\neq \sbridge(\sigma)$, $m_L(\sigma)\notin \sigma$ and $\sbridge(\sigma \cup m_L(\sigma))=\sbridge(\sigma)$. Assume $\sigma$ is potentially-type-2, but not type-2, i.e., there exists $\sigma'$ such that $\sigma'\setminus \sbridge(\sigma')=\sigma\setminus \sbridge(\sigma)$ and $\sbridge(\sigma) >_I\sbridge(\sigma')$. Then $m_L(\sigma')$ exists and either $m_L(\sigma')=m_L(\sigma)$ or $\sbridge(\sigma')>_I m_L(\sigma')$. In particular, $m_L(\sigma')\notin \sigma'$.
\end{lemma}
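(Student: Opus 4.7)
The plan is to parallel the proof of Lemma \ref{lem3} by comparing $v_L(\sigma)$ and $v_L(\sigma')$ through the shared subset $\sigma\setminus\sbridge(\sigma)=\sigma'\setminus\sbridge(\sigma')$, exploiting that $\sigma'$ arises from $\sigma$ by swapping $\sbridge(\sigma)$ for a strictly smaller monomial.

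First I would fix notation, writing $\sigma=\{m_1 >_I m_2 >_I \cdots >_I m_p\}$ in decreasing order, and setting $v\coloneqq v_L(\sigma)$ and $i$ so that $\sbridge(\sigma)=m_i$. A preliminary step is to show $v<i$: since $m_L(\sigma)\mid \lcm(m_1,\ldots,m_v)$ and $m_L(\sigma)\notin \sigma$, the minimality of $\G(I)$ yields $m_L(\sigma)<_I m_v$; on the other hand, $m_L(\sigma)$ is a gap of $\sigma$, hence a bridge of $\sigma\cup m_L(\sigma)$, so the hypotheses $\sbridge(\sigma\cup m_L(\sigma))=\sbridge(\sigma)$ and $m_L(\sigma)\neq\sbridge(\sigma)$ force $m_L(\sigma)>_I m_i$. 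Thus $m_v>_I m_L(\sigma)>_I m_i$, and $v<i$. Because $\sbridge(\sigma')<_I m_i$, the top $v$ elements of $\sigma'$ are exactly $m_1,\ldots,m_v$, so $m_L(\sigma)$ witnesses $v_L(\sigma')\geq v$, establishing the existence of $m_L(\sigma')$.

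Next I would split into two cases based on $v_L(\sigma')$. If $v_L(\sigma')=v$, the top-$v$ lcms of $\sigma$ and $\sigma'$ agree, so $m_L(\sigma')=m_L(\sigma)$; then $m_L(\sigma)\notin\sigma$ together with $m_L(\sigma)\neq\sbridge(\sigma')$ (via $m_L(\sigma)>_I\sbridge(\sigma)>_I\sbridge(\sigma')$) yields $m_L(\sigma')\notin\sigma'$. If instead $v_L(\sigma')>v$, I would first argue $i=v+1$: if $v+1<i$ then $m'_{v+1}=m_{v+1}$, and any witness for $v_L(\sigma')\geq v+1$ also witnesses $v_L(\sigma)\geq v+1$, contradicting $v_L(\sigma)=v$. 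With $i=v+1$, the element $m'_{v+1}$ is the $(>_I)$-largest of $\{m_{v+2},\ldots,m_p,\sbridge(\sigma')\}$; call it $m^*$, and note that by definition of $v_L(\sigma')$ there is some $m$ with $m^*>_I m$ and $m\mid \lcm(m_1,\ldots,m_v,m^*)$, so $m_L(\sigma')\leq_I m<_I m^*$.

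The main obstacle is excluding the sub-case $m^*=m_{v+2}$. The key observation is that such a witness $m$ would satisfy $m_{v+2}>_I m$ and $m\mid \lcm(m_1,\ldots,m_v,m_{v+2})\mid \lcm(m_1,\ldots,m_{v+2})$, which would witness $v_L(\sigma)\geq v+2$, contradicting $v_L(\sigma)=v$. Therefore $m^*=\sbridge(\sigma')$ and $m_L(\sigma')<_I\sbridge(\sigma')$, giving $\sbridge(\sigma')>_I m_L(\sigma')$. Finally, the ``in particular'' statement $m_L(\sigma')\notin \sigma'$ follows uniformly from the or-statement: if $m_L(\sigma')\in \sigma'$, then $m_L(\sigma')<_I m'_{v_L(\sigma')}$ forces its position in $\sigma'$ to lie strictly beyond $v_L(\sigma')$, so $m_L(\sigma')\mid \lcm(m'_1,\ldots,m'_{v_L(\sigma')})\mid \lcm(\sigma'\setminus m_L(\sigma'))$, making $m_L(\sigma')$ a bridge of $\sigma'$ no larger than $\sbridge(\sigma')$, which is incompatible with either $m_L(\sigma')=m_L(\sigma)\notin\sigma'$ or $\sbridge(\sigma')>_I m_L(\sigma')$.
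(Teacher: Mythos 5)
Your setup, the proof that $m_L(\sigma')$ exists, the case $v_L(\sigma')=v_L(\sigma)$, and the uniform derivation of $m_L(\sigma')\notin\sigma'$ from the or-statement are all correct and consistent with the paper's (very terse) proof. The gap is in the case $v_L(\sigma')>v$. There you repeatedly treat ``$v_L(\sigma')\geq v+1$'' as supplying a witness at level exactly $v+1$: first to conclude $i=v+1$, and again when you write ``by definition of $v_L(\sigma')$ there is some $m$ with $m^*>_I m$ and $m\mid\lcm(m_1,\ldots,m_v,m^*)$.'' But $v_L$ is defined as a \emph{supremum} over the set of levels $k$ admitting a witness, and that set need not be downward closed (for generators $ab>_Icd>_Ief>_Iace$ it is $\{3\}$). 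So $v_L(\sigma')>v$ only guarantees a witness at the single level $v'\coloneqq v_L(\sigma')$, which may exceed $v+1$; at that level the transfer of the witness back to $\sigma$ is no longer immediate, and $i=v+1$ is not forced. Concretely, take $\G(I)=\{x_1x_2>_Ix_3x_4>_Ix_1x_3>_Ix_5x_6>_Ix_7x_8>_Ix_2x_{10}>_Ix_7x_9>_Ix_4x_{10}>_Ix_8x_{10}\}$ and $\sigma$ the six generators other than $x_1x_3,\,x_2x_{10},\,x_4x_{10}$: then $v=2$, $m_L(\sigma)=x_1x_3\notin\sigma$, $\sbridge(\sigma)=x_7x_8$ sits in position $i=4=v+2$ of $\sigma$, $\sbridge(\sigma\cup m_L(\sigma))=\sbridge(\sigma)$, and $\sigma'=(\sigma\setminus x_7x_8)\cup\{x_2x_{10}\}$ satisfies $\sbridge(\sigma')=x_2x_{10}<_I\sbridge(\sigma)$ with $v_L(\sigma')=5$. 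Every hypothesis your argument invokes holds, yet $i\neq v+1$ and the witness for $\sigma'$ lives at level $5$, not $3$.

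The case can be repaired without $i=v+1$. Take the witness $m$ at level $v'=v_L(\sigma')$, so $m'_{v'}>_Im$ and $m\mid\lcm(m'_1,\ldots,m'_{v'})$. If $\sbridge(\sigma')\notin\{m'_1,\ldots,m'_{v'}\}$, then $\{m'_1,\ldots,m'_{v'}\}$ consists of the top $v'$ elements of $\sigma\setminus\sbridge(\sigma)$, namely $\{m_1,\ldots,m_{v'}\}$ if $v'<i$ and $\{m_1,\ldots,m_{v'+1}\}\setminus\{m_i\}$ if $v'\geq i$; in either case $m$ witnesses $v_L(\sigma)\geq v'$ or $v_L(\sigma)\geq v'+1$, contradicting $v_L(\sigma)=v<v'$. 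Hence $\sbridge(\sigma')\in\{m'_1,\ldots,m'_{v'}\}$, so $\sbridge(\sigma')\geq_I m'_{v'}>_I m\geq_I m_L(\sigma')$, which is the desired alternative. With this substitution the rest of your write-up goes through.
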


\begin{proof}
    Note that $m_L(\sigma \cup m_L(\sigma))=m_L(\sigma)$ and $\sbridge(\sigma)$ is a bridge of $\sigma\cup m_L(\sigma)$  by definition. It follows from Lemma \ref{lem:Lyumonoisbridge} and the hypothesis that  $m_L(\sigma)>_I \sbridge(\sigma)=\sbridge(\sigma \cup m_L(\sigma))$. Next observe that $\sigma$ and $\sigma'$ only differ at the elements  $\sbridge(\sigma)$ and $\sbridge(\sigma')$. Then, for any $m>_I  m_L(\sigma)$, $m\in \sigma$ if and only if $m\in \sigma'$. Furthermore, $v_L(\sigma')$ is finite since $v_L(\sigma') \geq v_L(\sigma)$. Hence, $m_L(\sigma')$ ~exists. 
    \begin{enumerate}
        \item If $v_L(\sigma')=v_L(\sigma)$, then  $m_L(\sigma')=m_L(\sigma)$. Since $m_L(\sigma)\notin \sigma$, we have $m_L(\sigma')\notin \sigma'$.
        \item If $v_L(\sigma')>v_L(\sigma)$, one can observe that $\sbridge(\sigma')>_I m_L(\sigma')$ under these assumptions. Hence $m_L(\sigma')\notin \sigma'$ by Lemma \ref{lem:Lyumonoisbridge}. \qedhere
    \end{enumerate}
\end{proof}

\begin{lemma}\label{lem:potentialtype2-2}
    Let $\sigma$ and $ \tau$ be  subsets of $\G(I)$ such that $\sbridge(\sigma), \sbridge(\tau), $ and $ m_L(\tau)$ exist where $m_L(\tau)\in \tau$, $\sigma \setminus \sbridge(\sigma)=\tau \setminus \sbridge(\tau)$ and $\sbridge(\tau) >_I \sbridge(\sigma)$. Then $m_L(\sigma)$ exists. Moreover, if $m_L(\tau)\neq \sbridge(\tau)$, then either $m_L(\sigma)=m_L(\tau)$ or $\sbridge(\sigma) >_I m_L(\sigma)$.
\end{lemma}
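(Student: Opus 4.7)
The plan is to split the argument into two cases based on whether $m_L(\tau) = \sbridge(\tau)$ or not, reducing the first case almost entirely to the previously-established Lemmas \ref{lem2} and \ref{lem3}, and handling the second by a direct witness argument. Throughout, set $S := \sigma \setminus \sbridge(\sigma) = \tau \setminus \sbridge(\tau)$, so that $\sigma = S \cup \{\sbridge(\sigma)\}$ and $\tau = S \cup \{\sbridge(\tau)\}$ differ only in the single element swap $\sbridge(\sigma) \leftrightarrow \sbridge(\tau)$, where $\sbridge(\tau) >_I \sbridge(\sigma)$.

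In the case $m_L(\tau) \neq \sbridge(\tau)$, first invoke Lemma \ref{lem2} (applied to $\tau$ in place of $\sigma$) to conclude that $m_L(S) = m_L(\tau \setminus \sbridge(\tau)) = m_L(\tau)$ exists. Since $m_L(\tau) \in \tau \setminus \{\sbridge(\tau)\} = S$ while $\sbridge(\sigma) \notin S$, we have $m_L(S) \neq \sbridge(\sigma)$. Then Lemma \ref{lem3}, applied to $\sigma$, yields $m_L(\sigma) = m_L(S) = m_L(\tau)$ or $\sbridge(\sigma) >_I m_L(\sigma)$. This single application simultaneously establishes the existence of $m_L(\sigma)$ and the ``moreover'' claim.

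In the remaining case $m_L(\tau) = \sbridge(\tau)$, the ``moreover'' part is vacuous and we need only show that $m_L(\sigma)$ exists. Write $\tau = \{m_1 >_I \cdots >_I m_p\}$ and $\sigma = \{n_1 >_I \cdots >_I n_p\}$ in sorted order, let $i_0$ be the position of $\sbridge(\tau)$ in $\tau$, and set $k_0 := v_L(\tau)$. The defining inequality $m_{k_0} >_I m_L(\tau) = m_{i_0}$ forces $k_0 < i_0$, so the first $k_0$ elements of $\tau$ all lie in $S$. Because $\sbridge(\tau) >_I \sbridge(\sigma)$, the position of $\sbridge(\tau)$ in sorted $\tau$ is at most that of $\sbridge(\sigma)$ in sorted $\sigma$; consequently the $k_0$ largest elements of $\sigma$ are also in $S$ and coincide with those of $\tau$. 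Then $\sbridge(\tau)$ itself witnesses that $v_L(\sigma) \geq k_0$, since $n_{k_0} = m_{k_0} >_I \sbridge(\tau)$ and $\sbridge(\tau) \mid \lcm(m_1,\ldots,m_{k_0}) = \lcm(n_1,\ldots,n_{k_0})$. Hence $m_L(\sigma)$ exists.

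The main obstacle, though ultimately routine, is the bookkeeping in Case 2 confirming that the $k_0$ largest elements of $\sigma$ and $\tau$ genuinely agree as sets and as ordered sequences; this is a careful comparison of insertion positions using $\sbridge(\tau) >_I \sbridge(\sigma)$. Everything else reduces cleanly to the earlier lemmas.
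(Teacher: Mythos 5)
Your proof is correct. It reaches the same conclusions as the paper's argument but packages the main case differently: where the paper proves the lemma from scratch by comparing the sorted prefixes of $\sigma$ and $\tau$ directly (using Lemma \ref{lem:Lyumonoisbridge} to get $m_L(\tau)\geq_I\sbridge(\tau)>_I\sbridge(\sigma)$, deducing that the top $v_L(\tau)$ elements of the two sets coincide, and then splitting on $v_L(\sigma)=v_L(\tau)$ versus $v_L(\sigma)>v_L(\tau)$), you outsource the case $m_L(\tau)\neq\sbridge(\tau)$ to the already-proved Lemmas \ref{lem2} and \ref{lem3} via the common set $S=\sigma\setminus\sbridge(\sigma)=\tau\setminus\sbridge(\tau)$: Lemma \ref{lem2} identifies $m_L(S)$ with $m_L(\tau)$, and Lemma \ref{lem3} then delivers both the existence of $m_L(\sigma)$ and the dichotomy $m_L(\sigma)=m_L(\tau)$ or $\sbridge(\sigma)>_I m_L(\sigma)$ in one stroke. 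This is cleaner and avoids repeating the $v_L$ bookkeeping that the paper redoes inline (its case split is exactly the dichotomy already present in the proof of Lemma \ref{lem3}). Your separate treatment of the degenerate case $m_L(\tau)=\sbridge(\tau)$, where only existence is at stake, is essentially the paper's uniform existence argument made explicit: both bridges sit strictly below the $k_0$-th largest element of $S$, so the top-$k_0$ prefixes of $\sigma$ and $\tau$ agree and $\sbridge(\tau)=m_L(\tau)$ serves as a witness for $v_L(\sigma)\geq k_0$. The only stylistic caveat is that you rely on Lemma \ref{lem3} implicitly asserting the existence of $m_L(\sigma)$; this is how the paper itself uses that lemma (its proof derives $v_L(\sigma)\geq v_L(\sigma\setminus\sbridge(\sigma))>-\infty$), so no gap results.
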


\begin{proof}
    As before, note that $\sigma$ and $\tau$ only differ at elements $\sbridge(\sigma)$ and $\sbridge(\tau)$. It follows from Lemma \ref{lem:Lyumonoisbridge} and our hypothesis that $ m_L(\tau) \geq_I\sbridge(\tau)>_I \sbridge(\sigma) $. Then, for any $m>_I m_L(\tau)$, $m\in \tau$ if and only if $m\in \sigma$. Hence, $v_L(\sigma)$ is finite and $m_L(\sigma)$ exists.  Now assume $m_L(\tau)\neq \sbridge(\tau)$. 
    \begin{enumerate}
        \item Suppose $v_L(\sigma)=v_L(\tau)$. Then $m_L(\sigma)=m_L(\tau)$.
        \item Suppose $v_L(\sigma)>v_L(\tau)$. Observe that  $\sbridge(\sigma)>_I m_L(\sigma)$ under these assumptions. \qedhere
    \end{enumerate}
\end{proof}

Now we are ready to prove the main result of this section: A Barile-Macchia resolution is at most as ``big" as the corresponding Lyubeznik resolution.  In order to illustrate possible scenarios, we will use green and blue arrows to depict directed edges in $A_L$ and $A_B$, respectively. Additionally, we use blue dashed arrows to depict directed edges of the form $(\sigma,\sigma\setminus \sbridge(\sigma))$ where $\sigma$ is potentially-type-2, but not type-2.

\begin{theorem}\label{BridgevsLyu}
    Let $I$ be a monomial ideal. Assume there exist no two subsets $\sigma$ and $\tau$ of $\G(I)$ such that $\sigma \setminus \sbridge(\sigma)=\tau \setminus \sbridge(\tau)$, $\sbridge(\tau)>_I \sbridge(\sigma)$, $m_L(\sigma)=m_L(\tau)>_I \sbridge(\tau)$, and $m_L(\tau)\in \tau$. Then $V_L\subseteq V_B$. In particular, $\rank (\mathcal{F}_L)_i\geq \rank (\mathcal{F}_B)_i$ for each $i\in \mathbb{Z}$ where $\mathcal{F}_L$ and $\mathcal{F}_B$ are the induced Lyubeznik and  Barile-Macchia resolutions, respectively.
\end{theorem}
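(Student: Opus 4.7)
My plan is to establish the set-inclusion $V_L\subseteq V_B$ element by element: I take an arbitrary $\rho\in V_L$ and show it must appear in some $A_B$-edge. Every such $\rho$ lies in a unique $A_L$-edge $(U,L)$ with $U=L\cup m_L(U)$ and $m_L(U)\in U$, so I split the argument into an upper case $\rho=U$ and a lower case $\rho=L$. The main tools will be Lemmas \ref{lem:Lyumonoisbridge}--\ref{lem:potentialtype2-2} together with the type classification supplied by Theorem \ref{thm:alltypes} and Remark \ref{rem:types}.

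For the upper case $\rho=U$, Lemma \ref{lem1} ensures that $\sbridge(U)$ exists, so $U$ carries a bridge. By Remark \ref{rem:types}, $U$ is therefore either type-1 or potentially-type-2, and a type-1 subset automatically belongs to $V_B$. The only problematic scenario is when $U$ is potentially-type-2 but gets removed at step~(3) of Algorithm~\ref{algorithm1}. In that situation, there is a competitor $U'$, also potentially-type-2, with $U'\setminus \sbridge(U')=U\setminus \sbridge(U)$ and $\sbridge(U')<_I\sbridge(U)$. Applying Lemma \ref{lem:potentialtype2-2} to the pair $(\sigma,\tau)=(U',U)$ produces $m_L(U')$, and --- because $m_L(U)\in U$ is a bridge of $U$ by Lemma \ref{lem:Lyumonoisbridge}, hence $m_L(U)\geq_I\sbridge(U)$ --- in the generic case $m_L(U)\neq \sbridge(U)$ we obtain either $m_L(U')=m_L(U)$, which is exactly the configuration forbidden by the theorem's hypothesis, or $\sbridge(U')>_I m_L(U')$, which makes $m_L(U')$ a gap of $U'$ and allows us to iterate the argument along the resulting $A_L$-edge out of $U'$ via Lemmas \ref{lem2} and \ref{lem3} until the forbidden pattern reappears.

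For the lower case $\rho=L$, if $m_L(U)=\sbridge(U)$ then $L=U\setminus\sbridge(U)$ is the common target of every potentially-type-2 subset with the same bridge-removal; step~(3) of the algorithm always keeps exactly one such subset (the one with smallest $\sbridge$), whose outgoing $A_B$-edge lands on $L$, so $L\in V_B$. If instead $m_L(U)\neq \sbridge(U)$, Lemma \ref{lem2} gives $\sbridge(L)=\sbridge(U)$, placing $L$ in the same type-1/potentially-type-2 dichotomy, and the potentially-type-2-but-not-type-2 obstruction is now handled by Lemma \ref{lem4:potentialtype2}, which transfers the Lyubeznik data from $L$ to a competitor and, together with the hypothesis, rules out the problematic configuration.

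The principal obstacle I anticipate is the second alternative $\sbridge(U')>_I m_L(U')$ of Lemma \ref{lem:potentialtype2-2}, together with the borderline case $m_L(U)=\sbridge(U)$: the hypothesis does not directly preclude either, so careful bookkeeping with Lemmas \ref{lem2}--\ref{lem3} will be needed to convert them back into instances of the forbidden pattern by tracing $v_L$ under bridge-deletion and monomial-adjunction. Once $V_L\subseteq V_B$ is secured, the quantitative assertion $\rank(\mathcal{F}_L)_i\geq \rank(\mathcal{F}_B)_i$ is immediate: by Theorem \ref{thm:morseres}, the rank of the $i$-th free module in each Morse resolution counts the critical subsets of cardinality $i$, and the inclusion $V_L\subseteq V_B$ simply says the $A_B$-critical subsets form a subcollection of the $A_L$-critical ones in every cardinality.
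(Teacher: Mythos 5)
Your overall skeleton matches the paper's: reduce to showing that every subset appearing in an $A_L$-edge also appears in an $A_B$-edge, classify the subset via Theorem \ref{thm:alltypes} and Remark \ref{rem:types}, and use the theorem's hypothesis precisely to kill the alternative $m_L(\sigma')=m_L(\sigma)$ coming out of Lemma \ref{lem:potentialtype2-2} when a potentially-type-2 subset loses at step (3). The concluding rank argument is also correct. However, there is a genuine gap at exactly the point you flag as "the principal obstacle": you never resolve it. The paper does not "iterate until the forbidden pattern reappears"; it runs a descending induction on $|\tau|$ (with base case $\tau=\G(I)$), reinforced by a secondary minimality assumption $(*)$ on $\sbridge(\tau)$ among subsets of the same cardinality, and it fixes in advance the only three configurations ("patterns") in which a pair $\tau$, $\tau\setminus m_L(\tau)$ can sit inside $A_B$. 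The troublesome alternative $\sbridge(\sigma')>_I m_L(\sigma')$ forces $m_L(\sigma')\notin\sigma'$ (Lemma \ref{lem:Lyumonoisbridge}), so the relevant $A_L$-edge lives on the strictly larger set $\sigma'\cup m_L(\sigma')$; the induction hypothesis then pins that larger set to one of the three patterns, each of which is excluded by the matching property of $A_L$ or $A_B$ together with Lemma \ref{lem2}. Without setting up this induction and the exhaustive list of patterns, your "iteration" has no termination guarantee and no mechanism for producing the contradiction.

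A second, smaller gap: your upper/lower split loses track of the fact that both endpoints of the $A_L$-edge must be handled \emph{simultaneously and consistently}. For instance, when $U$ is type-1 you conclude $U\in V_B$ immediately, but you still owe an argument that $L=U\setminus m_L(U)$ is in $V_B$, and the paper gets this only because the type-1 case forces the \emph{third pattern}, in which $U\setminus m_L(U)$ is the source of an $A_B$-edge; establishing that requires the subcase analysis on whether $m_L$ of the witnessing superset lies inside it (Lemmas \ref{lem2}, \ref{lem3}, \ref{lem4:potentialtype2}) and again the induction hypothesis. As written, your proposal is a correct plan of attack with the hardest steps deferred rather than a complete proof.
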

    
\begin{proof}
    Recall that $m_L(\sigma \cup m_L(\sigma))=m_L(\sigma)$ by definition. Thus, it suffices to show that for each $\tau \in V_L$ satisfying $(\tau,\tau \setminus m_L(\tau))\in A_L$,  both $\tau$ and $\tau \setminus m_L(\tau)$ appear in some edges of $A_B$. To be more specific, these two subsets of $\G(I)$ can appear in $A_B$ in one of the following three ~patterns:     
    
    \begin{figure}[H]
        \centering
        \includegraphics[width=0.98\textwidth]{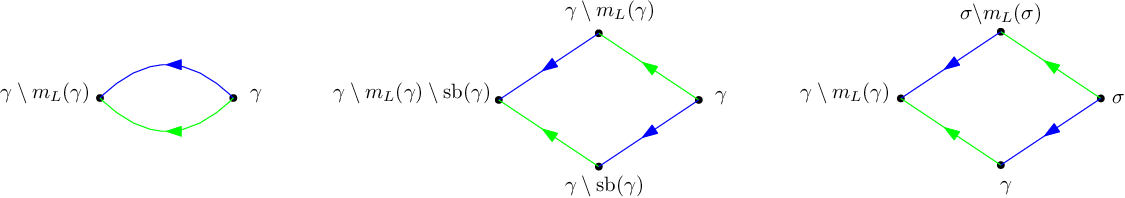}
        \caption{The only possible patterns for $\gamma$ and $\gamma \setminus m_L(\gamma)$ (from left to right)}
        \label{threepatterns}
    \end{figure}

   The first pattern occurs if $m_L(\tau)= \sbridge(\tau)$, the second pattern occurs if $m_L(\tau)\neq \sbridge(\tau)$ (the second diagram follows from Lemma \ref{lem2}), and the third pattern occurs if there exists a subset $\sigma$ satisfying the last diagram.

    Note that the assumption  $(\tau,\tau \setminus m_L(\tau))\in A_L$ for $\tau \in V_L$  implies $m_L(\tau)$ exists and $m_L(\tau)\in \tau$. Then, $\sbridge(\tau)$ exists by Lemma \ref{lem1}. If $V_L=\emptyset$, there is nothing to prove. So, we may assume $V_L\neq \emptyset$. In order to show one of the three patterns hold for $\tau$, we use descending induction on ~$|\tau|$.

    We start with the base case. Consider the largest $|\tau|$ such that $\tau \in V_L$ satisfying  $(\tau,\tau \setminus m_L(\tau))\in A_L$. Observe that if $\tau$ belongs to $V_L$, then any subset of $\G(I)$ containing $\tau$ also belongs to $V_L$ because a Lyubeznik resolution is induced by a simplicial complex. Hence, $\G(I) \in V_L$ since $V_L\neq \emptyset$. This means $m_L(\G(I))$ exists and $(\G(I), \G(I) \setminus m_L(\G(I))) \in A_L$.  Thus, we have $\tau=\G(I)$ for the base case. If $m_L(\tau)=\sbridge(\tau)$, then the first pattern occurs. So, we may assume $m_L(\tau)\neq \sbridge(\tau)$. It follows from Lemma \ref{lem2} that $\tau\setminus m_L(\tau)$ has a bridge. Therefore, $\tau\setminus m_L(\tau)$ must be type-1 or potentially-type-2. Since $m_L(\tau)\neq \sbridge(\tau)$, it is not  type-1, and hence it is potentially-type-2. In fact, it is type-2 since, by Lemma \ref{lem2}, we have $\sbridge(\tau\setminus m_L(\tau))=\sbridge(\tau)=\sbridge(\G(I))$, which is the smallest possible bridge of any subset of $\G(I)$. Therefore the second pattern occurs by Lemma \ref{lem2}. 

    Note that if $\tau$ follows the second pattern, then $\tau \setminus \sbridge(\tau)$ follows the third pattern. 
    
    Fix a natural number $n$. By induction, we can assume that whenever $(\tau,\tau \setminus m_L(\tau))\in A_L$ for a subset $\tau$ of $\G(I)$ where $|\tau|>n$, then $\tau$ satisfies one of the three patterns. 
    
    Consider $\tau\in V_L$ where  $(\tau,\tau \setminus m_L(\tau))\in A_L$ and $|\tau|=n$. We may assume $\sbridge (\tau)$ is the smallest in the following sense: If there exists  $\tau'$ where  $(\tau',\tau' \setminus m_L(\tau'))\in A_L$ with $|\tau'|=n$ and $\sbridge(\tau)>_I ~\sbridge(\tau')$, then $\tau'$ must follow one of the three patterns. We call this new assumption $(*)$. Again, since $\tau$ has a bridge, it must be type-1 or potentially-type-2. Based on this observation, we consider the following cases:
        \begin{enumerate}
            \item Suppose $\tau$ is type-1. By the definition of type-1 subsets, there exists a subset $\sigma$ of $\G(I)$ such that $\sigma\setminus \sbridge(\sigma)=\tau$. Since $ \tau \subset \sigma$ and $m_L(\tau)$ exists, so does  $m_L(\sigma)$. Note that $m_L(\sigma)\neq \sbridge(\sigma)$. Otherwise,  $(\sigma, \tau)\in A_L$ which contradicts the fact that $A_L$ is a matching.  We have two subcases.
\begin{enumerate}
            \item Suppose $m_L(\sigma)\in \sigma$. Then we have the following diagram:
                \begin{figure}[H]
                    \centering
            \includegraphics[width=0.28\textwidth]{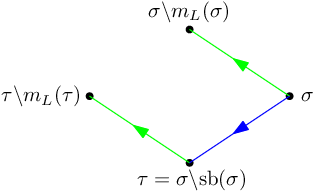}
                    \label{1a}
                \end{figure}
                By the induction hypothesis,  $\sigma$ follows one of the three patterns. Note that it does not follow the first pattern since $m_L (\sigma)\neq \sbridge(\sigma)$. So,  we have the following two possible diagrams.
                \begin{figure}[H]
                    \centering
            \includegraphics[width=0.65\textwidth]{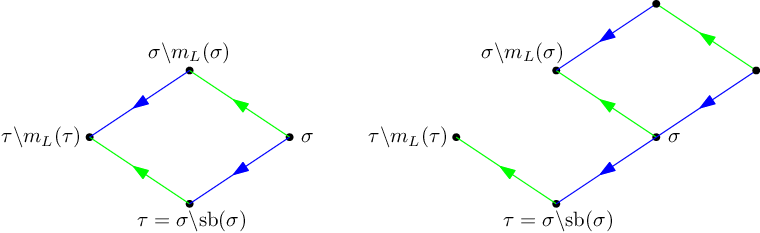}
                    \label{1a1}
                
                \end{figure}
                The second diagram contradicts the fact that $A_B$ is a matching. So, the only possible diagram is the first one which coincides with the third pattern from our list. Thus $\tau$ follows the third pattern for this case.

\item Suppose $m_L(\sigma)\notin \sigma$. Then we have the following diagram:
                \begin{figure}[H]
                    \centering
 \includegraphics[width=0.35\textwidth]{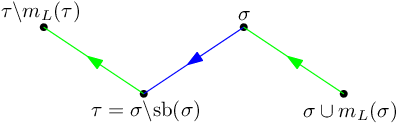}
                    \label{1b}
                \end{figure}
                By the induction hypothesis,  $\sigma\cup m_L(\sigma)$ follows one of the three patterns. Note that the first pattern contradicts the fact 
 that $A_B$ is a matching. Recall from  Lemma \ref{lem1} that $\sbridge (\sigma \cup m_L(\sigma))$ exists.  So, we have the following two possible diagrams:
                \begin{figure}[H]
                    \centering
                    \includegraphics[width=0.85\textwidth]{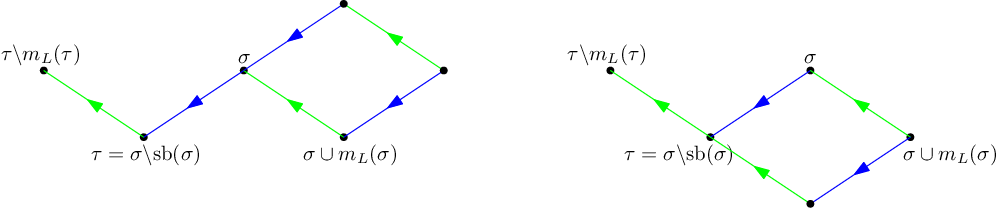}
                    \label{1b1}
                \end{figure}
                The first diagram  contradicts the fact that $A_L$ is a matching while the second one contradicts that $A_B$ is a matching. This means  $m_L(\sigma) \in \sigma$ when $\tau$ is type-1. 
     \end{enumerate}       
            \item Suppose $\tau$ is type-2. If $m_L(\tau)=\sbridge(\tau)$, then $\tau$ follows the first pattern and we are done.  So we may assume $m_L(\tau)\neq \sbridge(\tau)$. It follows from Lemma \ref{lem2} that $\tau\setminus m_L(\tau)$ has a bridge and $\sbridge (\tau\setminus m_L(\tau))=\sbridge(\tau)$. Hence,  $\tau\setminus m_L(\tau)$ is either type-1 or potential type-2. 

  \begin{enumerate}
  \item Suppose $\tau\setminus m_L(\tau)$ is type-1, i.e., there exists $\sigma$ such that $\sigma \setminus \sbridge(\sigma)=\tau\setminus m_L(\tau)$. Note that
$m_L (\sigma \setminus \sbridge(\sigma)) = m_L(\tau)$ and $m_L(\sigma)$ exists. Since $A_B$ is a matching, we have $m_L(\tau) \neq \sbridge(\sigma)$. It then follows from Lemma \ref{lem3} that $ \sbridge(\sigma) >_I m_L(\sigma)$. Hence,  $m_L(\sigma)\notin \sigma$ by Lemma \ref{lem:Lyumonoisbridge} and we have the following diagram:
                \begin{figure}[H]
                    \centering
             \includegraphics[width=0.45\textwidth]{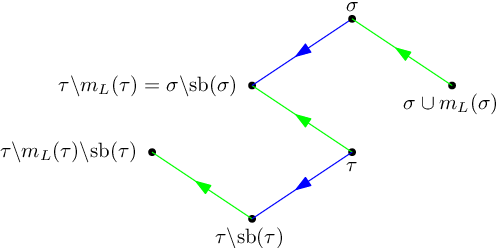}
                    \label{2a}
                \end{figure}
                By the induction hypothesis,  $\sigma\cup m_L(\sigma)$  follows one of the three patterns.  Note that the first pattern contradicts the fact that $A_B$ is a matching. Combining this with Lemma \ref{lem2}, we have the following two diagrams:
                \begin{figure}[H]
                    \centering
                    \includegraphics[width=0.95\textwidth]{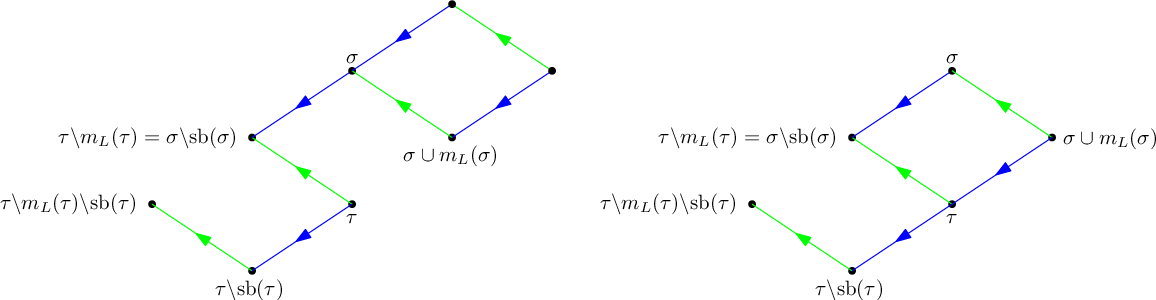}
                    \label{2a1}
                \end{figure}
                Neither of these diagrams is possible as they contradict the fact that $A_B$ is a ~matching.

  \item Suppose $\tau\setminus m_L(\tau)$ is type-2. Then, $\sigma$ follows the second pattern from our list by Lemma \ref{lem2}. 
                \begin{figure}[H]
                    \centering
                    \includegraphics[width=0.35\textwidth]{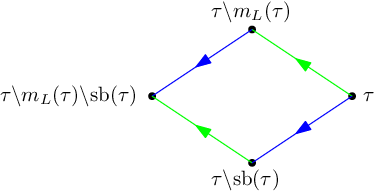}
                    \label{2b}
                \end{figure}
  \item  Suppose $\tau\setminus m_L(\tau)$ is potentially-type-2, but not type-2. To simplify the notation, set $\sigma=\tau\setminus m_L(\tau)$. Then, there exists $\sigma'$ such that $\sigma' \setminus \sbridge(\sigma')= \sigma \setminus \sbridge(\sigma)$ and $\sbridge(\sigma) >_I \sbridge(\sigma')$.  Note that $m_L (\sigma) =m_L (\tau)$.  It follows from Lemma \ref{lem4:potentialtype2} that $m_L(\sigma')$ exists and $m_L(\sigma')\notin \sigma'$. We have the following diagram: 

                \begin{figure}[H]
                    \centering
                    \includegraphics[width=0.4\textwidth]{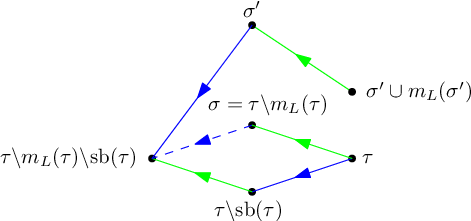}
                    \label{2c}
                \end{figure}
                By Lemma \ref{lem2}, 
                \[\sbridge(\tau)=\sbridge(\sigma \cup m_L(\sigma) )=\sbridge(\sigma)>_I \sbridge(\sigma')=\sbridge(\sigma'\cup m_L(\sigma')).\]
                Hence by $(*)$, $\sigma'\cup m_L(\sigma')$  follows one of the three patterns. Note that the first pattern is not possible.  Combining this with Lemma \ref{lem2}, we have the following two possible diagrams:
                \begin{figure}[H]
                    \centering
                    \includegraphics[width=0.85\textwidth]{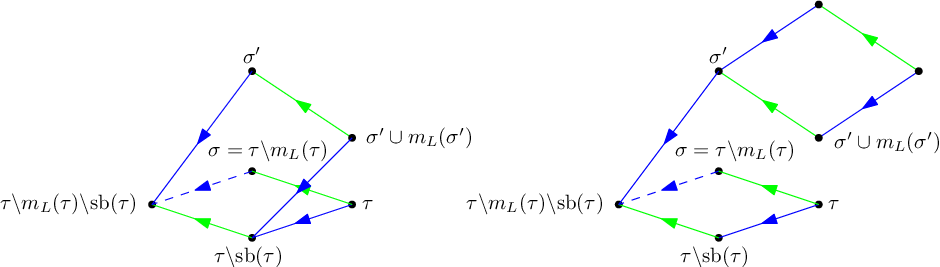}
                    \label{2c1}
                \end{figure}
                Both diagrams contradict the fact that $A_B$ is a matching.
\end{enumerate}
            \item Suppose $\tau$ is potentially-type-2, but not type-2. Then, there exists a subset $\sigma$ of $\G(I)$ such that $\sigma \setminus \sbridge(\sigma) = \tau \setminus \sbridge(\tau)$ and $\sbridge(\tau) >_I\sbridge(\sigma)$. By Lemma \ref{lem:potentialtype2-2}, $m_L(\sigma)$ exists. We have four subcases:
            \begin{enumerate}
                \item Suppose $m_L(\sigma)\in \sigma$ and $m_L(\tau)=\sbridge(\tau)$. Then we have a diagram.
                \begin{figure}[H]
                    \centering
                    \includegraphics[width=0.28\textwidth]{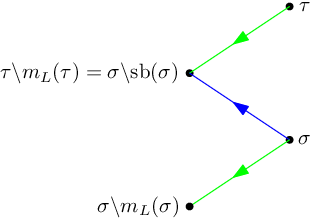}
                    \label{3a}
                \end{figure}
                By $(*)$,  $\sigma$  follows one of the three patterns. Note that it cannot follow the first and third patterns. Combining this with Lemma \ref{lem2}, we only have one possibility:
                \begin{figure}[H]
                    \centering
                    \includegraphics[width=0.28\textwidth]{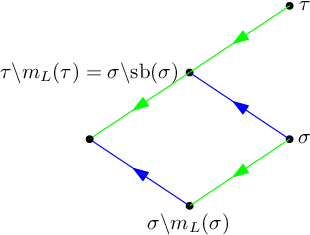}
                    \label{3a1}
                \end{figure}
                This situation, however, contradicts the fact that $A_L$ is a matching.
                
                \item Suppose $m_L(\sigma)\notin \sigma$ and $m_L(\tau)= \sbridge(\tau)$.  In this case, we have the following ~diagram: 
                \begin{figure}[H]
                    \centering
                    \includegraphics[width=0.35\textwidth]{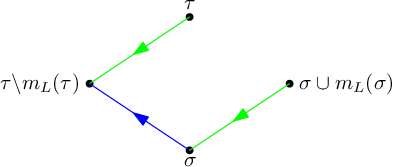}
                    \label{3b}
                \end{figure}
                By the induction hypothesis,  $\sigma\cup m_L(\sigma)$ follows one of the three patterns. However, the first pattern is not possible.  Combining this with Lemma \ref{lem2}, we have the following two possible diagrams:
                \begin{figure}[H]
                    \centering
                    \includegraphics[width=0.6\textwidth]{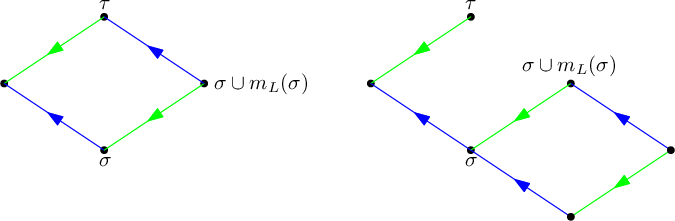}
                    \label{3b1}
                \end{figure}
                In the first diagram,   $\tau$ satisfies the third pattern. Meanwhile the second diagram is not possible since $A_B$ is a matching.

                \item Suppose $m_L(\tau)\neq \sbridge(\tau)$. Then,  either $m_L(\sigma)=m_L(\tau)$ or $\sbridge(\sigma) >_I m_L(\sigma)$ by Lemma \ref{lem:potentialtype2-2}. The first case is not possible since the existence of such $\sigma$ and $\tau$ contradicts our hypotheses.  Suppose $\sbridge(\sigma) >_I m_L(\sigma)$. Then $m_L(\sigma)\notin \sigma$ by Lemma ~\ref{lem:Lyumonoisbridge}. In this case, we have the following diagram:
                    \begin{figure}[H]
                        \centering
                        \includegraphics[width=0.35\textwidth]{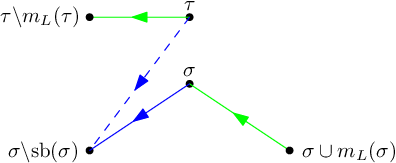}
                        \label{3c}
                    \end{figure}
                    By the induction hypothesis, $\sigma \cup m_L(\sigma)$ follows one of the three patterns. However, the first pattern is not possible. Combining this with Lemma \ref{lem2}, we have two possible diagrams:
                    \begin{figure}[H]
                        \centering
                       \includegraphics[width=0.6\textwidth]{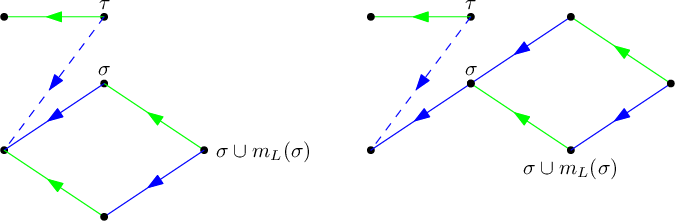}
                        \label{3c1}
                    \end{figure}
                     The second situation contradicts the fact that $A_B$ is a matching. On the other hand, since $m_L(\sigma)\geq_I\sbridge (\sigma \cup m_L(\sigma)) = \sbridge(\sigma)$ due to Lemma \ref{lem2} and the fact that $m_L(\sigma)$ is a bridge of $\sigma \cup m_L(\sigma)$, the first situation is not possible, either. \qedhere 
            \end{enumerate}
        \end{enumerate}
\end{proof}

The assumption cannot be removed, as can be seen in the next example.

\begin{example}\label{ex:CompareLyuBri}
    Let $I$ be the monomial ideal given in Example \ref{ex:BWmatching}. Consider the following total orderings $(>_1)$ and $(>_2)$ on $\G(I)$:
    \begin{align*}
        &m_1 >_1 m_2 >_1 m_3 >_1 m_4 >_1 m_5 >_1 m_6,\\
        &m_1 >_2 m_2 >_2 m_4 >_2 m_5 >_2 m_6 >_2 m_3.
    \end{align*}
    The Lyubeznik and Barile-Macchia resolutions with respect to $(>_1)$ are
    \begin{align*}
        \mathcal{F}_L^1: &~~0 \leftarrow  R \leftarrow R^6\leftarrow R^{13} \leftarrow R^{12} \leftarrow R^4\leftarrow 0\leftarrow 0,\\
         \mathcal{F}_B^1: &~~0\leftarrow R \leftarrow R^6\leftarrow R^{9} ~~\leftarrow R^{6} ~~\leftarrow R^3\leftarrow R\leftarrow 0,
    \end{align*}
    respectively. We observe that Theorem \ref{BridgevsLyu} does not hold for $(>_1)$: $\rank (\mathcal{F}_B^1)_i\leq \rank (\mathcal{F}_L^1)_i$ for each $i\neq 4$, but $\rank (\mathcal{F}_B^1)_4=1>0= \rank (\mathcal{F}_L^1)_4$. On the other hand, the Lyubeznik and Barile-Macchia resolutions with respect to $(>_2)$ are
    \begin{align*}
        \mathcal{F}_L^2: &~~0\leftarrow  R \leftarrow R^6\leftarrow R^{12} \leftarrow R^{10} \leftarrow R^3\leftarrow 0,\\
        \mathcal{F}_B^2: &~~0\leftarrow  R \leftarrow R^6 \leftarrow R^{9} ~~\leftarrow R^{5} ~\leftarrow R~\leftarrow 0,
    \end{align*}
    respectively. Hence Theorem \ref{BridgevsLyu} holds for $(>_2)$. In fact, one can check that $\mathcal{F}_B^2$ is minimal.
\end{example}

With more care, one can show that a monomial ideal that does not satisfy the assumption in Theorem \ref{BridgevsLyu} requires at least $6$ generators.  Hence, the ideal from Example \ref{ex:CompareLyuBri} is the smallest example where Theorem \ref{BridgevsLyu} does not hold. As a consequence, we obtain the following:

\begin{corollary}\label{cor:5generators}
    If $I$ is a monomial ideal with $|\G(I)|\leq 5$, then for any Lyubeznik resolution $\mathcal{F}_L$, there exists a Barile-Macchia resolution $\mathcal{F}_B$ such that $\rank (\mathcal{F}_B)_i\leq \rank (\mathcal{F}_L)_i$ for each $i\in \mathbb{Z}$.
\end{corollary}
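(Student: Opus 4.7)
The plan is to derive this corollary from Theorem~\ref{BridgevsLyu} by showing that, when $|\G(I)|\leq 5$, the hypothesis of that theorem is satisfied automatically for every total ordering $(>_I)$ on $\G(I)$. Once this is established, given any Lyubeznik resolution $\mathcal{F}_L$ induced by some ordering $(>_I)$, the Barile-Macchia resolution $\mathcal{F}_B$ for the same ordering will satisfy $\rank(\mathcal{F}_B)_i\leq \rank(\mathcal{F}_L)_i$ by Theorem~\ref{BridgevsLyu}.

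Suppose, toward a contradiction, that the hypothesis fails for some ideal with $|\G(I)|\leq 5$. Then there exist subsets $\sigma,\tau\subseteq\G(I)$ with $\sigma\setminus \sbridge(\sigma)=\tau\setminus \sbridge(\tau)=:S$; writing $a=\sbridge(\sigma)$, $b=\sbridge(\tau)$, $m=m_L(\sigma)=m_L(\tau)$, one has $m >_I b >_I a$ and $m\in\tau$. First I would extract the following: $a\neq b$ (otherwise $\sigma=\tau$), $a,b\notin S$ (by definition of $S$), and $m\in S$ (since $m\in\tau=S\cup\{b\}$ and $m\neq b$ because $m>_I b$). Thus $a,b,m$ are three distinct elements of $\G(I)$ with $a,b\in\G(I)\setminus S$ and $m\in S$, whence $|\G(I)|\geq |S|+2$. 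Next, set $S_{>m}=\{y\in S : y>_I m\}$. By the definition of $m_L(\sigma)$, the monomial $m$ divides the lcm of the first $v_L(\sigma)$ elements of $\sigma$ in $(>_I)$ order, all of which lie in $S_{>m}$ since $m \notin S_{>m}$ and $a <_I m$; hence $m\mid \lcm(S_{>m})$. Because $m$ is a minimal generator it cannot divide any other single minimal generator, forcing $|S_{>m}|\geq 2$ and hence $|S|\geq 3$.

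The main obstacle is to exclude the borderline case $|S|=3$. Suppose $S=\{m,y_1,y_2\}$ with $y_1,y_2>_I m$. Then $\lcm(S)=\lcm(y_1,y_2)$ because $m\mid \lcm(y_1,y_2)$, and $a\mid \lcm(S)=\lcm(y_1,y_2)$ because $a$ is a bridge of $\sigma$. Writing $\sigma$ in decreasing $(>_I)$ order as $(n_1,n_2,n_3,n_4)$, the monomial $m$ sits at position $3$ and $a$ at position $4$; taking $k=3$ and $m':=a$ shows $a<_I n_3=m$ and $a\mid \lcm(n_1,n_2,n_3)=\lcm(y_1,y_2)$, so $v_L(\sigma)\geq 3$. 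Whether $v_L(\sigma)=3$ or $v_L(\sigma)=4$, the set over which $m_L(\sigma)$ is minimized contains either $a$ itself or some monomial $<_I a$, forcing $m_L(\sigma)\leq_I a <_I m$ and contradicting $m_L(\sigma)=m$. Therefore $|S|\geq 4$, so $|\G(I)|\geq |S|+2\geq 6$, completing the contradiction and proving the corollary.
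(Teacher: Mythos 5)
Your proof is correct and follows exactly the route the paper intends: reduce to Theorem \ref{BridgevsLyu} and verify that its hypothesis can only fail when $|\G(I)|\geq 6$. The paper merely asserts this counting fact (``with more care, one can show\dots''), whereas you supply the details — the bounds $|S_{>m}|\geq 2$, $|S|\geq 3$, and the exclusion of $|S|=3$ via $v_L(\sigma)\geq 3$ — and these check out.
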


\subsection{Generalization of Barile-Macchia resolutions}

In \cite{BW02}, Batzies and Welker provided a generalization of Lyubeznik resolutions. We state that result below in a less general form. Note that we will identify each monomial  with an element in $\mathbb{Z}^N$. 

\begin{theorem}\cite[Theorem 3.2]{BW02}\label{def:LBW}
    Let $I$ be a monomial ideal and $(X,\lcm)$ be the $\mathbb{Z}^N$-graded complex supporting the Taylor resolution of $R/I$. Let $P$ be a poset, $f$ an $\lcm$-compatible $P$-grading of $X$ and $(>_p)_{p\in P}$ be a sequence of total orderings of $\G(I)$. Assume the following condition for all $p\in P$ and all  subsets $\sigma$ of $\G(I)$ such that $f(\sigma)=p$ and  $\sigma=\{m_i>_p \cdots >_p m_q\}$: 
    
    if $m\mid \lcm(m_i,\dots,m_t)$ for some $ 0\leq t<q$ and $  m_t>_p m$, then $f(\sigma \setminus m )=f(\sigma\cup m)=f(\sigma)=p$.

    For $\sigma=\{ m_1 >_{f(\sigma)} \cdots >_{f(\sigma)} m_q \}$, we define
   \[
    	v_L(\sigma)\coloneqq  \sup \big\{ k\in \mathbb{N} : \exists m \in \G(I) \text{ such that } m_k >_{f(p)} m 
    	 \text{ for } k\in [q] \text{ and } m\mid \lcm(m_1, \ldots, m_k) \big\}.
   \]
    If $v(\sigma)\neq -\infty$, set
    \[
    m_L(\sigma)\coloneqq \min_{>_{f(p)}} \{m\in \G(I): m\mid \lcm(m_1,\dots, m_{v_L(\sigma)})  \}.
    \]
    For each $p\in P$ set
    \[
    A_p\coloneqq \{(\sigma\cup m_L(\sigma),  \sigma \setminus m_L(\sigma)) ~|~  f(\sigma)=p \text{ and }v_L(\sigma)\neq - \infty \}.
    \]
    Then $A=\cup_{p\in P}A_p$ is an $f$-homogeneous acyclic matching. Hence, it induces a graded free resolution $\mathcal{F}_A$ of $R/I$, called \textbf{generalized Lyubeznik.}
\end{theorem}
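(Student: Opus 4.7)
My plan is to verify the three defining properties of an $f$-homogeneous acyclic matching in turn, following the schema of the original Batzies-Welker argument for Lyubeznik resolutions (Theorem~\ref{def:Lyuusingmatchings}) but with the refinement that one must work inside each graded piece $f^{-1}(p)$ separately.

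The $f$-homogeneity is essentially built into the hypothesis. Given $\sigma$ with $f(\sigma)=p$ and $v_L(\sigma)\neq-\infty$, the monomial $m=m_L(\sigma)$ by definition divides $\lcm(m_1,\ldots,m_{v_L(\sigma)})$ and satisfies $m_{v_L(\sigma)}>_p m$, so the compatibility condition on $(>_p)_{p\in P}$ applies with $t=v_L(\sigma)$ and gives $f(\sigma\cup m)=f(\sigma\setminus m)=f(\sigma)=p$. Hence each edge of $A_p$ lies between two cells of the same $f$-value, and every edge of $A$ is $f$-homogeneous.

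For the matching property, I would show that for any $\sigma$ with $v_L(\sigma)\neq-\infty$, passing to either $\sigma\cup m_L(\sigma)$ or $\sigma\setminus m_L(\sigma)$ does not alter the relevant invariants: one has $v_L(\sigma\cup m_L(\sigma))=v_L(\sigma\setminus m_L(\sigma))=v_L(\sigma)$ and $m_L(\sigma\cup m_L(\sigma))=m_L(\sigma\setminus m_L(\sigma))=m_L(\sigma)$. This is because $\lcm$ of every prefix $m_1,\ldots,m_k$ with $k\leq v_L(\sigma)$ is unchanged when we add or remove the strictly smaller monomial $m_L(\sigma)$, and the existence witness for $v_L$ is the same. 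It then follows that for any subset $\tau$ appearing in an edge of $A$, the partner $\tau'$ is uniquely recoverable as $\tau\cup m_L(\tau)$ or $\tau\setminus m_L(\tau)$, so no subset can appear in two distinct edges.

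Acyclicity is the main obstacle and the heart of the proof. Arguing by contradiction and applying Lemma~\ref{clm:directedcycle}, any directed cycle in $G^A$ has the alternating form $\tau_1\to\sigma_1\to\tau_2\to\cdots\to\sigma_k\to\tau_1$, where each black edge $(\tau_i,\sigma_i)\in A$ inserts $m_L(\sigma_i)$ and each forward edge $(\sigma_i,\tau_{i+1})$ deletes some $m_i\ne m_L(\sigma_i)$. By $f$-homogeneity, all $2k$ subsets share the same $f$-value $p$, so we may work with the single ordering $>_p$. Now let $\sigma_{i_0}$ minimize $m_L(\sigma_i)$ with respect to $>_p$, and set $m^*:=m_L(\sigma_{i_0})$. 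The deleted monomial $m_{i_0}$ must be re-added elsewhere on the cycle, so $m_{i_0}=m_L(\sigma_j)$ for some $j$, whence $m_{i_0}\geq_p m^*$ by minimality. Writing $\sigma_{i_0}=\{m_1>_p\cdots>_p m_q\}$ with $m_{v_L(\sigma_{i_0})+1}=m^*$, one then pushes the cycle condition $\lcm(\tau_{i_0+1})=\lcm(\sigma_{i_0})$ through the transport identity $m_L(\tau_{i_0+1})=m_L(\sigma_{i_0+1})$ from the matching step to compare the position of $m_{i_0}$ in $\sigma_{i_0}$ against the minimum-selection rule defining $m_L$. The precise extraction of the contradiction is the delicate part: one must check that the combination of $m_{i_0}\geq_p m^*$, $m_{i_0}\in \sigma_{i_0}$ with index $>v_L(\sigma_{i_0})$, and $m^*\mid\lcm(m_1,\ldots,m_{v_L(\sigma_{i_0})})$ violates the minimality clause in the definition of $m_L(\sigma_{j-1})$ or $m_L(\sigma_{i_0})$, depending on whether $m_{i_0}$ precedes or follows $m^*$ in the prefix, yielding the desired contradiction and establishing acyclicity.
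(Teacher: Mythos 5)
First, a point of reference: the paper does not prove this statement at all --- it is imported verbatim as \cite[Theorem 3.2]{BW02}, and the nearest in-paper analogue is Theorem~\ref{def:generalizedbridge}, whose proof reduces to the fibers $f^{-1}(p)$ and then reruns the argument of Theorem~\ref{thm:proofalg}. So your proposal is being measured against Batzies--Welker's argument rather than anything in this paper. Your treatment of $f$-homogeneity is correct (the hypothesis applies with $t=v_L(\sigma)$ and $m=m_L(\sigma)$, since $m_{v_L(\sigma)}>_p m_L(\sigma)$ by the existence of a witness). The matching step is essentially right but has a small unaddressed point: you verify that the witness survives, which gives $v_L(\sigma\cup m_L(\sigma))\geq v_L(\sigma)$, but you must also rule out that $v_L$ \emph{increases} after inserting $m_L(\sigma)$ (a new witness could appear at a position beyond $v_L(\sigma)$); excluding this uses the minimality clause in the definition of $m_L$ and is exactly where the argument has content.

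The genuine gap is in acyclicity, and you acknowledge it yourself: the contradiction is asserted ("one must check that the combination \dots violates the minimality clause"), not derived. Moreover, the extremal choice you make does not obviously produce one. Taking $m^*=m_L(\sigma_{i_0})$ minimal among the $m_L(\sigma_i)$ and concluding $b_{i_0}\geq_p m^*$ gives no leverage: the definition of $m_L(\sigma_{i_0})$ only constrains monomials dividing the \emph{prefix} lcm $\lcm(m_1,\dots,m_{v_L(\sigma_{i_0})})$, whereas the deleted monomial $b_{i_0}$ is only known to divide the \emph{full} lcm $\lcm(\sigma_{i_0}\setminus b_{i_0})=\lcm(\sigma_{i_0})$, and it sits at a position beyond $v_L(\sigma_{i_0})$. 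The same obstruction blocks the direct transplant of the Theorem~\ref{thm:proofalg} argument (take the largest added monomial and show the deleted one is a smaller bridge): for bridge matchings, "deleted without changing the lcm" is precisely the defining property of $\sbridge$, but for the Lyubeznik matching it does not imply divisibility of the relevant prefix lcm, so the minimality of $m_L$ is never violated by what you have established. The missing ingredient is the actual mechanism Batzies--Welker use --- tracking how the pair $(v_L(\cdot),m_L(\cdot))$, or equivalently a lexicographic invariant of the ordered prefixes, evolves monotonically along a gradient path and strictly drops somewhere on any closed alternating cycle. Without supplying and verifying that monotonicity, the proof is not complete.
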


We can obtain a similar generalization for Barile-Macchia resolutions.

\begin{theorem}\label{def:generalizedbridge}
    Let $I$ be a monomial ideal and $(X,\lcm)$ be the $\mathbb{Z}^N$-graded complex supporting the Taylor resolution of $R/I$. Let $P$ be a poset, $f$ an $\lcm$-compatible $P$-grading of $X$ and $(>_p)_{p\in P}$ a sequence of total orderings of $\G(I)$. Assume $f(\sigma \setminus \sbridge_{>_p}(\sigma) )=f(\sigma)$ for all $p\in P$ and all subsets $\sigma$ of $\G(I)$ such that $f(\sigma)=p$. For each $p\in P$, let $A_p$ be the $f$-homogeneous acyclic matching obtained by applying Algorithm \ref{algorithm1} to the set $f^{-1}(p)$ imposed with the total ordering $(>_p)$. Then $A=\cup_{p\in P}A_p$ is an $f$-homogeneous acyclic matching. Hence, it induces a graded free resolution $\mathcal{F}_A$ of $R/I$.
\end{theorem}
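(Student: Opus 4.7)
The plan is to verify the three defining properties of an $f$-homogeneous acyclic matching (Definition \ref{def:acyclicmatch}) for the union $A = \bigcup_{p \in P} A_p$, after which Theorem \ref{thm:morseres} immediately yields the graded cellular free resolution $\mathcal{F}_A$ of $R/I$. The matching and homogeneity axioms will be routine; the crux is acyclicity.

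For $f$-homogeneity and the matching axiom, I would proceed as follows. Each $A_p$ is an $\lcm$-homogeneous matching on $f^{-1}(p)$ by Theorem \ref{thm:proofalg} applied to the restricted collection of subsets, because the hypothesis $f(\sigma \setminus \sbridge_{>_p}(\sigma)) = f(\sigma)$ precisely guarantees that every directed edge produced by Algorithm \ref{algorithm1} on $f^{-1}(p)$ has both endpoints in $f^{-1}(p)$. This simultaneously yields $f$-homogeneity for each such edge (both endpoints share the $f$-value $p$) and forces the $A_p$ to live on disjoint fibers, so a subset $\tau$ participating in an edge of $A_{p_1}$ and an edge of $A_{p_2}$ must satisfy $p_1 = f(\tau) = p_2$, reducing the matching axiom for $A$ to the matching axiom already known for each individual $A_p$.

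Acyclicity is the step I expect to be the main obstacle. Suppose for contradiction that $G^A$ contains a directed cycle $\mathcal{C}$. Each edge of $G^A$ is either a non-matched edge $(\sigma, \sigma')$ with $\sigma' \subsetneq \sigma$, along which $f(\sigma') \leq_P f(\sigma)$ because $f$ is order-preserving, or a reversed matched edge, along which $f$ is constant by the $f$-homogeneity just established. Hence $f$ is weakly decreasing along $\mathcal{C}$, and since $\mathcal{C}$ closes up, $f$ must be constant on all of $\mathcal{C}$, say with value $p$. Every vertex of $\mathcal{C}$ therefore lies in $f^{-1}(p)$, every non-matched edge of $\mathcal{C}$ is a valid edge in the ambient graph on $f^{-1}(p)$, and every reversed matched edge of $\mathcal{C}$ comes from $A_p$ by disjointness of fibers. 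Consequently $\mathcal{C}$ is a directed cycle in the digraph $G^{A_p}$ associated with $A_p$, contradicting the acyclicity of $A_p$ proved in Theorem \ref{thm:proofalg}.

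Once $A$ is an $f$-homogeneous acyclic matching, no further work is needed: Theorem \ref{thm:morseres} directly produces the graded cellular free resolution $\mathcal{F}_A$ of $R/I$. The only subtlety to keep in mind is that the proof of Theorem \ref{thm:proofalg}, and in particular of Lemma \ref{clm:directedcycle}, must transfer verbatim to the restricted setting on $f^{-1}(p)$; this transfer succeeds precisely because the stability hypothesis $f(\sigma \setminus \sbridge_{>_p}(\sigma)) = f(\sigma)$ keeps every type-1, type-2, and potentially-type-2 subset encountered by Algorithm \ref{algorithm1} inside $f^{-1}(p)$.
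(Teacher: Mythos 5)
Your proposal is correct and follows essentially the same route as the paper's proof: verify the matching and homogeneity axioms directly from the fiberwise construction, show that $f$ must be constant along any directed cycle so that the cycle lives entirely in some $G^{A_p}$, and then invoke the acyclicity argument of Theorem \ref{thm:proofalg} on that fiber. The paper states these steps more tersely, but the details you supply (including the observation that the stability hypothesis keeps the algorithm's edges inside each fiber) are exactly the intended ones.
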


\begin{proof}
    The proof is exactly the same as that of Theorem \ref{thm:proofalg}. One can verify with definitions that $A$ is an $f$-homogeneous matching. Hence it suffices to show that it is acyclic. Using the same arguments as in the proof of Lemma \ref{clm:directedcycle}, one can verify that all subsets $\sigma$ of $\G(I)$ in a directed cycle have the same $f(\sigma)$. Hence a directed cycle exists in $G_{X}^A$ if and only if a directed cycle exists in the restriction of $G_{X}^A$ to $f^{-1}(p)$ for some $p\in P$. Then using the same arguments as in the proof of Theorem \ref{thm:proofalg}, one can verify that the existence of a directed cycle leads to a contradiction. Therefore $A$ is an $f$-homogeneous acyclic matching.
\end{proof}

In the same spirit of naming the Lyubeznik resolutions and their generalizations, we call these resolutions   \textbf{generalized Barile-Macchia}.
We obtain a direct generalization of Theorem \ref{BridgevsLyu} for the generalized version of Barile-Macchia and  Lyubeznik resolutions.

\begin{theorem}\label{BridgevsLyu3}
    Let $I$ be a monomial ideal and $(X,\lcm)$ be the $\mathbb{Z}^N$-graded complex supporting the Taylor resolution of $R/I$. Let $P$ be a poset, $f$ an $\lcm$-compatible $P$-grading of $X$ and $(>_p)_{p\in P}$ be a sequence of total orderings of $\G(I)$. Let $\mathcal{F}_L$ and $\mathcal{F}_B$ denote the generalized Lyubeznik and generalized Barile-Macchia resolutions induced by Theorem \ref{def:LBW} and Theorem \ref{def:generalizedbridge}, respectively. Assume the following:
    \begin{enumerate}
        \item For all $p\in P$ and all  subsets $\sigma$ of $\G(I)$ such that $f(\sigma)=p$ and $\sigma=\{m_1 >_p \cdots >_p m_q\}$, we have the condition: if $m\mid \lcm(m_1,\dots,m_t)$ for some $0\leq t<q$ and $ m_t>_p m$, then $ f(\sigma \setminus m )=f(\sigma\cup m)=f(\sigma)=p$. 
        \item For all $p\in P$ and all subsets $\sigma$ of $\G(I)$ such that $f(\sigma)=p$ the condition
         \[
            f(\sigma \setminus \sbridge_{>_p}(\sigma) )=f(\sigma).
         \]
        \item For each $p\in P$, the condition in Theorem \ref{BridgevsLyu} holds for $f^{-1}(p)$.
    \end{enumerate}
    Then $\rank (\mathcal{F}_B)_i\leq \rank (\mathcal{F}_L)_i$ for each $i\in \mathbb{Z}$.
\end{theorem}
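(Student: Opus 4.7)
The plan is to reduce to Theorem \ref{BridgevsLyu2} applied fiberwise. By construction, both matchings decompose as disjoint unions over the fibers of $f$: $A_L = \bigsqcup_{p \in P} A_{L,p}$ and $A_B = \bigsqcup_{p \in P} A_{B,p}$, where each $A_{L,p}$ and $A_{B,p}$ is a matching on $f^{-1}(p)$. Consequently, the sets of non-critical subsets decompose as $V_L = \bigsqcup_p V_{L,p}$ and $V_B = \bigsqcup_p V_{B,p}$. Since $\rank(\mathcal{F}_A)_r$ equals the number of critical (i.e., non-$V_A$) subsets of cardinality $r$ by Theorem \ref{thm:morseres}, the desired inequality follows once one establishes $V_{L,p} \subseteq V_{B,p}$ for every $p \in P$.

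Next, fix $p \in P$. The crucial point is that inside the fiber $f^{-1}(p)$ equipped with the total ordering $>_p$, the matchings $A_{L,p}$ and $A_{B,p}$ are defined by exactly the same recipes as in Theorem \ref{def:Lyuusingmatchings} and Algorithm \ref{algorithm1}: condition (1) ensures that the Lyubeznik edges $(\sigma \cup m_L(\sigma),\, \sigma \setminus m_L(\sigma))$ have both endpoints in $f^{-1}(p)$, and condition (2) ensures the same for the bridge edges $(\sigma,\, \sigma \setminus \sbridge_{>_p}(\sigma))$. Condition (3) then plays the role of fiberwise bridge-friendliness; as noted after Theorem \ref{BridgevsLyu2}, bridge-friendliness vacuously satisfies the hypothesis of Theorem \ref{BridgevsLyu}.

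With these ingredients in place, the descending induction on $|\tau|$ and the case analysis from the proof of Theorem \ref{BridgevsLyu} carry over verbatim to $f^{-1}(p)$. Every auxiliary subset that appears in the three-pattern diagrams is produced from the starting subset $\tau$ by a chain of bridge and $m_L$ operations, each of which preserves $f$-value by (1)--(2); thus the entire case analysis takes place inside a single fiber. The conclusion $V_{L,p} \subseteq V_{B,p}$ follows in each fiber, and assembling these across $p \in P$ gives $V_L \subseteq V_B$, whence $\rank(\mathcal{F}_B)_i \leq \rank(\mathcal{F}_L)_i$ for all $i$.

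The main obstacle is the bookkeeping needed to confirm that every subset produced in the diagrammatic case analysis of Theorem \ref{BridgevsLyu} remains inside the same fiber $f^{-1}(p)$. Conditions (1), (2), and (3) are tailored precisely so that each atomic step (passing between $\sigma$ and $\sigma \cup m_L(\sigma)$, or between $\sigma$ and a companion $\sigma'$ with $\sigma \setminus \sbridge(\sigma) = \sigma' \setminus \sbridge(\sigma')$) preserves $f$-value, which is why the argument localizes cleanly; nevertheless, verifying this in each of the patterns traversed by the induction is where all the care must be taken.
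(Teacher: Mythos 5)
The paper states this theorem without proof, and the intended argument is exactly the fiberwise reduction you describe: conditions (1) and (2) confine both matchings, and every auxiliary subset arising in the case analysis of Theorem \ref{BridgevsLyu}, to a single fiber $f^{-1}(p)$, while condition (3) is fiberwise bridge-friendliness, which (as in the passage from Theorem \ref{BridgevsLyu} to Theorem \ref{BridgevsLyu2}) renders vacuous the only case of that induction that requires the extra hypothesis on pairs $\sigma,\tau$. Your proposal is correct and takes essentially the same approach as the paper.
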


\section{Concluding Remarks, Conjectures and Questions}\label{sec:final}

In this section, we conclude our study of the Barile-Macchia resolutions by presenting several questions and conjectures.  In previous sections, we identified several classes of ideals that are bridge-friendly (or have a minimal Barile-Macchia resolution)). In general, it is difficult to tell whether a given monomial ideal has these properties. One way to complete this task is to exhaust all the possible total orderings, which is  not practical for ideals with many generators. So, we raise the following natural question:

\begin{question}
   What characterizes bridge-friendly monomial ideals or when a monomial ideal has a minimal Barile-Macchia resolution?  
\end{question}
 
Given the quite general nature of the previous question, one can pose a more tractable one: 

\begin{question}\label{ques:nonfriendlynorminimal}
    What is the smallest example, in terms of the number of minimal generators, of a monomial ideal that is not bridge-friendly or does not have a minimal Barile-Macchia resolution?
\end{question} 

It is straightforward that any monomial ideal with at most three generators is always bridge-friendly with respect to any total ordering. For bridge-friendliness, the answer is four (Example \ref{notmorseminimalexample}). The story is more complicated when it comes to having a minimal Barile-Macchia resolution. One can prove the following, recovering parts of a recent result \cite[Theorem 4.5]{faridi22}.

\begin{theorem}
    Let $I$ be a monomial ideal with at most four generators. Then $I$ has a minimal Barile-Macchia resolution. In particular, any monomial ideal with at most four generators has a minimal cellular free resolution.
\end{theorem}\label{thm:4generators}
\begin{proof}[Sketch of the proof.] 
    Set $\G(I)=\{m_1,m_2,m_3,m_4\}$. Consider all subsets $S$ of $\G(I)$ such that $\lcm(S)=\lcm(\G(I))$. Abusing notations, let $S_0$ denote such a subset with minimum cardinality (there could be many such $S_0$). Consider the cases when $|S_0|=2,3$ or $4$. 
    
    Without loss of generality, set $S_0=\{m_1,\dots,m_{|S_0|}\}$. Let $(>)$ denote the total ordering $m_1>m_2>m_3>m_4$ and $\mathcal{F}$ be the induced Barile-Macchia resolution. Suppose $\mathcal{F}$ is not minimal, then there exist subsets $\sigma$ and $\tau$ of $\G(I)$ such that $\lcm(\sigma)=\lcm(\tau), |\sigma| =|\tau|+1$ and there is a gradient path from $\sigma$ to $\tau$. By investigating all the possible Barile-Macchia matchings induced by $(>)$, one can always find a contradiction.
\end{proof}

One may ask whether this result holds for monomial ideals with more generators. The following example shows that there exists a monomial ideal with six generators that does not have a Barile-Macchia resolution.

\begin{example} 
    This example was introduced in \cite{KTY09}. Consider $$I=(m_1,m_2,m_3,m_4,m_5,m_6)$$ where $m_1=x_1x_2x_8x_9x_{10},~~
    	m_2=x_2x_3x_4x_5x_{10},~~
    	m_3=x_5x_6x_7x_8x_{10}, ~~
    	m_4=x_1x_4x_5x_6x_{9}, ~~$  
    $~~~~~$  $m_5=x_1x_2x_3x_6x_{7}$ and $	m_6=x_3x_4x_7x_8x_{9}$.

    The total Betti numbers of $R/I$ depend on $\textrm{char} (\Bbbk) $ \cite[3.3]{BMSW22}.  Hence, $I$ does not have a minimal Barile-Macchia resolution since Morse resolutions, in general, are cellular \cite[Proposition 2.2]{BW02}.
\end{example}

Therefore to get a full picture, the question now is whether Theorem \ref{thm:4generators} holds for monomial ideals with five generators. We conjecture that the answer is affirmative.

\begin{conjecture}
    All monomial ideals with at most five generators are bridge-minimal.
\end{conjecture}

One may ask a similar question:

\begin{question}\label{ques:nonfriendlynorminimal2}
    What is the smallest example, in terms of the number of variables, of a monomial ideal that is not bridge-friendly or does not have a minimal Barile-Macchia resolution?
\end{question} 

Corollary \ref{cor:allidealsMorsefriendly+minimal} provide a lower bound. For bridge-friendliness, an upper bound was obtained in Example \ref{ex:notfriendlycycle}. On the other hand, to find a monomial ideal that does not have a minimal Barile-Macchia resolution, it is sufficient find an ideal whose Betti numbers depend on $\textrm{char} (\Bbbk) $ or minimal free resolution is not cellular, i.e., not supported by any CW-complex (\cite[Proposition ~2.2]{BW02}). The smallest example of the former is the Stanley-Reisner ideal of a minimal triangulation of $\mathbb{R}P^2$  in a polynomial ring of dimension $6$ (see \cite{BH98}) while that of the latter is  a nearly Scarf ideal  in a polynomial ring of dimension $284$ \cite[Lemma 3]{Vel08}. Therefore, these two examples provide an upper bound for Question \ref{ques:nonfriendlynorminimal2}. 

Let $I$ be one of the ideals considered in this paper, namely edge ideals of weighted oriented forests or cycles. Notice that if $I$ has a minimal Barile-Macchia resolution, then  so does a monomial ideal $J$ where $\G(J)\subseteq \G(I)$. Motivated by this observation, we pose the following question:

\begin{question}
    Let $I$ and $J$ be monomial ideals where $\G(J)\subseteq \G(I)$. If $I$ has a minimal Barile-Macchia resolution, does $J$ have one as well? 
\end{question}

Free resolutions of an ideal may depend on $\textrm{char} (\Bbbk) $. However, for all monomial ideals, one can find a free resolution that does not (e.g., Taylor, Lyubeznik or Barile-Macchia resolutions). This is true even for monomial ideals whose minimal free resolutions depend on $\textrm{char} (\Bbbk) $. Then, it is natural to ask whether one can identify the  shortest length of such a resolution.  This question is one of the motivations behind our work in Section \ref{sec:comparison}. Recall that Lyubeznik resolutions are closer to minimal than Taylor resolutions. So, in Theorem \ref{BridgevsLyu}, we compared the Lyubeznik and Barile-Macchia resolutions with respect to a fixed ordering with the hope of obtaining an insight into this last question.  As the next step, we ask whether similar results as in Theorem \ref{BridgevsLyu} hold for those with respect to different orderings. Numerous examples suggest the following ~conjecture:

\begin{conjecture}
    Let $I$ be a monomial ideal. If $\mathcal{F}_L$ is a Lyubeznik resolution of $R/I$, then there exists a Barile-Macchia resolution $\mathcal{F}_B$ of $I$ such that $\rank (\mathcal{F}_B)_i\leq \rank (\mathcal{F}_L)_i$ for each $i\in \mathbb{Z}$.
\end{conjecture}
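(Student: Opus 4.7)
The conjecture generalizes Theorem \ref{BridgevsLyu2} in two directions: the bridge-friendliness hypothesis is dropped, and one is allowed to choose different total orderings for the two resolutions. The plan is to exploit the latter freedom to circumvent the former obstruction. The overall strategy is to start with the given Lyubeznik ordering and modify it when necessary until either Theorem \ref{BridgevsLyu} applies directly or an easier-to-analyze configuration is reached.

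First, given $\mathcal{F}_L$ induced by a total ordering $>_L$, I would test the hypothesis of Theorem \ref{BridgevsLyu}: if no bad pair $(\sigma,\tau)$ satisfying $\sigma\setminus \sbridge(\sigma)=\tau\setminus\sbridge(\tau)$, $\sbridge(\tau)>_L\sbridge(\sigma)$, $m_L(\sigma)=m_L(\tau)>_L \sbridge(\tau)$, and $m_L(\tau)\in\tau$ exists, then the Barile-Macchia resolution $\mathcal{F}_B$ with respect to $>_L$ already satisfies $V_L\subseteq V_B$, and the conclusion follows. In the presence of bad pairs, I would introduce local perturbations of $>_L$, the minimal one being the transposition of $\sbridge(\sigma)$ and $\sbridge(\tau)$ (or of $m_L(\tau)$ with one of its immediate predecessors in $>_L$). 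The central technical claim to establish is that each such swap strictly decreases a suitable potential function, for instance the number of bad pairs refined lexicographically by some secondary measure, while never increasing any $\rank (\mathcal{F}_B)_i$.

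The main obstacle is controlling the global effect of local swaps. A single reordering can simultaneously alter the bridge structure of many subsets, change which subsets are type-1, type-2, or potentially-type-2, and potentially create entirely new bad pairs elsewhere. Establishing termination therefore requires a carefully chosen monotone potential, and this is the crux of the argument. An alternative, and perhaps more conceptual, route is to work within the generalized framework of Theorem \ref{def:generalizedbridge}: produce an $\lcm$-compatible $P$-grading together with fiberwise orderings designed so that Theorem \ref{BridgevsLyu3} applies, yielding a generalized Barile-Macchia resolution that dominates $\mathcal{F}_L$ fiberwise. One would then need an auxiliary lemma asserting that every generalized Barile-Macchia matching arising from such a construction coincides with an ordinary Barile-Macchia matching for a suitably amalgamated global ordering.

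As a warm-up and to guide intuition, I would first extend Corollary \ref{cor:5generators} computationally to $|\G(I)|\le 7$ or $|\G(I)|\le 8$ using the {\tt Macaulay2} implementation at \cite{github}. The examples should reveal canonical recipes for producing a Barile-Macchia ordering from a Lyubeznik ordering; my hope is that one such recipe will admit a uniform description, and that combining it with the generalized framework above will yield the full conjecture.
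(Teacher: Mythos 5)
This statement is posed in the paper as an open conjecture; the authors offer no proof, only the remark that ``numerous examples suggest'' it. Your proposal is likewise not a proof but a research program, and you correctly identify its crux yourself: nothing in the proposal establishes that the local swaps terminate or that they never increase any $\rank(\mathcal{F}_B)_i$. That second point is the real danger. A single transposition in the total ordering changes the smallest bridge function $\sbridge$ globally, which can reclassify arbitrarily many subsets among type-1, type-2, potentially-type-2, and critical; Example \ref{example:DependOnordering} already shows that two orderings can produce Barile-Macchia resolutions of different lengths and ranks, so rank-monotonicity under swaps is not free and no candidate potential function is exhibited. Without that, the argument does not get off the ground even in principle.

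The alternative route through Theorem \ref{def:generalizedbridge} and Theorem \ref{BridgevsLyu3} has a more structural problem: the auxiliary lemma you would need --- that every generalized Barile-Macchia matching arising from a fiberwise construction coincides with an ordinary Barile-Macchia matching for some amalgamated global ordering --- is false in general. The paper's own example of the $9$-cycle (Section \ref{sec:final}, together with Remark \ref{rem:cycleTable}) exhibits an ideal admitting a minimal \emph{generalized} Barile-Macchia resolution while \emph{no} ordinary Barile-Macchia resolution is minimal, so the generalized and ordinary constructions genuinely differ. Since the conjecture demands an ordinary Barile-Macchia resolution $\mathcal{F}_B$, a fiberwise domination result cannot be converted back without additional hypotheses that you would still have to supply. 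The computational warm-up for $|\G(I)|\le 7$ or $8$ is reasonable exploratory work, but as it stands the proposal leaves the conjecture exactly as open as the paper does.
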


\begin{question}
    Let $I$ be a monomial ideal. Assume a Lyubeznik resolution or the Scarf complex of $R/I$ is the minimal free resolution. What conditions are sufficient for $I$ to have a minimal Barile-Macchia resolution?
\end{question}

Another question of the same theme is whether Corollary \ref{cor:5generators} can be extended to monomial ideals with more than $5$ generators.

In \cite{BW02}, the original statements of Theorems \ref{thm:morseres} and \ref{def:LBW}  are given in a more general way by replacing the Taylor simplicial complex with a compactly graded regular CW-complex. Therefore, the introduction of Barile-Macchia resolutions opens up various directions to obtain resolutions that one can hope to be minimal. In particular, one can use Algorithm \ref{algorithm1} to ``trim" the corresponding Lyubeznik resolution $\mathcal{F}_L$ and obtain a resolution $\mathcal{F}_B$ such that  $\rank (\mathcal{F}_B)_i\leq \rank (\mathcal{F}_L)_i$ for each $i\in \mathbb{Z}$. In fact, Barile and Macchia in \cite{BM20} used this method to obtain the minimal free resolutions of edge ideals of nonweighted nonoriented forests, which is a special case of our study in Section \ref{sec:forests}. This fact suggests the following question:

\begin{question}
    Does applying Algorithm \ref{algorithm1} to a Lyubeznik resolution of $R/I$ with respect to some total ordering on $\G(I)$ always produce resolution that is isomorphic to a Barile-Macchia resolution?
\end{question}

Finally, we discuss the generalization of Barile-Macchia resolutions. Generalized Lyubeznik resolutions are  meaningful generalizations of Lyubeznik resolutions in the sense that while the latter are rarely minimal, the former have been showed to be minimal for large classes of ideals such as generic and shellable ideals \cite[Proposition 4.1, Proposition 4.3]{BW02}. Barile-Macchia resolutions, meanwhile, are already minimal for large classes of ideals, and it begs the question of whether their generalization is meaningful. We found an example that provides an affirmative answer to this question.

\begin{example}
Let $I=(x_1x_2,x_2x_3,\ldots, x_8x_9,x_9x_1)$ and set $m_i=x_ix_{i+1}$ for each $1\leq i\leq 8$ and $m_9=x_9x_1$. Then, by Macaulay2 \cite{M2}, one can show that none of the Barile-Macchia resolutions of $I$ are minimal.  Next, we construct a minimal generalized Barile-Macchia resolution of $R/I$. Consider the following total orderings:
\[m_9 >_1 m_8 >_1 m_7 >_1 m_6 >_1 m_5 >_1 m_4 >_1 m_3 >_1 m_2 >_1 m_1,
\]
\[m_9 >_2 m_8 >_2 m_7 >_2 m_6 >_2 m_5 >_2 m_4 >_2 m_2 >_2 m_3 >_2 m_1.
\]

Let $X$ denote the Taylor resolution of $R/I$. Note that  $(X, \lcm)$ is $\ZZ^9$-graded. Let $P=\ZZ^9 \cup \{p_0\}$ be a poset where  $p_0 < p$ if $(1,1,1,1,1,0,0,1,1)< p$ and $p_0 > p$ if $(1,1,1,1,1,0,0,1,1)> p$  for any $p\in \ZZ^9$.  Define a map $f$ as follows:
\begin{align*}
    f: X^{(*)} &\to P\\
    \{m_1,m_2,m_3,m_4,m_8\} &\mapsto p_0,\\
    \{m_1,m_2,m_4,m_8\} &\mapsto p_0,\\
    \sigma &\mapsto \lcm(\sigma),
\end{align*}
where $\sigma \neq \{m_1,m_2,m_3,m_4,m_8\} , \{m_1,m_2,m_4,m_8\}$. One can verify that $f$ is an $\lcm$-compatible $P$-grading of $X$ by considering a map $g:P\rightarrow \ZZ^9$ where $g(p)=p$ for any $p\neq p_0$ and $g(p_0)=(1,1,1,1,1,0,0,1,1)$. By applying Algorithm \ref{algorithm1} to $f^{-1}(p_0)$ imposed with $(>_2)$ and $f^{-1}(p)$ imposed with $(>_1)$ for any $p\neq p_0$, we obtain an $f$-homogeneous acyclic matching such that for each $n\in \ZZ$, the number of critical subsets of cardinality $n$ equals the total Betti number $\beta_n(R/I)$.  Therefore the corresponding generalized Barile-Macchia resolution is minimal.
\end{example}

We conclude the paper with a question on whether more  examples of this kind can be found. 

\begin{question}\label{ques:generalizedbutnotbridge}
    What class of monomial ideals admits minimal generalized Barile-Macchia resolutions? What class of monomial ideals admits no minimal Barile-Macchia resolution, but a minimal generalized Barile-Macchia resolution?
\end{question}

\textbf{Acknowledgements.} The first author was supported by NSF grants DMS 1801285 and 2101671. We thank Srikanth Iyengar for his comments and suggestions.


\bibliographystyle{amsplain}
\bibliography{refs}
\end{document}